\newtheorem{ques}[theorem]{Question}
\let\@wraptoccontribs\wraptoccontribs
\title{Semiclassical Measures on Hyperbolic Manifolds}
\author{Elena Kim}
\address{Department of Mathematics\\Massachusetts Institute of Technology\\Cambridge, MA 02139}
\email{elenakim@mit.edu}
\author{Nicholas Miller}
\address{Department of Mathematics\\University of Oklahoma\\Norman, OK 73019}
\email{nickmbmiller@ou.edu}
\begin{document}

\begin{abstract}
We examine semiclassical measures for Laplace eigenfunctions on compact hyperbolic $(n+1)$-manifolds. We prove their support must contain the cosphere bundle of a compact immersed totally geodesic submanifold of dimension at least 2. 
Our proof adapts the argument of \cite{DJ18} and \cite{ADM24} and classifies the closures of unipotent orbits using Ratner theory. An important step in the proof is a generalization of the higher-dimensional fractal uncertainty principle of Cohen \cite{Co24} to Fourier integral operators, which may be of independent interest.  
\end{abstract}

\maketitle

\section{Introduction}\label{section:introduction}

Let $(M, g)$ be a compact hyperbolic $(n+1)$-dimensional manifold, that is a compact Riemannian manifold with constant curvature $-1$.
A key area of research in quantum chaos is semiclassical measures, detailed in Definition \ref{def:semiclassical_measure},
which capture the high frequency limit of the mass of eigenfunctions of the Laplacian. The study of semiclassical measures is guided by the Quantum Unique Ergodicity conjecture of \cite{RS94}, which poses that on negatively curved compact manifolds, the whole sequence of eigenfunctions converges semiclassically to the Liouville measure. Intuitively put, all eigenfunctions equidistribute in the semiclassical limit.

In this paper, we study the possible \emph{supports of semiclassical measures}.
Results on these supports have found applications to control
estimates~\cite{Ji18}, exponential decay for the damped wave equation~\cite{Ji20},
and bounds on restrictions of eigenfunctions~\cite{GZ21}.

\subsection{Main Results}
We let $F^*M$ denote the \emph{coframe bundle}. Elements of $F^*M$ are of the form $q= (x, \xi_1, \ldots,  \xi_{n+1})$, where $x \in M$ and  $\xi_1, \ldots, \xi_{n+1} \in T^*_x M$ form a positively oriented orthonormal basis.
Fix $\pi_S$ to be the following submersion:
$$\pi_S: F^*M \rightarrow S^*M, \quad (x, \xi_1, \ldots,  \xi_{n+1}) \mapsto (x, \xi_1).$$

Let $U_1^-$ be the vector field on $F^*M$ defined in \eqref{eq:frame}. This vector field is the generator of the one-parameter unipotent flow $e^{s U_1^-}$. 

It is well-known (see for example ~\cite{Zw12}*{\S \S5.1--2}) that if $\mu$ is a semiclassical measure, $\mu$ is invariant under the geodesic flow $\phi_t: S^*M \rightarrow S^*M$ and $\supp \mu \subset S^*M$.

Our first result describes the support of the semiclassical measures. 

\begin{theorem}
\label{thm:support}
Let $M$ be a compact hyperbolic manifold. If $\mu$ is a semiclassical measure on $M$, then for some $q \in F^*M$, 
$$\pi_S \overline{\{\phi_t ( e^{sU_1^-}(q)):t, s \in \R\}} \subset \supp \mu.$$
\end{theorem}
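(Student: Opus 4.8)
The argument follows the strategy of \cite{DJ18}, which treats the case $n=1$. Recall that a semiclassical measure $\mu$ is a probability measure, is invariant under the geodesic flow $\phi_t$, and satisfies $\supp\mu\subseteq S^*M$; in particular $\supp\mu$ is a nonempty closed $\phi_t$-invariant subset of $S^*M$. Set $\widetilde{S}:=\pi_S^{-1}(\supp\mu)\subseteq F^*M$. Since $\phi_t$ lifts to $F^*M$ (by parallel transporting the whole coframe) and $\pi_S$ intertwines the two flows, $\widetilde S$ is a nonempty, closed, geodesic-invariant set that is moreover invariant under the right action of the structure group $\mathrm{SO}(n)$ of the submersion $\pi_S$. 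Granting this, the theorem reduces to the single claim that $\widetilde S$ is also invariant under the horocyclic flow $e^{sU_1^-}$: choosing any $q\in\widetilde S$ (possible since $\mu\neq 0$), the set $\{\phi_t(e^{sU_1^-}(q)):t,s\in\R\}$ is then contained in $\widetilde S$, hence so is its closure, and applying $\pi_S$ together with $\pi_S(\widetilde S)=\supp\mu$ yields exactly $\pi_S\overline{\{\phi_t(e^{sU_1^-}(q)):t,s\in\R\}}\subseteq\supp\mu$. It is convenient to work in the homogeneous model $F^*M\cong\Gamma\backslash G$, where $G$ is the identity component of $\mathrm{SO}(n+1,1)$, in which $\phi_t$ and $e^{sU_1^-}$ are right translations by a diagonal one-parameter subgroup and by a one-parameter unipotent subgroup, and $\mathrm{SO}(n)$ is the centralizer of $\phi_t$ in a maximal compact; note that when $n=1$ the submersion $\pi_S$ is an isomorphism and one works directly on $S^*M$ as in \cite{DJ18}.

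The content is therefore the \emph{horocyclic invariance} of the support, which I would establish by adapting the long-time propagation argument of \cite{DJ18} to $n+1$ dimensions. Suppose $\widetilde S$ is not $e^{sU_1^-}$-invariant, so there exist $q\in\widetilde S$ and $s$ with $\rho_1:=\pi_S(e^{sU_1^-}(q))\notin\supp\mu$, while $\rho_0:=\pi_S(q)\in\supp\mu$. Fix eigenfunctions $u_h$ with $\|u_h\|_{L^2}=1$ realizing $\mu$, choose $a\in C_c^\infty(S^*M)$ with $a\geq 0$, $a\equiv 1$ near $\rho_1$, and $\supp a$ a small ball disjoint from $\supp\mu$, so that $\|\mathrm{Op}_h(a)u_h\|_{L^2}\to 0$. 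Propagating by the half-wave group and Egorov's theorem transports the symbol along $\phi_t$; iterating up to an Ehrenfest-type time $t\approx\kappa\log(1/h)$ and using the hyperbolicity of the geodesic flow together with the porosity of the complement of $\supp\mu$ along the stable/unstable horocyclic foliations, the region of $F^*M$ where $u_h$ carries non-negligible mass is confined to a neighborhood of a product of a stable Cantor set and an unstable Cantor set, with the horocyclic direction $U_1^-$ among the unstable ones. Writing the composition of the associated (nonlinear) stable/unstable changes of coordinates as a semiclassical Fourier integral operator and invoking the Fourier integral operator form of the higher-dimensional fractal uncertainty principle, one concludes that $u_h$ has vanishing mass near $\rho_0$, i.e. $\|\mathrm{Op}_h(b)u_h\|_{L^2}\to 0$ for $b$ a cutoff to a small ball about $\rho_0$; since $\int b\,d\mu>0$ this contradicts $\langle\mathrm{Op}_h(b)u_h,u_h\rangle\to\int b\,d\mu$. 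Hence $\widetilde S$ is $e^{sU_1^-}$-invariant.

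I expect this last step to be the principal obstacle, and it is the reason the paper needs a new tool. For $n=1$ the change of variables between stable and unstable coordinates is locally affine on $\R$, so the classical one-dimensional fractal uncertainty principle applies to a pseudodifferential or rescaled-Fourier setup directly; for $n+1>2$ this map is a genuine nonlinear diffeomorphism of $\R^n$, so one must upgrade the fractal uncertainty principle to honest Fourier integral operators with nonlinear phase in several variables — the generalization of Cohen's higher-dimensional FUP \cite{Co24} advertised in the abstract. The remaining work is bookkeeping: transporting the geometry among $S^*M$, $F^*M$, and $\Gamma\backslash G$; checking that small-ball cutoffs on $S^*M$ pull back to admissible symbols on $F^*M$ compatible with the $\mathrm{SO}(n)$-action; matching the constants in the long-time Egorov estimate against the regularity thresholds of the uncertainty principle; and verifying that the $U_1^-$-direction genuinely survives as a fractal direction. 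None of the finer homogeneous-dynamics input (Ratner's theorem, used elsewhere to identify the orbit closure with the cosphere bundle of a compact immersed totally geodesic submanifold) is required for Theorem~\ref{thm:support} itself.
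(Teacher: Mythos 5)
Your reduction is to a statement strictly stronger than the theorem, and that is where the argument breaks. You propose to prove that $\widetilde S=\pi_S^{-1}(\supp\mu)$ is invariant under the horocyclic flow $e^{sU_1^-}$. The theorem only asserts the existence of \emph{one} frame $q$ whose projected orbit closure lies in $\supp\mu$; invariance of the whole preimage would assert this for \emph{every} frame over \emph{every} point of the support. That stronger claim is not provable and is expected to fail: Theorem \ref{thm:orbit_closure} is consistent with $\supp\mu=S^*\Sigma$ for a proper compact immersed totally geodesic $\Sigma$ (a possibility the paper does not exclude), and for a frame $q=(x,\xi_1,\dots,\xi_{n+1})$ with $(x,\xi_1)\in S^*\Sigma$ but $\xi_2\notin T_x\Sigma$ the projected orbit $\pi_S(e^{sU_1^-}q)$ leaves $S^*\Sigma$ immediately, since it lies over the totally geodesic $2$-plane determined by $\xi_1,\xi_2$, which is transverse to $\Sigma$. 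So $\widetilde S$ need not be $U_1^-$-invariant even when the theorem holds; you have negated the wrong statement. The correct contradiction hypothesis is the negation of the theorem itself: $U_\mu=S^*M\setminus\supp\mu$ meets the projection of \emph{every} $U_1^-$-orbit, i.e.\ $U_\mu$ is $U_1^-$-dense.

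This matters because your contradiction set-up — a single point $\rho_1=\pi_S(e^{sU_1^-}q)\notin\supp\mu$ on a single orbit — gives the fractal uncertainty principle nothing to act on. The FUP step needs \emph{uniform} porosity of the propagated supports, and in the paper this comes precisely from the $U_1^-$-density assumption: Lemma \ref{lem:finite_safe} upgrades density to a uniform time $T$, Lemma \ref{lem:a1_a2_safe} builds a partition of unity $a_1+a_2=1$ with $\supp a_1\cap\supp\mu=\emptyset$ and both complements $U_1^-$-dense, and this is what yields the hyperbolic line porosity of $\supp a_-$ (Lemma \ref{lem:a_hyperbolic_line_porosity}). The complementary ball porosity of $\supp a_+$ does not come from any property of $\supp\mu$ at all, but from mixing of the geodesic flow (Lemmas \ref{lem:ergodicity} and \ref{lem:a_hyperbolic_ball_porosity}) — an ingredient missing from your sketch. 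Finally, the contradiction is not ``vanishing mass near $\rho_0$'' but the global statement $1=\|u_j\|_{L^2}\le\|A_{\cX}u_j\|_{L^2}+\|A_{\cY}u_j\|_{L^2}\to0$, obtained from the word decomposition of $(A_1+A_2)^{2T_1}$, with $\|A_{\cY}u_j\|\to0$ using $\WF_h(A_1)\cap\supp\mu=\emptyset$ and $\|A_{\cX}\|\to0$ from the FIO form of the higher-dimensional FUP. Your identification of that FIO generalization of Cohen's result as the essential new tool is correct, but the argument must be organized around the density/porosity hypothesis rather than pointwise horocyclic invariance of $\pi_S^{-1}(\supp\mu)$.
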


In the next result, we further characterize the orbit closure appearing in Theorem  \ref{thm:support}.
\begin{theorem}\label{thm:orbit_closure}
For $q =(x, \xi_1, \ldots, \xi_{n+1}) \in F^*M$,
$$\pi_S \overline{\{\phi_t( e^{sU_1^-}(q)) : t, s \in \R\}} = S^* \Sigma_q,$$ where $\Sigma_q$ is the minimal compact immersed totally geodesic submanifold in $M$ such that $x \in \Sigma_q$ and $\xi_1$, $\xi_2 \in T_x \Sigma_q$. 
\end{theorem}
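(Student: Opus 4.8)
The plan is to pass to the universal cover and use the homogeneous-dynamics description of the frame bundle. Write $M = \Gamma \backslash \mathbb{H}^{n+1}$, so that $F^*M$ is identified with $\Gamma \backslash G$ where $G = \mathrm{PSO}(n+1,1)$ acts on the right; under this identification the geodesic flow $\phi_t$ and the horocyclic flow $e^{sU_1^-}$ become right translations by one-parameter subgroups $\{a_t\}$ and $\{u_s^-\}$ of $G$, lying in the upper-triangular (minimal parabolic $AN$) part. Thus $\overline{\{\phi_t(e^{sU_1^-}(q))\}}$ is the closure of the orbit $\Gamma g H_0$, where $H_0 = \overline{\{a_t u_s^-\}}$ is a connected closed subgroup; since $\{a_t u_s^-: t,s\}$ is a $2$-dimensional group normalized by $\{a_t\}$, in fact $H_0$ is itself that $2$-dimensional solvable group (or its closure, still $2$-dimensional and generated by unipotents together with a diagonalizable part). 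The point is that $H_0$ contains a one-parameter unipotent subgroup, namely $\{u_s^-\}$, so Ratner's orbit closure theorem applies to the $u^-$-flow.

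First I would invoke Ratner's theorem (in the form for a single unipotent flow, e.g.\ via the fact that $\overline{\Gamma g\, u^-_{\mathbb R}}$ is homogeneous): there is a closed connected subgroup $L \leq G$ with $u^-_{\mathbb R} \subset L$, such that $\overline{\Gamma g\, u^-_{\mathbb R}} = \Gamma g L$ and $\Gamma \cap gLg^{-1}$ is a lattice in $gLg^{-1}$. Then I would enlarge: the full closure $\overline{\Gamma g H_0}$ equals $\Gamma g L'$ for the analogous $L'$ containing $H_0$; but since $\{a_t\}$ normalizes $\{u_s^-\}$ and $a_t$-translation is the geodesic flow which only contracts/expands, one checks $L' = L$ up to the $A$-direction, and in any case $L'$ is reductive-by-its-structure. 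The classification of closed subgroups of $\mathrm{PSO}(n+1,1)$ containing a horocyclic one-parameter subgroup and whose intersection with $\Gamma$ is a lattice is exactly the list of conjugates of $\mathrm{PSO}(k+1,1) \times (\text{compact})$ for $2 \le k \le n$ — these correspond to totally geodesic sub-$\mathbb{H}^{k+1}$'s. This is where I would cite the structure theory (Ratner, plus the classification of intermediate subgroups, cf.\ the arithmetic/orbit-closure literature) to conclude $gLg^{-1}$ stabilizes a totally geodesic copy of $\mathbb{H}^{k+1} \subset \mathbb{H}^{n+1}$ through the lift $\tilde x$ of $x$ tangent to the plane spanned by $\xi_1,\xi_2$.

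Next I would translate back downstairs. The lattice condition $\Gamma \cap gLg^{-1}$ cocompact-in-$gLg^{-1}$ (or finite-covolume; here cocompactness follows since $M$ is compact) means the image $\Sigma$ of that $\mathbb{H}^{k+1}$ in $M$ is a compact immersed totally geodesic submanifold containing $x$ with $\xi_1,\xi_2 \in T_x\Sigma$, and $\pi_S$ of the orbit closure is exactly $S^*\Sigma$: indeed $\Gamma g L / (\text{point stabilizer})$ maps onto the unit cosphere bundle of $\Sigma$ because $L$ acts transitively on the frame bundle of $\mathbb{H}^{k+1}$ and then on its cosphere bundle, and the extra compact factor is killed by $\pi_S$ (it only rotates $\xi_3,\dots,\xi_{n+1}$). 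Finally I would prove minimality: if $\Sigma'$ is any compact immersed totally geodesic submanifold with $x \in \Sigma'$ and $\xi_1,\xi_2 \in T_x\Sigma'$, then its cosphere bundle lifts to a homogeneous set $\Gamma g L''$ with $L''$ a conjugate of some $\mathrm{PSO}(k'+1,1)\times(\text{cpt})$ containing $\{a_t\}$ and $\{u_s^-\}$ (since $\xi_1\in T\Sigma'$ forces the geodesic through $(x,\xi_1)$ to stay in $\Sigma'$, and $\xi_2\in T\Sigma'$ forces the horocycle to stay in $S^*\Sigma'$, as the stable horocycle in the $\xi_1$–$\xi_2$ plane lies in the totally geodesic $\mathbb H^2$ they span); hence $H_0 \subset L''$, so $\overline{\Gamma g H_0} = \Gamma g L \subset \Gamma g L''$, giving $S^*\Sigma \subseteq S^*\Sigma'$ and thus $\Sigma \subseteq \Sigma'$. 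Uniqueness of the minimal one follows because an intersection of totally geodesic submanifolds is totally geodesic.

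The main obstacle I anticipate is the bookkeeping in the second step: verifying that the subgroup $L$ produced by Ratner's theorem, a priori only known to contain the single unipotent $\{u_s^-\}$ and to meet $\Gamma$ in a lattice, is forced to be (conjugate to) the isometry group of a totally geodesic $\mathbb{H}^{k+1}$ times a compact group — i.e.\ ruling out "skew" intermediate subgroups and correctly handling the $A$ and $N$ directions so that $H_0 = \{a_t u_s^-\} \subset L$. This requires the classification of connected Lie subgroups of $\mathrm{PSO}(n+1,1)$ that contain a horospherical one-parameter subgroup, together with the observation that a lattice intersection forces the relevant factor to be semisimple of rank one with no unwanted unipotent radical; I would handle it by Lie-algebra computation in $\mathfrak{so}(n+1,1)$, decomposing under $\mathrm{ad}(a_t)$ into the $+1,0,-1$ weight spaces and checking which subalgebras are normalized appropriately and can support a lattice.
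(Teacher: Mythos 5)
Your reduction to homogeneous dynamics on $\Gamma\backslash G$ matches the paper's starting point, but there is a genuine gap at the central step: you treat the closure of the $AU^-$-orbit as homogeneous, writing $\overline{\Gamma g H_0}=\Gamma g L'$ ``for the analogous $L'$'' and then asserting $L'=L$ up to the $A$-direction. Ratner's orbit closure theorem applies only to subgroups generated by unipotent elements; $H_0=AU^-$ contains the diagonalizable flow $A$ and is not unipotently generated, so no version of Ratner's theorem hands you $L'$. Worse, the claim that $L'$ is essentially $L$ is false in general: applying Ratner to $U^-$ alone (as in Lemma \ref{lem:Uorbitclosure}) gives $\overline{\Gamma g U^-}=\Gamma g L$ where $L$ is conjugate into $N_G(W_\ell)$ only after conjugating by an element $bk$ with $b\in N^-$ and $k\in K_U$; when $b\neq e$ the projection of $\Gamma g L$ to $M$ is merely \emph{parallel} to a totally geodesic submanifold rather than geodesic, and $\overline{\Gamma g AU^-}$ is in general strictly larger than $\overline{\Gamma g U^-}$ (the geodesic flow contracts $b$ and the $AU^-$-closure sweeps out the full $W_2$-closure). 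So your conclusion that $gLg^{-1}$ stabilizes a totally geodesic $\mathbb{H}^{k+1}$ through the lift of $x$ tangent to the $\xi_1$--$\xi_2$ plane does not follow from the $U^-$-closure, and the Lie-algebra bookkeeping you flag at the end is not where the real difficulty lies.

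What is missing is precisely the content of Proposition \ref{prop:AUorbits}: $\overline{x_0AU^\pm}=\overline{x_0W_2}$. The paper cannot and does not get this from Ratner applied to $AU^\pm$; instead it classifies $W_2$-orbit closures by Ratner (Lemma \ref{lem:Worbitclosure}, where the conjugating element can be taken in $K_0$, so the geodesic-submanifold interpretation does go through via Corollary \ref{cor:WN}), and then shows that inside $\overline{x_0W_2}=x_0H$ the singular set for the $U$-action, described by Dani--Margulis tubes $X(J,U)$, has nowhere dense right $W_2$-saturation (Lemmas \ref{lem:nowheredensetube} and \ref{lem:tubegeometry}); this furnishes a basepoint $y_0\in\overline{x_0AU}$ with $\overline{y_0U}=x_0H$, forcing $\overline{x_0AU}=\overline{x_0W_2}$. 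To repair your argument you would need this (or an equivalent) equidistribution/nowhere-density input; without it, the step from the $U^-$-closure to the $AU^-$-closure, and hence to a genuine totally geodesic $\Sigma_q$ through $(x,\xi_1,\xi_2)$, is unsupported.
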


As discussed in \S \ref{section:TGexamples}, there are many 3-dimensional hyperbolic manifolds with no totally geodesic surfaces. On any such manifold, Theorem~\ref{thm:orbit_closure} implies semiclassical measures must have full support.

We remark here that the presence of cosphere bundles of totally geodesic submanifolds in the higher-dimensional statement of both Theorem \ref{thm:orbit_closure} and the main theorem of \cite{ADM24} is intimately related to the failure of one-parameter unipotent flows on $F^*M$ to be uniquely ergodic.
This should be juxtaposed with the $2$-dimensional case \cite{DJ18}, where the unique ergodicity of the horocycle flow is a key tool in concluding that semiclassical measures have full support.

\subsection{Previous results}

We briefly review previous results on semiclassical measures. For
additional context, see the surveys~\cite{Ze19} and~\cite{Dy23}.

Developed in the 1970s and 80s, the \emph{Quantum Ergodicity theorem} of  Shnirelman~\cite{Sh74}, \linebreak Zelditch~\cite{Ze87},  and  Colin de
Verdi\`{e}re~\cite{CdV85}  implies that on negatively curved compact manifolds, a density-one sequence of Laplace eigenfunctions converges semiclassically to the Liouville measure. 

The \emph{Quantum Unique Ergodicity (QUE) conjecture} of Rudnick and Sarnak ~\cite{RS94} asserts that on negatively curved compact manifolds, the whole sequence of eigenfunctions converges semiclassically to the Liouville measure.  
The geodesic flow on a negatively curved compact manifold is Anosov. This condition is necessary: Donnelly \cite{Do03} and Hassell \cite{Ha10} have constructed examples of manifolds with ergodic, but not Anosov, geodesic flows with quasimodes and eigenfunctions that do not equidistribute.

QUE remains open in general. However, it has been partially resolved in the arithmetic setting, i.e., under the restriction to the joint eigenfunctions of the Laplacian and Hecke operators. In this setting,
 QUE is known as  \emph{Arithmetic Quantum Unique Ergodicity (AQUE)}.
Specifically, Lindenstrauss~\cite{Li06} and Soundararajan~\cite{So10} proved AQUE on compact arithmetic surfaces and on the modular surface $\SL_2(\Z) \backslash \bH^2$.  
Recent work of Shem-Tov and Silberman \cite{ShemSilb} established AQUE for compact arithmetic hyperbolic $3$-manifolds. In dimension $4$, Shem-Tov--Silberman \cite{ShemSilb2} also have partial results towards AQUE. Namely, any semiclassical measure on a compact arithmetic hyperbolic $4$-manifold is a convex sum of the Liouville measure and the Liouville measure on the cosphere bundles of (the infinitely many) immersed totally geodesic $3$-dimensional submanifolds. However, they are unable to remove the possibility that the measure might concentrate on such submanifolds.
At present, there are no arithmetic results in dimensions greater than $4$.

The starting point of \cite{Li06} to prove AQUE in dimension $2$ is the use of homogeneous dynamics to prove a Ratner--style measure classification theorem. This was vastly generalized by Einsiedler--Lindenstrauss in \cite{EinLind}.
More precisely, if a measure is geodesic flow invariant and satisfies $3$ additional rigidity hypotheses, Einsiedler--Lindenstrauss show that it is a homogeneous measure, that is, its ergodic components are Haar measures on orbit closures of subgroups generated by unipotents in $\Gamma\backslash G$.
The striking feature of this result is that, in contrast to the work of Ratner \cites{RatnerMeasure, RatnerTop}, it applies to measures invariant under the geodesic flow which are generally too flexible to admit a meaningful classification.
To verify AQUE, Lindenstrauss and Shem-Tov--Silberman first verify the $3$ hypotheses of \cites{Li06,EinLind} for semiclassical measures (which is non-trivial in its own right). They are therefore reduced to the homogeneous setting. From this, one can immediately conclude AQUE in dimension $2$. However, in dimension $3$, one must additionally rule out measures with positive mass on the cosphere bundle of a totally geodesic surface.
The inability to rule out this phenomenon in dimension $4$ is precisely the reason for the conditional in \cite{ShemSilb2}.
We mention this to draw parallels between these results and the statements of Theorems \ref{thm:support} and \ref{thm:orbit_closure}.
That is, the condition of containing the cosphere bundle of a totally geodesic submanifold is natural from these perspectives and reminiscent of, though strictly weaker than, what one obtains from this Ratner--style measure rigidity in the arithmetic setting.

Returning to the original QUE conjecture, one branch of work has focused on proving positive lower bounds for the Kolmogorov--Sina\u{\i} entropy of semiclassical measures, known as \emph{entropy bounds}. These entropy bounds were established by Anantharaman~\cite{An08}; Anantharaman and Nonnenmacher~\cite{AN07}; Anantharaman, Koch, and Nonnenmacher~\cite{AKN09}; Rivi\`{e}re~\cites{Ri10-1, Ri10-2}; and Anantharaman and Silberman~\cite{AS13}. Specifically, the work of~\cite{AN07} shows that for an $(n+1)$-dimensional manifold with constant negative curvature, a semiclassical measure must have entropy at least $n/2$. Note that the Liouville measure has entropy $n$ and is the unique measure of maximal entropy, while the $\delta$-measure on a closed geodesic has entropy 0. 

Theorems \ref{thm:support} and \ref{thm:orbit_closure} exclude different semiclassical measures than the entropy bounds. 
This distinction is perhaps easiest to state in dimension $3$, where the existence of infinitely many commensurability classes of compact hyperbolic $3$-manifolds with no immersed, totally geodesic surfaces is known \cite[Theorem 9.5.1]{MRBook}.
For such examples, Theorem \ref{thm:orbit_closure} asserts that any semiclassical measure has full support, in contrast to the abundance of measures with entropies greater than $1$ with proper support.
In higher dimensions, one can make similar statements, though a limiting factor is our understanding of the behavior of geodesic submanifolds in non-arithmetic manifolds (see \S \ref{section:TGexamples}).
However, the entropy bounds exclude some measures that our results do not. One example is a semiclassical measure of the form $\mu = \alpha \mu_L + (1-\alpha) \mu_0$, where $\mu_L$ is the Liouville measure, $\mu_0$ is a $\delta$-measure on a closed geodesic, and $0<\alpha<1/2$. Such a measure clearly has full support, but has entropy $\alpha n<n/2$, and therefore is ruled out by~\cite{AN07}.

Our work, specifically Theorem~\ref{thm:support}, builds off of \cite{DJ18}, wherein Dyatlov and Jin proved that all semiclassical measures on compact hyperbolic surfaces $M= \Gamma \backslash \bH^2$ have full support. This result was generalized to compact surfaces with negative curvature by Dyatlov, Jin, and Nonnenmacher in ~\cite{DJN22}. Both \cite{DJ18} and \cite{DJN22} relied on the 1-dimensional fractal uncertainty principle \cite{BD18}, which restricted their results to surfaces. However, in 2023, Cohen \cite{Co24} generalized the fractal uncertainty principle to higher dimensions. His proof used the work of Han and Schlag from~\cite{HS20}.  We exploit this breakthrough to examine semiclassical measures on higher-dimensional manifolds.

Recently, Athreya, Dyatlov, and Miller in \cite{ADM24} studied compact complex hyperbolic manifolds. They found that the support of a semiclassical measure must contain $S^* \Sigma$, where $\Sigma$ is a compact immersed totally geodesic complex submanifold. 
Although their result holds in higher dimensions, similarly to the work on quantum cat maps in \cite{DJ23}, they use only the 1-dimensional fractal uncertainty principle. For both complex and real hyperbolic manifolds, the geodesic flow expands/contracts on the unstable/stable subspaces. However, on complex hyperbolic manifolds, the geodesic flow expands and contracts fastest each in a single direction. These directions are 1-dimensional and thus can be analyzed using the 1-dimensional fractal uncertainty principle. 
However, in our case of real hyperbolic manifolds, the rate of expansion/contraction of the geodesic flow is uniform on the unstable/stable subspaces. For $n+1>2$, the unstable/stable subspaces have dimension greater than $1$ and thus require the higher-dimensional fractal uncertainty principle. 

\subsection{Proof outline for Theorem~\ref{thm:support}}
Let $u_j$ be a sequence of normalized eigenfunctions of $-\Delta$ with eigenvalues $h_j^{-2} \rightarrow \infty$ that converge semiclassically to $\mu$. We assume towards a contradiction that the projection onto $S^*M$ of every orbit of $U_1^-$ in $F^*M$ intersects  $S^*M \setminus \supp \mu$. Once a contradiction is found, Theorem~\ref{thm:support} follows by the $\phi_t$-invariance of $\mu$.

 We construct a partition of unity $a_1 + a_2 =1$ on $S^*M$ such that
 \begin{enumerate_eq}
     \item $\supp a_1 \cap \supp \mu = \emptyset$ \label{eq:outline_support_property}
     \item and the projection onto $S^*M$ of every orbit of $U_1^-$ in $F^*M$ intersects $S^*M \setminus \supp a_1$, $S^*M \setminus \supp a_2$. \label{eq:outline_U1-_dense}
 \end{enumerate_eq}
We then dynamically refine and quantize this partition of unity. Specifically,  we quantize $a_1 \circ \phi_k$ and $a_2  \circ \phi_k$ to obtain operators $A_1(k)$ and $A_2(k)$ for $0 \leq k \leq 2T_1 -1 \approx 2 \rho \log h^{-1}$, where $\rho \in (3/4, 1)$. For $\w=w_0 \cdots w_{2T_1-1} \in \{1, 2\}^{2T_1}$, set $A_\w = A_{w_{2T_1-1}}(m-1) \cdots A_{w_0}(0)$. We split the set of words  $\w \in \{1, 2\}^{2T_1}$ into two parts. Loosely, let $\cY$ be the set of $\w$ with a large fraction of 1's and let $\cX$ be the set of  $\w$ with a small fraction of 1's. Set 
$A_\cY$ to be the sum of $A_\w$ with $\w \in \cY$  and  $A_\cX$ to be the sum of $A_\w$ with $\w \in \cX$. Note that $A_\cY + A_\cX = I$ on $S^*M$. 

To reach a contradiction, it suffices to prove  $\|A_\cY u_j\|_{L^2}$
and $\|A_\cX\|_{L^2 \rightarrow L^2}$ both converge to 0 in the semiclassical limit.
We use the fact that $a_1$ is supported away from $\mu$ to obtain the decay of  $\|A_\cY u_j\|_{L^2}$. To show $\|A_\cX\|_{L^2 \rightarrow L^2}$ converges to zero, we employ the triangle inequality: it suffices to bound $\# \cX$ and prove the decay of $\|A_\w\|_{L^2 \rightarrow L^2}$ for $\w \in \{1,2\}^{2T_1}$. We can adjust our definition of $\cX$ to control $\# \cX$; the main difficulty is showing $\|A_\w\|_{L^2 \rightarrow L^2} \rightarrow 0$. This is the statement of Lemma~\ref{lem:A_w_decay}, the proof of which requires the fractal uncertainty principle. 

The fractal uncertainty principle requires two sets, one that is line porous and one that is ball porous, see Definitions~\ref{def:porous_on_balls}~and~\ref{def:porous_on_lines}. We adapt these definitions to hyperbolic manifolds to define the notions of hyperbolic line and ball porosity. We show hyperbolic line and ball porosity imply a fractal uncertainty principle.

The choice of $T_1$ is too large for $A_\w$ to be pseudodifferential. However, we  split $\w$ into two equal parts, $\w= \w_+ \w_-$, and use $\w_\pm$ to construct two operators that are pseudodifferential with symbols $a_\pm$. 
We then show that $\supp a_\pm$ give rise to hyperbolic line and ball porous sets. This argument works because of the construction of $a_\pm$ from propagated symbols $a_i \circ \phi_k$,  the property ~\eqref{eq:outline_U1-_dense}, and the mixing of the geodesic flow. The property~\eqref{eq:outline_U1-_dense} is needed for hyperbolic line porosity, while mixing of the geodesic flow is needed for hyperbolic ball porosity. We can then apply the fractal uncertainty principle and deduce $\|A_\w\|_{L^2 \rightarrow L^2}$.

\subsection{Structure of the paper}
\begin{itemize}
    \item In \S\ref{section:preliminaries}, we review the required preliminaries for this paper. Specifically, in \S\ref{subsection:hyperbolic_manifolds}, we survey the geometric and dynamical properties of hyperbolic manifolds; in \S\ref{subsection:safe_sets}, we exploit the hyperbolic structure to construct a partition of unity; and in \S\ref{subsection:semiclassical_definitions}--\ref{subsection:symbol_class} we recall the essential semiclassical definitions.
    \item In \S\ref{section:orbit}, we give a self-contained proof of Theorem \ref{thm:orbit_closure} and discuss known examples of hyperbolic $n$-manifolds.
    \item In \S\ref{section:FUP}, we state and generalize the higher-dimensional fractal uncertainty principle.
    \item In \S\ref{section:reduction}, we reduce the proof of Theorem~\ref{thm:support} to showing Lemma~\ref{lem:A_w_decay}.
    \item In \S\ref{section:apply_FUP}, we prove Lemma~\ref{lem:A_w_decay} using the fractal uncertainty principle.
\end{itemize}

\bigskip\textbf{Acknowledgements:} EK would like to thank Semyon Dyatlov (partially supported by NSF--DMS-2400090 and NSF CAREER grant DMS--1749858) for suggesting and advising this project.
EK is supported by NSF GRFP under grant No. 1745302 and NM is partially supported by NSF DMS--2405264. 

%%%%%%%%%%%%%%%%%%%%%%%%%%%%%%%%%%%%%%%%%%%%%%%%%%%%%%%%%%%%%%%%%%%%%%%%%%%%%%%%
%%%%%%%%%%%%%%%%%%%%%%%%%%%%%%%%%%%%%%%%%%%%%%%%%%%%%%%%%%%%%%%%%%%%%%%%%%%%%%%%

\section{Preliminaries}
\label{section:preliminaries}

%%%%%%%%%%%%%%%%%%%%%%%%%%%%%%%%%%%%%%%%%%%%%%%%%%%%%%%%%%%%%%%%%%%%%%%%%%%%%%%%
%%%%%%%%%%%%%%%%%%%%%%%%%%%%%%%%%%%%%%%%%%%%%%%%%%%%%%%%%%%%%%%%%%%%%%%%%%%%%%%%

\subsection{Hyperbolic manifolds}
\label{subsection:hyperbolic_manifolds}
We begin by following the exposition in ~\cite{DFG15}*{\S\S 3.1--2}. Let $\bH^{n+1}$ be hyperbolic $(n+1)$-space. We use the notation $\R^{1,n+1}$ to denote $\R^{n+2}$ endowed with the Minkowski metric $g_M = -dx_0^2 + \sum_{j=1}^{n+1} dx_j^2$. We write the corresponding scalar product as $\lrang{\cdot, \cdot}_M$. Then the hyperbolic space of dimension $n+1$ is defined as 
\begin{equation}\label{eqn:hyperbolicspace}
\bH^{n+1} \coloneqq \{x \in \R^{1, n+1} : \lrang{x,x}_M =-1, x_0 >0\}.
\end{equation}
The hyperbolic metric is given by $g \coloneqq g_M\vert_{T\bH^{n+1}}$. 
Moreover, the boundary of $\bH^{n+1}$ is given by the set of all null lines in $\R^{1,n+1}$. 
We normalize this to give the following description of the boundary
\begin{equation}\label{eqn:hyperbolicspaceboundary}
\partial \bH^{n+1} \coloneqq \{x \in \R^{1, n+1} : \lrang{x,x}_M =0, x_0 =1\}.
\end{equation}

Let $G \coloneqq \SO_0(1, n+1) \subset \SL(n+2, \R)$ be the group of orientation-preserving isometries of $\bH^{n+1}$.
That is, $\SO_0(1, n+1)$ is the connected component of the identity of $\SO(1,n+1)$, which are precisely the matrices in $\SO(1,n+1)$ which preserve $\bH^{n+1}$.
This is a connected, noncompact Lie group.
By considering the orbit of $x=(1,0,\dots,0)\in\bH^{n+1}$, $B\in\SO(1,n+1)$ is an element of $\SO_0(1,n+1)$ if and only if the upper-left entry of this matrix, $B_{0,0}$, is positive.

Denote by $\vec{e}_0, \ldots, \vec{e}_{n+1}$ the canonical basis of $\R^{1, n+1}$. Let $\pi_K: G \rightarrow \bH^{n+1}$, $B \mapsto B\vec{e}_0$.
Thus, we can write $\bH^{n+1} \simeq G/K$, where 
\begin{equation}\label{eqn:Kequation}
K \coloneqq  \{B \in G : B\vec{e}_0 = \vec{e}_0\}= \left\{ 
\begin{bmatrix}
 1& 0 & \cdots &0 \\
\hspace{5pt}\begin{matrix} 0\\ \vdots \\ 0 \end{matrix}   && \mathlarger{\mathlarger{\mathlarger{\mathlarger{B}}}} \end{bmatrix} : B \in \SO(n+1) \right\} \simeq \SO(n+1).\end{equation}
In particular, $K$ is the \emph{isotropy group} of $\vec{e}_0$ as well as a maximal compact subgroup of $G$. 

Let $M$ be a compact hyperbolic $(n+1)$-dimensional manifold. We consider $M$ via group actions as $M = \Gamma \backslash \bH^{n+1} \simeq \Gamma \backslash G /K$ for a torsion-free, cocompact lattice $\Gamma \subset G$.

We turn our attention to $SM$, which we identify with $S^*M$ using the Riemannian metric. We know $SM = \Gamma \backslash S \bH^{n+1}$, where 
\begin{equation}\label{eq:unittangentbundle}
S\bH^{n+1} \coloneqq \{(x, \xi) : x \in \bH^{n+1}, \xi \in \R^{1, n+1}, \lrang{\xi, \xi}_M =1, \lrang{x, \xi}_M=0\}.
\end{equation}

Similarly to the above, we have a map $\pi_{K_0}: G \rightarrow S\bH^{n+1}$, $B \mapsto (B \vec{e}_0, B \vec{e}_1)$. 
Then $S\bH^{n+1} \simeq G /{K_0}$, where 
\begin{equation}\label{eqn:K0defn}{K_0} \coloneqq \{B \in G : B\vec{e}_0 = \vec{e}_0,~B\vec{e}_1 = \vec{e}_1\}= \left\{ 
\begin{bmatrix}
 1& 0 & 0 &\cdots & 0 \\
 0& 1 & 0 &\cdots & 0 \\
\hspace{5pt}\begin{matrix} 0 \\ \vdots \\ 0 \end{matrix}   & \hspace{5pt}\begin{matrix} 0 \\ \vdots \\ 0 \end{matrix} && \mathlarger{\mathlarger{\mathlarger{\mathlarger{B}}}} \end{bmatrix} : B \in \SO(n) \right\} \simeq \SO(n).\end{equation}

Therefore, $SM \simeq \Gamma \backslash G/{K_0}$. 
For $1 \leq i, j \leq n+1, 2 \leq k \leq n+1$, the Lie algebra of $G$ is spanned by the matrices
$$X \coloneqq E_{0,1} + E_{1,0}, \quad A_k \coloneqq E_{0,k} + E_{k,0}, \quad R_{i,j} \coloneqq E_{i,j} - E_{j,i},$$
where $E_{i,j}$ is the elementary matrix with all entries zero, except for the $(i,j)$th entry, which is equal to 1.
${K_0}$ is spanned by $R_{i+1, j+1}$ for $1 \leq i, j \leq n$.

For $1 \leq i < j \leq n$, the following forms a basis for the Lie algebra of $G$
\begin{equation}\label{eq:frame}
X, \quad R_{i+1, j+1}, \quad U_i^+ \coloneqq -A_{i+1} - R_{1, i+1}, \quad U_i^- \coloneqq -A_{i+1} + R_{1, i+1}.
\end{equation}

For $1 \leq i, j \leq n$ and $i \neq j$, we have the following commutator relations
\begin{equation}\label{eq:commutator}
\begin{split}
[X, U_i^\pm] = \pm U_i^\pm, \quad [U_i^\pm, U_j^\pm]=0, \quad [U_i^+, U_i^-] &=2X, \quad [U_i^\pm, U_j^\mp]=2R_{i+1, j+1},\\  [R_{i+1, j+1}, X]=0, \quad [R_{i+1, j+1}, U_k^\pm] =& \delta_{jk}U_i^\pm -\delta_{ik}U_j^\pm.
\end{split}
\end{equation}

Let $F^*M$ denote the \emph{coframe bundle}. Elements of $F^*M$ are of the form $(x, \xi_1, \ldots, \xi_n, \xi_{n+1})$, where $x \in M$ and  $\xi_1, \ldots, \xi_{n+1} \in T^*_x M$ form a positively oriented orthonormal basis.
Let $\pi_S$ be the following submersion
$$\pi_S: F^*M \rightarrow S^*M, \quad (x, \xi_1, \ldots, \xi_n, \xi_{n+1}) \mapsto (x, \xi_1).$$

Let $\pi_\Gamma^F: F^*\bH^{n+1} \rightarrow F^*M$ be the covering map. Using $\pi_\Gamma^F$, we have a right action of $G$ on $F^*M$. Thus, viewing the elements of the Lie algebra of $G$ as left-invariant vector fields, each element of $G$ induces a vector field on $F^*M$. 

The geodesic flow $\phi_t: S \bH^{n+1} \rightarrow S \bH^{n+1}$ (under the parametrization \eqref{eq:unittangentbundle}) is given by $\phi_t(x, \xi) = (x \cosh t + \xi \sinh t, x \sinh t + \xi \cosh t)$. We see $\phi_t(\pi_{K_0}(g)) = \pi_{K_0}(g e^{tX})$ for $g \in G$. 
As $X$ is invariant under right multiplication by elements of the subgroup ${K_0}$, we can use $\pi_{K_0}$ to push forward the left-invariant vector field on $G$ generated by $X$ to obtain the generator of the geodesic flow. By an abuse of notation, we use $X$ to also denote the generator of the geodesic flow $\phi_t$ on $S^* M$, i.e., 
$$\phi_t = e^{tX}.$$  We extend $\phi_t$ to $T^* M \setminus 0$ by setting it to be homogeneous. By homogeneity, we mean  $[X, \xi \cdot \partial_\xi] =0$, where $\xi \cdot \partial_{\xi}$ is the generator of dilations.

There is an isomorphism $G \simeq F^*\bH^{n+1}$ given by $B \mapsto (B \vec{e}_0, B \vec{e}_1, \ldots, B \vec{e}_{n+1})$. Then, $\Gamma \backslash G \simeq F^*M$.

We know that $U_i^\pm$ generate the unipotent flows $e^{sU_i^\pm}$. In other words, if $q \in F^* \bH^{n+1}$, then for all $s \in \R$, the geodesic starting at $\pi_S(e^{sU_i^\pm} q)$ has the same positive limiting point for $U_i^+$ and negative limiting point for $U_i^-$  as the geodesic starting at $\pi_S(q)$.

Differentiating the conditions for $S\bH^{n+1}$ given by \eqref{eq:unittangentbundle}, we get 
$$T_{(x,\xi)}(S\bH^{n+1}) = \left\{(v_x, v_\xi) \in (\R^{1,n+1})^2 : \lrang{x, v_x}_M =0, \lrang{x, v_\xi}_M + \lrang{\xi, v_x}_M =0, \lrang{\xi, v_\xi}_M=0\right\}.$$
Following the exposition of ~\cite{DZ16}*{\S 4.1},
for each $(x, \xi) \in T^*M \setminus 0$, we have the following decomposition of the tangent space at each $(x, \xi) \in T^*M \setminus 0$
\begin{equation}\label{eq:tangent_space_decomp}
T_{(x, \xi)}(T^*M) = \R X \oplus \R(\xi \cdot \partial_\xi) \oplus E_s(x, \xi) \oplus E_u(x, \xi),
\end{equation}
where $E_s$ and $E_u$ are respectively the $n$-dimensional stable and unstable bundles. These stable and unstable bundles were defined over $S^* \bH^{n+1}$ in ~\cite{DFG15}*{(3.14)} by setting
\begin{equation}
\label{eq:Es_Eu_def}
\begin{split}
E_s(x, \xi) \coloneqq\{(v, -v) : \lrang{x, v}_M = \lrang{\xi, v}_M =0 \},\\
 E_u(x, \xi) \coloneqq\{(v, v) : \lrang{x, v}_M = \lrang{\xi, v}_M =0 \}.
\end{split}
\end{equation}

This definition can be extended to $|\xi|_g >0$ by setting $E_s$ and $E_u$ to be homogeneous.

From \cite{DFG15}*{\S 3.3}, we know that vector subbundles $E_s$ and $E_u$ are spanned respectively by $U_1^+, \ldots, U_n^+$ and $U_1^-, \ldots, U_n^-$ in the following sense:
\begin{equation}\label{eq:preimage_Es_Eu}
\pi_S^* E_s = \spn(U_1^+, \ldots, U_n^+) \oplus \mathfrak{h}, \quad \pi_S^* E_u = \spn(U_1^-, \ldots, U_n^-) \oplus \mathfrak{h},
\end{equation}
where $\mathfrak{h}$ is the left-translation of the Lie algebra of ${K_0}$ or equivalently the kernel of $d\pi_S$. Note that $\mathfrak{h}$ is spanned by $R_{i+1, j+1}$ for $1 \leq i < j \leq n$. Importantly, each individual vector field $U_i^\pm$ is not invariant under right multiplication by elements of ${K_0}$ for $n+1>2$. Therefore, $U_i^\pm$ do not descend to vector fields on $S\bH^{n+1}$ by the map $\pi_S$. However by ~\ref{eq:commutator}, $\spn (U_1^\pm, \ldots, U_n^\pm)$ are invariant under ${K_0}$.

In $T_{(x, \xi)}(T^*M)$, $E_s$ and $E_u$ are the images of the stable and unstable bundles of $\bH^{n+1}$ under the covering map 
\begin{equation}\label{eq:pi_gamma_def}
\pi_\Gamma: T^*\bH^{n+1} \rightarrow T^*M.
\end{equation}

Furthermore, $E_s$ and $E_u$ are invariant under the geodesic flow $\phi_t$. The projection map
$T_{(x, \xi)} T^* M \rightarrow T_x M$ is an isomorphism from $E_s(x, \xi)$ (or $E_u(x, \xi)$) onto the space $\{ \eta \in T_x M : \lrang{\xi, \eta} =0 \}$. Thus, using the projection map, we can canonically pull back the metric $g_x$ to $E_s(x, \xi)$  (or to $E_u(x, \xi)$). Following ~\cite{DFG15}*{\S 3.3}, we know
\begin{equation*}\label{eq:quant_stable/unstable}
|d\phi_t(x, \xi) w|_g= \begin{cases}e^t|w|_g, & w \in E_u(x, \xi);\\
e^{-t}|w|_g, & w \in E_s(x, \xi).\end{cases}
\end{equation*}
For each $(x, \xi) \in T^*M \setminus 0$, we call
\begin{equation}\label{eq:LsLu_def}
L_s(x, \xi) \coloneqq \R X(x, \xi) \oplus E_s(x, \xi) \quad \text{and} \quad L_u(x, \xi) \coloneqq \R X(x, \xi) \oplus E_u(x, \xi)
\end{equation}
respectively the \emph{weak stable subspace} and \emph{weak unstable subspace}. 

We have the following definition, as in ~\cite{DZ16}*{Definition 3.1}.
\begin{definition}\label{def:lagrangian_foliation}
Let $M$ be a manifold, $U \subset T^*M$ be an open set, and for $(x, \xi) \in U$, let $L_{(x, \xi)} \subset T_{(x, \xi)}(T^*M)$ be a family of subspaces depending smoothly on $(x, \xi)$. We say that $L$ is a \emph{Lagrangian foliation} on $U$ if 
\begin{itemize}
\item $L_{(x, \xi)}$ is integrable. Namely, if $X, Y \in C^\infty(U; L)$ are vector fields, then $[X, Y] \in C^\infty(U; L)$.
\item $L_{(x, \xi)}$ is a Lagrangian subspace of $T_{(x, \xi)}(T^*M)$ for each $(x, \xi) \in U$. 
\end{itemize}
\end{definition} 

From ~\cite{DZ16}*{Lemma 4.1}, we know that
 $L_s$ and $L_u$ are Lagrangian foliations on $T^*M \setminus 0$.

%%%%%%%%%%%%%%%%%%%%%%%%%%%%%%%%%%%%%%%%%%%%%%%%%%%%%%%%%%%%%%%%%%%%%%%%%%%%%%%%
%%%%%%%%%%%%%%%%%%%%%%%%%%%%%%%%%%%%%%%%%%%%%%%%%%%%%%%%%%%%%%%%%%%%%%%%%%%%%%%%

\subsection{$U_1^-$-dense sets}
\label{subsection:safe_sets}

%%%%%%%%%%%%%%%%%%%%%%%%%%%%%%%%%%%%%%%%%%%%%%%%%%%%%%%%%%%%%%%%%%%%%%%%%%%%%%%%
%%%%%%%%%%%%%%%%%%%%%%%%%%%%%%%%%%%%%%%%%%%%%%%%%%%%%%%%%%%%%%%%%%%%%%%%%%%%%%%%

Inspired by $V$-dense sets in \cite{ADM24} and safe sets in \cite{DJ18}, we define the following.

\begin{definition}\label{def:safe}
\leavevmode  \begin{itemize}
    \item A \emph{$U_1^-$-orbit} is a set of the form $e^{\R U_1^-}q$, where $q \in F^*M$.
    \item  A \emph{$U_1^-$-segment} is a set of the form
$\{e^{t U_1^-}q: a \leq t \leq b\}$, where $q \in F^*M$ and $a< b \in \R$. 
    \item A set $U \subset S^*M$ is \emph{$U_1^-$-dense} if $\pi^{-1}_S(U)$ intersects every $U_1^-$-orbit in $F^*M$. In other words, for each $q \in F^* M$, $U$ intersects $\pi_S(e^{\R U_1^-}q)$. 
    \item Finally, a set $U \subset S^*M$ is \emph{$U_1^-$-sparse} if for all $q \in  F^*M$, $\{t \in \R: \pi_S(e^{t U_1^-}(q)) \in U\}$ is countable.
\end{itemize}
\end{definition}
Clearly, if $U$ is $U_1^-$-sparse, then $S^*M \setminus U$ is $U_1^-$-dense. 

In the following two lemmas, we prove properties of $U_1^-$-dense sets that will help us construct a particular partition of unity. 

\begin{lemma}\label{lem:finite_safe}
If $U \subset S^*M$ is $U_1^-$-dense, then there exists some $T>0$ such that for all $q \in F^* M$, $\{\pi_S(e^{t U_1^-}q) : |t| \leq T\} \cap U \neq  \emptyset$.
\end{lemma}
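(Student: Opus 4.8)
The plan is to argue by contradiction and compactness. Suppose no such uniform $T$ exists. Then for every $m \in \mathbb{N}$ there is a point $q_m \in F^*M$ with $\{\pi_S(e^{tU_1^-}q_m) : |t| \leq m\} \cap U = \emptyset$. Since $F^*M = \Gamma \backslash G$ is compact, after passing to a subsequence we may assume $q_m \to q_\infty$ for some $q_\infty \in F^*M$. I would like to conclude that the entire $U_1^-$-orbit of $q_\infty$ avoids $\pi_S^{-1}(U)$, contradicting the $U_1^-$-density of $U$. For this it is convenient to know that $U$ can be taken open, or at least that $S^*M \setminus U$ is closed; in our application $U$ will be (the interior of) a nice set, but if $U$ is merely $U_1^-$-dense one should be slightly careful. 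Assuming $U$ is open (which is the case of interest, and in general one can pass to the closure of the complement), the set $\pi_S^{-1}(S^*M \setminus U)$ is closed in $F^*M$.

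Now fix any $t_0 \in \mathbb{R}$. For all $m \geq |t_0|$ we have $\pi_S(e^{t_0 U_1^-}q_m) \notin U$, i.e. $e^{t_0 U_1^-}q_m \in \pi_S^{-1}(S^*M \setminus U)$. The flow $(t,q) \mapsto e^{tU_1^-}q$ is continuous on $\mathbb{R} \times F^*M$, so $e^{t_0 U_1^-}q_m \to e^{t_0 U_1^-}q_\infty$ as $m \to \infty$; since $\pi_S^{-1}(S^*M \setminus U)$ is closed, the limit lies in it, so $\pi_S(e^{t_0 U_1^-}q_\infty) \notin U$. As $t_0 \in \mathbb{R}$ was arbitrary, $U$ does not intersect $\pi_S(e^{\mathbb{R}U_1^-}q_\infty)$, contradicting Definition~\ref{def:safe}. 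Hence the desired $T>0$ exists.

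The only genuine subtlety — and the step I would be most careful about — is the topological status of $U$: the contradiction hypothesis produces orbit segments disjoint from $U$, but to pass to the limit I need the \emph{complement} of $U$ (pulled back to $F^*M$) to be closed, i.e. $U$ relatively open in $S^*M$, so that the closed condition ``$\pi_S(e^{tU_1^-}q)\notin U$'' survives taking limits. If one only assumes $U$ is $U_1^-$-dense as a bare set this can fail, so either the lemma should be stated for open $U_1^-$-dense sets (the form in which it is used), or one replaces $U$ throughout by $\operatorname{int}(U)$ after first checking $\operatorname{int}(U)$ is still $U_1^-$-dense. Everything else — the subsequence extraction (compactness of $F^*M$), joint continuity of the flow, and the diagonal-type argument selecting one bad limit orbit out of arbitrarily long bad segments — is routine. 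I would also remark that the same argument gives, more strongly, that $T$ can be chosen uniformly over any compact family, but here $F^*M$ itself is compact so no extra statement is needed.
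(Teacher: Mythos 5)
Your argument is the same as the paper's: assume no uniform $T$, pick $q_j$ whose length-$2j$ segments miss $U$, pass to a convergent subsequence $q_j\to q_\infty$ by compactness of $F^*M$, and use continuity of the flow to conclude the whole orbit of $q_\infty$ misses $U$, contradicting $U_1^-$-density. The topological caveat you raise is legitimate, and in fact it applies verbatim to the paper's own one-line proof, which passes to the limit without comment: the step ``$\pi_S(e^{cU_1^-}q_j)\notin U$ for all large $j$ implies $\pi_S(e^{cU_1^-}q_\infty)\notin U$'' needs $S^*M\setminus U$ to be closed, i.e.\ $U$ open. Some hypothesis of this kind is genuinely necessary: already for surfaces (where $K_0$ is trivial and $\pi_S$ is the identity), taking $U$ to consist of exactly one point of each horocycle orbit gives a $U_1^-$-dense set for which the hitting time from base points far along an orbit from its chosen point is unbounded; and your fallback of replacing $U$ by $\operatorname{int}(U)$ does not rescue the general statement, since the interior of a $U_1^-$-dense set can be empty (indeed the paper later applies the lemma to \emph{compact} $U_1^-$-dense sets $U_w\Subset S^*M\setminus\supp a_w$). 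The harmless fix, consistent with your proposal, is to state the lemma for open $U_1^-$-dense sets and, in the application, enlarge the compact set $U_w$ to an open $U_1^-$-dense $U_w'$ with $U_w\subset U_w'\Subset S^*M\setminus\supp a_w$, after which the rest of the paper's argument (choice of $\varepsilon_0$, etc.) goes through unchanged. Aside from this shared subtlety, your proof is correct and matches the paper's.
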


\begin{proof}
Let $U$ be $U_1^-$-dense  and suppose that no such $T$ exists.  Then for all $j \in \N$, there exists $q_j \in F^*M$ such that $\pi_S(e^{cU_1^-}q_j) \cap U = \emptyset$ for all $c \in [-j,j ]$.  We pass to a convergent subsequence $q_j \rightarrow q_\infty \in F^*M$. Then for all $c \in \R$, $\pi_S (e^{c U_1^- } q_\infty) \cap U = \emptyset$, a contradiction.
\end{proof}

\begin{lemma}\label{lem:safe_compact}
If $U \subset S^*M$ is open and $U_1^-$-dense, there exists a compact and $U_1^-$-dense  $K \subset U$. 
\end{lemma}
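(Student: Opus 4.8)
The plan is to combine Lemma~\ref{lem:finite_safe} with a compactness argument to extract, from the open $U_1^-$-dense set $U$, a finite subcover that is still $U_1^-$-dense, and then shrink it slightly to land in a compact subset of $U$. First I would apply Lemma~\ref{lem:finite_safe} to obtain a $T>0$ such that for every $q \in F^*M$ the segment $\{\pi_S(e^{tU_1^-}q): |t|\le T\}$ meets $U$. Since $F^*M$ is compact and $U$ is open, I would cover $F^*M$ by finitely many open sets on which the situation is uniform: for each $q_0\in F^*M$ there is an open neighborhood $W_{q_0}$ and a time $t_{q_0}\in[-T,T]$ such that $\pi_S(e^{t_{q_0}U_1^-}q)\in U$ for all $q\in W_{q_0}$ — this uses continuity of the flow $e^{tU_1^-}$, of $\pi_S$, and openness of $U$. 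By compactness, finitely many $W_{q_1},\dots,W_{q_N}$ cover $F^*M$, with associated times $t_1,\dots,t_N$.

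Next I would produce a compact $U_1^-$-dense set. For each $i$, the map $q \mapsto \pi_S(e^{t_iU_1^-}q)$ is continuous on the compact set $\overline{W_{q_i}}$ (or on a slightly smaller compact set $C_i\subset W_{q_i}$ still covering $F^*M$, obtained by a standard shrinking-of-finite-open-covers argument), so its image $K_i$ is compact. However, $K_i$ need not lie inside $U$; what is guaranteed is only that $K_i \cap U \neq\emptyset$ pointwise along orbits, so I must be a little more careful. The cleaner route: for each $q_0$, because $\pi_S(e^{t_{q_0}U_1^-}q_0)\in U$ and $U$ is open, there is a compact neighborhood $V_{q_0}\subset U$ of that point and an open neighborhood $W_{q_0}$ of $q_0$ with $\pi_S(e^{t_{q_0}U_1^-}(W_{q_0}))\subset V_{q_0}$. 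Compactness of $F^*M$ gives finitely many $W_{q_1},\dots,W_{q_N}$ covering $F^*M$, and then $K \coloneqq V_{q_1}\cup\cdots\cup V_{q_N}$ is a finite union of compact subsets of $U$, hence compact and contained in $U$. For any $q\in F^*M$, pick $i$ with $q\in W_{q_i}$; then $\pi_S(e^{t_{q_i}U_1^-}q)\in V_{q_i}\subset K$, so $K$ meets $\pi_S(e^{\R U_1^-}q)$. Thus $\pi_S^{-1}(K)$ meets every $U_1^-$-orbit, i.e.\ $K$ is $U_1^-$-dense.

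I expect the main (mild) obstacle to be bookkeeping the two layers of neighborhoods correctly: one needs the neighborhood $W_{q_0}$ in $F^*M$ on which a \emph{single} flow time $t_{q_0}$ works simultaneously, and the compact target neighborhood $V_{q_0}\subset U$ into which that flowed neighborhood maps. This is where continuity of $(t,q)\mapsto \pi_S(e^{tU_1^-}q)$ and local compactness of $S^*M$ (so that points of the open set $U$ have compact neighborhoods inside $U$) are used; Lemma~\ref{lem:finite_safe} is only needed to know that such a flow time can be chosen from the bounded window $[-T,T]$, which is not strictly necessary for this particular statement but keeps everything uniform and is consistent with how the partition of unity will later be built. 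No serious analytic difficulty arises beyond this finite-cover manipulation.
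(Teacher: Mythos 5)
Your argument is correct, but it follows a genuinely different route from the paper. The paper takes a compact exhaustion $U=\bigcup_j K_j$ with $K_j\subset K_{j+1}^\circ$ and argues by contradiction: if no $K_j$ were $U_1^-$-dense, one picks witnesses $q_j$ whose orbits miss $K_l$ for $l\le j$, passes to a convergent subsequence $q_j\to q_\infty$, and uses continuity of the flow to conclude the orbit of $q_\infty$ misses every $K_l^\circ$, hence misses $U$ — contradicting $U_1^-$-density. You instead give a direct construction: for each $q_0\in F^*M$ you pick a flow time $t_{q_0}$ sending $\pi_S(e^{t_{q_0}U_1^-}q_0)$ into $U$, a compact neighborhood $V_{q_0}\subset U$ of that image (local compactness of $S^*M$), and an open $W_{q_0}\ni q_0$ mapped into $V_{q_0}$ by the same time; compactness of $F^*M$ yields a finite subcover, and $K=\bigcup_i V_{q_i}$ is the desired compact $U_1^-$-dense subset of $U$. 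Both proofs are essentially the same amount of work; yours is constructive (it exhibits $K$ explicitly as a finite union of compact neighborhoods and shows each orbit meets $K$ at one of finitely many explicit times), while the paper's exhaustion-plus-subsequence argument is shorter on the page and mirrors the proof of Lemma~\ref{lem:finite_safe}. Your remark that Lemma~\ref{lem:finite_safe} is not actually needed here is accurate — $U_1^-$-density alone supplies a time for each basepoint, and continuity does the rest — and your self-correction discarding the first (flawed) attempt, where the image sets $K_i$ need not lie in $U$, lands on a sound argument.
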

\begin{proof}
We first take a compact exhaustion of $U$. Specifically, let $U = \bigcup_{j \in \N} K_j$, where each $K_j$ is compact and $K_j \subset K_{j+1}^\circ$. Suppose that none of the $K_j$'s are $U_1^-$-dense. In other words, for each $j$, there exists $q_j \in F^*M$ such that $\pi_S(e^{\R U_1^-} q_j ) \cap K_l = \emptyset$ for all $l \leq j$. 

We pass to a convergent subsequence $q_j \rightarrow q_\infty \in F^*M$. Then for all $l \in \N$, $\pi_S (e^{\R U_1^-}q_\infty) \cap  K_l^\circ = \emptyset$, a contradiction.
\end{proof}

We next construct a function $F$ which has a particular relationship with $U_1^-$-orbits. This relationship will be used to build a set $D \subset S^*M$ that is both $U_1^-$-sparse and $U_1^-$-dense, which in turn will be used to create a  partition of unity  in Lemma~\ref{lem:a1_a2_safe}.  Lemma~\ref{lem:a1_a2_safe}  is a key step in the proof of   Lemma~\ref{lem:partition_of_unity}. 
\begin{lemma}\label{lem:jets}
Let $\rho_0 \in S^*M$. There exists $N = N(n) \in \N$ and $F \in C^\infty(U_{\rho_0}; \R)$, where $U_{\rho_0} \subset S^*M$ is a coordinate neighborhood of $\rho_0$ such that
\begin{enumerate}[(1)]
    \item $F(\rho_0) =0$; \label{item:condition1}
    \item for each $q \in \pi_S^{-1}(\rho_0)$, there exists $j = j_q \in \{1, \ldots, N\}$ such that $\partial_t^j F(\pi_S(e^{tU_1^-} q))|_{t=0} \neq 0$; \label{item:condition2}
\end{enumerate}
\end{lemma}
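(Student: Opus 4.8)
\textbf{Proof proposal for Lemma~\ref{lem:jets}.}

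The plan is to produce $F$ by pulling back a suitable linear function in local coordinates and then verifying the nonvanishing of some jet of $F$ along every $U_1^-$-orbit through $\rho_0$ using the fact that the maps $t \mapsto \pi_S(e^{tU_1^-}q)$ are nonconstant real-analytic curves, all passing through $\rho_0$ at $t=0$, whose velocity vectors span a fixed subspace as $q$ ranges over the fiber $\pi_S^{-1}(\rho_0)$. First I would fix a coordinate chart $U_{\rho_0}$ centered at $\rho_0$, so that $\rho_0 \leftrightarrow 0 \in \R^{2n+1}$ and the $U_1^-$-orbit through $q$ is a curve $\gamma_q(t)$ with $\gamma_q(0)=0$. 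Because $U_1^-$ is a nonvanishing real-analytic vector field on $F^*M$ and $\pi_S$ is a submersion whose kernel $\mathfrak h$ does not contain $U_1^-$ (by~\eqref{eq:preimage_Es_Eu}, $U_1^-$ projects to a nonzero vector in $E_u$), each $\gamma_q$ is a nonconstant real-analytic curve; in particular $\gamma_q$ is not identically $0$, so some derivative $\partial_t^j \gamma_q(0) \neq 0$.

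The key point is a uniform bound $N$ on the order of the first nonvanishing derivative, valid over the whole compact fiber $\pi_S^{-1}(\rho_0) \cong K_0$. To get this, I would consider the map $(q,t) \mapsto \pi_S(e^{tU_1^-}q)$; by compactness of $K_0$ and real-analyticity, one can argue that the "order of flatness at $t=0$" is bounded: if it were not, there would be a sequence $q_k$ with $\partial_t^j \gamma_{q_k}(0) = 0$ for all $j \le k$, and passing to a convergent subsequence $q_k \to q_\infty$ would force $\gamma_{q_\infty} \equiv 0$ by analyticity, contradicting that it is a nonconstant curve. This yields $N = N(\rho_0)$; one then checks $N$ can be taken to depend only on $n$ by homogeneity/transitivity of $G$ acting on $F^*M$ (all fibers and all points $\rho_0$ look the same), or simply notes that $\rho_0$ is arbitrary so any bound works locally and absorb the dependence. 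Now let $d_q \in \{1,\dots,N\}$ be the order of the first nonzero derivative of $\gamma_q$ at $0$, with leading coefficient $v_q := \partial_t^{d_q}\gamma_q(0) \in \R^{2n+1}\setminus\{0\}$. I would then choose a single linear functional $\ell$ on $\R^{2n+1}$ with $\ell(v_q) \neq 0$ for \emph{every} $q$ — this is possible because the $v_q$ are finitely many "directions" up to the finitely many strata $d_q = j$, but more robustly: the set of bad $\ell$ (those vanishing on some $v_q$) is a finite union of hyperplanes once we stratify by $d_q$, or a countable union with empty interior, so a generic $\ell$ works; set $F := \ell$ in these coordinates, so $F(\rho_0)=0$ automatically, giving~\ref{item:condition1}, and $\partial_t^{d_q} F(\gamma_q(t))|_{t=0} = \ell(v_q) \neq 0$, giving~\ref{item:condition2} with $j = d_q \le N$.

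The main obstacle I anticipate is making the "uniform bound $N$" and the "single good $\ell$" steps genuinely rigorous and uniform over the non-discrete fiber $K_0$: the velocity directions $v_q$ vary with $q$, so I cannot just pick $\ell$ avoiding finitely many vectors. The clean way around this is to work one jet-order at a time: for $q$ with $d_q = j$, the leading coefficient $v_q$ is a continuous (indeed real-analytic) nonvanishing section over the compact set $\{d_q = j\}$, but that set need not be closed. A more robust approach, which I would actually carry out, is to avoid choosing $\ell$ by hand: instead observe that for the specific vector field $U_1^- = -A_2 + R_{1,2}$ and its action on $F^*M$, the curve $\gamma_q(t) = \pi_S(q e^{tU_1^-})$ has an explicit low-order Taylor expansion whose leading behavior one can compute directly from~\eqref{eq:frame} and~\eqref{eq:commutator} — the geodesic-flow and horocycle-flow formulas already recorded in \S\ref{subsection:hyperbolic_manifolds} show $\pi_S(q e^{tU_1^-})$ moves, to leading order, into the unstable direction $U_1^-$ and, at second order (via $[U_1^-,\text{(frame rotation)}]$ type terms), into directions recording the frame $\xi_2,\dots,\xi_{n+1}$. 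Concretely one can take $F$ to be (a cutoff of) a fixed component of the "$x$"-coordinate or a fixed entry of the moving frame relative to a reference frame at $\rho_0$; the finitely many such candidate functions $F_1,\dots,F_m$ have the property that for each $q$ at least one $F_i$ has a nonvanishing jet of order $\le N$ at $t=0$, and then $F := \sum_i c_i F_i$ for generic constants $c_i$ works because for each fixed $q$ the condition "$\partial_t^j(\sum c_i F_i)(\gamma_q)|_0 = 0$ for all $j \le N$" is a nontrivial linear condition on $(c_i)$, and the union over $q$ of these bad sets is a countable union of proper subspaces once we again stratify by jet order, hence has empty interior — so a full-measure set of $(c_i)$ gives the desired $F$. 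I would present the argument in this second form, since it sidesteps any delicate compactness-of-strata issue and reduces everything to: (a) $\gamma_q$ nonconstant analytic, (b) uniform flatness bound $N$ by the convergent-subsequence argument above, (c) genericity of the coefficients.
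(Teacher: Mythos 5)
Your central strategy fails for $n\ge 2$, and the genericity argument you fall back on has a real gap. First, the ``order of flatness'' analysis is moot: by \eqref{eq:preimage_Es_Eu}, $U_1^-$ is never tangent to the fibers of $\pi_S$, so every curve $\gamma_q(t)=\pi_S(e^{tU_1^-}q)$ has nonzero velocity at $t=0$; thus your $d_q$ equals $1$ for every $q$ and the leading coefficient is $v_q=d\pi_S(U_1^-|_q)$. But as $q$ ranges over the fiber $\pi_S^{-1}(\rho_0)\cong \SO(n)$, the adjoint action of $K_0$ rotates $U_1^-$ among $U_1^-,\dots,U_n^-$ (see \eqref{eq:commutator}), so the directions $v_q$ sweep out the entire $n$-dimensional unstable subspace $E_u(\rho_0)$. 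For $n\ge 2$ no single linear functional $\ell$ (equivalently, no single differential $dF(\rho_0)$) can be nonzero on all of them, which is precisely why the lemma must allow derivatives up to an $n$-dependent order $N$; a first-order construction cannot succeed. (This also makes your compactness/analyticity bound on the flatness order unnecessary; the paper's proof uses no analyticity at all.)

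Second, your repair --- finitely many candidates $F_1,\dots,F_m$ and generic coefficients $(c_i)$ --- is not justified as stated. The fiber $\pi_S^{-1}(\rho_0)$ is an uncountable manifold of dimension $\tfrac{n(n-1)}{2}$, so the union of the per-$q$ bad sets remains uncountable even after stratifying by jet order, and ``countable union of proper subspaces has empty interior'' does not apply. Worse, you only establish that each per-$q$ bad set is a nontrivial (codimension $\ge 1$) linear condition; a union of codimension-one subspaces over an $\tfrac{n(n-1)}{2}$-parameter family can easily cover the whole coefficient space. What is needed is quantitative, and it is exactly what the paper's proof supplies: work with the full space of $N$-jets of $F$ at $\rho_0$ (of dimension $K$), note that $\partial_t^j F(\pi_S(e^{tU_1^-}q))|_{t=0}=(U_1^-)^j(F\circ\pi_S)|_q$ depends linearly on the jet, show that for each $q$ the map from the jet to the vector $\bigl((U_1^-)^j(F\circ\pi_S)|_q\bigr)_{j=1}^N$ is surjective (this uses only that $\gamma_q$ is immersed at $t=0$), so the bad set for each $q$ has codimension exactly $N$; then choose $N>\tfrac{n(n-1)}{2}$ so that the union over the fiber has dimension at most $\tfrac{n(n-1)}{2}+K-N<K$ and hence misses some jet $c$, and take $F$ with that $N$-jet and $F(\rho_0)=0$. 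This dimension count, rather than a countability argument, is the missing ingredient in your proposal.
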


\begin{proof}
Let $U_{\rho_0} \subset S^*M$ be a coordinate neighborhood of $\rho_0$. We know that for any $f \in C^\infty(U_{\rho_0}; \R)$,  $\partial_t^j f(\pi_S(e^{t U_1^-} q))|_{t=0} = (U_1^-)^j (f \circ \pi_S)|_{q}$.    
Therefore, ~\ref{item:condition2} depends only on the $N$-jet of $f$ at $\rho_0$, i.e., using local coordinates in $U_{\rho_0}$, on $\partial^\alpha f(\rho_0)$ for all $\alpha \in \N^{2n +1}$ with $0<|\alpha| \leq N$. Set $K \coloneqq K(N) = \# \{\alpha \in \N^{2n+1} : 0< |\alpha| \leq N\}$. We set $c_f \in \R^K$  to be the coefficients of the $N$-jet of $f$ at $\rho_0$, excluding the first coefficient.  

For some surjective and linear function $T_q: \R^K \rightarrow \R^N$, the condition $\partial_t^j f(\pi_S(e^{t U_1^-}q))|_{t=0} =0$ for $j=1, \ldots, N$ is equivalent to $T_q(c_f) = 0$. Note that $T^{-1}_q(0)$ is a $q$-dependent subspace of dimension $K-N$. 

We examine $\bigcup_{q \in \pi_S^{-1}(\rho_0)} T^{-1}_q(0)$. Note that $\dim (\pi_S^{-1} (\rho_0)) = \dim (\SO(n)) = \tfrac{n(n-1)}{2}$. Therefore, for $N > \tfrac{n(n-1)}{2}$, there exists $c \in \R^K$ such that $c \notin \bigcup_{q \in \pi_S^{-1}(\rho_0)} T^{-1}_q(0)$.

We conclude the proof by taking a function $F \in C^\infty(U_{\rho_0}, \R)$ with $N$-jet at $\rho_0$ determined by $c$ and $F(\rho_0)= 0$. 
\end{proof}

\begin{lemma}\label{lem:sparse}
Let $\rho_0 \in S^*M$ and let $F \in C^\infty(U_{\rho_0})$ be the function constructed in Lemma~\ref{lem:jets}. There exists some $U'_{\rho_0} \subset U_{\rho_0}$ and $c = c_{\rho_0}>0$ such that 
for each $q \in \pi_S^{-1}(U_{\rho_0}')$, for  some $j_q \in \{1, \ldots, N\}$,
\begin{equation}\label{eq:bounded_away_from_zero}
\left|\partial_t^{j_q} F(\pi_S(e^{t U_1^-} q))|_{t=0} \right| \geq c.
\end{equation} Additionally, for any $r \in \R$, $F^{-1}(r) \cap U'_{\rho_0}$ is $U_1^-$-sparse.  
\end{lemma}

\begin{proof} 
For each $q \in \pi_S^{-1}(\rho_0)$, there exists  $j_{q} \in \{1, \ldots, N\}$ such that $\partial_t^{j_q} F(\pi_S(e^{tU_1^-} q))|_{t=0} \neq 0$. 
Thus, for each  $q \in \pi_S^{-1}(\rho_0)$, we can select open $U_q \subset \pi_S^{-1}(U_{\rho_0})$ such that $q \in U_q$ and for some $c_q>0$, if $\tilde{q} \in U_q$, then $|\partial_t^{j_q} F(\pi_S(e^{t U_1^-} \tilde{q}))|_{t=0}| \geq c_q$. By the compactness of $\pi_S^{-1}(\rho_0)$, we know 
$$U'_{\rho_0} \coloneqq \bigcap_{q \in \pi^{-1}_S(\rho_0)} \pi_S(U_q) \subset U_{\rho_0}$$ 
is a nonempty open set and  $c \coloneqq \inf_{q \in \pi_S^{-1}(\rho_0)} c_q >0$. 
Therefore, for  each $q \in \pi_S^{-1}(U_{\rho_0}')$, there exists some $j_q \in \{1, \ldots, N\}$ such that 
\begin{equation*}
\left|\partial_t^{j_q} F(\pi_S(e^{t U_1^-} q))|_{t=0} \right| \geq c,
\end{equation*}
which completes the proof of ~\eqref{eq:bounded_away_from_zero}.

Now we show for any $r \in \R$, $F^{-1}(r) \cap U_{\rho_0}'$ is $U_1^-$-sparse.
Note that the projection onto $S^*M$ of each $U_1^-$-orbit has countably many segments that intersect  $F^{-1}(r) \cap U_{\rho_0}'$. 
Thus,
it suffices to show that for all $q \in F^*M$ and $t_0 \in \R$, if $\pi_S(e^{t_0 U_1^-} q) \in F^{-1}(r) \cap U_{\rho_0}'$, then there exists $j$ such that $\partial_t^j F_\rho(\pi_S(e^{t U_1^-} q))|_{t = t_0} \neq 0$.
As $e^{t_0 U_1^-} q \in \pi_S^{-1}(U_{\rho_0}')$, this follows from  \eqref{eq:bounded_away_from_zero}. 
\end{proof}

Finally, we construct our partition of unity, partly inspired by \cite{DJ23}*{Lemma 3.5} and~\cite{ADM24}*{Lemma 3.2}. 
\begin{lemma}\label{lem:a1_a2_safe}
Let $U \subset S^*M$ be open and $U_1^-$-dense. Then there exist $\chi_1, \chi_2 \in C^\infty(S^*M; [0,1])$ such that  $\chi_1 + \chi_2 =1$, $\supp \chi_1 \subset U$, and the complements $S^*M \setminus \supp \chi_1$, $S^*M \setminus \supp \chi_2$ are $U_1^-$-dense. 
\end{lemma}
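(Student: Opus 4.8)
The plan is to build $\chi_1$ as a smooth bump supported inside a compact $U_1^-$-dense subset of $U$, arranged so that its complement (and the complement of $\chi_2 = 1-\chi_1$, i.e.\ the zero set of $\chi_1$) still meets every $U_1^-$-orbit. First I would invoke Lemma~\ref{lem:safe_compact} to replace $U$ by a compact $U_1^-$-dense $K \subset U$, and fix an open $U_1^-$-dense $V$ with $K \subset V \subset \overline{V} \subset U$ (shrinking if necessary). The support of $\chi_1$ will be forced to lie in $\overline{V}$, which gives $\supp\chi_1 \subset U$ for free. The two nontrivial requirements are: (a) $S^*M \setminus \supp\chi_1$ is $U_1^-$-dense, and (b) $Z := \chi_1^{-1}(0) = S^*M\setminus\supp\chi_2$ is $U_1^-$-dense.

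For (a): since $K$ is $U_1^-$-dense, Lemma~\ref{lem:finite_safe} gives a uniform time $T$ so that every $U_1^-$-orbit segment of length $T$ through any $q \in F^*M$ hits $K$ — hence hits $\supp\chi_1$ once we ensure $\chi_1 \equiv 1$ near $K$. But I also need the complementary orbit to hit the \emph{complement} of $\supp\chi_1$: here the idea is that a $U_1^-$-orbit cannot be entirely contained in the compact set $\overline{V}$ unless... well, it can be (orbit closures can be proper compact sets), so this needs care. The resolution is to use Lemma~\ref{lem:jets}: at each point $\rho \in \overline V$, the function $F$ constructed there vanishes at $\rho$ but has a nonvanishing $U_1^-$-derivative of order $\le N$ in every fiber direction, so $F^{-1}(0)$ meets each $U_1^-$-orbit through $\pi_S^{-1}(\rho)$ in an isolated point locally. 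Cover $\overline V$ by finitely many such coordinate neighborhoods $U_{\rho_0}$ with functions $F_{\rho_0}$, take a product (or a suitable convex combination with cutoffs) to produce $\chi_1$ whose support is a small neighborhood of $K$ inside $V$ and whose zero set is ``thin'' along every $U_1^-$-orbit; quantitatively, every $U_1^-$-orbit passes through $S^*M\setminus\supp\chi_1$ because $\supp\chi_1$, being a small neighborhood of $K$, is ``porous'' along $U_1^-$-orbits — an orbit spends only bounded time in it before the geometry (controlled via the $F_{\rho_0}$) forces it out. This simultaneously yields (b), since the zero set of $\chi_1$ contains large portions of every orbit (it contains everything outside a neighborhood of $K$, and near $K$ it still contains the transversal zero sets of the $F_{\rho_0}$).

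More concretely, I would: (i) choose $\varepsilon>0$ small so that the $\varepsilon$-neighborhood $K_\varepsilon$ of $K$ lies in $V$; (ii) on a slightly larger neighborhood, use Lemma~\ref{lem:jets} and a finite subcover $\{U_{\rho_i}\}$ of $\overline{K_\varepsilon}$ to get functions $F_i$ and, via a partition of unity subordinate to this cover, assemble $F = \sum \psi_i F_i^2 \ge 0$ which vanishes on $K$ to infinite order only where all $F_i$ vanish, hence whose zero set within any $U_1^-$-orbit is discrete (using condition~\eqref{item:condition2}); (iii) set $\chi_1 = \theta(F) \cdot \mathbf{1}_{K_\varepsilon}$-type cutoff where $\theta$ is $1$ near $0$ and supported near $0$, so $\supp\chi_1 \subset K_\varepsilon \subset U$ and $\chi_1 \equiv 1$ on $K$; then (a) holds by Lemma~\ref{lem:finite_safe} applied to the $U_1^-$-dense set $\{\chi_1 < 1\} \supset S^*M \setminus \overline{K_\varepsilon}$ is not quite right — rather, one argues directly that no $U_1^-$-orbit is contained in $\supp\chi_1$ because inside $\supp\chi_1 \subset K_\varepsilon$ the function $F$ is not identically zero along the orbit (condition~\eqref{item:condition2}), so the orbit exits $\{\theta(F) > 0\}$; (iv) $\chi_2 = 1 - \chi_1$, and $S^*M\setminus\supp\chi_2 = \chi_1^{-1}(1) \supset K$, which is $U_1^-$-dense by construction.

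\textbf{Main obstacle.} The delicate point is (a): ensuring $S^*M \setminus \supp\chi_1$ is $U_1^-$-dense, i.e.\ that $\supp\chi_1$ contains no entire $U_1^-$-orbit. Since $U_1^-$-orbit closures can be proper compact sets (this is exactly the higher-dimensional phenomenon highlighted after Theorem~\ref{thm:orbit_closure}), one cannot argue by noncompactness of orbits. Lemma~\ref{lem:jets} is precisely the tool that circumvents this: it guarantees that at \emph{every} point the candidate function has a nonzero $U_1^-$-jet in every frame-fiber direction, so its zero set is genuinely transversal to $U_1^-$-orbits and an orbit cannot linger in a thin sublevel set. Threading this local transversality through a finite cover to get a single global $\chi_1$ with all three properties simultaneously — $\supp\chi_1\subset U$, $\chi_1\equiv 1$ on a $U_1^-$-dense set, and $\supp\chi_1$ orbit-porous — is the technical heart of the argument, and I expect matching up the cutoff scales (the size $\varepsilon$ of the neighborhood of $K$, the cutoff $\theta$, and the jet order $N$ from Lemma~\ref{lem:jets}) to be where the real work lies.
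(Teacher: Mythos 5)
Your proposal correctly identifies the tools (Lemmas \ref{lem:safe_compact}, \ref{lem:finite_safe}, \ref{lem:jets}) and the real difficulty (a $U_1^-$-orbit projection can be a proper compact set, so nothing soft prevents $\supp\chi_1$ from swallowing an entire orbit), but the construction you sketch does not deliver the claimed properties, and the key step is left unproven. Concretely: (1) the functions $F_i$ furnished by Lemma \ref{lem:jets} vanish only at the chosen base points $\rho_i$ and have no reason to be small on $K$, so $F=\sum\psi_iF_i^2$ is not small on $K$ and $\chi_1=\theta(F)\cdot(\text{cutoff})$ is \emph{not} identically $1$ on $K$; hence your step (iv) ($S^*M\setminus\supp\chi_2\supset K$) fails as written (also note $S^*M\setminus\supp\chi_2$ is the interior of $\{\chi_1=1\}$, not $\chi_1^{-1}(0)$ as asserted in your item (b)). (2) The exit argument for (a) is not established: ``$F$ is not identically zero along the orbit'' does not imply the orbit leaves $\{\theta(F)>0\}$ — you need $F$ to exceed the fixed threshold set by $\supp\theta$ along every orbit segment that stays in $K_\varepsilon$, i.e.\ a uniform quantitative oscillation bound along orbits. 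Moreover, the jet non-degeneracy \eqref{item:condition2} holds for each individual chart function $F_{\rho_i}$ at points over $\rho_i$, and it does not transfer to the assembled $\sum\psi_iF_i^2$: the cutoffs $\psi_i$ and the squaring can kill low-order jets, and increases of one term can be cancelled by decreases of another, so the ``discrete zero set / forced exit'' claim for the global $F$ does not follow from the lemma. If you force $F$ to be small on a neighborhood of $K$ to rescue (1), you destroy exactly the transversality you need for (2).

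For comparison, the paper avoids building any global threshold function. It uses Lemma \ref{lem:jets} chart by chart, with the uniform lower bound \eqref{eq:bounded_away_from_zero} and a Taylor expansion to get the quantitative statement \eqref{eq:grow_by_epsilon}: along any orbit segment of bounded length through a fixed chart, $F_\rho$ oscillates by at least a fixed $\varepsilon$, so the orbit crosses one of finitely many level sets $F_\rho^{-1}(j\varepsilon)$. The union $D$ of these level-set pieces over a finite cover is $U_1^-$-dense yet has empty interior in every orbit projection, so $U\setminus D$ is still $U_1^-$-dense. This yields two \emph{disjoint} compact $U_1^-$-dense sets $K_1\subset U\setminus D$ and $K_2\subset S^*M\setminus K_1$, and an ordinary partition of unity subordinate to the cover $(U\setminus K_2,\,S^*M\setminus K_1)$ finishes: $S^*M\setminus\supp\chi_1\supset K_2$ and $S^*M\setminus\supp\chi_2\supset K_1$. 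The quantitative Taylor step and the separation into two dense compacta are exactly the pieces missing from your argument; without them (or an equivalent), the proposal has a genuine gap.
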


\begin{proof}
1. We first  use  Lemma~\ref{lem:sparse} to construct a set $D \subset S^*M$ that is both $U_1^-$-dense and $U_1^-$-sparse. Fix $\rho_0 \in S^*M$ and take $F =F_{\rho_0} \in C^\infty(U_{\rho_0}; \R)$ from Lemma ~\ref{lem:jets} and $U_{\rho_0}' \subset U_{\rho_0}$  and $c_{\rho_0}$ from Lemma ~\ref{lem:sparse}.
Fix an open $U_{\rho_0}'' \Subset U_{\rho_0}'$. Then pick $R=R_{\rho_0}>0$ such that for every $q \in \pi_S^{-1}(U_{\rho_0}'')$, $\{\pi_S(e^{t U_1^-}q) : |t| \leq R\} \subset U_{\rho_0}'$. We assume that $R \leq 1$.

Let $q \in \pi_S^{-1}(U_{\rho_0}'')$. By~\eqref{eq:bounded_away_from_zero}, we can define 
$$P_q(t) = \left(\partial_t^{j_q} F(\pi_S(e^{t U_1^-} q))|_{t=0}\right)^{-1} \sum_{k=1}^{j_q} \partial_t^k F(\pi_S(e^{t U_1^-} q))|_{t=0} \frac{t^k}{k!}.$$
Using Chebyshev polynomials, one can show that among monic polynomials of degree $n$, the smallest maximal absolute value on the interval $[-1, 1]$ is 
$2^{1-n}$. Therefore,
we know that $\max_{|t| \leq R} |P_q(t)| \geq 2^{1-j_q} \geq 2^{1-N}$.

As $F$ is smooth, for $|t| \leq R$ and $1 \leq k \leq N+1$, $|\partial_t^k F(\pi_S(e^{t U_1^-} q))| \leq C$, where $C$ is uniform in $t$, $k$, and $q \in \pi^{-1}_S(U_{\rho_0}'')$.  Therefore, taking the Taylor expansion in $t$ and using \eqref{eq:bounded_away_from_zero},
\begin{align*}
\max_{|t| \leq R} |F(\pi_S(e^{t U_1^-} q)) - F(\pi_S(q))| &\geq \max_{|t| \leq R} \left(\left|\sum_{k=1}^{j_q} \partial_t^k F(\pi_S(e^{t U_1^-} q))|_{t=0} \frac{t^k}{k!} \right| - C \frac{|t|^{j_q +1}}{(j_q +1)!}\right)\\
&\geq c_{\rho_0}2^{1-N} - CR.    
\end{align*}

Possibly shrinking the value of $R$, there exists $\varepsilon = \varepsilon_{\rho_0}>0$ such that $c_{\rho_0}2^{1-N} - CR \geq \varepsilon$. We conclude
\begin{equation}\label{eq:grow_by_epsilon}
\max_{|t| \leq R} \left|F(\pi_S(q)) - F(\pi_S(e^{t U_1^-} q))\right| \geq \varepsilon.
\end{equation}

Now, by the continuity of $F$, there exists $r>0$ such that for all $\rho  \in U_{\rho_0}''$, $|F(\rho)| < r$. Set
$$D_{\rho_0} \coloneqq U'_{\rho_0} \cap \bigcup_{\substack{j \in \Z \\ |j| \leq \lrceil{\frac{r}{\varepsilon}}}} F^{-1}(j \varepsilon).$$
By \eqref{eq:grow_by_epsilon}, for each $q \in \pi_S^{-1}(U_{\rho_0}'')$, $\pi_S(e^{\R U_1^-} q)$ intersects $D_{\rho_0}$.

Now take a finite open cover of $S^*M$ by $U_{\rho}''$, indexed by $P=\{\rho_1, \ldots, \rho_K\} \subset S^*M$. 
We see that 
$D \coloneqq \bigcup_{\rho \in P} D_{\rho}$
is $U_1^-$-dense. 

By Lemma~\ref{lem:sparse}, $D$ is the finite union of $U_1^-$-sparse sets. Therefore, $D$ is  $U_1^-$-sparse.

2. Note that both $D$ and $U \setminus D$ are  $U_1^-$-dense. 
From Lemma ~\ref{lem:safe_compact}, we know there exists a $U_1^-$-dense and compact $K_1 \subset U \setminus D$. As $D \subset S^*M \setminus K_1$, we know $S^*M \setminus K_1$ is $U_1^-$-dense. Again by Lemma ~\ref{lem:safe_compact}, we can find a $U_1^-$-dense compact $K_2 \subset S^*M \setminus K_1$. We take a partition of unity on $\chi_1, \chi_2 \in C^\infty(S^*M; [0,1])$, $\chi_1 +\chi_2 =1$  subordinate the the cover of $S^*M = (U \setminus K_2) \cup (S^*M \setminus K_1)$:
$$\supp \chi_1 \subset U \setminus K_2, \quad \supp \chi_2 \subset S^*M \setminus K_1,$$
which completes the proof. 
\end{proof}

%%%%%%%%%%%%%%%%%%%%%%%%%%%%%%%%%%%%%%%%%%%%%%%%%%%%%%%%%%%%%%%%%%%%%%%%%%%%%%%%
%%%%%%%%%%%%%%%%%%%%%%%%%%%%%%%%%%%%%%%%%%%%%%%%%%%%%%%%%%%%%%%%%%%%%%%%%%%%%%%%

\subsection{Semiclassical definitions}
\label{subsection:semiclassical_definitions}

%%%%%%%%%%%%%%%%%%%%%%%%%%%%%%%%%%%%%%%%%%%%%%%%%%%%%%%%%%%%%%%%%%%%%%%%%%%%%%%%
%%%%%%%%%%%%%%%%%%%%%%%%%%%%%%%%%%%%%%%%%%%%%%%%%%%%%%%%%%%%%%%%%%%%%%%%%%%%%%%%

In this section, we introduce the semiclassical analysis used in this paper. First, we establish the following notational conventions. 
\begin{notation}\label{notation}
Suppose $(F, \|\cdot \|_F)$ is a normed vector space and $f_h \in F$ is a family depending on a parameter $h>0$. If $\|f_h\|_F = \cO(h^\alpha)$, we write $f_h = \cO(h^\alpha)_F$. 
\end{notation}

\begin{notation}\label{notation2}
We use $C$ to denote a positive constant, the value of which varies in each appearance. 
\end{notation}

We follow the exposition in ~\cite{DZ16}*{\S 2.1}, starting by recalling the standard class of semiclassical pseudodifferential operators $\Psi_h^k(M)$ with symbols $S^k_h(T^*M)$. A function $a(x, \xi;h) \in C^\infty(T^*M)$ is in $S^k_h(T^*M)$ if
\begin{enumerate}[(1)]
    \item for each compact set $K \subset M$, $|\partial_x^\alpha \partial_\xi^\beta a(x, \xi)| \leq C_{\alpha \beta K} \lrang{\xi}^{k -|\beta|}$ for $x \in K$;
    \item $a(x, \xi; h) \sim \sum_{j=0}^\infty h^j a_j(x, \xi)$ as $|\xi| \rightarrow \infty$, where $a_j$ is positively homogeneous in $\xi$ of degree $k -j$.
\end{enumerate}

For $A \in \Psi_h^k(M)$, $\WF_h(A)$ is a closed subset of the fiber-radially compactified
cotangent bundle $\overline{T}^* M \supset T^*M$. Let $\Psi_h^{\comp}(M)$ be the subset of $A \in \Psi_h^k(M)$ for which $\WF_h(A)$ is a compact subset of $T^*M$.

For $M= \R^n$, we use the standard quantization,
\begin{equation*} \label{eq:quantization}
\op_h(a)f(y)=(2\pi h)^{-(n+1)}\int_{\R^{2n+2}}e^{i\lrang{y-y', \eta}/h}a(y,\eta)f(y')\,dy'd\eta, \quad a \in S^k(T^* \R^{n+1}).
\end{equation*}

Following~\cite{DZ19}*{\S E.1.7} and~\cite{Zw12}*{\S 14.2.2}, we can generalize this quantization to pseudodifferential operators on manifolds $\Psi^k_h(M)$. We denote the  \emph{principal symbol map} by 
$$\sigma_h: \Psi^k_h(M) \rightarrow S^k(T^*M).$$

We now define semiclassical measures. 

\begin{definition}\label{def:semiclassical_measure}
Suppose that $u_j$ is a sequence of $L^2$-normalized eigenfunctions of $-\Delta$ with eigenvalues $h_j^{-2} \rightarrow \infty$, i.e.,
$$(-h_j^2 \Delta -I)u_j =0, \quad \|u_j\|_{L^2} =1.$$
We assume $h_j>0$. We say that $u_j$ \emph{converge semiclassically} to a probability measure $\mu$ on $T^*M$ if
$$\lrang{\op_{h_j}(a) u_j, u_j}_{L^2(M)} \rightarrow \int_{T^*M} a \, d\mu \quad \text{as } j \rightarrow \infty \text{ for all } a \in C_c^\infty(T^*M).$$
We call such a measure $\mu$ a \emph{semiclassical measure}.
\end{definition}
Semiclassical measures are geodesic-flow invariant probability measures with support contained in $S^*M$ (see \cite{Zw12}*{\S \S 5.2--1}).

For $A, B \in \Psi^k_h(M)$ and an open set $U \subset \overline{T}^*M$, we say that 
$$A=B +\cO(h^\infty) \text{ \emph{microlocally in} } U$$
if $\WF_h(A-B) \cap U = \emptyset$.

Now let $B=B(h): \cD'(M) \rightarrow C^\infty_c(M)$ be an $h$-tempered family of smoothing operators. Further assume $\WF'_h(B) \subset \overline{T}^*(M \times M)$ is a compact subset of $T^*(M \times M)$. We say that $B$ is \emph{pseudolocal} if $\WF'_h(B)$ is contained in the diagonal $\Delta(T^*M) \subset T^*(M \times M)$. For a pseudolocal operator $B$, we define $\WF_h(B) \subset T^*M$ to be the set that satisfies
\begin{equation*}\label{eq:WF'_def}
\WF'_h(B) = \{(x, \xi, x, \xi) : (x, \xi) \in \WF_h(B)\}.
\end{equation*}

Elements of $\Psi_h^{\comp}(M)$ are pseudolocal with a wavefront set that matches the above definition. 

%%%%%%%%%%%%%%%%%%%%%%%%%%%%%%%%%%%%%%%%%%%%%%%%%%%%%%%%%%%%%%%%%%%%%%%%%%%%%%%%
%%%%%%%%%%%%%%%%%%%%%%%%%%%%%%%%%%%%%%%%%%%%%%%%%%%%%%%%%%%%%%%%%%%%%%%%%%%%%%%%

\subsection{Propagating operators} \label{subsection:propagate}

%%%%%%%%%%%%%%%%%%%%%%%%%%%%%%%%%%%%%%%%%%%%%%%%%%%%%%%%%%%%%%%%%%%%%%%%%%%%%%%%
%%%%%%%%%%%%%%%%%%%%%%%%%%%%%%%%%%%%%%%%%%%%%%%%%%%%%%%%%%%%%%%%%%%%%%%%%%%%%%%%

Recall from \S\ref{subsection:hyperbolic_manifolds} that $\phi_t: T^*M \setminus 0 \rightarrow T^*M \setminus 0$ is the homogeneous geodesic flow.
Let $p \in C^\infty(T^*M \setminus 0)$ be given by the following:
\begin{equation}\label{eq:p_def}
p(x, \xi)=|\xi|_g, \quad (x, \xi) \in T^*M\setminus 0.
\end{equation}

We see that $-h^2 \Delta$ lies in $\Psi_h^2(M)$ and 
$$\sigma_h(-h^2 \Delta) = p^2.$$

We fix
$$\psi_P \in C_c^\infty((0, \infty); \R), \quad \psi_P(\lambda) = \sqrt{\lambda} \quad \text{for } \frac{1}{16} \leq \lambda \leq 16,$$
and define 
\begin{equation}\label{eq:P_def}
P \coloneqq \psi_P(-h^2 \Delta), \quad P^* = P.    
\end{equation}

For further background on $\psi_P(-h^2 \Delta)$, see ~\cite{Zw12}*{\S 14.3.2}.

Then, 
$$P \in \Psi_h^{\comp}(M) \quad \text{and} \quad \sigma_h(P) =p \quad \text{on} \quad \{1/4 \leq |\xi|_g \leq 4\}.$$

To quantize the flow $\phi_t$, we define the unitary operator
\begin{equation*}\label{eq:U_def}
U(t) \coloneqq \exp\left(-\frac{itP}{h} \right) : L^2(M) \rightarrow L^2(M).    
\end{equation*}
Then for a bounded operator $A:L^2(M) \rightarrow L^2(M)$, define the time-dependent symbol $A(t)$ by propagating by $U(t)$:
\begin{equation}\label{eq:T_notation}
A(t) \coloneqq U(-t)AU(t).
\end{equation}

%%%%%%%%%%%%%%%%%%%%%%%%%%%%%%%%%%%%%%%%%%%%%%%%%%%%%%%%%%%%%%%%%%%%%%%%%%%%%%%%
%%%%%%%%%%%%%%%%%%%%%%%%%%%%%%%%%%%%%%%%%%%%%%%%%%%%%%%%%%%%%%%%%%%%%%%%%%%%%%%%

\subsection{Symbol class}\label{subsection:symbol_class}

%%%%%%%%%%%%%%%%%%%%%%%%%%%%%%%%%%%%%%%%%%%%%%%%%%%%%%%%%%%%%%%%%%%%%%%%%%%%%%%%
%%%%%%%%%%%%%%%%%%%%%%%%%%%%%%%%%%%%%%%%%%%%%%%%%%%%%%%%%%%%%%%%%%%%%%%%%%%%%%%%

Suppose $A \in \Psi^{\comp}_h(M)$ with $\WF_h(A) \subset \{1/4 < |\xi|_g < 4\}$ and $t$ is uniformly bounded in $h$. Then, Egorov's theorem ~\cite{Zw12}*{Theorem 11.1} gives 
\begin{equation*}\label{eq:egorov_A(t)}
A(t) \in \Psi_h^{\comp}(M), \quad \sigma_h(A(t)) = \sigma_h(A) \circ \phi_t.
\end{equation*} 
However, if instead $t$ grows with $h$, the derivatives of $\sigma_h(A) \circ \phi_t$ may grow exponentially with $t$. Thus, $A(t)$ may no longer lie in $\Psi_h^{\comp}(M)$. To handle this possibility, we describe a symbol class  $S^{\comp}_{L, \rho, \rho'}(U)$, first introduced in ~\cite{DJ18}*{\S A.1}. Specifically, we define a symbol class that contains $\sigma_h(A(t))$ for $0 \leq t \leq \rho \log h^{-1}$, $\rho <1$. 

\begin{definition}\label{def:symbol_class}
Fix two parameters $\rho, \rho'$ such that
$$0 \leq \rho <1, \quad 0 \leq \rho' \leq \frac{\rho}{2}, \quad \rho + \rho'<1.$$ 
Let $L$ be a Lagrangian foliation. 
We say that an $h$-dependent symbol $a$ lies in the class $S^{\comp}_{L, \rho, \rho'}(U)$ for an open set $U \subset T^*M$ if 
\begin{itemize}
    \item $a(x, \xi; h)$ is smooth in $(x, \xi) \in U$,  defined for $0 < h \leq 1$, and supported in an $h$-independent compact subset of $U$;
    \item $a$ satisfies the derivative bounds
    \begin{equation}\label{eq:derivative_bounds}
    \sup_{(x, \xi) \in U} |Y_1 \ldots Y_m Z_1 \ldots Z_k a(x, \xi;h)| \leq Ch^{-\rho k - \rho' m}, \quad 0 < h \leq 1,
     \end{equation}
    for all vector fields $Y_1, \ldots, Y_m, Z_1, \ldots, Z_k$ on $U$ such that $Y_1, \ldots, Y_m$ are tangent to $L$. The constant $C$ depends on the vector fields, but must be uniform in $h$.
\end{itemize}
\end{definition}

Recall the definitions of the Lagrangian foliations $L_s$ and $L_u$ from \eqref{eq:LsLu_def}. 
From the argument in \cite{DJ18}*{\S 2.3}, if $a \in C_c^\infty(T^*M \setminus 0)$ is an $h$-independent symbol,  uniformly in  $t \in [0, \rho \log h^{-1}]$,
\begin{equation}\label{eq:log_propagate}
a \circ \phi_t \in S^{\comp}_{L_s, \rho, 0} (T^*M \setminus 0) \text{ and } a \circ \phi_{-t} \in S^{\comp}_{L_u, \rho, 0} (T^*M \setminus 0).
\end{equation}
This follows from using \eqref{eq:preimage_Es_Eu} to rewrite the derivative bounds \eqref{eq:derivative_bounds} in terms of the frame in \eqref{eq:frame}, then using  the commutation relations in \eqref{eq:commutator}.   

From ~\cite{DJ18}*{Lemma A.8}, we know for an $h$-independent $a\in C_c^\infty(\{1/4<|\xi|_g<4\})$, uniformly in $t\in [0,\rho\log h^{-1}]$,
\begin{equation}  \label{eq:egorov_log}
\begin{split}
U(-t)\op_h(a)U(t)&=\op_h^{L_s}(a\circ\varphi_t)+\mathcal O(h^{1-\rho}\log h^{-1})_{L^2\to L^2},\\
U(t)\op_h(a)U(-t)&=\op_h^{L_u}(a\circ\varphi_{-t})+\mathcal O(h^{1-\rho}\log h^{-1})_{L^2\to L^2}.
\end{split}
\end{equation}
We remark that~\cite{DJ18}*{Lemma A.8} is stated for hyperbolic surfaces, however the proof holds in all dimensions. 

\subsubsection{Fourier integral operators}\label{subsection:FIO}
We now review Fourier integral operators associated to symplectomorphisms (also known as canonical transformations), summarizing the exposition in ~\cite{DZ16}*{\S 2.2}.
Let $\kappa: U_2 \rightarrow U_1$ be a symplectomorphism, where $U_j \subset T^*M_j$ are open sets and $M_j$ are manifolds of equal dimension. Define the graph of $\kappa$ by 
\begin{equation*}\label{eq:graph_def}
\Gr(\kappa) \coloneqq \{(x, \xi, y, \eta) : (y, \eta) \in U_2, (x, \xi) = \kappa(y, \eta)\} \subset T^*(M_1 \times M_2).
\end{equation*}
Let $\xi dx$ and $\eta dy$ be the canonical 1-forms on $T^*U_1$ and $T^*U_2$, respectively. We require that $\kappa$ is an \emph{exact symplectomorphism}, in other words, $(\xi dx -\eta dy)|_{\Gr(\kappa)}$ is an exact form. Fix an antiderivative $F \in C^\infty(\Gr(\kappa))$, i.e., $(\xi dx - \eta dy)|_{\Gr(\kappa)} = dF$. 

Let $I^{\comp}_h(\kappa)$ be the class of compactly supported and microlocalized Fourier integral operators associated to $\kappa$. We have $I^{\comp}_h(\kappa): \cD'(M_2) \rightarrow C_c^\infty(M_1)$. For the properties of $I^{\comp}_h(\kappa)$ and further references, see ~\cite{DZ16}*{\S 2.2}.

Assume that $B \in I^{\comp}_h(\kappa)$, $B' \in I^{\comp}_h(\kappa^{-1})$. Then, $BB' \in \Psi_h^{\comp}(M_1)$, $B'B \in \Psi_h^{\comp}(M_2)$, $\WF_h(BB') \subset U_1$, $\WF_h(B'B) \subset U_2$, and 
\begin{equation*}\label{eq:composition_BB'}
\sigma_h(B'B) = \sigma_h(BB') \circ \kappa.
\end{equation*}

We say that $B$, $B'$ \emph{quantize} $\kappa$ \emph{near} $V_1 \times V_2$ (for compact subsets $V_j \subset U_j$ such that $\kappa(V_2) = V_1$) if 
\begin{equation}\label{eq:quantize_kappa}
\begin{split}
BB' &= 1 + \cO(h^\infty) \text{ microlocally near } V_1,\\
B'B &= 1 + \cO(h^\infty) \text{ microlocally near } V_2.
\end{split}
\end{equation}

Following \cite{DZ16}*{\S 2.2}, it can be shown that $B, B'$ exist if $V_2$ is a sufficiently small neighborhood of a point.

\subsubsection{Quantization}
\label{subsubsection:quantization}
Let $L$ be a Lagrangian foliation on $U \subset T^*M$. We introduce a quantization $\op_h^L$, first formulated in ~\cite{DZ16}*{\S 3.3}, that respects the structure of $L$. We follow the presentation in ~\cite{DJ18}*{\S A.4}.

Using standard coordinates $(y, \eta)$ on $T^*\R^{n+1}$, we denote the vertical foliation on $T^*\R^{n+1}$ by 
$$L_0 \coloneqq \spn(\partial_{\eta_1}, \ldots, \partial_{\eta_{n+1}}) = \ker dy.$$

We call $(U', \kappa, B, B')$ a \emph{chart} for $L$ if $U' \subset U$ is an open set, $\kappa: U' \rightarrow T^*\R^n$ is an exact symplectomorphism onto its image with $d \kappa(x, \xi) \cdot L_{(x, \xi)} = (L_0)_{\kappa(x,\xi)}$, $B \in I^{\comp}_h(\kappa)$, and $B' \in I^{\comp}_h(\kappa^{-1})$. From ~\cite{DZ16}*{Lemma 3.6} and the paragraph following ~\cite{DZ16}*{(2.12)}, for each $(x_0, \xi_0) \in U$, there exists a chart $(U', \kappa, B, B')$ such that $\sigma_h(B'B)(x_0, \xi_0) \neq 0$. 

\begin{definition}\label{def:Op_h^L}
Let $a \in S_{L, \rho, \rho'}^{\comp}(U)$ and let $(U_l, \kappa_l, B_l, B_l')$ be a collection of charts for $L$ such that $U_l \subset U$ form a locally finite cover of $U$, $\sigma_h(B_l' B_l) \in C_c^\infty(U_l)$ is a partition of unity on $U$. Choose $\chi_l \in C_c^\infty(U_l)$  equal to 1 in a neighborhood of $\supp \sigma_h(B_l' B_l)$. Then,
$$\op_h^L(a) \coloneqq \sum_l B_l' \op_h(a_l) B_l, \quad a_l \coloneqq (\chi_l a) \circ \kappa_l^{-1} \in S^{\comp}_{L_0, \rho, \rho'}(T^* \R^n).$$
\end{definition}

We know $\op_h^L$ depends on the choice of charts, but the class of operators does not.

For a compactly supported
operator $A:L^2(M)\to L^2(M)$, we say that $A\in\Psi^{\comp}_{h,L,\rho,\rho'}(U)$
if $A=\op_h^L(a)+\mathcal O(h^\infty)_{L^2\to L^2}$ for some
$a\in S^{\comp}_{L,\rho,\rho'}(U)$. We cite the following properties of the quantization
procedure $\op_h^L$ from ~\cite{DJ18}*{\S A.4}.
\begin{enumerate_eq}{}
\item For each $a\in S^{\comp}_{L,\rho,\rho'}(U)$, the operator
$\op_h^L(a):L^2(M)\to L^2(M)$ is compactly supported and
bounded uniformly in $h$. \label{eq:bounded_property} 
\item \label{eq:FIO_conjugation} Assume that $M_1,M_2$ are manifolds of the same dimension,
$U_j\subset T^*M_j$ are open sets,
$L_j$ are Lagrangian foliations on $U_j$,
$U'_j\subset U_j$ are open,
$\varkappa:U'_2\to U'_1$ is an exact symplectomorphism mapping
$L_2$ to $L_1$,
and $B\in I^{\comp}_h(\varkappa)$, $B'\in I^{\comp}_h(\varkappa^{-1})$.
Then for each $a_1\in S^{\comp}_{L_1,\rho,\rho'}(U_1)$, there exists
$a_2\in S^{\comp}_{L_2,\rho,\rho'}(U_2)$ such that
\begin{equation*}
\begin{aligned}
B'\op_h^{L_1}(a_1)B&=\op_h^{L_2}(a_2)+\mathcal O(h^\infty)_{L^2\to L^2},\\
a_2&=(a_1\circ\varkappa)\sigma_h(B'B)+\mathcal O(h^{1-\rho})_{S^{\comp}_{L_2,\rho,\rho'}(U_2)},\\
\supp a_2&\subset \varkappa^{-1}(\supp a_1).
\end{aligned}
\end{equation*}
\item 
\label{eq:composition_formula}
For each $a,b\in S^{\comp}_{L,\rho,\rho'}(U)$, there exists
$a\#_L b\in S^{\comp}_{L,\rho,\rho'}(U)$ such that
\begin{equation*}
\begin{aligned}
\op_h^L(a)\op_h^L(b)&=\op_h^L(a\#_L b)+\mathcal O(h^\infty)_{L^2\to L^2},\\
a\#_L b&=ab+\mathcal O(h^{1-\rho-\rho'})_{S^{\comp}_{L,\rho,\rho'}(U)},\\
\supp (a\#_L b)&\subset \supp a\cap \supp b.
\end{aligned}
\end{equation*}
\end{enumerate_eq}

Finally, define the symbol class
$$S_{L, \rho}^{\comp}(T^*M \setminus 0) \coloneqq \bigcap_{\varepsilon>0} S^{\comp}_{L, \rho + \varepsilon, \varepsilon} (T^*M \setminus 0).$$
When working with this symbol class, we often employ the following notation. 
\begin{notation}\label{notation:epsilon}
We write $f(h) = \cO(h^{\alpha-})$ if $f(h) = \cO(h^{\alpha-\varepsilon})$ for all $\varepsilon>0$.
\end{notation}

%%%%%%%%%%%%%%%%%%%%%%%%%%%%%%%%%%%%%%%%%%%%%%%%%%%%%%%%%%%%%%%%%%%%%%%%%%%%%%%%
%%%%%%%%%%%%%%%%%%%%%%%%%%%%%%%%%%%%%%%%%%%%%%%%%%%%%%%%%%%%%%%%%%%%%%%%%%%%%%%%

\section{Totally geodesic submanifolds and orbit closures}\label{section:orbit}

%%%%%%%%%%%%%%%%%%%%%%%%%%%%%%%%%%%%%%%%%%%%%%%%%%%%%%%%%%%%%%%%%%%%%%%%%%%%%%%%
%%%%%%%%%%%%%%%%%%%%%%%%%%%%%%%%%%%%%%%%%%%%%%%%%%%%%%%%%%%%%%%%%%%%%%%%%%%%%%%%

In this section, we classify orbit closures in $F^*M$ under the $AU^\pm$-action, where $A=\{e^{tX}\}$, $U^\pm=\{e^{tU_1^\pm}\}$.
This will prove Theorem \ref{thm:orbit_closure} upon projecting to $S^*M$. 
After proving Theorem \ref{thm:orbit_closure}, we will also discuss what is presently known about totally geodesic submanifolds in higher-dimensional hyperbolic manifolds in \S \ref{section:TGexamples}.

%%%%%%%%%%%%%%%%%%%%%%%%%%%%%%%%%%%%%%%%%%%%%%%%%%%%%%%%%%%%%%%%%%%%%%%%%%%%%%%%
%%%%%%%%%%%%%%%%%%%%%%%%%%%%%%%%%%%%%%%%%%%%%%%%%%%%%%%%%%%%%%%%%%%%%%%%%%%%%%%%

\subsection{Generalities on geodesic submanifolds and their frame bundles}\label{subsection:TGsubmanifolds} 

%%%%%%%%%%%%%%%%%%%%%%%%%%%%%%%%%%%%%%%%%%%%%%%%%%%%%%%%%%%%%%%%%%%%%%%%%%%%%%%%
%%%%%%%%%%%%%%%%%%%%%%%%%%%%%%%%%%%%%%%%%%%%%%%%%%%%%%%%%%%%%%%%%%%%%%%%%%%%%%%%

Throughout this section, we continue to let $(M,g)$ denote a compact hyperbolic $(n+1)$-manifold. 
Recall from \S \ref{subsection:hyperbolic_manifolds}, that we may identify $\bH^{n+1}$ with $G/K$, where $G=\SO_0(1,n+1)$ and $K\subset G$ is the maximal compact subgroup defined in \eqref{eqn:Kequation}.
Given any natural number $2\le \ell\le n+1$, there is an isometric embedding of $\bH^\ell$ into $\bH^{n+1}$ given by setting $x_j=0$ for all $j>\ell$ in \eqref{eqn:hyperbolicspace}.
We call such an embedding the \emph{standard embedding} of $\bH^\ell$ into $\bH^{n+1}$ and denote it by $\bH^\ell_{\mathrm{std}}$.
We use $W_{\ell}$ to denote the subgroup of $G$ defined by
\begin{equation}\label{eqn:Wequation}
W_{\ell} \coloneqq   \left\{ 
\begin{bmatrix}
B&0_{(\ell+1)\times (n-\ell+1)}\\
0_{(n-\ell+1)\times(\ell+1)}&\mathrm{Id}_{(n-\ell+1)\times (n-\ell+1)}\end{bmatrix} : B \in \SO_0(1,\ell) \right\},
\end{equation}
which we call a \emph{standard subgroup} of $G$.
We also use the notation $K_{\ell}=K\cap W_{\ell}\cong\SO(\ell)$ to denote a maximal compact subgroup of $W_{\ell}$, embedded similarly.

With \eqref{eqn:Wequation} in mind, the identification of $\bH^{n+1}$ with $G/K$ yields the left $W_{\ell}$-equivariant identifications
$$\bH^\ell= W_{\ell}/ K_\ell\cong \bH^\ell_{\mathrm{std}}\simeq W_\ell K/K\subseteq G/K\simeq \bH^{n+1}.$$
Moreover, as $G$ acts transitively on isometric copies of $\bH^\ell$ in $\bH^{n+1}$, it follows that totally geodesic embeddings of $\bH^\ell$ in $\bH^{n+1}$ are in one-to-one correspondence with subsets of $G/K$ of the form $gW_\ell K/K$ for elements $g\in G$.
We freely pass between $\bH^{n+1}$ and $G/K$ using these identifications.

Given a hyperbolic $\ell$-manifold $X$, we say that $X$ \emph{totally geodesically immerses} in $M$ if there is a proper immersion $\iota:X\to M$ such that some (equivalently, any) lift of $\iota$ to $\widetilde{\iota}:\bH^\ell\hookrightarrow \bH^{n+1}$ is a totally geodesic embedding.
In this instance, we call $\iota(X)$ a \emph{totally geodesic submanifold} and suppress the reliance on $\iota$ in the sequel.
Though closed geodesics in $M$ fit the above definition, our convention in this paper is that a totally geodesic submanifold always has dimension at least $2$.
Note that the stabilizer of the standard embedding of $\bH^\ell_{\mathrm{std}}\simeq W_{\ell}K/K$ in $G$ is precisely given by the block diagonally embedded
$$N_G(W_{\ell})=\mathrm{S}(\mathrm{O}_0(1,\ell)\times \mathrm{O}(n-\ell+1))\subset G,$$
where $\mathrm{O}_0(1,\ell)$ is the subgroup of $\mathrm{O}(1,\ell)$ of index two preserving $\bH^{\ell}_{\mathrm{std}}$.
Therefore the stabilizer of any geodesic plane $gW_{\ell}K/K$ in $G/K$ is given precisely by $N_G(W_{\ell})^g:=gN_G(W_{\ell})g^{-1}$.
This can be deduced from Lemma \ref{lem:Wnormalizer}.
Importantly, the condition that $\iota$ is a proper immersion is equivalent to the condition that $\Gamma\cap N_G(W_{\ell})^g$ is a lattice in $N_G(W_{\ell})^g$ for some (equivalently, any) lift $gW_{\ell}K/K$ of $X$ to $ G/K$.

Given an immersed, totally geodesic $\ell$-submanifold $X\subset M$ and a point $x\in X$, let $F_M(X)$ be the bundles over $X$ given by the subbundle of the frame bundle of $M$ restricted to $X$, $FM\vert_X$, such that the fiber over $x\in X$ is the subset of frames whose first $\ell$ vectors are tangent to $X$. 
This is a principal $\mathrm{S}(\mathrm{O}(\ell)\times \mathrm{O}(n-\ell+1))$-bundle over $X$.
As in \S \ref{subsection:hyperbolic_manifolds}, we identify the frame bundle of $\bH^{n+1}$ with $G$ and such an identification is equivariant for the right $K$- and left $\Gamma$-actions.
In particular, in this way $\Gamma\backslash G$ is identified with $FM$ and $\Gamma\backslash G/K$ is identified with $M$.

Combining the above, a totally geodesic submanifold $X$ is given by a subset $\Gamma\backslash\Gamma gW_{\ell}K/K$ of $\Gamma\backslash G$, which is the image of $gW_{\ell}K/K$ under the map from the universal cover.
It is straightforward to verify that $gN_G(W_{\ell})$ is precisely the bundle $F_{\bH^{n+1}}(gW_{\ell}K/K)$.
The naturality of these bundles with respect to the left and right actions then yields that $F_{M}(X)=\Gamma\backslash\Gamma gN_G(W_{\ell})\subseteq \Gamma\backslash G\simeq FM$.

Continuing to follow \S \ref{subsection:hyperbolic_manifolds}, we identify the coframe bundle $F^*M$ and the frame bundle $FM$ using the Riemannian metric, and similarly the cosphere bundle $S^*M$ and the sphere bundle $SM$.
These identifications are equivariant for the actions described in \S \ref{subsection:hyperbolic_manifolds}.
We also abusively use the same notation for maps when passing between the frame (resp. sphere) and coframe (resp. cosphere) bundles, e.g., we continue to denote by $\pi_S:FM\to SM$, the natural projection.

Recall that the right quotient of $FM\simeq \Gamma\backslash G$ by $K_0$ is identified with the sphere bundle $SM$.
In particular, the map $\pi_S:FM\to SM$ is the natural quotient map, which we also abusively notate the same before and after the corresponding equivariant identification.
Therefore
$$\pi_S\left(F_{\bH^{n+1}}(gW_{\ell}K/K)\right)=\pi_S\left(\Gamma\backslash\Gamma gN_G(W_{\ell})\right)=\Gamma\backslash\Gamma g N_G(W_{\ell})K_0/K_0=\Gamma\backslash\Gamma g W_{\ell}K_0/K_0,$$
where in Lemma \ref{lem:Wnormalizer}, we will compute $N_G(W_{\ell})$ explicitly and in Corollary \ref{cor:WN}, verify that it satisfies $W_{\ell}K_0=N_G(W_{\ell})K_0$ and similarly $W_{\ell}K=N_G(W_{\ell})K$.
Due to the naturality above, the subset $g W_{\ell}K_0/K_0$ is precisely the image of the immersion of the sphere bundle, $SX$, of $X$ into $SM$ induced by inclusion. 
In this way, there is a correspondence between closed subsets of $\Gamma \backslash G$ of the form $\Gamma\backslash\Gamma gS$ for some $W_{\ell}\subseteq S\subseteq N_G(W_{\ell})$ and immersed, totally geodesic $\ell$-dimensional submanifolds $X$ of $M$, as well as immersions of their sphere bundles (see \cite[Lemma 3.2]{BFMS} for more on this).

%%%%%%%%%%%%%%%%%%%%%%%%%%%%%%%%%%%%%%%%%%%%%%%%%%%%%%%%%%%%%%%%%%%%%%%%%%%%%%%%
%%%%%%%%%%%%%%%%%%%%%%%%%%%%%%%%%%%%%%%%%%%%%%%%%%%%%%%%%%%%%%%%%%%%%%%%%%%%%%%%

\subsection{Group theoretic preliminaries}\label{subsection:groups}

%%%%%%%%%%%%%%%%%%%%%%%%%%%%%%%%%%%%%%%%%%%%%%%%%%%%%%%%%%%%%%%%%%%%%%%%%%%%%%%%
%%%%%%%%%%%%%%%%%%%%%%%%%%%%%%%%%%%%%%%%%%%%%%%%%%%%%%%%%%%%%%%%%%%%%%%%%%%%%%%%

In this subsection, we collect some group-theoretic preliminaries which we will need later in this section.
Some of the statements herein may seem unmotivated but their utility will become clear by the end of the section.

Continuing the notation from \S \ref{subsection:hyperbolic_manifolds}, we define 
\begin{align}\label{eqn:Aequation}
A& \coloneqq \langle e^{tX}: t\in\R\rangle=\left\{ 
\begin{bmatrix}
\cosh(t)&\sinh(t)&0_{1\times n}\\
\sinh(t)&\cosh(t)&0_{1\times n}&\\
0_{n\times 1}&0_{n\times 1}&\mathrm{Id}_{n\times n}
\end{bmatrix} : t\in\R \right\},\\\nonumber
U^{\pm}&\coloneqq\langle e^{sU_1^\pm}:s\in \R\rangle=\left\{ 
\begin{bmatrix}
1+s^2/2&\mp s^2/2&s&\cdots&0\\
\pm s^2/2&1-s^2/2&\pm s&\cdots&0\\
s& \mp s&1&\cdots&0\\
\vdots&\vdots&\vdots&\ddots&\vdots\\
0&0&0&\cdots&1
\end{bmatrix} : s\in\R \right\}.
\end{align}
Several times throughout we will use the $KAN$-decomposition of $G$, that is, $G=KAN^\pm$ where $K$ is the maximal compact, $A$ as in \eqref{eqn:Aequation}, and either choice of $N^\pm$ is the full horospherical group given by
\begin{align}
N^\pm&\coloneqq\langle e^{s_1U_1^\pm}\dots e^{s_nU_n^\pm}:(s_1,\dots,s_n)\in \R^n\rangle \nonumber\\
&=\left\{ 
\begin{bmatrix}
1+|\vec{v}|^2/2&\mp |\vec{v}|^2/2&\vec{v}&\\
\pm |\vec{v}|^2/2&1-|\vec{v}|^2/2&\pm\vec{v}\\
\vec{v}^T&\mp\vec{v}^T&\mathrm{Id}_{n\times n}
\end{bmatrix} : \vec{v}\in\R^n \right\}\cong\R^n,\label{eqn:horospherical}
\end{align}
where in \eqref{eqn:horospherical} we consider one entry of $\vec{v}$ (resp. $\vec{v}^T$) per column (resp. row).

A choice of minimal parabolic $P^\pm$ which has unipotent radical $N^\pm$ is therefore given by $P^\pm=K_0AN^\pm$, with $K_0$ as in \eqref{eqn:K0defn}; all other minimal parabolics are $G$-conjugate to $P^\pm$.
This decomposition of $P^\pm$ is known as the Langland's decomposition, where in the literature typically one uses ``$M$'' as opposed to ``$K_0$''.
Importantly, $N_G(N^\pm)=P^\pm$ and there is an isomorphism of $N^{\pm}$ with $\R^n$, induced by sending an element $b\in N^{\pm}$ to the corresponding vector $\vec{v}$, which intertwines the action by conjugation of $K_0$ on $N^{\pm}$ and the linear action of $K_0\cong \SO(n)$ on $\R^n$.
Moreover, under such an isomorphism, the conjugation action by $A$ on $N^{\pm}$ is intertwined with the action of $\R$ on $\R^n$ by scaling; precisely, by scaling by a factor of $\cosh(t)+\sinh(t) = e^t$.

We use these facts to deduce a few group theoretic lemmas, which we require in the sequel.
All of these facts are standard, but we provide proofs for completeness.

\begin{lemma}\label{lem:Unormalizer}
If $U=U^\pm$ and $K_U=(K\cap N_G(U^\pm))$, then $N_G(U^\pm)=N^\pm AK_U$.
Moreover, $K_U$ is explicitly given by the block diagonal embedding of $\mathrm{S}(\mathrm{O}(1)\times \mathrm{O}(n-1))$ in $K_0\cong \SO(n)$ in \eqref{eqn:K0defn}.
\end{lemma}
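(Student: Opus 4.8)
The strategy is to compute the normalizer $N_G(U^\pm)$ by decomposing it with respect to the $KAN^\pm$-decomposition, using the fact (recalled above) that $N_G(N^\pm)=P^\pm=K_0AN^\pm$. I will treat the case $U=U^+$, the case $U^-$ being identical after the obvious sign changes. The containment $N^\pm A K_U\subseteq N_G(U^\pm)$ is the easy direction: $N^\pm$ normalizes $U^\pm$ since $N^\pm$ is abelian (see \eqref{eqn:horospherical}), $A$ normalizes $U^\pm$ by the commutator relation $[X,U_1^+]=U_1^+$ in \eqref{eq:commutator} (equivalently, conjugation by $A$ scales the $\vec v$-vector, and $U^+$ corresponds to the coordinate axis $\R\vec e_1\subset\R^n$, which is preserved by scaling), and $K_U=K\cap N_G(U^+)$ normalizes $U^+$ by definition. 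So the content is the reverse containment together with the explicit identification of $K_U$.

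\textbf{Key steps.} First I would observe that $U^+\subseteq N^+$, so any $g\in N_G(U^+)$ satisfies $gU^+g^{-1}\subseteq N^+$, but $g N^+ g^{-1}$ need not equal $N^+$ in general; to fix this I instead argue as follows. Since $U^+$ is a one-parameter unipotent subgroup of the abelian group $N^+\cong\R^n$ corresponding to the line $\R\vec e_1$, and since $N_G(U^+)\supseteq N^+$, it suffices to understand $N_G(U^+)/N^+$ acting on $G/N^+$, but more efficiently: let $g\in N_G(U^+)$ and write its $KAN^+$-decomposition $g=kan$ with $k\in K$, $a\in A$, $n\in N^+$. Because $n\in N^+\subseteq N_G(U^+)$ and $a\in A\subseteq N_G(U^+)$, we get $k=gan^{-1}\cdot(\text{stuff})$... more precisely $g n^{-1}a^{-1}=k$, and since $n^{-1},a^{-1}\in N_G(U^+)$ we conclude $k\in N_G(U^+)$, hence $k\in K\cap N_G(U^+)=K_U$. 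This shows $N_G(U^+)\subseteq K_U A N^+ = N^+ A K_U$ (rearranging using that $K_U$ normalizes $A N^+$, or simply $N_G(U^+)=K_U\cdot(AN^+)$ as a set since $AN^+$ is a subgroup normalized by $K_U$), giving the reverse containment.

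\textbf{Identifying $K_U$.} It remains to compute $K_U=K\cap N_G(U^+)$ explicitly. Under the isomorphism $N^+\cong\R^n$ intertwining the conjugation action of $K_0\cong\SO(n)$ with the standard linear action, $U^+$ corresponds to the coordinate line $\R\vec e_1$. An element $k\in K$ normalizes $U^+$ iff conjugation by $k$ preserves this line; I would first check that $K\cap N_G(U^+)\subseteq K_0$ — this uses that elements of $K\setminus K_0$ move $\vec e_1$ (the geodesic direction) and hence cannot normalize the horocyclic direction $U^+$, which can be verified directly from the matrix forms in \eqref{eqn:Kequation}, \eqref{eqn:K0defn}, and \eqref{eqn:Aequation} together with the $KAN$ uniqueness. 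Then within $K_0\cong\SO(n)$ acting standardly on $\R^n$, the stabilizer of the line $\R\vec e_1$ is exactly the block-diagonal $\mathrm{S}(\mathrm{O}(1)\times\mathrm{O}(n-1))$ (those orthogonal transformations fixing $\vec e_1$ up to sign and acting arbitrarily on $\vec e_1^\perp$, with total determinant one as elements of $\SO(n)$). This yields the claimed description of $K_U$ and completes the proof.

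\textbf{Main obstacle.} The routine part is the $KAN$-bookkeeping; the one point requiring care is justifying that $K\cap N_G(U^+)\subseteq K_0$ rather than some larger subgroup of $K$ — i.e., ruling out elements of $K$ that rotate $\vec e_1$ into $\vec e_1^\perp$ yet somehow still normalize $U^+$ after combining with an $A$-factor. I expect this to follow cleanly from the uniqueness of the $KAN^+$-decomposition: if $k\in K$ normalizes $U^+\subseteq N^+$ then $kU^+k^{-1}\subseteq K\cap (\text{conjugates of }N^+)$, and one shows $kU^+k^{-1}\subseteq N^+$ forces $k\in N_G(N^+)\cap K=P^+\cap K=K_0$, using $N_G(N^+)=P^+$.
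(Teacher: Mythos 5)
Your proposal is correct and essentially reproduces the paper's argument: the same easy inclusion $N^\pm AK_U\subseteq N_G(U^\pm)$, the same $KAN$-decomposition step showing the $K$-factor of any $g\in N_G(U^\pm)$ lies in $K_U$, and the same identification of $K_U$ as the stabilizer in $K_0\cong\SO(n)$ of the line $\R\vec{e}_1\subset\R^n\cong N^\pm$, giving $\mathrm{S}(\mathrm{O}(1)\times\mathrm{O}(n-1))$. The only cosmetic difference is the justification that $K\cap N_G(U^+)\subseteq K_0$: the paper uses that $U^+$ fixes $\infty=(1,1,0,\dots,0)\in\partial\bH^{n+1}$ (indeed as its unique boundary fixed point, so any normalizing $k\in K$ fixes $\infty$ and hence lies in $K_0$), while your route via $N_G(N^+)=P^+$ relies on the equivalent fact that a nontrivial unipotent lies in a unique horospherical subgroup, so the two justifications amount to the same input.
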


We remark that the lack of a superscript in $K_U$ is intentional since this group is the same regardless of the choice of $U^\pm$.

\begin{proof}
We give the argument for $U=U^+$, $N=N^+$, as the argument for $U^-$, $N^-$ is similar.

We first prove the statement in the first sentence.
By the remarks immediately preceding this lemma and the fact that $N\cong\R^n$ is abelian, it is clear that $N AK_U\subseteq N_G(U)$.
For the reverse inclusion, let $g\in N_G(U)$, then, using the $KAN$-decomposition, we may write $g=kab$ for $k\in K$, $a\in A$, $b\in N$.\footnote{Throughout, we make the unfortunate notation choice of $b\in N$ to avoid overloading the letter $n$ for dimension.}
As $AN\subset N_G(U)$, it follows that $g\in N_G(U)$ if and only if $k\in N_G(U)$.
Therefore $k\in N_G(U)\cap K=K_U$ and hence $N_G(U)=K_UAN=N AK_U$, as required.
Here, the second equality comes from taking inverses.

For the explicit description in the second sentence, we first reduce to showing that $K_U\subset K_0$.
To this end, note that $U\subset N$ stabilizes the point $\infty=(1,1,0,\dots,0)\in\partial\bH^{n+1}$.
Therefore, if $k\in N_G(U)$ then $k\cdot\infty=\infty$, where this action is the action of $K$ on $\partial \bH^{n+1}$ induced from the linear action of $G$ on $\R^{1,n+1}$.
In particular, the description of $K$ in \eqref{eqn:Kequation} shows immediately that $k\in N_G(U)$ implies $k\in K_0$. 
The result then follows from the remarks preceding this lemma.
Indeed, under the equivariant isomorphism of $N$ with $\R^n$, $U$ is sent to the $1$-dimensional subspace of vectors for which all coordinates are zero except possibly the first.
It is then clear that the linear action of $k\in K_0$ on $\R^n$ stabilizes this subspace if and only if $k\in K_U$, for $K_U$ as described.
\end{proof}

\begin{lemma}\label{lem:Wnormalizer}
Fix any $\ell\ge 2$ then 
\begin{equation}\label{eqn:Wnormalizer}
N_G(W_{\ell})=\mathrm{S}(\mathrm{O}_0(1,\ell)\times \mathrm{O}(n-\ell+1))=(\mathrm{O}(1,\ell)\times \mathrm{O}(n-\ell+1))\cap G,
\end{equation}
where $\mathrm{O}_0(1,\ell)$ is block embedded in the upper lefthand corner of $\mathrm{GL}(n+2,\R)$ and $\mathrm{O}(n-\ell+1)$ is block embedded in the lower righthand corner of $\mathrm{GL}(n+2,\R)$.
\end{lemma}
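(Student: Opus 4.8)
The plan is to compute $N_G(W_\ell)$ directly by exploiting the description of $W_\ell$ as the stabilizer of the standard copy $\bH^\ell_{\mathrm{std}}\subset\bH^{n+1}$, together with linear algebra in $\R^{1,n+1}$. Recall that $W_\ell\cong\SO_0(1,\ell)$ acts on $\R^{1,n+1}=\R^{1,\ell}\oplus\R^{n-\ell+1}$ by the identity on the second summand and the standard action on the first. Write $V_1\coloneqq\mathrm{span}(\vec e_0,\dots,\vec e_\ell)$ and $V_2\coloneqq\mathrm{span}(\vec e_{\ell+1},\dots,\vec e_{n+1})$, so that $V_1$ is the subspace on which the Minkowski form has signature $(1,\ell)$ and $V_2$ is its orthogonal complement, which is positive definite. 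The containment $\mathrm{S}(\mathrm{O}_0(1,\ell)\times\mathrm{O}(n-\ell+1))\subseteq N_G(W_\ell)$ is immediate: any block-diagonal element preserving each $V_i$ conjugates $W_\ell$ into itself since $\mathrm{O}_0(1,\ell)$ normalizes $\SO_0(1,\ell)$ inside $\mathrm{O}(1,\ell)$, and the $\mathrm{O}(n-\ell+1)$ block commutes with $W_\ell$ entirely. One should double-check that such an element indeed lies in $G=\SO_0(1,n+1)$, i.e. has positive $(0,0)$-entry and determinant one — this is exactly the content of the ``$\mathrm S(\cdots)$'' and ``$\cap G$'' decorations, and follows because the $\mathrm{O}_0$ subscript forces $B_{0,0}>0$.

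For the reverse inclusion, suppose $g\in N_G(W_\ell)$. The key observation is that $V_1$ and $V_2$ are characterized intrinsically in terms of the $W_\ell$-action: $V_2$ is the subspace of vectors fixed by all of $W_\ell$ (the fixed-point set $(\R^{1,n+1})^{W_\ell}$), and $V_1=V_2^{\perp}$ with respect to the Minkowski form. Indeed $\SO_0(1,\ell)$ acting standardly on $\R^{1,\ell}$ has no nonzero fixed vectors once $\ell\ge 2$ (a fixed vector would be fixed by a maximal compact $\SO(\ell)$, forcing it into the $\R\vec e_0$ line, but $\SO_0(1,\ell)$ does not fix $\vec e_0$). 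Now if $g$ normalizes $W_\ell$, then $g$ permutes the fixed-point set: for $v\in V_2$ and $w\in W_\ell$ we have $w(gv)=g(g^{-1}wg)v=g\,v$ since $g^{-1}wg\in W_\ell$ fixes $v$; hence $gV_2\subseteq V_2$, and by dimension count $gV_2=V_2$. Since $g\in G$ preserves the Minkowski form, it also preserves $V_2^\perp=V_1$. Therefore $g$ is block-diagonal with respect to $V_1\oplus V_2$, the $V_1$-block lies in $\mathrm{O}(1,\ell)$ and the $V_2$-block in $\mathrm{O}(n-\ell+1)$, and $g\in G$; moreover the $V_1$-block must actually preserve $\bH^\ell_{\mathrm{std}}$ rather than swap its two sheets (it has positive $(0,0)$-entry because $g\in\SO_0(1,n+1)$ does, and the $V_2$-block does not involve the $0$-coordinate), so it lies in $\mathrm{O}_0(1,\ell)$. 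This gives $g\in(\mathrm{O}(1,\ell)\times\mathrm{O}(n-\ell+1))\cap G=\mathrm{S}(\mathrm{O}_0(1,\ell)\times\mathrm{O}(n-\ell+1))$, completing the proof.

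The two displayed descriptions of $N_G(W_\ell)$ in \eqref{eqn:Wnormalizer} agree because an element of $(\mathrm{O}(1,\ell)\times\mathrm{O}(n-\ell+1))$ lies in $G=\SO_0(1,n+1)$ precisely when its total determinant is $1$ and its $(0,0)$-entry is positive; the positivity of the $(0,0)$-entry forces the $\mathrm{O}(1,\ell)$-factor into $\mathrm{O}_0(1,\ell)$ (the index-two subgroup preserving $\bH^\ell_{\mathrm{std}}$), and then the determinant-one condition is exactly the ``$\mathrm S$'' constraint relating the signs of the determinants of the two blocks. I would spell this bookkeeping out in a sentence or two.

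The main obstacle — really the only non-formal point — is establishing that $V_2$ is intrinsically the $W_\ell$-fixed subspace, i.e. that $\SO_0(1,\ell)$ has no nonzero fixed vectors in $\R^{1,\ell}$ for $\ell\ge 2$; everything else is then forced by the compatibility of $g$ with the bilinear form. This is where the hypothesis $\ell\ge 2$ is used (for $\ell=1$, $\SO_0(1,1)$ is a one-parameter diagonalizable group with no fixed vectors either, but the ``$\mathrm{O}$'' structure and the normalizer both degenerate, which is why the paper's convention excludes $\ell=1$). I expect this to take only a few lines using the maximal-compact argument sketched above, so the proof should be short.
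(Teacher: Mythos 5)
Your proof is correct, and it takes a genuinely different route from the paper's. The paper argues via the structure theory of $G$: it writes $g=kba$ using the $KAN$-decomposition, discards $a\in A\subset W_\ell$, and then pins down $k$ and $b$ separately through the action on the boundary $\partial\bH^{n+1}$ (the stabilizer of $\infty$, the parabolic $P^+$, the equivariant identification of $N^+$ with $\R^n$, and an explicit computation of $b\cdot\infty^-$ to rule out $b\notin N\cap W_\ell$). You instead work linearly in $\R^{1,n+1}$: you identify $V_2=\mathrm{span}(\vec e_{\ell+1},\dots,\vec e_{n+1})$ intrinsically as the $W_\ell$-fixed subspace (using that $\SO_0(1,\ell)$ has no nonzero fixed vectors in $\R^{1,\ell}$ when $\ell\ge 2$, via the maximal compact $\SO(\ell)$ plus a boost), observe that any $g\in N_G(W_\ell)$ must preserve this fixed subspace and hence its Minkowski-orthogonal complement $V_1$, and conclude block-diagonality; the bookkeeping with the $(0,0)$-entry and determinants then matches the two descriptions in \eqref{eqn:Wnormalizer} exactly as the paper intends. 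Your argument is shorter and essentially representation-theoretic, and it avoids the boundary computations entirely; what the paper's route buys is that the ingredients it develops (the $KAN$-decomposition, the stabilizer of $\infty$, the identification of $N^\pm$ with $\R^n$ and its $K_0$-equivariance) are reused verbatim in the neighboring Lemma \ref{lem:Unormalizer}, Corollary \ref{cor:Wnormalizerdecomposition}, and the tube computations of \S\ref{subsection:nowheredense}, so the two proofs fit different expository economies. The only points worth tightening in your write-up are to say explicitly that $gV_2=V_2$ (not just $\subseteq$) by invertibility, and that $g(V_2^\perp)=(gV_2)^\perp$ because $g$ preserves the Minkowski form; both are one-line remarks you already implicitly use.
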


\begin{proof}
The second equality in \eqref{eqn:Wnormalizer} is straightforward from the definition of $G$, so it suffices to show the first. 
To this end, we define 
$$H=(\mathrm{O}(1,\ell)\times \mathrm{O}(n-\ell+1))\cap G,$$
and verify that this is the full normalizer of $W_\ell$ in $G$.

A straightforward computation shows that $H\subseteq N_G(W_\ell)$, so it suffices to prove the reverse inclusion.
Since $A$ normalizes $N=N^+$, the $KAN$-decomposition of $G$ can be written as $G=KNA$.
Let $g\in G$ and write $g=kba$.
As $A\subset W_{\ell}$, one sees that $g\in N_G(W_{\ell})$ if and only if $kb\in N_G(W_{\ell})$.

We first show that $kb\in N_G(W_\ell)$ implies $k\in K\cap H$.
Indeed, recall that $W_\ell$ stabilizes $\bH^{\ell}_{\mathrm{std}}$ as defined at the beginning of \S \ref{subsection:TGsubmanifolds} and hence stabilizes $\partial \bH^{\ell}_{\mathrm{std}}$ under the induced action of $G$ on $\partial\bH^{n+1}$.
Using \eqref{eqn:hyperbolicspaceboundary}, one verifies that $\partial \bH^{\ell}_{\mathrm{std}}$ is identified with the subset of $\partial \bH^{n+1}$ where $x_j=0$ for all $j>\ell$.
Let $\infty=(1,1,0,\dots,0)\in \partial \bH^{\ell}_{\mathrm{std}}$ as in the proof of Lemma \ref{lem:Unormalizer}, which is stabilized by $N$.
Then $kb\in N_G(W_\ell)$ implies that $kb\cdot \infty=k\cdot \infty\in\partial \bH^{\ell}_{\mathrm{std}}$.
As $K_\ell$ acts transitively on $\partial \bH^{\ell}_{\mathrm{std}}$, there exists $k'\in K_\ell$ for which $k'k\cdot \infty=\infty$.
Therefore $k'k\in\mathrm{stab}_G(\infty)$, which is precisely the parabolic subgroup $P^+$.
In particular, $k'k\in K_0=P^+\cap K$ and hence $k'k$ normalize $N$. 
Again, using the equivariant identification of $N$ with $\R^n$, we see that $W_\ell\cap N$ is identified with the vectors for which the first $\ell$ coordinates are possibly non-zero and the rest are zero. 
The stabilizer of such a subset is precisely given by $\mathrm{S}(\mathrm{O}(\ell-1)\times \mathrm{O}(n-\ell+1))= K_0\cap H$, block diagonally embedded in $K_0$. 
In particular, $k'k\in K_0\cap H$, and since $K_\ell\subset K\cap H$ it follows that $k\in K_\ell(K_0\cap H)\subseteq K\cap H$, as claimed.

Finally, we show that $b\in (N\cap W_\ell)\subset H$ from which it will follow that $N_G(W_\ell)=H$ as claimed.
Assume $b\notin (N\cap W_\ell)$.
Then using \eqref{eqn:horospherical}, $b$ corresponds to a vector $\vec{v}=(v_1,\dots,v_n)\in\R^n$ such that some $v_j\neq 0$ for $j\ge \ell$.
Letting $\infty^-=(1,-1,0,\dots,0)\in\partial \bH^\ell_{\mathrm{std}}$, then under the identification of $\partial \bH^{n+1}$ given in \eqref{eqn:hyperbolicspaceboundary}, one computes that 
$$b\cdot \infty^-=\left(1,\frac{|\vec{v}|^2-1}{|\vec{v}|^2+1},\dots,\frac{2v_i}{|\vec{v}|^2+1},\dots\right).$$
In particular, $x_{j+1}=2v_j/(|\vec{v}|^2+1)\neq 0$ and, since $j+1>\ell$, the point $b\cdot\infty^-\notin\partial \bH^{\ell}_{\mathrm{std}}$.
Hence $b\notin N_G(W_\ell)$, as required.
\end{proof}

\begin{corollary}\label{cor:Wnormalizerdecomposition}
For any $g\in N_G(W_\ell)$, we may write $g=wk$ for some $w\in W_\ell$ and some $k\in K_0$.
Moreover, either $k\in C_G(W_\ell)$ or $k=k_\ell k_0$ where 
\begin{equation}\label{eqn:k0matrix}
k_\ell=\begin{bmatrix}
\mathrm{Id}_{\ell\times \ell}&0_{\ell\times 1}&0_{\ell\times (n-\ell+1)}\\
0_{1\times \ell}&-1&0_{1\times (n-\ell+1)}\\
0_{(n-\ell+1)\times \ell}&0_{(n-\ell+1)\times 1}&\mathrm{Id}_{(n-\ell+1)\times (n-\ell+1)}
\end{bmatrix},
\end{equation}
and $k_0\in\mathrm{O}(n-\ell+1)\subset K_0$ is block embedded as in Lemma \ref{lem:Wnormalizer} and such that $\det(k_0)=-1$.
\end{corollary}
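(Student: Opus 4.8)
The plan is to combine the explicit block-diagonal description of $N_G(W_\ell)$ from Lemma \ref{lem:Wnormalizer} with a single determinant-correcting reflection. Throughout, all matrices will be block-diagonal for the coordinate splitting $\{0,\dots,\ell\}\sqcup\{\ell+1,\dots,n+1\}$, i.e.\ of the form $\begin{bmatrix}*&0\\0&*\end{bmatrix}$ with blocks of sizes $\ell+1$ and $n-\ell+1$. By Lemma \ref{lem:Wnormalizer} I may write $g=\begin{bmatrix}B&0\\0&C\end{bmatrix}$ with $B\in\mathrm{O}_0(1,\ell)$ and $C\in\mathrm{O}(n-\ell+1)$. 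Since $g\in G\subseteq\SL(n+2,\R)$ we have $\det B\cdot\det C=1$, and as $\det B,\det C\in\{\pm1\}$ this forces $\det B=\det C=:\varepsilon$. I treat the cases $\varepsilon=1$ and $\varepsilon=-1$ separately.

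If $\varepsilon=1$: then $\det B=1$ together with $B_{0,0}>0$ (the defining condition of $\mathrm{O}_0(1,\ell)$) gives $B\in\SO_0(1,\ell)$, so $w:=\begin{bmatrix}B&0\\0&\mathrm{Id}\end{bmatrix}\in W_\ell$ by \eqref{eqn:Wequation}; with $k:=\begin{bmatrix}\mathrm{Id}&0\\0&C\end{bmatrix}$ we get $g=wk$. Here $k\in K_0$: it preserves the Minkowski form, $k_{0,0}=1>0$, $\det k=\det C=1$, and it fixes $\vec{e}_0$ and $\vec{e}_1$ since (as $\ell\ge2$) those coordinates lie in its identity block. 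Moreover $k$ acts trivially on coordinates $0,\dots,\ell$ while $W_\ell$ acts trivially on coordinates $\ell+1,\dots,n+1$, so $k$ commutes with all of $W_\ell$; hence $k\in C_G(W_\ell)$, which is the first alternative of the corollary.

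If $\varepsilon=-1$: let $D=\mathrm{diag}(1,\dots,1,-1)\in\mathrm{O}(1,\ell)$ be the reflection in the last coordinate of the first block, so the matrix $k_\ell$ of \eqref{eqn:k0matrix} is exactly $\begin{bmatrix}D&0\\0&\mathrm{Id}\end{bmatrix}$. Set $w:=\begin{bmatrix}BD&0\\0&\mathrm{Id}\end{bmatrix}$ and $k_0:=\begin{bmatrix}\mathrm{Id}&0\\0&C\end{bmatrix}$, the block embedding of $C\in\mathrm{O}(n-\ell+1)$ as in Lemma \ref{lem:Wnormalizer}, with $\det k_0=\det C=-1$. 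Since $D^2=\mathrm{Id}$ one computes directly that $w(k_\ell k_0)=\begin{bmatrix}B&0\\0&C\end{bmatrix}=g$, so it only remains to check the memberships. For $w$: $B,D\in\mathrm{O}(1,\ell)$ gives $BD\in\mathrm{O}(1,\ell)$; moreover $\det(BD)=(-1)(-1)=1$ and $(BD)_{0,0}=B_{0,0}D_{0,0}=B_{0,0}>0$; therefore $BD\in\SO_0(1,\ell)$ and $w\in W_\ell$. For $k:=k_\ell k_0=\begin{bmatrix}D&0\\0&C\end{bmatrix}$: it preserves the Minkowski form, $k_{0,0}=D_{0,0}=1>0$, $\det k=\det D\cdot\det C=1$, and it fixes $\vec{e}_0$ and $\vec{e}_1$ because $D$ and $C$ each act as the identity on coordinates $0$ and $1$ (again using $\ell\ge2$, which places the $-1$ entry of $D$ in position $\ell\ge2$). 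Hence $k\in K_0$, which is the second alternative, and the proof is complete.

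The argument is essentially determinant bookkeeping, so I anticipate no real obstacle. The only point deserving care is the dual role of the reflection $k_\ell$: right-multiplication by $k_\ell$ converts a determinant-$(-1)$ matrix in the $\mathrm{O}_0(1,\ell)$-factor into an element of $W_\ell$, and the matching determinant sign is exactly what lets the leftover determinant-$(-1)$ factor $C$ be absorbed, together with $k_\ell$, into $K_0$. One must also keep track of the hypothesis $\ell\ge2$, which is used precisely to guarantee that $k_\ell$ and $k_0$ fix both $\vec{e}_0$ and $\vec{e}_1$, so that the resulting $k$ genuinely lies in $K_0$ and not merely in $K$.
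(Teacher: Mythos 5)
Your proof is correct and follows essentially the same route as the paper: both start from the block description of $N_G(W_\ell)$ in Lemma \ref{lem:Wnormalizer}, split into cases according to the common sign of the two block determinants, and use $k_\ell$ (your reflection $D$, block embedded) as the representative of the nontrivial coset of $\SO_0(1,\ell)$ in $\mathrm{O}_0(1,\ell)$ to absorb the sign. Your write-up merely makes explicit the membership checks ($B_{0,0}>0$, $\det=1$, fixing $\vec{e}_0,\vec{e}_1$ via $\ell\ge 2$) that the paper leaves implicit.
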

\begin{proof}
Note that $\SO_0(1, \ell)$ is an index two subgroup of $\mathrm{O}_0(1, \ell)$ and one representative of the non-trivial coset is given by $\SO_0(\ell,1)k_\ell$ with $k_\ell$ as in \eqref{eqn:k0matrix}.
By Lemma \ref{lem:Wnormalizer}, if $g\in N_G(W_\ell)=\mathrm{S}(\mathrm{O}_0(1,\ell)\times \mathrm{O}(n-\ell+1))$, then $g=wk_0$ for some $w\in \mathrm{O}_0(1,\ell)$, $k_0\in  \mathrm{O}(n-\ell+1)$ block embedded as above.
If $\det(w)=1$, then $\det(k_0)=1$ and consequently $w\in W_\ell$ and $k=k_0\in\SO(n-\ell+1)=C_G(W_\ell)$. 
If $\det(w)=-1$, then $\det(k_0)=-1$ and $w=w'k_\ell$ for some $w'\in W_\ell$.
In particular, $g=w'k_\ell k_0$ and we conclude the statement of the lemma.
\end{proof}

The following is now immediate from Corollary \ref{cor:Wnormalizerdecomposition}.

\begin{corollary}\label{cor:WN}
Fix any $\ell\ge 2$, then $W_\ell K_0= N_G(W_\ell)K_0$ and $W_\ell K= N_G(W_\ell)K$.
\end{corollary}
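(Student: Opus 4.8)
The plan is to read both equalities off Corollary \ref{cor:Wnormalizerdecomposition} directly: only the first assertion of that corollary — the existence, for every $g \in N_G(W_\ell)$, of a factorization $g = wk$ with $w \in W_\ell$ and $k \in K_0$ — is required, and what remains is a formal manipulation of products of subgroups. No part of the finer description of the noncentral coset representative $k_\ell k_0$ enters.

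First I would dispatch $W_\ell K_0 = N_G(W_\ell) K_0$. Since every subgroup is contained in its own normalizer, $W_\ell \subseteq N_G(W_\ell)$, and so $W_\ell K_0 \subseteq N_G(W_\ell) K_0$. Conversely, Corollary \ref{cor:Wnormalizerdecomposition} gives $N_G(W_\ell) \subseteq W_\ell K_0$, so, using that $K_0$ is a subgroup (hence $K_0 K_0 = K_0$),
\[N_G(W_\ell) K_0 \subseteq W_\ell K_0 K_0 = W_\ell K_0,\]
and the two inclusions give the first equality.

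The second equality $W_\ell K = N_G(W_\ell) K$ is obtained by the same two steps, now invoking $K_0 \subseteq K$ (immediate from comparing \eqref{eqn:K0defn} with \eqref{eqn:Kequation}): then $N_G(W_\ell) \subseteq W_\ell K_0 \subseteq W_\ell K$, so $N_G(W_\ell) K \subseteq W_\ell K K = W_\ell K$, while $W_\ell K \subseteq N_G(W_\ell) K$ again follows from $W_\ell \subseteq N_G(W_\ell)$. I do not anticipate any real obstacle; the single point worth flagging is that Corollary \ref{cor:Wnormalizerdecomposition} delivers the extra factor $k$ in $K_0$, not merely in $K$, which is exactly what makes the sharper $K_0$-version of the identity — the one actually used in \S\ref{subsection:TGsubmanifolds} to identify $\pi_S\bigl(F_{\bH^{n+1}}(gW_\ell K/K)\bigr)$ with $\Gamma\backslash\Gamma g W_\ell K_0/K_0$ — valid.
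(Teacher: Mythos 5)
Your proposal is correct and matches the paper's approach: the paper declares the corollary "immediate from Corollary \ref{cor:Wnormalizerdecomposition}," and your two-inclusion argument (using $W_\ell\subseteq N_G(W_\ell)$, $N_G(W_\ell)\subseteq W_\ell K_0$ from the decomposition $g=wk$, and $K_0K_0=K_0$, $K_0\subseteq K$) is exactly the routine verification being left implicit. Nothing further is needed.
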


%%%%%%%%%%%%%%%%%%%%%%%%%%%%%%%%%%%%%%%%%%%%%%%%%%%%%%%%%%%%%%%%%%%%%%%%%%%%%%%%
%%%%%%%%%%%%%%%%%%%%%%%%%%%%%%%%%%%%%%%%%%%%%%%%%%%%%%%%%%%%%%%%%%%%%%%%%%%%%%%%

\subsection{Classifying $U^{\pm}$ orbit closures in $F^*M$}\label{subsection:orbitclosures}

%%%%%%%%%%%%%%%%%%%%%%%%%%%%%%%%%%%%%%%%%%%%%%%%%%%%%%%%%%%%%%%%%%%%%%%%%%%%%%%%
%%%%%%%%%%%%%%%%%%%%%%%%%%%%%%%%%%%%%%%%%%%%%%%%%%%%%%%%%%%%%%%%%%%%%%%%%%%%%%%%

Our next goal is to use Ratner's theorems to classify orbit closures of $U^\pm$ in the frame bundle $\Gamma\backslash G$ and thereby the coframe bundle after identification using the Riemannian metric. 
Ratner's Orbit Closure Theorem \cite[Theorem A, Corollary A]{RatnerTop} is the following important theorem, where we again abusively also use the map $\pi^F_\Gamma$ from \S \ref{subsection:hyperbolic_manifolds} to denote the map of frame bundles $G\to \Gamma\backslash G$.

\begin{theorem}[Ratner]\label{thm:Ratner}
Suppose that $D\subseteq G$ is a closed, connected subgroup generated by unipotent elements, $g_0\in G$, and $x_0=\pi^F_\Gamma(g_0)$. 
Then there exists a closed, connected subgroup $L\subseteq G$ such that $D\subseteq L$ and $\overline{x_0 D}=x_0L$ in $\Gamma\backslash G$, $L$ acts ergodically on $x_0L$, and $g_0Lg_0^{-1}\cap \Gamma$ is a lattice in $g_0Lg_0^{-1}$.
\end{theorem}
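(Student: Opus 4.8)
The final statement is Ratner's Orbit Closure Theorem, Theorem~\ref{thm:Ratner}, which is a citation of \cite[Theorem A, Corollary A]{RatnerTop}. Since this is a quoted result rather than one the paper proves itself, I would not attempt to reprove it from scratch—the full proof is Ratner's celebrated measure-classification program, far beyond the scope of a ``proof'' here. Instead, the task is to explain how the statement as phrased follows from the form in which Ratner stated it. So the plan is to record the translation between Ratner's original formulation and the homogeneous-dynamics packaging used above.

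\textbf{First step.} Ratner's Topological Rigidity Theorem (Orbit Closure Theorem) asserts that for a connected subgroup $D \subseteq G$ generated by one-parameter unipotent subgroups (equivalently, by unipotent elements, since in a Lie group the closure of the group generated by all unipotents is connected and generated by one-parameter unipotent subgroups) and any point $x_0 = \Gamma g_0 \in \Gamma\backslash G$, the orbit closure $\overline{x_0 D}$ is itself a homogeneous subset: there is a closed connected subgroup $L$ with $D \subseteq L$, $\overline{x_0 D} = x_0 L$, and the stabilizer $g_0^{-1}\Gamma g_0 \cap L$ (equivalently $\Gamma \cap g_0 L g_0^{-1}$ after conjugating) is a lattice in $L$ (resp.\ $g_0 L g_0^{-1}$), carrying a finite $L$-invariant measure. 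This is precisely the content of \cite[Theorem A]{RatnerTop}; the ergodicity of the $L$-action on $x_0 L$ with respect to this finite invariant measure, and in fact the unique ergodicity of certain associated unipotent subactions, is \cite[Corollary A]{RatnerTop}. So I would simply cite these two results and observe that the statement above is a verbatim repackaging, up to the bookkeeping of whether one conjugates the lattice intersection by $g_0$ on the left or the right.

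\textbf{The only genuine point to check} is the equivalence ``generated by unipotent elements'' versus ``generated by one-parameter unipotent subgroups.'' If $D$ is a closed connected subgroup generated (as an abstract group, topologically) by its unipotent elements, then since $G$ is a real algebraic (or linear Lie) group, every unipotent element $u \in D$ lies on the one-parameter subgroup $\{\exp(t \log u) : t \in \R\} \subseteq D$ ($\log u$ being nilpotent), so $D$ is generated by one-parameter unipotent subgroups and Ratner's hypothesis applies directly. Conversely a subgroup generated by one-parameter unipotent subgroups is generated by unipotent elements. Hence the two formulations coincide, and there is nothing further to prove.

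\textbf{Main obstacle.} There is no real obstacle at the level of this excerpt: the proof ``is'' the citation, and the only substantive content is the remark that Ratner's theorems apply to any closed connected subgroup generated by unipotents, which is exactly how $D$ is given here. In the applications later in \S\ref{subsection:orbitclosures} one will take $D = AU^\pm$ or $D = U^\pm$ (note $A = \{e^{tX}\}$ is \emph{not} unipotent, so when $D$ contains $A$ one must instead invoke Ratner for the unipotent part and combine with the $A$-action, or use the version of Ratner's theorem for subgroups generated by unipotents together with a suitable diagonalizable piece, e.g.\ via \cite{RatnerTop} applied to the subgroup generated by $U^+$ and $U^-$, which does contain $A$ by the commutator relation $[U_i^+, U_i^-] = 2X$ in \eqref{eq:commutator}); but that is a matter for the subsequent subsections, not for the statement of Theorem~\ref{thm:Ratner} itself.
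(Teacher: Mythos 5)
Your proposal is correct and matches the paper exactly: the paper states Theorem~\ref{thm:Ratner} purely as a citation of \cite[Theorem A, Corollary A]{RatnerTop} and offers no proof, so the citation plus the routine translation of hypotheses (unipotent elements versus one-parameter unipotent subgroups, and conjugating the lattice intersection by $g_0$) is all that is required. Your closing remark about $AU^\pm$ is also consistent with the paper, which indeed handles that case separately later (via Proposition~\ref{prop:AUorbits}) rather than through Theorem~\ref{thm:Ratner} directly.
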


Theorem \ref{thm:Ratner} therefore enables us to classify orbit closures both when $D=U^\pm$ and when $D=W_{\ell}$, which is the content of the next two lemmas.
In what follows, given a subgroup $H\subseteq G$, we use $H^\dagger$ to denote the subgroup of $H$ generated by its unipotent elements\footnote{In the homogeneous dynamics literature, the notation more commonly used is $H^+$, however, we avoid this notation to avoid conflict with $U^\pm$.}.
This is a closed, normal subgroup of $H$.

\begin{lemma}\label{lem:Uorbitclosure}
Suppose that $U=U^\pm$ and let $x_0=\pi_\Gamma^F(g_0)$ for some $g_0\in G$.
Then $\overline{x_0 U}=x_0L$ for some closed, connected, reductive subgroup $L$ such that $U\subset L$.
Moreover, there exists some $k\in K_U$, $b\in N^\pm$, and $\ell\ge 2$ for which
$$W_{\ell}\subseteq bkLk^{-1}b^{-1}\subseteq N_G(W_{\ell}),$$
where $W_{\ell}$ is a standard subgroup of $G$.
\end{lemma}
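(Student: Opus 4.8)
The plan is to apply Ratner's Orbit Closure Theorem (Theorem~\ref{thm:Ratner}) with $D = U = U^{\pm}$, obtaining a closed, connected subgroup $L \subseteq G$ with $U \subseteq L$, $\overline{x_0 U} = x_0 L$, and $\Lambda := g_0 L g_0^{-1} \cap \Gamma$ a lattice in $g_0 L g_0^{-1}$. The first task is to show $L$ is reductive. Since $\Gamma$ is a cocompact lattice in $G$, it contains no nontrivial unipotent elements, so $\Lambda$ consists of semisimple elements; as $\Lambda$ is a lattice in the conjugate $g_0 L g_0^{-1}$, its Zariski closure has no nontrivial unipotent radical (a nontrivial unipotent radical would, via the Borel density theorem and the fact that a lattice in a group with nontrivial unipotent radical meets that radical in a lattice, force $\Lambda$ to contain unipotents). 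Hence $g_0 L g_0^{-1}$, and therefore $L$, has trivial unipotent radical, i.e. $L$ is reductive. Since $G = \SO_0(1,n+1)$ has real rank one, the only proper reductive subgroups containing a one-parameter unipotent $U$ are, up to conjugacy, of the form $W_\ell \cdot (\text{compact centralizer})$; more precisely the semisimple part of $L$ must be a conjugate of some $\SO_0(1,\ell)$ for $\ell \geq 2$ (it cannot be trivial because $L \supseteq U$ and a reductive group containing a nontrivial unipotent has noncompact semisimple part, and the rank-one structure forces the noncompact factor to be a single $\SO_0(1,\ell)$ block).

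\textbf{Key steps in order.} First, normalize the position of $U$ inside $L$. Since $U = U^{\pm} \subset N^{\pm}$ and $L$ is reductive containing $U$, decompose $L$ using its own Cartan-type structure: the centralizer/normalizer analysis shows that after conjugating $L$ by an element of $N_G(U) = N^{\pm} A K_U$ (Lemma~\ref{lem:Unormalizer}) — which does not move $U$ out of the relevant horospherical/abelian position — we may arrange the noncompact semisimple factor of $L$ to be exactly the standard $W_\ell' := \SO_0(1,\ell)$ block for the appropriate $\ell$, with $U$ sitting as the first root vector of that block. This is where the explicit matrix descriptions in \eqref{eqn:Aequation}, \eqref{eqn:horospherical} and the identification of $N^{\pm}$ with $\R^n$ intertwining the $K_0$- and $A$-actions are used: $U$ corresponds to the line $\R e_1 \subset \R^n$, and any reductive $L \supseteq U$ with $\ell$-dimensional noncompact part must, after an element of $K_U$ (the stabilizer of that line in $K_0 \cong \SO(n)$) and an element of $N^{\pm}A$, align its noncompact hyperbolic plane with $\bH^\ell_{\mathrm{std}}$. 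Thus there are $k \in K_U$ and $b \in N^{\pm}$ with $b k L k^{-1} b^{-1} \supseteq W_\ell$. Second, for the upper containment $b k L k^{-1} b^{-1} \subseteq N_G(W_\ell)$: the conjugated group is reductive, contains $W_\ell$, and $W_\ell = \SO_0(1,\ell)$ is itself a maximal noncompact-semisimple-rank-one subgroup in the sense that its normalizer $N_G(W_\ell) = \mathrm{S}(\mathrm{O}_0(1,\ell) \times \mathrm{O}(n-\ell+1))$ (Lemma~\ref{lem:Wnormalizer}) already contains every reductive subgroup of $G$ whose noncompact part is exactly $W_\ell$ — any extra reductive directions would have to centralize $W_\ell$ (rank-one prevents enlarging the noncompact part without jumping $\ell$) and the centralizer of $W_\ell$ in $G$ lies in $\SO(n-\ell+1) \subset N_G(W_\ell)$. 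Hence the conjugate of $L$ is trapped between $W_\ell$ and $N_G(W_\ell)$, which is the claim.

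\textbf{Main obstacle.} The delicate point is the second step's rigidity claim: showing that a connected reductive subgroup $L$ of the rank-one group $G = \SO_0(1,n+1)$ that contains a fixed one-parameter unipotent $U$ must — after conjugation by an explicitly controlled element of $N^{\pm} A K_U$ — be squeezed between a \emph{standard} $W_\ell$ and its normalizer. One must carefully argue that $U$ determines the noncompact factor (so that the only freedom is the compact centralizer direction and the choice of $\ell$), and that the conjugating element can be taken in $K_U$ rather than all of $K$; this relies on the boundary-action computations of the kind used in the proofs of Lemmas~\ref{lem:Unormalizer} and~\ref{lem:Wnormalizer} (tracking where $\infty$ and $\infty^-$ go), together with the Langlands/$KAN$ decomposition to absorb the $N^{\pm}A$ part. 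The reductivity of $L$ itself (Borel density plus cocompactness of $\Gamma$ killing unipotents in $\Lambda$) is standard and should go through cleanly.
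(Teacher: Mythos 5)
Your skeleton matches the paper's: Ratner's theorem produces $L$; cocompactness of $\Gamma$ (which forces $\Gamma$ to contain no nontrivial unipotents) rules out the degenerate case so that $L$ is reductive; real rank one forces the subgroup $L^\dagger$ generated by unipotents to be a conjugate of some $W_\ell$; and the real content is controlling the conjugator. Two of your sub-steps differ in mechanism from the paper. For reductivity, the paper does not run a Borel-density/unipotent-radical argument: it cites Shah's dichotomy (\cite{Shah}, Proposition 3.1) that $L$ is either reductive with compact center or unipotent, and kills the unipotent case because $g_0Lg_0^{-1}\cap\Gamma$ must be a lattice while $\Gamma$ has no unipotents \cite{KazMarg}. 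Your route is plausible, but the assertion that a lattice in a group with nontrivial unipotent radical meets that radical in a lattice requires hypotheses and a citation; Shah's statement lets you bypass this. For the upper containment, you argue that the extra reductive directions must centralize $W_\ell$; the paper gets this for free from normality: since $L^\dagger$ is normal in $L$, any $g$ with $gW_\ell g^{-1}=L^\dagger$ gives $W_\ell\subseteq g^{-1}Lg\subseteq N_G(W_\ell)$ in one stroke.

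The genuine gap is at the step you yourself flag as the main obstacle: you assert that the conjugator taking $L^\dagger$ to the standard $W_\ell$ can be chosen of the form $bk$ with $b\in N^\pm$, $k\in K_U$, appealing to unspecified boundary-action and alignment computations, but you never supply the mechanism. The paper's mechanism is short and purely algebraic, and your writeup needs it (or a genuinely carried-out geometric substitute): pick any $g\in G$ with $gW_\ell g^{-1}=L^\dagger$, so $W_\ell\subseteq g^{-1}Lg\subseteq N_G(W_\ell)$; then $g^{-1}Ug$ is a one-parameter unipotent subgroup of $W_\ell$, and since $W_\ell$ acts transitively by conjugation on its one-parameter unipotent subgroups there is $w\in W_\ell$ with $w^{-1}g^{-1}Ugw=U$, i.e.\ $w^{-1}g^{-1}\in N_G(U)=AN^\pm K_U$ by Lemma \ref{lem:Unormalizer}; writing $w^{-1}g^{-1}=abk$ and conjugating the chain by $wa$ (harmless, since both $w$ and $a$ normalize $W_\ell$ and hence $N_G(W_\ell)$) yields exactly $W_\ell\subseteq bkLk^{-1}b^{-1}\subseteq N_G(W_\ell)$. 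Note also that only the $A$- and $W_\ell$-parts get absorbed: the $N^\pm$-part $b$ cannot be removed, so your phrase about "absorbing the $N^\pm A$ part" is misleading (though your final statement correctly retains $b$); the survival of $b$ is precisely why the paper must later work with $AU^\pm$-orbit closures rather than $U^\pm$-orbit closures alone.
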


\begin{proof}
For the first statement, assume that $U$ is any one-parameter unipotent subgroup of $G$.
The existence of $L$ as described is a combination of Ratner's theorem and \cite[Proposition 3.1]{Shah}.
Indeed, in the latter, Shah shows that such an $L$ must either be reductive with compact center or unipotent.
As $\Gamma$ is cocompact, it cannot contain any non-trivial unipotent elements \cite[Lemma 1]{KazMarg} and therefore, if $L$ were unipotent, $g_0Lg_0^{-1}\cap \Gamma$ would be trivial, contradicting that it is a lattice in $g_0Lg_0^{-1}$.
Hence $L$ must be reductive with compact center.

The condition that $U\subset L$ further implies that $L$ is a real rank $1$ subgroup of $G$, that is, contains a conjugate of the torus $A$.
Therefore there exists $\ell\ge 2$ for which $L^\dagger$ is isomorphic to $W_{\ell}$.
As $G$ acts transitively by conjugation on its subgroups isomorphic to $W_{\ell}$, there exists some $g\in G$ for which $gW_{\ell}g^{-1}= L^\dagger\subseteq L$.
Consequently $W_{\ell}\subseteq g^{-1}Lg$ and, as $L^\dagger$ is a normal subgroup of $L$, it follows that $W_{\ell}\subseteq g^{-1}Lg\subseteq N_G(W_{\ell})$.

We now assume that $U=U^\pm$.
The condition that $U\subset L$ implies that $g^{-1}Ug\subset g^{-1}L^\dagger g=W_{\ell}$.
As $W_{\ell}$ acts transitively by conjugation on its one-parameter unipotent subgroups, there exists some $w\in W_{\ell}$ for which $w^{-1}g^{-1}Ugw=U$.
In particular, $w^{-1}g^{-1}\in N_G(U^\pm)=N^\pm AK_U\subset P^\pm$ by Lemma \ref{lem:Unormalizer}.

Since $A$ normalizes $N^\pm$, we also have $N_G(U^\pm)=A N^\pm K_U$ and therefore we may write $w^{-1}g^{-1}=abk$ for some  $a\in A$, $b\in N^\pm$, and $k\in K_U$.
As conjugation by $w$ and $a$ preserves $W_\ell$, we get the following chain of inclusions
$$a^{-1}w^{-1}W_{\ell}wa=W_{\ell}\subseteq a^{-1}w^{-1}g^{-1}L gwa\subseteq a^{-1}w^{-1}N_G(W_{\ell})wa=N_G(W_{\ell}),$$
and therefore it follows that
$$W_\ell\subseteq a^{-1}w^{-1}g^{-1}Lgwa=bkL(bk)^{-1}\subseteq N_G(W_\ell).$$
This is the desired result.
\end{proof}

For orbit closures under actions of standard subgroups, we have the following lemma, which is similar to the proof of \cite[Lemma 3.2]{BFMS}.
However, we require a slightly more refined version, so we recall the argument for completeness.

\begin{lemma}\label{lem:Worbitclosure}
Fix $\ell\ge 2$, let $W_{\ell}$ be a standard subgroup of $G$, and let $x_0=\pi_\Gamma^F(g_0)$ for some $g_0\in G$.
Then $\overline{x_0 W_{\ell}}=x_0H$ for some closed, connected, reductive subgroup $H$ such that $W_\ell \subseteq H$.
In particular, there exists some $k \in K_0$ and some $\ell'\ge \ell$ such that
$$W_{\ell'}\subseteq kHk^{-1}\subseteq N_G(W_{\ell'}).$$
Moreover, $k$ has the form of Corollary \ref{cor:Wnormalizerdecomposition}, that is, either $k\in C_G(W_\ell)\cong \SO(n-\ell+1)$ or $k=k_\ell k_0$ for $k_\ell$ as in \eqref{eqn:K0defn} and some $k_0\in \mathrm{O}(n-\ell+1)$.
\end{lemma}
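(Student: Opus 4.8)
The plan is to mimic the proof of Lemma \ref{lem:Uorbitclosure}, using Ratner's Orbit Closure Theorem (Theorem \ref{thm:Ratner}) together with Shah's structural result \cite[Proposition 3.1]{Shah} and the fact that $\Gamma$ is cocompact hence unipotent-free. Since $W_\ell$ is generated by unipotents and is closed and connected, Theorem \ref{thm:Ratner} applies directly with $D=W_\ell$: there is a closed connected subgroup $H\subseteq G$ with $W_\ell\subseteq H$, $\overline{x_0W_\ell}=x_0H$, $H$ acting ergodically on $x_0H$, and $g_0Hg_0^{-1}\cap\Gamma$ a lattice in $g_0Hg_0^{-1}$. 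By \cite[Proposition 3.1]{Shah}, $H$ is either unipotent or reductive with compact center; the unipotent case is excluded exactly as in Lemma \ref{lem:Uorbitclosure}, since $g_0Hg_0^{-1}\cap\Gamma$ would then be trivial by \cite[Lemma 1]{KazMarg}, contradicting that it is a lattice. So $H$ is reductive with compact center.

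Next I would pin down $H$ up to conjugacy. Since $W_\ell\subseteq H$ and $W_\ell$ contains the torus $A$, the subgroup $H$ has real rank $1$; its semisimple part $H^\dagger$ (the subgroup generated by unipotents) is then a connected real rank $1$ subgroup of $G=\SO_0(1,n+1)$ containing $W_\ell$, hence isomorphic to some $W_{\ell'}$ with $\ell'\ge\ell$. As $G$ acts transitively by conjugation on its subgroups isomorphic to $W_{\ell'}$, there is $g\in G$ with $gW_{\ell'}g^{-1}=H^\dagger$, and since $H^\dagger\trianglelefteq H$ we get $W_{\ell'}\subseteq g^{-1}Hg\subseteq N_G(W_{\ell'})$. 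This is the statement up to replacing $g$ by an element of the stated form.

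The remaining—and most delicate—point is to arrange that the conjugating element can be taken to lie in $K_0$ and to have the precise shape from Corollary \ref{cor:Wnormalizerdecomposition}. Here I would exploit that $W_\ell\subseteq g^{-1}Hg$ as well. From $gW_{\ell'}g^{-1}=H^\dagger\supseteq W_\ell$ we get $g^{-1}W_\ell g\subseteq W_{\ell'}$, so $g^{-1}W_\ell g$ is a standard-type subgroup inside $W_{\ell'}$; since $W_{\ell'}$ acts transitively by conjugation on its $\SO_0(1,\ell)$-subgroups, there is $w\in W_{\ell'}$ with $w^{-1}g^{-1}W_\ell g w=W_\ell$, i.e.\ $gw\in N_G(W_\ell)$. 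By Corollary \ref{cor:Wnormalizerdecomposition} write $gw=w'k$ with $w'\in W_\ell\subseteq W_{\ell'}$ and $k\in K_0$ of the stated form (either $k\in C_G(W_\ell)$ or $k=k_\ell k_0$). Since conjugation by $w,w'\in W_{\ell'}$ preserves both $W_{\ell'}$ and $N_G(W_{\ell'})$ (the latter because $W_{\ell'}\subseteq N_G(W_{\ell'})$), the chain $W_{\ell'}\subseteq g^{-1}Hg\subseteq N_G(W_{\ell'})$ transforms into $W_{\ell'}\subseteq k^{-1}Hk\subseteq N_G(W_{\ell'})$, hence $W_{\ell'}\subseteq kHk^{-1}\subseteq N_G(W_{\ell'})$ after replacing $k$ by $k^{-1}$, which is again of the required form since $C_G(W_\ell)$ is a group and $(k_\ell k_0)^{-1}=k_\ell^{-1}k_0^{-1}=k_\ell k_0^{-1}$ with $k_\ell^{-1}=k_\ell$.

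The main obstacle I anticipate is the bookkeeping in this last paragraph: one must be careful that $w$ can indeed be chosen in $W_{\ell'}$ rather than in all of $G$ (this uses that $g^{-1}W_\ell g$ lands inside $W_{\ell'}$, which in turn uses $W_\ell\subseteq H^\dagger=gW_{\ell'}g^{-1}$, a consequence of $W_\ell$ being generated by unipotents so that $W_\ell\subseteq H^\dagger$ automatically), and that the normalizer inclusions are genuinely preserved under conjugation by $W_{\ell'}$. Once the placement of $w$ inside $W_{\ell'}$ is justified, invoking Corollary \ref{cor:Wnormalizerdecomposition} to extract the explicit form of $k$ is routine, and one should double-check that the inversion $k\mapsto k^{-1}$ does not destroy the normal form, which it does not by the computation above.
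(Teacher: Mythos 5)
Your proposal is correct and follows essentially the same route as the paper's proof: Ratner's theorem plus Shah's dichotomy to make $H$ reductive, identification of $H^\dagger$ with a conjugate of $W_{\ell'}$, transitivity of $W_{\ell'}$-conjugation to place the conjugator in $N_G(W_\ell)$, Corollary \ref{cor:Wnormalizerdecomposition}, and then conjugating the inclusion chain. The only (harmless) deviations come from your opposite convention for $g$: the single conjugating element is $k^{-1}w'k\,w^{-1}$, so one needs $k\in N_G(W_\ell)$ and not merely $w,w'\in W_{\ell'}$, and your final replacement of $k$ by $k^{-1}$ preserves the normal form because $k_\ell$ and $k_0$ act on disjoint blocks and hence commute.
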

\begin{proof}
The beginning of the proof is similar to that of Lemma \ref{lem:Uorbitclosure}.
Indeed, a combination of Ratner's theorem and \cite[Proposition 3.1]{Shah} shows that there exists $H$ which is either reductive with compact center or unipotent. 
As $W_{\ell}\subseteq H$, $H$ cannot be unipotent hence it is reductive with compact center.
Therefore $H^\dagger$ is isomorphic to $W_{\ell'}$ for some $\ell'\ge \ell$ and consequently there exists $g\in G$ for which $W_{\ell'}\subseteq gH g^{-1}\subseteq N_G(W_{\ell'})$.

As $W_{\ell}\subseteq W_{\ell'}$, we have $gW_{\ell} g^{-1}\subseteq gH^\dagger g^{-1}= W_{\ell'}$.
Since $W_{\ell'}$ acts transitively by conjugation on its subgroups isomorphic to $W_{\ell}$, it follows that there exists $w\in W_{\ell'}$ such that $wgW_{\ell} g^{-1}w^{-1}=W_{\ell}$.
In particular, $wg\in N_G(W_{\ell})$ and therefore $wg=\hat{w}k$ for some $\hat{w}\in W_{\ell}$ and some $k\in K_0\cap N_G(W_\ell)$ which is furnished by Corollary \ref{cor:Wnormalizerdecomposition}.
Similar to Lemma \ref{lem:Uorbitclosure}, as $\hat{w}^{-1}W_{\ell'}\hat{w}=W_{\ell'}$ and $\hat{w}^{-1}N_G(W_{\ell'})\hat{w}=N_G(W_{\ell'})$, we have that
$$\hat{w}^{-1}w W_{\ell'}w^{-1}\hat{w}=W_{\ell'}\subseteq \hat{w}^{-1}wgHg^{-1}w^{-1}\hat{w}\subseteq \hat{w}^{-1}w N_G(W_{\ell'})w^{-1}\hat{w}=N_G(W_{\ell'}).$$
Therefore we conclude that
$$W_{\ell'}\subseteq kHk^{-1}\subseteq N_G(W_{\ell'}),$$
as desired.
\end{proof}

Our goal in \S \ref{subsection:basepoints} and \S\ref{subsection:nowheredense} will be to prove the following proposition, which shows that $AU^\pm$-orbit closures are the same as $W_2$-orbit closures.

\begin{proposition}\label{prop:AUorbits}
Fix any $g_0\in G$ and let $x_0=\pi^F_\Gamma(g_0)$, then $\overline{x_0AU^\pm}=\overline{x_0W_2}$ in $\Gamma\backslash G$.
\end{proposition}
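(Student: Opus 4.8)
The plan is to prove the two inclusions $\overline{x_0AU^\pm}\subseteq\overline{x_0W_2}$ and $\overline{x_0W_2}\subseteq\overline{x_0AU^\pm}$ separately. The first is immediate: since $A\subseteq W_2$ and $U^\pm\subseteq W_2$ (both $e^{tX}$ and $e^{sU_1^\pm}$ lie in the standard $\SO_0(1,2)$-block), we have $x_0AU^\pm\subseteq x_0W_2\subseteq\overline{x_0W_2}$, and taking closures gives the containment. So the substance of the proposition is the reverse inclusion.

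For the reverse inclusion, I would first apply Lemma~\ref{lem:Uorbitclosure} to $U=U^\pm$: there is a closed connected reductive subgroup $L$ with $U^\pm\subseteq L$, $\overline{x_0U^\pm}=x_0L$, and conjugates $bkLk^{-1}b^{-1}$ sandwiched between some standard $W_\ell$ and $N_G(W_\ell)$ with $k\in K_U$, $b\in N^\pm$. Since $U^\pm\subseteq L$ and $A$ normalizes $L^\dagger\cong W_\ell$ (in fact $A\subseteq$ the conjugate of $W_\ell$ inside $L$, because real rank one forces $L$ to contain a conjugate of $A$ and the chain of normalizers shows $A$ sits in $L^\dagger$), we get $x_0 A U^\pm\subseteq x_0 L$, hence $\overline{x_0AU^\pm}\subseteq x_0L=\overline{x_0U^\pm}\subseteq\overline{x_0AU^\pm}$, so in fact $\overline{x_0AU^\pm}=x_0L$. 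Now I need to identify $L$ with $W_2$. Here is where the crucial point enters: a priori $\ell$ could be larger than $2$. The key observation is that $L$ is generated (as a closed connected group) by $A$, $U^\pm$ together with the compact center — but the center is compact and $L$ is reductive of real rank one, so $L^\dagger$, which contains $AU^\pm$, is isomorphic to some $W_\ell$; however $AU^\pm$ generates a copy of $\SO_0(1,2)\cong W_2$ inside $L^\dagger$, and since $L$ acts ergodically (Ratner) on $x_0L$ while the compact center contributes no extra "expansion," one argues that the orbit closure of the rank-one piece already fills $x_0L^\dagger$-translates; more carefully, one shows $\overline{x_0AU^\pm}=\overline{x_0 W_2}$ by noting both equal $x_0L$ where $L$ is the minimal reductive group whose $\dagger$-part contains the $\SO_0(1,2)$ generated by $A,U^\pm$ — and this is exactly the content of Lemma~\ref{lem:Worbitclosure} applied to $W_2$, giving $\overline{x_0W_2}=x_0H$ with $W_{\ell'}\subseteq kHk^{-1}\subseteq N_G(W_{\ell'})$. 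The two subgroups $L$ and $H$ both contain $AU^\pm$ hence $W_2$, both are reductive with compact center and real rank one, and the orbit closures $x_0L$ and $x_0H$ both contain $x_0AU^\pm$; I would conclude $x_0L=x_0H$ by a minimality/dimension argument, using that any reductive rank-one $H\supseteq W_2$ with $x_0H=\overline{x_0W_2}$ must satisfy $x_0L\subseteq x_0H$ (since $x_0AU^\pm\subseteq x_0H$ so $\overline{x_0AU^\pm}=x_0L\subseteq x_0H$) and symmetrically $x_0H\subseteq x_0L$ (since $x_0W_2\subseteq\overline{x_0AU^\pm}=x_0L$, as $W_2$ is generated by $A$ and $U^\pm$ — wait, $W_2=\SO_0(1,2)$ is generated by $A$ and $U^+$, or by $A$, $U^+$, $U^-$, all of which lie in $L$).

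Let me restate the clean version of the argument I would write: By Lemma~\ref{lem:Uorbitclosure}, $\overline{x_0U^+}=x_0L$ with $L$ reductive, $U^+\subseteq L$. Since $L$ has real rank one and contains $U^+$, it contains a conjugate of $A$; chasing the normalizer chain in the proof of Lemma~\ref{lem:Uorbitclosure} shows we may take $A\subseteq L^\dagger$, and as $\SO_0(1,2)=\langle A,U^+\rangle$ we get $W_2\subseteq L$ after the appropriate identification, in particular $x_0AU^+\subseteq x_0W_2\subseteq x_0L$. Taking closures, $\overline{x_0AU^+}\subseteq x_0L=\overline{x_0U^+}\subseteq\overline{x_0AU^+}$, so all three coincide and equal $x_0L$. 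On the other hand, Lemma~\ref{lem:Worbitclosure} with $W_2$ gives $\overline{x_0W_2}=x_0H$ for reductive $H\supseteq W_2$. Since $W_2\supseteq AU^+$ we have $x_0AU^+\subseteq x_0H$, so $x_0L=\overline{x_0AU^+}\subseteq x_0H$; conversely $W_2\subseteq L$ gives $x_0W_2\subseteq x_0L$ so $x_0H=\overline{x_0W_2}\subseteq x_0L$. Hence $x_0L=x_0H$, i.e. $\overline{x_0AU^+}=\overline{x_0W_2}$. The case $U^-$ is identical. The main obstacle I anticipate is the careful bookkeeping showing that the conjugating elements can be arranged so that $A$ genuinely lies inside $L^\dagger$ (equivalently, that $A\cdot U^+$ generates the standard $W_2\subseteq L$ rather than some skew copy) — this requires invoking the explicit normalizer computations (Lemma~\ref{lem:Unormalizer}, Corollary~\ref{cor:Wnormalizerdecomposition}) to control the $K_U$- and $N^\pm$-components, much as in the proof of Lemma~\ref{lem:Uorbitclosure}, and making sure the same conjugation that sandwiches $L$ between $W_\ell$ and $N_G(W_\ell)$ also respects the $A$-action. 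Everything else is formal manipulation of orbit closures under the equivariance of $\pi^F_\Gamma$.
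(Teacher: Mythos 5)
Your first inclusion $\overline{x_0AU^\pm}\subseteq\overline{x_0W_2}$ is fine and is also how the paper starts, but the reverse inclusion has a genuine gap at exactly the point you flag as "bookkeeping": you cannot, in general, arrange the conjugations in Lemma~\ref{lem:Uorbitclosure} so that $A\subseteq L^\dagger$, hence you cannot conclude $W_2\subseteq L$. Lemma~\ref{lem:Uorbitclosure} only gives $W_\ell\subseteq bkLk^{-1}b^{-1}\subseteq N_G(W_\ell)$ with $k\in K_U$ and, crucially, $b\in N^\pm$. The element $k$ is harmless, but $b$ is not: $N^\pm$ does not normalize $W_\ell$ (nor $A$), so $L$ contains only an $N^\pm$-conjugate of a copy of $A$, and the orbit $x_0L=\overline{x_0U^\pm}$ is in general only "parallel" to (the frame bundle over) a geodesic plane rather than a geodesic plane itself — see the Remark following the proof of Theorem~\ref{thm:orbit_closure}, which makes precisely this point. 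Your chain $x_0AU^+\subseteq x_0W_2\subseteq x_0L$, and with it the claim $\overline{x_0U^+}=\overline{x_0AU^+}=\overline{x_0W_2}$, therefore does not follow; indeed it would assert that every single $U^+$-orbit closure coincides with the corresponding $W_2$-orbit closure, which is false (the singular points, where $\overline{yU^+}$ is a translate by some $b\in N^+$ of a smaller homogeneous set, are exactly the counterexamples). In short, the obstruction is not notational: the failure of $N^\pm$ to normalize $W_\ell$ is the entire difficulty of the proposition.

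The paper's proof goes a different and necessary route. It takes $\overline{x_0W_2}=x_0H$ from Lemma~\ref{lem:Worbitclosure}, writes $W_2=AUK_2$ so that $x_0H=\overline{x_0AU}K_2$, and then must produce a point $y_0\in\overline{x_0AU}$ that is \emph{generic} for $U$ in $x_0H$, i.e.\ $\overline{y_0U}=x_0H$. This is where the Dani--Margulis tube machinery enters: one shows that the right $W_2$-saturation of the singular set $\mathcal{S}_{x_0}$ is nowhere dense in $x_0H$ (Lemma~\ref{lem:nowheredensetube}, Corollary~\ref{cor:nowheredensesingset}, resting on the geometric computation in Lemma~\ref{lem:tubegeometry}), picks $y\notin \mathcal{S}_{x_0}W_2$, writes $y=y_0k$ with $y_0\in\overline{x_0AU}$, $k\in K_2$, notes $y_0\notin\mathcal{S}_{x_0}W_2$ as well, and concludes $x_0H=\overline{y_0U}\subseteq\overline{x_0AU}\subseteq x_0H$. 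If you want to repair your argument, you would need to supply this genericity step (or an equivalent equidistribution input); the formal manipulations with $L$ and $H$ alone cannot close the gap.
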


Note that $AU^\pm$ is not generated by unipotents so we cannot apply Ratner's theorem to classify its orbits as in Lemmas \ref{lem:Uorbitclosure} and \ref{lem:Worbitclosure}.
Therefore the proof of Proposition \ref{prop:AUorbits} will require additional understanding of the failure of equidistribution of $U^\pm$-orbits inside $W_2$-orbits, which is classified in the work of Ratner \cites{RatnerMeasure,RatnerTop} and Dani--Margulis \cite{DM}.
Many of the arguments below are inspired by \cite{LeeOh}, which develops a topological approach to the study of orbit closures in $\SO_0(1,n+1)$; there are also some similarities with arguments found in \cite{BFMS}.
For completeness, we opt to give a complete account of the relevant details in what follows.

Momentarily assuming the proof of Proposition \ref{prop:AUorbits}, we first show how to deduce Theorem \ref{thm:orbit_closure} from Proposition \ref{prop:AUorbits}.

\begin{proof}[Proof of Theorem \ref{thm:orbit_closure} assuming Proposition \ref{prop:AUorbits}]
We argue the corresponding result for the frame bundle instead of the coframe bundle. 
Recall that, using the Riemannian metric, there is an isomorphism $F^*M$ to $FM$ which identifies $S^*M$ with $SM$ and is equivariant with respect to the flows discussed in \S \ref{subsection:hyperbolic_manifolds}.
Therefore, it suffices for our purposes to work in the frame bundle.
Moreover, as in \S \ref{subsection:TGsubmanifolds}, we equivariantly identify $FM$ with $\Gamma\backslash G$ and $SM$ with $\Gamma\backslash G/K_0$, hence it suffices to prove the corresponding result for orbit closures on these spaces under the usual group actions.

Fix $q\in F^*M$, which we identify with a point in $\Gamma\backslash G$ abusively also denoted $q$. We write $q=\Gamma\backslash\Gamma q_0$ for some $q_0\in G$.
After the above reductions, Lemma \ref{lem:Worbitclosure} and Proposition \ref{prop:AUorbits} show that
$$ \overline{\{\phi_t( e^{sU_1^\pm}(q)) : t, s \in \R\}}=\overline{qAU^\pm}=\overline{qW_2}=qH,$$
for some closed, connected subgroup $H$ such that
$$W_\ell\subseteq kH k^{-1}\subseteq N_G(W_\ell),$$
with $\ell\ge 2$ and $k\in K_0$.
Consequently if $\pi:FM\to M$ is the right quotient by $K$, then 
$$\Sigma_q \coloneqq \pi(qH)=\Gamma\backslash\Gamma q_0 H K/K=\Gamma\backslash\Gamma q_0k^{-1} W_\ell  K/K,$$
where the final equality follows from Corollary \ref{cor:WN} and the fact that $k\in  K_0\subset K$.
As $qH$ is closed in $\Gamma\backslash G$ and $\pi$ is proper, $\Sigma_q$ is a totally geodesic submanifold of $M$.
Note that this subset is indeed totally geodesic, as it lifts to the isometrically embedded $\bH^\ell\subseteq \bH^{n+1}$ corresponding to $q_0k^{-1} W_\ell  K/K\subseteq G/K$.

The similar computation shows that
$$\pi_S \overline{\{\phi_t( e^{sU_1^\pm}(q)) : t, s \in \R\}} = \pi_S(qH)=\Gamma\backslash\Gamma q_0 H K_0/K_0=\Gamma\backslash\Gamma q_0k^{-1} W_\ell  K_0/K_0,$$
and by the discussion in \S \ref{subsection:TGsubmanifolds}, this set is precisely $S\Sigma_q$.
The result then follows.
\end{proof}

\begin{remark}
The proof above makes it transparent why one needs to study $AU^\pm$-orbit closures as opposed to simply $U^\pm$-orbit closures.
The essential difference is the presence of the element $b\in N^\pm$ in Lemma \ref{lem:Uorbitclosure}. Upon insertion of this conjugating element into the calculations above, one no longer concludes that the corresponding subset lifts to a geodesic plane in $G/K$, merely that it lifts to a subset with $W_\ell$ conjugated by $b$.
This is sometimes referred to as being parallel to a geodesic plane in the literature.
The failure of $N^\pm$ to normalize $W_\ell$ makes this an essential problem and one cannot, in general, conclude that such subsets are geodesic planes. 
\end{remark}

%%%%%%%%%%%%%%%%%%%%%%%%%%%%%%%%%%%%%%%%%%%%%%%%%%%%%%%%%%%%%%%%%%%%%%%%%%%%%%%%
%%%%%%%%%%%%%%%%%%%%%%%%%%%%%%%%%%%%%%%%%%%%%%%%%%%%%%%%%%%%%%%%%%%%%%%%%%%%%%%%

\subsection{Finding good basepoints for orbits}\label{subsection:basepoints}

%%%%%%%%%%%%%%%%%%%%%%%%%%%%%%%%%%%%%%%%%%%%%%%%%%%%%%%%%%%%%%%%%%%%%%%%%%%%%%%%
%%%%%%%%%%%%%%%%%%%%%%%%%%%%%%%%%%%%%%%%%%%%%%%%%%%%%%%%%%%%%%%%%%%%%%%%%%%%%%%%

From the discussion in the previous subsection, we are reduced to showing that $\overline{x_0AU^\pm }=\overline{x_0 W_2}$ for every $x_0\in \Gamma\backslash G$.
Our goal in this section is to show that there always exists some point (in fact, many points) $y_0\in \overline{x_0AU^\pm}$ for which $\overline{y_0U^\pm}=\overline{x_0W_2}$ and hence to conclude that $\overline{y_0U^\pm}=\overline{x_0AU^\pm}=\overline{x_0 W_2}$.
For this, we must first recount important work of Dani--Margulis on equidistribution of unipotent flows.
We do this in a more general context than the present setting and then specialize to the current setting after giving the requisite background.

Let $U$, $W$ be subgroups of $G$ generated by unipotents such that $U\subset W$ is a proper subgroup.
Fix a point $\pi_\Gamma^F(g_0)=x_0\in\Gamma\backslash G$ and let $\overline{x_0 W}=x_0 H$ be the closure afforded by Ratner's theorem.
We call a point $y\in x_0H$ a \emph{singular point} if $\overline{y U}$ is proper in $x_0 H$.
The set of all such points is given by
$$\mathcal{S}_{x_0} \coloneqq \left\{y\in x_0H : \overline{y U}\subsetneq x_0 H\right\}\subset \Gamma\backslash G,$$
which we call the \emph{singular set}.

Note that any $y\in x_0H$ can be written as $y=\pi_\Gamma^F(g_0h_0)$ for some $h_0\in H$.
Moreover, again by Ratner's theorem, $\overline{y U}=yZ=\Gamma\backslash\Gamma g_0h_0Z$ for some closed, connected $Z\subseteq H$ such that $U\subseteq Z$.
If $yZ\subsetneq x_0H=yH$, it also follows that $Z^\dagger\subsetneq  H^\dagger$ is a proper subgroup (see for instance \cite[Lemmas 3.10, 3.11]{ADM24}) and in particular $Z\subsetneq H$.
As $yZ$ is closed, so is $yZ(g_0h_0)^{-1}=\Gamma\backslash\Gamma g_0h_0Z(g_0h_0)^{-1}$, and therefore the existence of such a $y$ is equivalent to the existence of a closed, connected subgroup $J=g_0h_0Z(g_0h_0)^{-1}$ for which $\Gamma\backslash \Gamma  J$ is closed in $\Gamma\backslash G$.
Note that by definition, $U\subseteq Z$ and therefore $g_0h_0U(g_0h_0)^{-1}\subseteq J$.
We collect elements of $G$ having this latter property in the sets 
$$X(J,U) \coloneqq \left\{g\in G : g U g^{-1}\subseteq J\right\}\subset G,$$
which in the literature are frequently referred to as \emph{tubes}.

We are only interested in tubes, $X(J,U)$, that correspond to orbit closures of elements in the singular set.
As shown above, these correspond precisely to the subgroups $J$ satisfying the following four conditions:
\begin{enumerate}
\item $J\subset  G$ is a proper, closed, connected subgroup,
\item $J$ contains a conjugate of $U$,
\item $J\cap \Gamma$ is a lattice in $J$ whose Zariski closure is precisely $J$,
\item $g_0^{-1} J g_0\subset  H$ is a proper subgroup of $H$.
\end{enumerate}
Let $\mathcal{H}_{(g_0,H)}$ denote the collection of all subgroups of $G$ satisfying conditions (1)--(4) and define the following subset of $G$
$$\mathcal{S}_{g_0H}\coloneqq g_0H\cap \left(\bigcup_{J\in\mathcal{H}_{(g_0,H)}}X(J,U)\right).$$
The collection $\mathcal{H}_{(g_0,H)}$ is a countable collection of subgroups of $G$ by work of Ratner \cite[Theorem 2]{RatnerMeasure} or, alternatively, Dani--Margulis \cite[Proposition 2.3]{DM}.
It follows from the discussion above that
\begin{equation}\label{eqn:singsetrelation}
\mathcal{S}_{x_0}=\pi_\Gamma^F\left(\mathcal{S}_{g_0H}\right).
\end{equation}
In particular, for a given choice of the triple $(x_0,U,W)$, we wish to show the existence of points $y$ in the complement of this set in $x_0 H$.

We now specialize to our setting.
For the remainder of the subsection, we focus on the case of the subgroups $U=U^+$ and $W=W_2$.
It will be transparent that the case where $U=U^-$ is entirely similar, with only cosmetic differences to the proofs.
In this setting, in the definition of $\mathcal{H}_{(g_0,H)}$, Condition (1) specializes to
\begin{enumerate}
\item[(1')] $J\subset  G$ is a proper, closed, connected reductive subgroup,
\end{enumerate}
which in particular implies that $J^\dagger$ is conjugate to some standard subgroup $W_{\ell}$.
As $g_0^{-1}J^\dagger g_0$ is a proper subgroup of $H^\dagger\cong W_{\ell'}$, it also follows that $\ell<\ell'$.

In the remainder of the subsection, we complete the proof of Proposition \ref{prop:AUorbits} modulo the following technical lemma, which says that the right $W_2$-saturation of a tube is nowhere dense in the orbit $x_0H$.

\begin{lemma}\label{lem:nowheredensetube}
Suppose that $x_0=\pi_\Gamma^F(g_0)$ and $\overline{x_0W_2}=x_0H$ for $H$ as in Lemma \ref{lem:Worbitclosure}.
Then for any $J\in \mathcal{H}_{(g_0,H)}$, the set $\left( g_0H\cap X(J,U)\right)W_2$ is nowhere dense in $g_0H$.
\end{lemma}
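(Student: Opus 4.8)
The plan is to reduce the statement to an intrinsic one inside $H$ and then run a dimension count combined with semialgebraicity.

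\emph{Reduction to $H$.} Since $J\in\mathcal{H}_{(g_0,H)}$, condition~(4) says that $\widetilde{J}\coloneqq g_0^{-1}Jg_0$ is a proper, closed, connected reductive subgroup of $H$; by condition~(2) and the specialization $U=U^+$, its semisimple part $\widetilde{J}^\dagger$ is conjugate to a standard subgroup $W_\ell$ with $2\le \ell$, and (as recorded in \S\ref{subsection:basepoints}) $\ell<\ell'$, where $H^\dagger\cong W_{\ell'}$. Using that $U,W_2\subseteq H$, conjugation by $g_0^{-1}$ carries $g_0H\cap X(J,U)$ onto $X_H(\widetilde{J},U)\coloneqq\{h\in H:hUh^{-1}\subseteq\widetilde{J}\}$, and $(g_0H\cap X(J,U))W_2$ onto $X_H(\widetilde{J},U)W_2$. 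Hence it suffices to prove that $X_H(\widetilde{J},U)W_2$ is nowhere dense in $H$.

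\emph{The tube is a single double coset of codimension at least $2$.} The set $X_H(\widetilde{J},U)$ is invariant under left multiplication by $\widetilde{J}$ and right multiplication by $N_H(U)$. Every nontrivial one-parameter unipotent subgroup of the reductive-with-compact-center group $\widetilde{J}$ lies in $\widetilde{J}^\dagger$, and since $\widetilde{J}^\dagger\cong \SO_0(1,\ell)$ acts transitively by conjugation on its one-parameter unipotent subgroups (cf.\ the proof of Lemma~\ref{lem:Uorbitclosure}), all such subgroups are $\widetilde{J}$-conjugate. Fixing a root subgroup $U_{\widetilde{J}}\subseteq\widetilde{J}^\dagger$ and an element $a_0\in H^\dagger$ with $a_0Ua_0^{-1}=U_{\widetilde{J}}$ (which exists since $U$ and $U_{\widetilde{J}}$ are both one-parameter unipotent subgroups of $H^\dagger\cong\SO_0(1,\ell')$), one gets $X_H(\widetilde{J},U)=\widetilde{J}a_0N_H(U)$, a single $(\widetilde{J},N_H(U))$-double coset, so
$$\dim X_H(\widetilde{J},U)=\dim\widetilde{J}+\dim N_H(U)-\dim N_{\widetilde{J}}(U_{\widetilde{J}}).$$
A short computation from Lemma~\ref{lem:Unormalizer} gives $\dim\big(\SO_0(1,m)/N_{\SO_0(1,m)}(U_0)\big)=2m-3$ for any root subgroup $U_0$; and since $\widetilde{J}$ and $H$ are reductive with compact center, writing $\widetilde{J}=\widetilde{J}^\dagger C_{\widetilde{J}}$ and $H=H^\dagger C_H$ with $C_{\widetilde{J}},C_H$ compact and centralizing $\widetilde{J}^\dagger$, $H^\dagger$ respectively, these compact factors cancel in the relevant quotients, yielding $\dim\widetilde{J}-\dim N_{\widetilde{J}}(U_{\widetilde{J}})=2\ell-3$ and $\dim H-\dim N_H(U)=2\ell'-3$. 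Therefore $\dim H-\dim X_H(\widetilde{J},U)=2(\ell'-\ell)\ge 2$.

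\emph{Saturating by $W_2$ costs only one dimension.} The crucial observation is that $N_{W_2}(U^+)$ is the minimal parabolic $AU^+$ of $W_2\cong\SO_0(1,2)$, which is $2$-dimensional, and $N_{W_2}(U^+)\subseteq N_H(U)$; hence $N_H(U)\cap W_2=N_{W_2}(U)$ is $2$-dimensional, and the multiplication map $N_H(U)\times W_2\to H$ has every fiber diffeomorphic to $N_{W_2}(U)$, so $\dim\big(N_H(U)W_2\big)=\dim N_H(U)+1$. Since $X_H(\widetilde{J},U)=\widetilde{J}a_0N_H(U)$ is right-$N_H(U)$-invariant, it is a union of right $N_H(U)$-cosets parametrized by $\widetilde{J}/N_{\widetilde{J}}(U_{\widetilde{J}})$, a set of dimension $2\ell-3$, whence
$$\dim\big(X_H(\widetilde{J},U)W_2\big)\le (2\ell-3)+\dim\big(N_H(U)W_2\big)=\dim X_H(\widetilde{J},U)+1\le \dim H-1.$$
Finally, $X_H(\widetilde{J},U)$ is a real algebraic subvariety of $H$ (it is the zero set of polynomial equations, $\widetilde{J}$ being Zariski closed by condition~(3)) and $W_2$ is a real algebraic subgroup, so $X_H(\widetilde{J},U)W_2$, being the image of a polynomial map, is semialgebraic; a semialgebraic subset of $H$ of dimension strictly less than $\dim H$ is nowhere dense. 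Conjugating back by $g_0$ completes the proof.

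\emph{Main obstacle.} The one genuinely delicate point is the final dimension estimate: the crude bound $\dim\big(X_H(\widetilde{J},U)W_2\big)\le \dim X_H(\widetilde{J},U)+\dim W_2$ only gives codimension $\ge -1$ and is useless. One has to exploit that a full $2$-dimensional subgroup of $W_2$, namely the parabolic $N_{W_2}(U)$, is already absorbed by the right-$N_H(U)$-invariance of the tube, so that the effective saturation is by the $1$-dimensional quotient $W_2/N_{W_2}(U)$; this is precisely what reconciles the $3$-dimensional group $W_2$ with the codimension-$2$ input coming from $\ell<\ell'$. Everything else is bookkeeping with the real rank one structure theory already set up in \S\ref{subsection:groups}.
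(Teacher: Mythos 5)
Your proof is correct, but it takes a genuinely different route from the paper's. The paper proves the lemma by a chain of reductions (left translation by $g_0^{-1}$, conjugation by the element $k$ of Corollary \ref{cor:Wnormalizerdecomposition}, conjugation by an element of $W_{\ell'}$, absorbing the compact factor $C$) down to the standard tube $X(W_\ell,U)=W_\ell K_U N$ sitting inside $W_{\ell'}$, and then establishes nowhere density geometrically in Lemma~\ref{lem:tubegeometry}: under the identification of $W_{\ell'}/N_{W_{\ell'}}(W_\ell)$ with the space of geodesic $\ell$-planes in $\bH^{\ell'}_{\mathrm{std}}$, the $W_2$-saturation of the standard tube lands in the set of planes whose boundary meets the circle $K_2\cdot\infty$, which is visibly nowhere dense. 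You instead stay inside $H$: you identify the tube as a single double coset $\widetilde{J}a_0N_H(U)$, compute its codimension $2(\ell'-\ell)\ge 2$ (both arguments rely, via the discussion in \S\ref{subsection:basepoints} and the lemmas of \cite{ADM24} cited there, on the fact that $\ell<\ell'$; as in the paper's proof, the case $\ell'=2$ is only covered because $\mathcal{H}_{(g_0,H)}=\emptyset$ there, which is worth saying explicitly), observe that right $W_2$-saturation raises dimension by at most one because the two-dimensional parabolic $N_{W_2}(U)=AU\subseteq N_H(U)$ is already absorbed by the right $N_H(U)$-invariance of the tube, and then use semialgebraicity (condition (3) makes $\widetilde{J}$ Zariski closed, so the tube is algebraic and its $W_2$-saturation is semialgebraic by Tarski--Seidenberg) to upgrade positive codimension to nowhere density. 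Your approach buys a quantitative codimension bound and avoids the boundary-sphere geometry entirely; its cost is an extra layer of standard but uncited inputs that should be referenced for completeness: the dimension theory of semialgebraic sets (closure preserves dimension; images of semialgebraic maps whose fibers have dimension at least $k$ lose at least $k$ dimensions), and the structure fact that the compact parts of the reductive groups $H$ and $\widetilde{J}$ centralize their unipotent-generated subgroups $H^\dagger$, $\widetilde{J}^\dagger$, which is what makes the compact factors cancel in your counts $\dim H-\dim N_H(U)=2\ell'-3$ and $\dim\widetilde{J}-\dim N_{\widetilde{J}}(U_{\widetilde{J}})=2\ell-3$. One small correction: the first reduction is left translation by $g_0^{-1}$ (exactly as in the paper), not conjugation; conjugation by $g_0^{-1}$ would send $g_0H$ to $Hg_0$ and replace the saturating group $W_2$ by $g_0^{-1}W_2g_0$, whereas left translation together with the identity $g_0^{-1}X(J,U)=X(g_0^{-1}Jg_0,U)$ gives precisely the set $X_H(\widetilde{J},U)W_2$ you work with.
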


As an immediate consequence, we deduce the nowhere density of the right $W_2$-saturation of the singular set (see also \cite[Lemma 3.14]{ADM24}).
Strictly speaking, this is stronger than what we need to furnish the point $y$ above.

\begin{corollary}\label{cor:nowheredensesingset}
Suppose that $x_0=\pi_\Gamma^F(g_0)$ and $\overline{x_0W_2}=x_0H$ for $H$ as in Lemma \ref{lem:Worbitclosure}.
Then $\mathcal{S}_{x_0}W_2$ is nowhere dense in $x_0H$.
\end{corollary}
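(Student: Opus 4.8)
The plan is to deduce Corollary \ref{cor:nowheredensesingset} from Lemma \ref{lem:nowheredensetube} by a countable-union argument. Recall from \eqref{eqn:singsetrelation} that $\mathcal{S}_{x_0} = \pi_\Gamma^F(\mathcal{S}_{g_0H})$, where
$$\mathcal{S}_{g_0H} = g_0H \cap \left(\bigcup_{J \in \mathcal{H}_{(g_0,H)}} X(J,U)\right).$$
Since right multiplication by elements of $W_2$ and the map $\pi_\Gamma^F$ both commute with taking unions, we have
$$\mathcal{S}_{x_0} W_2 = \pi_\Gamma^F\left(\bigcup_{J \in \mathcal{H}_{(g_0,H)}} \left(g_0H \cap X(J,U)\right)W_2\right) = \bigcup_{J \in \mathcal{H}_{(g_0,H)}} \pi_\Gamma^F\left(\left(g_0H \cap X(J,U)\right)W_2\right).$$

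Next I would record that $\mathcal{H}_{(g_0,H)}$ is a countable collection, as noted in the excerpt (by Ratner \cite[Theorem 2]{RatnerMeasure} or Dani--Margulis \cite[Proposition 2.3]{DM}). By Lemma \ref{lem:nowheredensetube}, each set $\left(g_0H \cap X(J,U)\right)W_2$ is nowhere dense in $g_0H$; since $\pi_\Gamma^F\colon g_0H \to x_0H$ is a covering map (in particular continuous, open, and proper given that $x_0H$ is closed in $\Gamma\backslash G$), it carries nowhere dense sets to nowhere dense sets, so each $\pi_\Gamma^F\left(\left(g_0H \cap X(J,U)\right)W_2\right)$ is nowhere dense in $x_0H$. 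Thus $\mathcal{S}_{x_0}W_2$ is a countable union of nowhere dense subsets of $x_0H$, i.e., meager.

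The last step is to upgrade ``meager'' to ``nowhere dense.'' Here I would invoke the Baire category theorem: $x_0H$ is a locally compact Hausdorff space (it is a closed subset of the manifold $\Gamma\backslash G$, and in fact a homogeneous space $g_0Hg_0^{-1}\cap\Gamma\backslash H$ which is a manifold), hence Baire, so a meager set has empty interior. To get that its closure also has empty interior, I would note the standard strengthening: a countable union of \emph{closed} nowhere dense sets has nowhere dense closure, so it suffices to replace each $\pi_\Gamma^F\left(\left(g_0H \cap X(J,U)\right)W_2\right)$ by its closure, which is still nowhere dense, and then $\overline{\mathcal{S}_{x_0}W_2}$ is contained in a countable union of closed nowhere dense sets and thus is itself meager; being closed and meager in a Baire space, it has empty interior, so $\mathcal{S}_{x_0}W_2$ is nowhere dense in $x_0H$. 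Since $x_0H = \overline{x_0W_2}$, this is the claimed statement.

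I do not expect any serious obstacle here; the entire content is packaged into Lemma \ref{lem:nowheredensetube}, and this corollary is a routine Baire category argument. The only minor points requiring care are verifying that $\pi_\Gamma^F$ preserves nowhere density (which follows from it being an open continuous surjection onto $x_0H$) and that $x_0H$ is a Baire space (which follows from it being a manifold, or more weakly locally compact Hausdorff), both of which are standard.
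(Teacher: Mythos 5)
Your overall skeleton is the same as the paper's: rewrite $\mathcal{S}_{x_0}W_2$ via \eqref{eqn:singsetrelation} as the image of a union over the countable family $\mathcal{H}_{(g_0,H)}$, feed each piece into Lemma \ref{lem:nowheredensetube}, and finish with Baire category. However, two of your steps are genuinely wrong as stated. First, the claim that the map $g_0H\to x_0H$ ``carries nowhere dense sets to nowhere dense sets'' because it is an open continuous (and allegedly proper) surjection is false: covering maps do not preserve nowhere density in general (for $\R\to\R/\Z$, a suitable closed discrete set in $\R$ — nowhere dense — has dense image in the circle), and the map here is not proper, since its fibers are infinite orbits of the lattice $\Gamma\cap g_0Hg_0^{-1}$; closedness of $x_0H$ in $\Gamma\backslash G$ does not give properness. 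The ingredient that repairs this, and which the paper's proof explicitly invokes while your proposal never uses it, is the countability of $\Gamma$: one works upstairs in $g_0H$, where $q^{-1}\bigl(q(B)\bigr)=\bigl(\Gamma\cap g_0Hg_0^{-1}\bigr)B$ is a \emph{countable} union of translates of the sets $\left(g_0H\cap X(J,U)\right)W_2$, each nowhere dense by Lemma \ref{lem:nowheredensetube}, hence meager in the Baire space $g_0H$; density of the complement then descends because $x_0H\setminus q(B)=q\bigl(g_0H\setminus q^{-1}(q(B))\bigr)$ and continuous surjections send dense sets to dense sets.

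Second, your upgrade from meager to nowhere dense relies on the assertion that a countable union of closed nowhere dense sets has nowhere dense closure, together with the containment of $\overline{\mathcal{S}_{x_0}W_2}$ in the union of the closures of the pieces; both are false (the rationals, written as a countable union of points, defeat the first, and the closure of an infinite union is generally strictly larger than the union of the closures). What the Baire argument honestly delivers is that $\mathcal{S}_{x_0}W_2$ is meager, equivalently that its complement is dense in $x_0H$ — and that is exactly what is used in the proof of Proposition \ref{prop:AUorbits}, where one only needs a point (indeed a full right $W_2$-orbit) outside $\mathcal{S}_{x_0}W_2$; the paper itself remarks that nowhere density is stronger than what is needed. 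So the correct fix is not to strengthen the topology lemma you invoked, but to route the argument through the $\Gamma$-saturation upstairs and settle for (and use) meagerness/dense complement.
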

\begin{proof}
Assuming Lemma \ref{lem:nowheredensetube}, this is a simple application of the Baire category theorem combining \eqref{eqn:singsetrelation} with the facts that $\Gamma$, $\mathcal{H}_{(g_0,H)}$ are countable and that the image of a function distributes on unions. 
\end{proof}

We now conclude Proposition \ref{prop:AUorbits}, momentarily assuming Lemma \ref{lem:nowheredensetube}.

\begin{proof}[Proof of Proposition \ref{prop:AUorbits} assuming Lemma \ref{lem:nowheredensetube}]
Fix any $g_0\in G$ and let $x_0=\pi^F_\Gamma(g_0)$.
By Lemmas \ref{lem:Uorbitclosure} and \ref{lem:Worbitclosure}, we write $\overline{x_0 U}=x_0L$ and $\overline{x_0 W_2}=x_0H$.
Then 
$$\overline{x_0 U}=x_0L\subseteq \overline{x_0AU}\subseteq \overline{x_0 W_2}=x_0H.$$
The $KAN$-decomposition of $W_2$ is given by $W_2=K_2AU=AUK_2$, where $K_2\cong \SO(2)$ as in the beginning of \S \ref{subsection:TGsubmanifolds}, and the equalities of decompositions follow from taking inverse and the fact that $A$ normalizes $U$.
As $K_2$ is compact it follows that 
$$x_0H=\overline{x_0W_2}=\overline{x_0 AUK_2}=\overline{x_0 AU}K_2,$$
and consequently any $y\in x_0H$ can be written as $y=y_0 k$ for some $y_0\in\overline{x_0AU}$, $k\in K_2$.
From Corollary \ref{cor:nowheredensesingset}, there exists some $y\in x_0H$ such that $y\notin S_{x_0}W_2$.

For such $y$, it follows by definition of the singular set that
$$\overline{yU_0}=yH=x_0H.$$
Decomposing this point as $y=y_0k$ as above, Corollary \ref{cor:nowheredensesingset} shows that $y_0\notin S_{x_0}W_2$ as well, since the entire right $W_2$-orbit of $y$ has this property.
Therefore,
$$x_0H=\overline{y_0 U}\subseteq\overline{x_0 AU}\subseteq \overline{x_0 W_2}=x_0H,$$
and hence $\overline{x_0AU}=\overline{x_0 W_2}=x_0H$ as required.
\end{proof}

%%%%%%%%%%%%%%%%%%%%%%%%%%%%%%%%%%%%%%%%%%%%%%%%%%%%%%%%%%%%%%%%%%%%%%%%%%%%%%%%
%%%%%%%%%%%%%%%%%%%%%%%%%%%%%%%%%%%%%%%%%%%%%%%%%%%%%%%%%%%%%%%%%%%%%%%%%%%%%%%%

\subsection{The proof of Lemma \ref{lem:nowheredensetube}}\label{subsection:nowheredense}

%%%%%%%%%%%%%%%%%%%%%%%%%%%%%%%%%%%%%%%%%%%%%%%%%%%%%%%%%%%%%%%%%%%%%%%%%%%%%%%%
%%%%%%%%%%%%%%%%%%%%%%%%%%%%%%%%%%%%%%%%%%%%%%%%%%%%%%%%%%%%%%%%%%%%%%%%%%%%%%%%

To prove Lemma \ref{lem:nowheredensetube}, we proceed in three steps. 
First, in Lemma \ref{lem:standardtube} we give a computation of the tube $X(J,U)$ in the simplest possible case when $J=W_{\ell}$, which we refer to as the \emph{standard tube}. 
Next, we give a geometric argument in Lemma \ref{lem:tubegeometry} that will allow us to conclude nowhere density in the simplest possible case, that is, when $g_0$ is the identity and the tube in consideration is the standard one.
Finally, we give the proof of Proposition \ref{lem:nowheredensetube}, where one uses general properties of tubes and orbits to reduce nowhere density to this simple setting.

Before embarking upon this, we make a few observations about the behavior of tubes under various group operations.
These properties will be essential in the final step, when we reduce from the general case in the proof of Lemma \ref{lem:nowheredensetube}.
Two properties of tubes are immediate from the definition, namely that
\begin{align}
X(J,U)&=X(J^\dagger,U),\label{eqn:tubeproperty1}\\ 
gX(J,U)&=X(gJg^{-1},U),~~\forall g\in G,\label{eqn:tubeproperty2}
\end{align}
where in the former we are using that $U$ is generated by unipotents.
There is no analog of the latter property with the element on the righthand side for general $g\in G$, however, in the case that $g\in N_G(U)$ one moreover has that 
\begin{equation}\label{eqn:tubeproperty3}
X(J,U)g=X(J,U),~~\forall g\in N_G(U),
\end{equation}
which similarly follows from the definition.
We now compute the standard tube.

\begin{lemma}\label{lem:standardtube}
Let $\ell\ge 2$, $U=U^+$, and $N=N^+$, then
$$X(W_{\ell},U)=W_{\ell}N_G(U)=W_{\ell}K_UN,$$
with $K_U$ as in Lemma~\ref{lem:Unormalizer}.
\end{lemma}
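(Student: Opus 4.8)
The containment $W_\ell N_G(U)\subseteq X(W_\ell,U)$ is immediate: if $g=wn$ with $w\in W_\ell$ and $n\in N_G(U)$, then $gUg^{-1}=wnUn^{-1}w^{-1}=wUw^{-1}\subseteq W_\ell$, since $U\subseteq W_\ell$ and $W_\ell$ is a group. The second equality $W_\ell N_G(U)=W_\ell K_U N$ then follows directly from Lemma~\ref{lem:Unormalizer}, which gives $N_G(U^\pm)=N^\pm A K_U$, together with the observation that $A\subseteq W_\ell$ (since $A\subseteq W_2\subseteq W_\ell$); indeed $W_\ell N_G(U)=W_\ell N A K_U = W_\ell A N K_U = W_\ell N K_U$, and one can reorder $N K_U=K_U N$ after absorbing, or more cleanly note both sides equal $W_\ell K_U N$ up to rearranging the $A$ factor into $W_\ell$ and using that $A$ normalizes $N$. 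So the content of the lemma is the reverse inclusion $X(W_\ell,U)\subseteq W_\ell N_G(U)$.

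For the reverse inclusion, I would take $g\in X(W_\ell,U)$, so $gUg^{-1}\subseteq W_\ell$. Using the $KAN$-decomposition (in the form $G=KNA$ or $G=NAK$ as convenient), write $g=\hat g k'$ where I first want to reduce to controlling a $K$-part. The key geometric idea is the same one used in Lemmas~\ref{lem:Unormalizer} and~\ref{lem:Wnormalizer}: $U\subseteq N$ fixes the boundary point $\infty=(1,1,0,\dots,0)\in\partial\bH^{n+1}$, and in fact $U$ is a one-parameter unipotent subgroup whose conjugate $gUg^{-1}$, lying in $W_\ell$, must be a one-parameter unipotent subgroup of $W_\ell\cong\SO_0(1,\ell)$. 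Every such subgroup of $W_\ell$ fixes a unique point of $\partial\bH^\ell_{\mathrm{std}}$, so $gUg^{-1}$ fixes some $\xi\in\partial\bH^\ell_{\mathrm{std}}\subseteq\partial\bH^{n+1}$; equivalently $U$ fixes $g^{-1}\xi$. But the set of boundary points fixed by $U$ is exactly $\{\infty\}$ (a single horocyclic orbit's common endpoint), so $g^{-1}\xi=\infty$, i.e.\ $g\cdot\infty=\xi\in\partial\bH^\ell_{\mathrm{std}}$.

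Now since $W_\ell$ (indeed $K_\ell\subseteq W_\ell$) acts transitively on $\partial\bH^\ell_{\mathrm{std}}$, choose $w\in W_\ell$ with $w^{-1}\cdot\xi=\infty$, so $wg$ fixes $\infty$, hence $wg\in\mathrm{stab}_G(\infty)=P^+=K_0AN$. Write $wg=k_0 a n$ with $k_0\in K_0$, $a\in A\subseteq W_\ell$, $n\in N$. Then $X(W_\ell,U)\ni g=w^{-1}k_0 a n$ and, since $\{X(W_\ell,U)\}$ is right-invariant under $N_G(U)=NAK_U\supseteq AN$ by~\eqref{eqn:tubeproperty3}, we get $w^{-1}k_0\in X(W_\ell,U)$, i.e.\ $k_0 U k_0^{-1}\subseteq w W_\ell w^{-1}=W_\ell$, so $k_0\in X(W_\ell,U)\cap K_0$. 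It remains to show $k_0\in W_\ell\cdot K_U$ (equivalently $k_0\in W_\ell N_G(U)$), after which $g=w^{-1}k_0(an)\in W_\ell\,W_\ell N_G(U)\,N_G(U)=W_\ell N_G(U)$, finishing the proof. For this last point, $k_0\in K_0\cong\SO(n)$ and $k_0Uk_0^{-1}\subseteq W_\ell$; via the equivariant identification of $N\cong\R^n$ from the remarks preceding Lemma~\ref{lem:Unormalizer}, $U$ corresponds to the first coordinate axis and $W_\ell\cap N$ to the span of the first $\ell-1$ coordinates, and $k_0$ acts linearly sending the first axis into this span. Decomposing the $\SO(n)$-action, one can correct $k_0$ by an element of $K_\ell\cap K_0=\mathrm{SO}(\ell-1)\subseteq W_\ell$ to land in the stabilizer $K_U$ of the first axis, giving $k_0\in (W_\ell\cap K)K_U\subseteq W_\ell N_G(U)$.

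\textbf{Main obstacle.} The principal subtlety is the last step: cleanly extracting from ``$k_0\in\SO(n)$ carries the $U$-axis into the $W_\ell$-plane'' the conclusion that $k_0\in (W_\ell\cap K_0)\,K_U$, i.e.\ organizing the linear-algebra bookkeeping so that the correcting element genuinely lies in $W_\ell$ and not merely in $N_G(W_\ell)$ — this is exactly the place where the distinction between $W_\ell$ and its normalizer (and the role of Corollary~\ref{cor:Wnormalizerdecomposition}) could intrude, and it must be checked that here, because $\ell$ is arbitrary and we only need membership in the \emph{tube} rather than the normalizer, a clean reduction is available. The remaining steps are routine applications of the $KAN$-decomposition, the boundary-action descriptions~\eqref{eqn:hyperbolicspaceboundary}, and the tube identities~\eqref{eqn:tubeproperty1}--\eqref{eqn:tubeproperty3}.
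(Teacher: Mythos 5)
Your proof is correct, but it takes a genuinely different route from the paper's for the hard inclusion $X(W_\ell,U)\subseteq W_\ell N_G(U)$. The paper does it in two lines: since $W_\ell\cong\SO_0(1,\ell)$ acts transitively by conjugation on its one-parameter unipotent subgroups, for $g\in X(W_\ell,U)$ one picks $w\in W_\ell$ with $wgU(wg)^{-1}=U$, so $wg\in N_G(U)$ and $g\in W_\ell N_G(U)$; the second equality then follows from Lemma~\ref{lem:Unormalizer} exactly as you argue (your reordering $NK_U=K_UN$ is legitimate because $K_U\subseteq K_0\subseteq P^+=N_G(N)$). What you do instead is essentially inline a proof of that transitivity fact: you use that a nontrivial unipotent element is parabolic with a unique boundary fixed point to get $g\cdot\infty\in\partial\bH^\ell_{\mathrm{std}}$, move it back to $\infty$ by $W_\ell$, land in $\mathrm{stab}_G(\infty)=P^+=K_0AN$, strip the $AN$ factor using the tube invariance \eqref{eqn:tubeproperty3}, and finally correct the remaining $k_0\in K_0$ by an element of $K_\ell\cap K_0\cong\SO(\ell-1)\subseteq W_\ell$. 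The step you flagged as the main obstacle does go through: under the equivariant identification $N\cong\R^n$, the condition $k_0Uk_0^{-1}\subseteq W_\ell\cap N$ says the line $k_0\cdot\R e_1$ lies in the span of the first $\ell-1$ coordinates, and $\SO(\ell-1)$ acts transitively on lines in $\R^{\ell-1}$ (trivially so when $\ell=2$, where the line is already $\R e_1$ and $K_U=\mathrm{S}(\mathrm{O}(1)\times\mathrm{O}(n-1))$ absorbs a possible flip), so $k_0\in(K_\ell\cap K_0)K_U\subseteq W_\ell N_G(U)$. One cosmetic slip: you should choose $w$ with $w\cdot\xi=\infty$ (not $w^{-1}\cdot\xi=\infty$) so that $wg$ fixes $\infty$; this is just a relabeling. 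In short, the paper's argument buys brevity by citing a standard rank-one transitivity statement, while yours buys self-containedness at the cost of the $KAN$/boundary bookkeeping.
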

\begin{proof}
We first prove the first equality.
It is clear that $W_{\ell}N_G(U)\subseteq  X(W_{\ell},U)$ by definition.
For the reverse inclusion, let $g\in X(W_{\ell},U)$ so that $gUg^{-1}\subset  W_{\ell}$. 
As $W_{\ell}$ acts transitively by conjugation on its one-parameter unipotent subgroups, there exists some $w\in W_{\ell}$ for which $wgU(wg)^{-1}=U$.
Consequently $wg\in N_G(U)$ and hence $g\in W_\ell N_G(U)$ as required.

For the second equality, using Lemma \ref{lem:Unormalizer}, we note that $N_G(U)=NAK_U=AK_UN$ by taking inverses and using the fact that $K_U\subset K_0$ and $K_0$ centralizes $A$. 
As $A\subset  W_{\ell}$, the result follows.
\end{proof}

\begin{lemma}\label{lem:tubegeometry}
For any $\ell'>\ell\ge 2$, the subset $(W_{\ell'}\cap X(W_\ell,U))W_2$ of $W_{\ell'}$ is nowhere dense.
\end{lemma}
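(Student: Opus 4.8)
The plan is to make everything explicit using Lemma \ref{lem:standardtube} and the matrix descriptions of the standard subgroups. By that lemma, $X(W_\ell,U)=W_\ell K_U N$, so the set in question is $\left(W_{\ell'}\cap W_\ell K_U N\right)W_2$. Since $A\subset W_\ell$ and $U=U^+\subset W_\ell$, and $K_U$ normalizes $U$, I would first simplify: an element $g\in W_{\ell'}$ lies in $W_\ell K_U N$ iff $g=w k b$ with $w\in W_\ell$, $k\in K_U$, $b\in N$. Working in the boundary picture as in Lemmas \ref{lem:Unormalizer} and \ref{lem:Wnormalizer}, the key observation is that $W_\ell$ stabilizes $\partial\bH^\ell_{\mathrm{std}}$, that $K_U\subset K_0$ stabilizes $\infty=(1,1,0,\dots,0)$, and that $N$ acts on $\partial\bH^{n+1}$ by an explicit polynomial map. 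So the condition $g\in W_\ell K_U N$ translates into an algebraic condition on where $g$ sends a few boundary points, in particular $g^{-1}\cdot\infty$ and the image of $\partial\bH^\ell_{\mathrm{std}}$; I expect this to cut $W_{\ell'}\cap X(W_\ell,U)$ down to a lower-dimensional (hence measure-zero, closed-complement) subset of $W_{\ell'}$, or at least a subset contained in a proper analytic subvariety.

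The second step is to handle the right multiplication by $W_2$. Here I would use that $W_2=AUK_2$ (the $KAN$-decomposition used in the proof of Proposition \ref{prop:AUorbits}), and that $AU\subset N_G(U)$, so by \eqref{eqn:tubeproperty3} we have $X(W_\ell,U)\cdot AU=X(W_\ell,U)$. Thus $\left(W_{\ell'}\cap X(W_\ell,U)\right)W_2=\left(W_{\ell'}\cap X(W_\ell,U)\right)AUK_2$, and since $AU\subseteq W_{\ell'}$ normalizes nothing problematic, in fact $\left(W_{\ell'}\cap X(W_\ell,U)\right)AU\subseteq W_{\ell'}\cap X(W_\ell,U)$ by the same tube-invariance. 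So the set is just $\left(W_{\ell'}\cap X(W_\ell,U)\right)K_2$: right translation by the compact group $K_2\cong\SO(2)$. A closed nowhere dense set translated by a compact group stays closed; and a proper analytic subvariety translated by the one-parameter family $K_2$ sweeps out a set of the same dimension (dimension at most $\dim W_{\ell'}-1+1=\dim W_{\ell'}$, but one must check the sweep does not fill an open set), so one needs a dimension count to confirm nowhere density survives.

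Concretely, I would argue as follows. Let $Y=W_{\ell'}\cap X(W_\ell,U)$. Since $Y$ is a real-analytic subset of the connected real-analytic manifold $W_{\ell'}$ and is not all of $W_{\ell'}$ (exhibit one element of $W_{\ell'}$ not in $X(W_\ell,U)$, e.g.\ a rotation in $K_{\ell'}$ moving $\infty$ off of the relevant subspace, using $\ell'>\ell$), $Y$ has empty interior and $\dim Y\le \dim W_{\ell'}-1$. Then $YK_2$ is the image of $Y\times K_2$ under the smooth multiplication map, so $\dim(YK_2)\le \dim Y+\dim K_2=\dim W_{\ell'}-1+1=\dim W_{\ell'}$ — this bound is not yet enough, so I would instead note $YK_2=\bigcup_{k\in K_2}Yk$ is a $1$-parameter union of nowhere dense analytic sets, and since $Yk$ is nowhere dense for each $k$ and the union is over a one-dimensional compact parameter, $YK_2$ is contained in a countable union of nowhere dense closed sets, hence meager; being also closed (product of compact $K_2$ with a closed-in-$W_{\ell'}$ set $Y$ gives closed image), it is nowhere dense by Baire. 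The main obstacle is the first step: pinning down exactly which elements of $W_{\ell'}$ lie in $W_\ell K_U N$ and verifying this is a proper analytic subset — this requires carefully unwinding the boundary action of $N$ and the block structure of $W_{\ell'}$ versus $W_\ell$, and making sure the ``extra'' directions in $W_{\ell'}$ (those present because $\ell'>\ell$) genuinely force a nontrivial algebraic constraint rather than being absorbed into the $K_UN$ factor.
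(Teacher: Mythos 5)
Your reduction of the right $W_2$-factor to $K_2$ is fine: since $AU\subset N_G(U)$, the invariance \eqref{eqn:tubeproperty3} indeed gives $\left(W_{\ell'}\cap X(W_\ell,U)\right)W_2=\left(W_{\ell'}\cap X(W_\ell,U)\right)K_2$, and this parallels the paper's use of $W_2=K_2AU$. The content of the lemma, however, lies in the two steps you did not carry out, and the substitute you offer for them is invalid.

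First, all you actually establish about $Y\coloneqq W_{\ell'}\cap X(W_\ell,U)$ is that it is a proper closed (real-algebraic) subset, i.e.\ of codimension at least one; you yourself defer the identification of $W_{\ell'}\cap W_\ell K_U N$ as the ``main obstacle'' and only ``expect'' a nontrivial constraint. Second, the category argument you then apply to $YK_2$ is false: a union of nowhere dense sets over an uncountable one-parameter family is not ``contained in a countable union of nowhere dense closed sets,'' and with only $\dim Y\le\dim W_{\ell'}-1$ the sweep $YK_2$ can perfectly well contain an open set (rotate a plane through the origin in $\R^3$ by a circle of rotations: each translate is nowhere dense, the union is not). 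So nothing in your argument excludes $YK_2$ having interior. To beat the extra dimension contributed by $K_2$ you must either prove that $Y$ has codimension at least two --- which requires exhibiting \emph{both} constraints encoded in $gUg^{-1}\subset W_\ell$: the fixed boundary point $g\cdot\infty$ must lie on the proper subsphere $\partial\bH^{\ell}_{\mathrm{std}}\subset\partial\bH^{\ell'}_{\mathrm{std}}$, \emph{and} the direction of $gUg^{-1}$ inside the horospherical group at that point must be tangent to $W_\ell$ (the coarser, codimension-one condition that the boundary sphere of the associated plane merely meets the circle $K_2\cdot\infty$ is not enough when $\ell'=\ell+1$, since that condition is open near transversal intersections) --- or follow the paper's route: pass to the homogeneous space $W_{\ell'}/N_{W_{\ell'}}(W_\ell)$ of geodesic $\ell$-planes, use that $NK_U$ fixes $\infty$ so the relevant orbit of $\bH^\ell_{\mathrm{std}}$ consists of planes whose boundary contains $\infty$, and only then apply the compact factor $K_2$. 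Once a codimension-two (or equivalent geometric) statement is in hand, the conclusion follows from the closedness of $YK_2$ together with a dimension/measure-zero argument for the image of $Y\times K_2$ under multiplication, not from Baire category over an uncountable parameter. As written, the step you postponed is exactly where the lemma lives, and the argument you substituted for it does not close the gap.
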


\begin{proof}
Throughout we let $N=N^+$.
Taking inverses, we equivalently show that $W_2(W_{\ell'}\cap X(W_\ell,U))=W_2(W_{\ell'}\cap N K_UW_{\ell})$ is nowhere dense in $W_{\ell'}$, as the latter is compatible with our identification of $G/K$ with $\bH^{n+1}$.
We first show this on a particular quotient of $W_{\ell'}$ and then lift the result to $W_{\ell'}$ itself at the end of the proof.

Let $\mathcal{P}_{\ell'}(\ell)$ denote the set of all isometric embeddings of $\bH^\ell$ into $\bH_{\mathrm{std}}^{\ell'}$, where throughout the proof we use the notation defined at the beginning of \S \ref{subsection:TGsubmanifolds}. 
Using Lemma \ref{lem:Wnormalizer}, one verifies that $\mathrm{stab}_{W_{\ell'}}(\bH^\ell_{\mathrm{std}})=N_{W_{\ell'}}(W_{\ell})$.
As $W_{\ell'}$ acts transitively on geodesic $\ell$-planes in $\bH_{\mathrm{std}}^{\ell'}$, $\mathcal{P}_{\ell'}(\ell)$ has the structure of a homogeneous space and is identified with $W_{\ell'}/N_{W_{\ell'}}(W_{\ell})$ by mapping $wN_{W_{\ell'}}(W_{\ell})$ to $w\bH^\ell_{\mathrm{std}}$.

We claim that the image of $W_2(W_{\ell'}\cap N K_UW_{\ell})$ is nowhere dense in $ W_{\ell'}/N_{W_{\ell'}}(W_{\ell})\simeq \mathcal{P}_{\ell'}(\ell)$.
As $W_\ell\subset N_{W_{\ell'}}(W_\ell)$, it suffices to show that $W_2(W_{\ell'}\cap NK_U)$ is nowhere dense in $W_{\ell'}/N_{W_{\ell'}}(W_{\ell})$.
Using Equations \eqref{eqn:hyperbolicspaceboundary}, \eqref{eqn:Kequation}, and \eqref{eqn:horospherical}, one computes that $NK_U$ stabilizes the point $\infty=(1,1,0,\cdots,0)\in \partial \bH^\ell_{\mathrm{std}}\subset \partial \bH^{\ell'}_{\mathrm{std}}\subset \partial  \bH^{n+1}$.
In particular, the $(W_{\ell'}\cap NK_U)$-orbit of $\bH^\ell_{\mathrm{std}}$ is contained in
$$\mathcal{P}_\infty \coloneqq \{\bH^\ell\subset\bH_\mathrm{std}^{\ell'}: \infty\in\partial \bH^\ell\}\subset \mathcal{P}_{\ell'}(\ell).$$ 
Moreover, the $KAN$-decomposition of $W_2$ gives that $W_2= K_2 A U$ and a straightforward computation using \eqref{eqn:Aequation} shows that $AU\cdot\infty=\infty$.
Consequently, $W_2\cdot \mathcal{P}_\infty=K_2\cdot \mathcal{P}_\infty$, so we are reduced to computing this orbit.

A final computation shows that if $v_\theta=(1,\cos(\theta),\sin(\theta),0,\dots,0)$ then
$$K_2\cdot \infty=\{v_\theta: \theta\in[0,2\pi)\},$$
and therefore
$$W_2(W_{\ell'}\cap NK_U)\cdot \bH^\ell_{\mathrm{std}}\subset \mathcal{P}_\theta\coloneqq K_2\cdot \mathcal{P}_\infty=\{ \bH^\ell\subset\bH_\mathrm{std}^{\ell'}: v_\theta\in\partial \bH^\ell~\text{for some }\theta\in[0,2\pi)\}.$$ 
However, the set $\mathcal{P}_\theta$ is nowhere dense in $\mathcal{P}_{\ell'}(\ell)$ and therefore so is the image of $W_2(W_{\ell'}\cap N K_UW_{\ell})$ in $W_{\ell'}/N_{W_{\ell'}}(W_\ell)$.
The result then follows since quotient maps are open maps and hence taking pre-image preserves nowhere density.
\end{proof}

Combining the ingredients above, we are now in a position to prove Lemma \ref{lem:nowheredensetube} and thus complete the proof of Theorem \ref{thm:orbit_closure}.

\begin{proof}[Proof of Lemma \ref{lem:nowheredensetube}]
The entire proof is a sequence of reductions from the general computation stated in Lemma \ref{lem:nowheredensetube} to that of the one in Lemma \ref{lem:tubegeometry}.
Suppose that $x_0=\pi_\Gamma^F(g_0)$ and $\overline{x_0W_2}=x_0H$ for $H$ as in Lemma \ref{lem:Worbitclosure}, so that there exists $k\in K_0$ for which
$$W_{\ell'}\subseteq kHk^{-1}\subseteq N_G(W_{\ell'}),$$
for some $\ell'\ge 2$.
In the sequel, we will use the refined description of $k$ from Corollary \ref{cor:Wnormalizerdecomposition}.

We may immediately reduce to the case that $\ell'>2$ since otherwise $\mathcal{H}_{(g_0,H)}=\emptyset$.
Indeed, in that setting $H^\dagger\cong W_2$ and therefore a combination of properties (1') and (4) of $\mathcal{H}_{(g_0,H)}$ show that there are no subgroups $J\subset G$ for which $J^\dagger$ is both isomorphic to a standard subgroup and for which $g_0^{-1} J^\dagger g_0\subseteq H^\dagger$ is a proper subgroup. The reduction from $g_0^{-1} J g_0\subset H$ being proper to the statement on subgroups generated by unipotents follows, for instance, from \cite[Lemmas 3.10, 3.11]{ADM24}.

Now assume $\ell'>2$ and $J\in \mathcal{H}_{(g_0,H)}$.
Then, by left translation by $g_0^{-1}$, the condition that $\left( g_0H\cap X(J,U)\right)W_2$ is nowhere dense in $g_0H$ is equivalent to the condition that
$$\left( H\cap g_0^{-1}X(J,U)\right)W_2=\left( H\cap X(g_0^{-1}Jg_0,U)\right)W_2,$$
is nowhere dense in $H$.
This follows from \eqref{eqn:tubeproperty2}.

Note that there is a bijection from $\mathcal{H}_{(g_0,H)}$ to $\mathcal{H}_{(e,H)}$ by sending $J$ to $g_0^{-1}Jg_0$.
By applying \eqref{eqn:tubeproperty1}, we are therefore reduced to considering subsets of $H$ of the form
$$\left( H\cap X(J,U)\right)W_2=\left( H\cap X(J^\dagger,U)\right)W_2,$$
for any $J\in\mathcal{H}_{(e,H)}$.

We now reduce to the case where $W_{\ell'} \subset H \subset N_G(W_{\ell'})$.
By the explicit description of $k$ in Corollary \ref{cor:Wnormalizerdecomposition}, it follows that both $k\in K_U$ and also that $k$ normalizes $W_2$. Therefore
\begin{align*}
k\left( H\cap X(J^\dagger,U)\right)W_2k^{-1}&=\left( kHk^{-1}\cap kX(J^\dagger,U)k^{-1}\right)W_2\\
&=\left( kHk^{-1}\cap X(kJ^\dagger k^{-1},U)\right)W_2
\end{align*}
where we use Equations \eqref{eqn:tubeproperty2} and \eqref{eqn:tubeproperty3}.
As conjugation preserves nowhere density, we are therefore reduced to the case that $k=e$, since the previous equation implies the case for general $k$.
That is to say, we are reduced to showing that $\left( H\cap X(J^\dagger,U)\right)W_2$ is nowhere dense in $H$, where $W_{\ell'}\subseteq H\subseteq N_G(W_{\ell'})$ and $J\in \mathcal{H}_{(e,H)}$.

We next reduce to the case that $J^\dagger=W_{\ell}$ for some $2\le \ell< \ell'$. 
Indeed, if $J\in \mathcal{H}_{(e,H)}$ then from the discussion in \S \ref{subsection:basepoints}, $J^\dagger\cong W_{\ell}$ for some $2\le \ell< \ell'$ and, moreover, as all such subgroups of $W_{\ell'}$ are $W_{\ell'}$-conjugate, there exists $w\in W_{\ell'}\subseteq H$ for which $wJ^\dagger w^{-1}=W_{\ell}$.
As both the group $H$ and the property of nowhere density are invariant under left translation by $w$, nowhere density of the previous equation is equivalent to nowhere density of
$$w\left( H\cap X(J^\dagger,U)\right)W_2=\left( wH\cap wX(J^\dagger,U)\right)W_2=\left( H\cap X(W_{\ell},U)\right)W_2,$$
in $H$.

Finally, we reduce to the case that $H=W_{\ell'}$.
Since $W_{\ell'}\subseteq H\subseteq N_G(W_{\ell'})$, it follows from Corollary \ref{cor:Wnormalizerdecomposition} that $H=W_{\ell'} C$ for some $C\subset K_0$ for which $C\subseteq K_U$ and $C$ normalizes $W_2$.
In fact, this corollary shows that $C$ is a subgroup of $\langle  k_{\ell'}, \mathrm{O}(n-\ell'+1)\rangle$ and, as $\ell'>2$, this shows that $C$ in fact centralizes $W_2$.
By a final application of \eqref{eqn:tubeproperty3}, we have that
$$\left( H\cap X(W_{\ell},U)\right)W_2=\left( W_{\ell'}C\cap X(W_{\ell},U)\right)W_2=\left( W_{\ell'}\cap X(W_{\ell},U)\right)W_2C.$$
In particular, the right $C$-saturation of this set combined with the fact that $C$ and $W_2$ commute shows that nowhere density of the above set in $H$ is equivalent to nowhere density of
$\left( W_{\ell'}\cap X(W_{\ell},U)\right)W_2$ in $W_{\ell'}$. However, this is precisely what is proved by  Lemma~\ref{lem:tubegeometry}.
\end{proof}

%%%%%%%%%%%%%%%%%%%%%%%%%%%%%%%%%%%%%%%%%%%%%%%%%%%%%%%%%%%%%%%%%%%%%%%%%%%%%%%%
%%%%%%%%%%%%%%%%%%%%%%%%%%%%%%%%%%%%%%%%%%%%%%%%%%%%%%%%%%%%%%%%%%%%%%%%%%%%%%%%

\subsection{Geodesic submanifolds in hyperbolic manifolds}\label{section:TGexamples}

%%%%%%%%%%%%%%%%%%%%%%%%%%%%%%%%%%%%%%%%%%%%%%%%%%%%%%%%%%%%%%%%%%%%%%%%%%%%%%%%
%%%%%%%%%%%%%%%%%%%%%%%%%%%%%%%%%%%%%%%%%%%%%%%%%%%%%%%%%%%%%%%%%%%%%%%%%%%%%%%%

In this subsection, we briefly recount what is presently known about totally geodesic submanifolds of hyperbolic manifolds.
Hyperbolic manifolds bifurcate into two distinct types, arithmetic and non-arithmetic.
At present, we have an effectively complete understanding of the behavior of totally geodesic submanifolds in the former class, while the latter class remains more mysterious.
We describe what is known about each of these classes of manifolds below.

Informally, arithmetic manifolds are a classical construction of locally symmetric spaces which arise from integer structures on the isometry group $G$. 
In the hyperbolic setting, it is well-known that there are precisely three general constructions of arithmetic hyperbolic manifolds which are as follows: 
\begin{itemize}
\item Arithmetic manifolds of simplest type, which exist in all dimensions.
\item Arithmetic manifolds of second type, which exist in every odd dimension.
\item Arithmetic manifolds of triality type, which exist only in dimension $7$.
\end{itemize}
Additionally, in each dimension for which a given construction exists, one can produce infinitely many commensurability classes of that construction. 
Recall that two manifolds are \emph{commensurable} if they share a common finite sheeted cover.
This forms an equivalence relation on the set of finite-volume hyperbolic manifolds.

Arithmetic manifolds satisfy a strong submanifold dichotomy -- in every codimension there exists either $0$ or infinitely many totally geodesic submanifolds.
Moreover, for each of the above constructions, one can use the theory of algebraic groups over number fields to show the following regarding their totally geodesic submanifolds:
\begin{itemize}
\item Arithmetic manifolds of simplest type have infinitely many totally geodesic submanifolds of every codimension.
\item Arithmetic manifolds of second type always have infinitely many totally geodesic submanifolds of every even codimension but no totally geodesic submanifolds of codimension $1$.
\item Arithmetic manifolds of triality type have infinitely many totally geodesic submanifolds of dimension $3$.
\end{itemize}
The first two items of this list are straightforward to the reader well-versed in arithmetic manifolds.
For the triality type manifolds, we refer the interested reader to \cite[Theorem 1.5]{BBKS}.

For non-arithmetic manifolds, much less is currently known. 
Unlike in the arithmetic setting, it is known that non-arithmetic manifolds necessarily have only finitely many codimension $1$ totally geodesic submanifolds and more generally only finitely many maximal totally geodesic submanifolds \cite[Theorem 1.1]{BFMS}, where maximal means with respect to inclusion.

In dimensions $2$ and $3$, Teichm\"uller theory and work of Thurston \cite{Thurston}, culminating in Agol's proof of the virtual fibering conjecture \cite{Agol}, give a complete commensurability classification of all constructions of hyperbolic $2$- and $3$-manifolds.
Indeed, in dimension $2$ they are all hyperbolic structures on genus $g\ge 2$ surfaces, and in dimension $3$, all hyperbolic manifolds are mapping tori of pseudo-Anosovs (up to commensurability).
However, in dimensions $\ge 4$, our understanding of constructions of hyperbolic manifolds is still incomplete.
The following are presently the only known families of constructions of higher dimensional hyperbolic manifolds:
\begin{itemize}
\item Non-arithmetic manifolds arising from hyperbolic reflection groups. These are known to exist only in dimensions $\le 30$ when compact \cite{Vinberg} and $\le 997$ when one only assumes finite-volume \cite{Prok}.
\item Non-arithmetic manifolds arising from the gluing construction of Gromov--Piatetski-Shapiro \cite{GPS} and subsequent generalizations, e.g., by Raimbault \cite{Raimbault} and Gelander--Levit \cite{GelLev}. These exist in all dimensions.
\item Non-arithmetic manifolds arising from the gluing construction of Agol \cite{Agol}, generalized by Belolipetsky--Thomson \cite{BelThom} to higher dimensions, and subsequent generalizations. These exist in all dimensions.
\item Non-arithmetic manifolds arising from gluing constructions which hybridize the former two types. These exist in all dimensions.
\end{itemize}
Informally, the latter three gluing constructions are all built by taking an arithmetic manifold of simplest type which contains an embedded totally geodesic hypersurface, cutting the arithmetic manifold open on that hypersurface, and gluing this to another manifold built similarly.
In particular, these constructions always have codimension $1$ totally geodesic submanifolds coming from the boundary on which one is gluing. For different reasons, manifolds corresponding to hyperbolic reflection groups also always have such submanifolds.
As such, the following remains an extremely important open question for higher dimensional hyperbolic manifolds.

\begin{ques}\label{ques:noTGs}
Do there exist non-arithmetic manifolds of dimension $\ge 4$ which have no immersed totally geodesic, codimension $1$ submanifolds?
\end{ques}

We note that in dimension $2$, this question is vacuous as we only consider geodesic submanifolds of dimension $\ge2$. 
In dimension $3$, it is known that there are a plethora of non-arithmetic manifolds with no totally geodesic surfaces (e.g. by using the results of \cite[\S5.3.1]{MRBook} on manifolds in \cite[\S13.5]{MRBook} or on census manifolds in Snappy \cite{Snappy}), so Question \ref{ques:noTGs} is asking about genuinely higher dimensional phenomenon.

%%%%%%%%%%%%%%%%%%%%%%%%%%%%%%%%%%%%%%%%%%%%%%%%%%%%%%%%%%%%%%%%%%%%%%%%%%%%%%%%
%%%%%%%%%%%%%%%%%%%%%%%%%%%%%%%%%%%%%%%%%%%%%%%%%%%%%%%%%%%%%%%%%%%%%%%%%%%%%%%%

%%%%%%%%%%%%%%%%%%%%%%%%%%%%%%%%%%%%%%%%%%%%%%%%%%%%%%%%%%%%%%%%%%%%%%%%%%%%%%%%
%%%%%%%%%%%%%%%%%%%%%%%%%%%%%%%%%%%%%%%%%%%%%%%%%%%%%%%%%%%%%%%%%%%%%%%%%%%%%%%%
 
\section{Fractal uncertainty principle}\label{section:FUP}

%%%%%%%%%%%%%%%%%%%%%%%%%%%%%%%%%%%%%%%%%%%%%%%%%%%%%%%%%%%%%%%%%%%%%%%%%%%%%%%%
%%%%%%%%%%%%%%%%%%%%%%%%%%%%%%%%%%%%%%%%%%%%%%%%%%%%%%%%%%%%%%%%%%%%%%%%%%%%%%%%

A crucial part of the argument for Theorem \ref{thm:support} is the higher-dimensional fractal uncertainty principle, which we generalize to apply to hyperbolic manifolds. We begin by recalling the  following definitions. We denote the ball centered at $x$ of radius $r$ by $B_r(x)$. 
\begin{definition}\label{def:porous_on_balls}
A set $X \subset \R^n$ is \emph{$\nu$-porous on balls} from scales $\alpha_0$ to $\alpha_1$ if for every ball $B \subset \R^n$ of diameter $\alpha_0 < R< \alpha_1$, there is some $x \in B$ such that $B_{\nu R}(x) \cap X = \emptyset$.
\end{definition}

\begin{definition}\label{def:porous_on_lines}
A set $X \subset \R^n$ is \emph{$\nu$-porous on lines} from scales $\alpha_0$ to $\alpha_1$ if for all line segments $\tau \subset \R^n$ of length $\alpha_0 < R< \alpha_1$, there is some $x \in \tau$ such that $B_{\nu R}(x) \cap X = \emptyset$.
\end{definition}

Let $\cF_h$ be the unitary semiclassical Fourier transform defined by 
$$
\cF_h f(\xi) = h^{-\frac{n}{2}} \int_{\R^n} e^{-2 \pi i \lrang{x, \xi}/h} f(x) dx, \quad f \in L^2(\R^n).
$$

We now state the higher-dimensional fractal uncertainty principle. 
\begin{theorem}[~\cite{Co24}*{Theorem 1.1}]\label{thm:fractal_uncertainty}
Set $0 < \nu \leq \frac{1}{3}$. Let
\begin{itemize}
\item $X_- \subset [-1,1]^n$ be $\nu$-porous on balls from scales $h$ to 1;
\item $X_+ \subset [-1,1]^n$ be $\nu$-porous on lines from scales $h$ to 1.
\end{itemize}
Then there exist $\beta, C >0$, depending only on $\nu$ and $n$, such that
\begin{equation*}
\|\1_{X_-} \cF_h \1_{X_+}\|_{L^2 (\R^n) \rightarrow L^2(\R^n)} \leq Ch^{\beta}.
\end{equation*}
\end{theorem}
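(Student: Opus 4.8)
This is \cite{Co24}*{Theorem 1.1}, so I only sketch the strategy, which extends the method of Bourgain--Dyatlov to higher dimensions. The plan is to exploit that both porosity hypotheses are scale-invariant and run an iteration over the dyadic scales between $h$ and $1$: writing $N \asymp \log h^{-1}$, I would reduce the bound $\|\1_{X_-}\cF_h\1_{X_+}\| \leq Ch^\beta$ to a \emph{one-scale estimate} saying that passing from scale $2^{-k}$ to scale $2^{-(k+1)}$ contracts the relevant operator norm by a factor bounded away from $1$ by an amount depending only on $\nu$ and $n$; a product of $N$ such factors then produces the power $h^\beta$. This reduction should be routine once one has a submultiplicativity (transfer-operator) formalism and a discretization of $X_\pm$ into unions of dyadic cubes respecting the porosity.

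The real content is the one-scale estimate, and this is where the asymmetry of the two hypotheses must be used. On the physical side I would exploit the line porosity of $X_+$: inside every ball at the given scale there is a line segment missing $X_+$, so on the tube around that segment one can apply a one-dimensional gain in the direction of the segment --- this is where a Beurling--Malliavin-type outer function, or equivalently the one-dimensional fractal uncertainty principle of Bourgain--Dyatlov \cite{BD18}, should enter. On the Fourier side, the ball porosity of $X_-$ is what forces the frequency localization to be genuinely fractal in all directions, which is exactly what is needed so that the one-directional gains accumulated over the many tubes covering $[-1,1]^n$ do not cancel but instead assemble --- via an almost-orthogonality or $TT^*$ argument --- into a uniform contraction of the full operator.

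The hard part will be precisely this assembly step: the direction in which $X_+$ has a gap varies from ball to ball, so the one-dimensional gains live along a field of directions with no a priori coherence, and I would need to show that they still combine into a bound on $\1_{X_-}\cF_h\1_{X_+}$ without the gain being eaten by the $h$-dependent number of tubes, while the contraction constant and hence $\beta$ stay uniform through the $\asymp \log h^{-1}$ iterations (which is why $\beta$ can depend only on $\nu$ and $n$). Tracking the dependence of every constant on $\nu$ and $n$ at each stage of the induction is the technical heart of the argument in \cite{Co24}.
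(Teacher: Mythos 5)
The paper does not prove this statement at all: it is imported verbatim as a black box from \cite{Co24}, so your opening move of simply citing Cohen is exactly what the authors do, and nothing more is required. The issue is with the sketch you attach, which should not be mistaken for an outline of Cohen's argument and which, taken on its own terms, breaks at its central step. Your one-scale gain is supposed to come from applying the one-dimensional fractal uncertainty principle of \cite{BD18} in the direction of the segment gap furnished by the line porosity of $X_+$. But a one-dimensional gain in a direction $v$ needs porosity on \emph{both} sides of the 1D transform: the physical-side gap along $v$ is indeed supplied by the line porosity of $X_+$, while the relevant frequency-side set is (essentially) the projection of $X_-$ onto $v$, and ball porosity is not inherited by projections. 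For instance $X_-=[0,1]\times C$ with $C$ a Cantor set is $\nu$-porous on balls, yet its projection onto the first coordinate is the full interval, so along such directions there is no 1D FUP to invoke, tube by tube, no matter how the $TT^*$/almost-orthogonality bookkeeping is arranged. Since the gap direction of $X_+$ varies from ball to ball with no relation to the geometry of $X_-$, the per-tube input of your assembly step simply does not exist in general; this is precisely why the higher-dimensional statement resisted reduction to \cite{BD18}, and why the earlier higher-dimensional result of \cite{HS20} had to assume that $X_+$ is covered by products of one-dimensional regular sets (as the paper points out immediately after the theorem). The submultiplicative iteration over $\asymp\log h^{-1}$ dyadic scales is fine arithmetically — a uniform contraction per scale does yield $h^{\beta}$ — but the uniform one-scale estimate is the whole content, and it is obtained in \cite{Co24} by a genuinely different mechanism rather than by slicing and quoting the 1D theorem.

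For the purposes of this paper your proposal is acceptable only in its first sentence: quote \cite{Co24}*{Theorem 1.1} and move on. If you want to say anything about the proof, either describe Cohen's actual argument or omit the sketch, since as written it suggests a reduction to the one-dimensional result that is known not to work under the stated hypotheses.
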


We spend the rest of this section generalizing Theorem  ~\ref{thm:fractal_uncertainty}. We do so by adapting the work of ~\cite{BD18}*{\S\S 2.2, 4} to higher dimensions.

%%%%%%%%%%%%%%%%%%%%%%%%%%%%%%%%%%%%%%%%%%%%%%%%%%%%%%%%%%%%%%%%%%%%%%%%%%%%%%%%
%%%%%%%%%%%%%%%%%%%%%%%%%%%%%%%%%%%%%%%%%%%%%%%%%%%%%%%%%%%%%%%%%%%%%%%%%%%%%%%%

\subsection{Porosity properties}\label{subsection:porosity_properties}

%%%%%%%%%%%%%%%%%%%%%%%%%%%%%%%%%%%%%%%%%%%%%%%%%%%%%%%%%%%%%%%%%%%%%%%%%%%%%%%%
%%%%%%%%%%%%%%%%%%%%%%%%%%%%%%%%%%%%%%%%%%%%%%%%%%%%%%%%%%%%%%%%%%%%%%%%%%%%%%%%
The first step in generalizing Theorem ~\ref{thm:fractal_uncertainty} is to adapt ~\cite{BD18}*{Lemmas 2.1--4} to higher dimensions to show that line and ball porosity are preserved under certain operations (although the porosity constant $\nu$ may decrease and the scales may change). Notably, ~\cite{BD18} used $\delta$-regularity instead of porosity and thus used different proofs to ours. 

First we show that porosity is preserved under dilations and shifts.
\begin{lemma}\label{lem:affine_ball}
Let $X$ be $\nu$-porous on balls from scales $\alpha_0$ to $\alpha_1$. Fix $\lambda >0$ and $y \in \R^n$. Then $\tilde{X} \coloneqq y + \lambda X$ is $\nu$-porous on balls from scales $\lambda \alpha_0$ to $\lambda \alpha_1$.
\end{lemma}

\begin{proof}
Let $B$ be a ball of diameter $R$, where $\lambda \alpha_0 < R < \lambda \alpha_1$. Set $\tilde{B} = \lambda^{-1}(B-y)$. As $\tilde{B}$ is a ball of diameter $\lambda^{-1} R$,  there exists $\tilde{x} \in \tilde{B}$ such that $B_{\nu \lambda^{-1} R}(\tilde{x}) \cap X = \emptyset$. Thus, $B_{\nu R}(\lambda \tilde{x} +y) \cap \tilde{X} = \emptyset$, where $\lambda \tilde{x} +y \in B$.
\end{proof}

\begin{lemma}\label{lem:affine_line}
Let $X$ be $\nu$-porous on lines from scales $\alpha_0$ to $\alpha_1$. Fix $\lambda >0$ and $y \in \R^n$. Then $\tilde{X} \coloneqq y + \lambda X$ is $\nu$-porous on lines from scales $\lambda \alpha_0$ to $\lambda \alpha_1$.
\end{lemma}

\begin{proof}
Let $\tau$ be a line segment of length $R$, where $\lambda \alpha_0 < R < \lambda \alpha_1$. Set $\tilde{\tau} = \lambda^{-1}(\tau-y)$. As $\tilde{\tau}$ is a line segment of length $\lambda^{-1} R$, there exists $\tilde{x} \in \tilde{\tau}$ such that $B_{\nu \lambda^{-1} R}(\tilde{x}) \cap X = \emptyset$. Thus, $B_{\nu R}(\lambda \tilde{x} +y) \cap \tilde{X} = \emptyset$, where $\lambda \tilde{x} +y \in \tau$.
\end{proof}

We now turn our attention to neighborhoods of porous sets. We use the following notation. 
\begin{notation}\label{notation:neighborhood}
For any set $S \subset \R^n$ and any $\delta>0$, define 
\begin{equation*}\label{eq:neighborhood_def}
S(\delta) \coloneqq S + B_\delta (0).
\end{equation*}
\end{notation}

We quote the following two lemmas on neighborhoods of porous sets.  
\begin{lemma}[~\cite{Ki24}, Lemma 2.17]\label{lem:gen_porous_balls}
Let $\nu \in (0,1 )$, $0 < \alpha_0 \leq \alpha_1$ and $0 < \alpha_2 \leq \frac{\nu}{2} \alpha_1$. Assume that $X \subset \R^n$ is $\nu$-porous on balls from scales $\alpha_0$ to $\alpha_1$. Then the neighborhood $X(\alpha_2)$ is $\frac{\nu}{2}$-porous on balls from scales $\max(\alpha_0, \frac{2}{\nu}\alpha_2)$ to $\alpha_1$.
\end{lemma}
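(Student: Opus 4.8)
The plan is to reduce directly to the porosity of $X$ itself, using the \emph{same} center as the ball furnished by the hypothesis and absorbing the $\alpha_2$-neighborhood into the gap between the radii $\tfrac{\nu}{2}R$ and $\nu R$. No dynamics or covering arguments are needed; this is purely a metric-geometry bookkeeping statement.

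Concretely, I would first fix an arbitrary ball $B\subset\R^n$ of diameter $R$ with $\max(\alpha_0,\tfrac{2}{\nu}\alpha_2)<R<\alpha_1$. Since in particular $\alpha_0<R<\alpha_1$, the $\nu$-porosity of $X$ on balls from scales $\alpha_0$ to $\alpha_1$ gives a point $x\in B$ with $B_{\nu R}(x)\cap X=\emptyset$. I then claim the \emph{same} point $x$ witnesses $\tfrac{\nu}{2}$-porosity of $X(\alpha_2)$ at scale $R$, i.e.\ $B_{\nu R/2}(x)\cap X(\alpha_2)=\emptyset$. Indeed, if $z$ lay in this intersection, write $z=w+v$ with $w\in X$ and $|v|<\alpha_2$; then
\[
|w-x|\le |z-x|+|v|<\frac{\nu}{2}R+\alpha_2<\frac{\nu}{2}R+\frac{\nu}{2}R=\nu R,
\]
where the final strict inequality uses $\alpha_2<\tfrac{\nu}{2}R$, which is exactly the assumption $R>\tfrac{2}{\nu}\alpha_2$. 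Hence $w\in B_{\nu R}(x)\cap X$, contradicting the choice of $x$. Since $x\in B$, this is precisely what the definition of $\tfrac{\nu}{2}$-porosity on balls demands.

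The argument is elementary and I do not anticipate a genuine obstacle beyond keeping the scale thresholds straight. The two points to verify carefully are: (i) the range of $R$ on which we invoke porosity of $X$, namely $(\alpha_0,\alpha_1)$, contains the conclusion's range $(\max(\alpha_0,\tfrac{2}{\nu}\alpha_2),\alpha_1)$ — immediate from the definition of $\max$; and (ii) the hypothesis $\alpha_2\le\tfrac{\nu}{2}\alpha_1$ is used only to ensure that the conclusion's scale interval is non-degenerate (when $\tfrac{2}{\nu}\alpha_2\ge\alpha_1$ the statement is vacuous), so no separate case analysis on it enters the proof. I would also remark in passing that all inequalities above are strict, so the strict conventions in Definition~\ref{def:porous_on_balls} are respected throughout.
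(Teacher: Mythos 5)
Your argument is correct and complete: reusing the center $x$ furnished by the $\nu$-porosity of $X$ and absorbing the $\alpha_2$-fattening into the gap between $\tfrac{\nu}{2}R$ and $\nu R$ (legitimate precisely because $R>\tfrac{2}{\nu}\alpha_2$ gives $\alpha_2<\tfrac{\nu}{2}R$) is exactly the standard proof; the paper itself only quotes this lemma from \cite{Ki24} without reproving it, so there is no alternative route to compare against. Your reading of the hypothesis $\alpha_2\le\tfrac{\nu}{2}\alpha_1$ as a non-degeneracy condition on the scale interval is also right, and the open-versus-closed convention for the ball in $X(\alpha_2)=X+B_{\alpha_2}(0)$ is immaterial since your final inequality is strict either way.
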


\begin{lemma}[~\cite{Ki24}, Lemma 2.18]\label{lem:gen_porous_lines}
Let $\nu \in (0,1 )$, $0 < \alpha_0 \leq \alpha_1$ and $0 < \alpha_2 \leq \frac{\nu}{2} \alpha_1$. Assume that $X \subset \R^n$ is $\nu$-porous on lines on scales $\alpha_0$ to $\alpha_1$. Then the neighborhood $X(\alpha_2)$ is $\frac{\nu}{2}$-porous on lines from scales $\max(\alpha_0, \frac{2}{\nu}\alpha_2)$ to $\alpha_1$.
\end{lemma}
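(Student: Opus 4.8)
The plan is to prove $\tfrac{\nu}{2}$-porosity on lines of $X(\alpha_2)$ by exhibiting, for each admissible line segment, the \emph{same} missing-ball center as the one supplied by the porosity of $X$, only with the radius halved so as to absorb the thickening by $\alpha_2$. Concretely, fix a line segment $\tau\subset\R^n$ of length $R$ with $\max(\alpha_0,\tfrac{2}{\nu}\alpha_2)<R<\alpha_1$. Since in particular $\alpha_0<R<\alpha_1$, the hypothesis that $X$ is $\nu$-porous on lines from scales $\alpha_0$ to $\alpha_1$ yields a point $x\in\tau$ with $B_{\nu R}(x)\cap X=\emptyset$.

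Next I would check that the shrunken ball $B_{\nu R/2}(x)$ is disjoint from $X(\alpha_2)=X+B_{\alpha_2}(0)$. Suppose not, and pick $z\in B_{\nu R/2}(x)\cap X(\alpha_2)$, so that $z=y+w$ with $y\in X$ and $|w|<\alpha_2$. The triangle inequality then gives
$$|x-y|\le |x-z|+|z-y|<\tfrac{\nu}{2}R+\alpha_2.$$
From $R>\tfrac{2}{\nu}\alpha_2$ we have $\alpha_2<\tfrac{\nu}{2}R$, hence $|x-y|<\nu R$, i.e. $y\in B_{\nu R}(x)\cap X$, contradicting the previous paragraph. Therefore $B_{\nu R/2}(x)\cap X(\alpha_2)=\emptyset$ with $x\in\tau$, which is exactly the defining property of $\tfrac{\nu}{2}$-porosity on lines at scale $R$; since $R$ was an arbitrary admissible scale, this completes the argument.

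I do not anticipate a genuine obstacle here: the only subtlety is the bookkeeping of scales. One must inflate the lower scale to $\max(\alpha_0,\tfrac{2}{\nu}\alpha_2)$ precisely to guarantee $\alpha_2<\tfrac{\nu}{2}R$ in the estimate above, while the hypothesis $\alpha_2\le\tfrac{\nu}{2}\alpha_1$ is what keeps this inflated lower scale strictly below $\alpha_1$, so that the scale range is nonempty; the upper scale $\alpha_1$ is left untouched. One should also pay mild attention to whether $B_r(\cdot)$ is the open or closed ball and align the strict/non-strict inequalities accordingly, though this does not affect the conclusion. Finally, I would note that the companion statement Lemma~\ref{lem:gen_porous_balls} admits a verbatim proof with ``line segment'' replaced by ``ball'' throughout, which is why the two are recorded in parallel.
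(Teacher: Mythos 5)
Your argument is correct and is the standard one: use the porosity of $X$ at the same scale $R$ to get $x\in\tau$ with $B_{\nu R}(x)\cap X=\emptyset$, then halve the radius and absorb the $\alpha_2$-thickening via the triangle inequality, the inflated lower scale $\max(\alpha_0,\tfrac{2}{\nu}\alpha_2)$ being exactly what guarantees $\alpha_2<\tfrac{\nu}{2}R$. The paper itself does not prove this lemma but quotes it from \cite{Ki24}, and your proof is precisely the expected argument, so there is nothing further to compare.
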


To generalize Theorem \ref{thm:fractal_uncertainty}. we will use changes of variables. To that end, we show that  ball porosity is preserved under diffeomorphisms. 

\begin{lemma}\label{lem:diffeomorphism_ball}
Let $U \subset \R^n$ be a ball and $\varkappa:U \rightarrow \varkappa(U)$ be a diffeomorphism with 
$$C_1^{-1}|x-y| \leq |\varkappa(x) - \varkappa(y)| \leq C_1 |x-y|,$$
for $x, y \in U$.
Let $X \subset U$ and suppose $\varkappa(X)$ is $\nu$-porous on balls from scales $\alpha_0$ to $\alpha_1$. Then $X$ is $\frac{\nu}{2C_1^2}$-porous on balls from scales $C_1 \alpha_0$ to $C_1 \alpha_1$.
\end{lemma}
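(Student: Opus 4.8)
The plan is to transfer ball porosity of $\varkappa(X)$ back to $X$ by working with a ball $B = B_r(x_0) \subset \R^n$ of diameter $R = 2r$ with $C_1\alpha_0 < R < C_1\alpha_1$ and producing a point of $B$ around which a comparably-sized ball misses $X$. First I would consider the image $\varkappa(B)$. The bi-Lipschitz bounds show that $\varkappa(B)$ is sandwiched between two balls: it contains $B_{r/C_1}(\varkappa(x_0))$ and is contained in $B_{C_1 r}(\varkappa(x_0))$. Pick the ball $\tilde B := B_{r/C_1}(\varkappa(x_0))$, which has diameter $2r/C_1 = R/C_1$; since $C_1\alpha_0 < R < C_1\alpha_1$ we get $\alpha_0 < R/C_1 < \alpha_1$, so the porosity hypothesis on $\varkappa(X)$ applies to $\tilde B$.

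Next I would extract from $\nu$-porosity on balls a point $\tilde y \in \tilde B$ with $B_{\nu R/C_1}(\tilde y) \cap \varkappa(X) = \emptyset$. Because $\tilde B \subset \varkappa(B)$, we can write $\tilde y = \varkappa(y)$ for a unique $y \in B$. The key step is then to pull the empty ball back: if $z \in X$ with $|\varkappa(y) - \varkappa(z)| < \nu R / C_1$, then by the lower bi-Lipschitz bound $|y - z| \le C_1 |\varkappa(y)-\varkappa(z)| < \nu R$... but that is too weak. Instead I should run the estimate the other way: suppose $z \in B_{\nu R/C_1^2}(y) \cap X$; then $|\varkappa(y) - \varkappa(z)| \le C_1 |y - z| < \nu R / C_1$, contradicting that $B_{\nu R/C_1}(\varkappa(y)) \cap \varkappa(X) = \emptyset$. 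Hence $B_{\nu R/C_1^2}(y) \cap X = \emptyset$ with $y \in B$, which is exactly $\frac{\nu}{C_1^2}$-porosity on balls for $X$ at scale $R$.

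Since $R$ ranges over all diameters in $(C_1\alpha_0, C_1\alpha_1)$, this establishes that $X$ is $\frac{\nu}{C_1^2}$-porous on balls from scales $C_1\alpha_0$ to $C_1\alpha_1$, completing the proof. The only real point requiring care is getting the two applications of the bi-Lipschitz inequality aligned correctly — one factor of $C_1$ is lost in shrinking $B$ to an inscribed ball on which the hypothesis is usable, and a second factor is lost when transporting the missing ball back through $\varkappa$ — which accounts for the $C_1^2$ in the porosity constant and the $C_1$ in the scale range. There is no genuine obstacle here; it is a bookkeeping exercise with the constants, and I would just be careful that the inscribed ball $\tilde B$ indeed lies inside $\varkappa(B)$ so that its preimage point lies in $B$.
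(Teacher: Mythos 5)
Your argument is correct and is essentially the paper's own proof: both inscribe the ball $B_{R/(2C_1)}(\varkappa(x_0))$ in $\varkappa(B)$ (losing one factor of $C_1$), apply the porosity hypothesis there, and pull the empty ball back through $\varkappa$ using the upper Lipschitz bound (losing the second factor of $C_1$). No gaps; the constant bookkeeping matches the paper exactly.
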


\begin{proof}
Let $x \in \R^n$ and pick $C_1 \alpha_0 < R < C_1 \alpha_1$. We need to show that there exists $y \in B_{R/2}(x)$ such that 
$B_{\nu R/C_1^2}(y) \cap X = \emptyset$. Note that exists $\tilde{x} \in B_{R/2}(x)$ such that either $B_{R/4}(\tilde{x}) \subset U \cap B_{R/2}(x)$ or $B_{R/4}(\tilde{x}) \cap U = \emptyset$. Therefore, it suffices to assume that $B_{R/4}(x) \subset U$ and show there exists $y \in B_{R/4}(x)$ such that 
$B_{\nu R/2C_1^2}(y) \cap X = \emptyset$.

We know
$$B_{\frac{R}{4C_1}}(\varkappa(x)) \subset \varkappa\left(B_{\frac{R}{4}}(x)\right).$$
By the ball porosity of $\varkappa(X)$, there exists $\varkappa(y) \in B_{R/4C_1}(\varkappa(x))$ such that 
$B_{\nu R/2C_1}(\varkappa(y)) \cap \varkappa(X) = \emptyset.$
Thus,
$$\varkappa^{-1}\left(B_{\frac{\nu R}{2C_1}}(\varkappa(y)) \cap \varkappa(U) \right)\cap X = \emptyset.$$
Finally, since $B_{\nu R/2C_1^2}(y)\subset \varkappa^{-1}(B_{\nu R/2C_1}(\varkappa(y))\cap \varkappa(U)) \cup (\R^n \setminus U)$,  $B_{\nu R/2C_1^2}(y) \cap X = \emptyset$. The proof concludes by noting that $y \in B_{R/4}(x)$. 
\end{proof}

We also show that line porosity is preserved under diffeomorphisms. We do not use this fact in our paper. However, we still include it as it may be helpful for future results. 

\begin{lemma} \label{lem:diffeomorphism_line}
Let $\varkappa(x):\R^n \rightarrow \R^n$ be a diffeomorphism with 
\begin{equation}\label{eq:kappa_lipschitz}
C_1^{-1} |x-y| \leq |\varkappa(x) - \varkappa(y)| \leq C_1 |x-y|
\end{equation}
and for $1\leq i, j\leq n$, 
$$|\partial_i \partial_j \varkappa^{-1} (x)|\leq C_2.$$
Assume $0< \alpha_0 < \alpha_1 \leq \frac{\nu}{C_1 C_2 n}$.
Let $X \subset \R^n$ and suppose $\varkappa(X)$ is $\nu$-porous on lines from scales $\alpha_0$ to $\alpha_1$. Then $X$ is $\frac{\nu}{2C_1^2}$-porous on lines from scales $C_1 \alpha_0$ to $C_1\alpha_1$.
\end{lemma}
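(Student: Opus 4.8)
The plan is to mimic the proof of Lemma~\ref{lem:diffeomorphism_ball}, but along line segments rather than balls, with the extra complication that $\varkappa^{-1}$ distorts a line segment into a curve. First I would fix a line segment $\tau \subset \R^n$ of length $R$ with $C_1\alpha_0 < R < C_1\alpha_1$; the goal is to produce $y \in \tau$ with $B_{\nu R/(2C_1^2)}(y)\cap X = \emptyset$. Write $\tau = \{x_0 + t v : t \in [0,R]\}$ for a unit vector $v$. The image $\varkappa(\tau)$ is a curve in $\R^n$, not a segment, so I cannot directly invoke line porosity of $\varkappa(X)$ along it. Instead I would replace $\varkappa(\tau)$ by the honest segment $\sigma$ joining its endpoints $\varkappa(x_0)$ and $\varkappa(x_0 + Rv)$. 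By \eqref{eq:kappa_lipschitz} this segment has length between $R/C_1$ and $R$, so in particular its length lies between $\alpha_0$ and $\alpha_1$ (using $R < C_1\alpha_1$ and $R/C_1 > \alpha_0$), and line porosity of $\varkappa(X)$ applies: there is $z \in \sigma$ with $B_{\nu R/C_1}(z)\cap \varkappa(X) = \emptyset$ (shrinking the radius from $\nu \cdot(\text{length})$ down to $\nu R/C_1$ is harmless).

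The heart of the argument — and the step I expect to be the main obstacle — is controlling how far $\sigma$ deviates from $\varkappa(\tau)$, equivalently, how far $\varkappa^{-1}(\sigma)$ deviates from $\tau$. Parametrize $\sigma$ by $s \mapsto \varkappa(x_0) + s\,w$ where $w = \varkappa(x_0+Rv) - \varkappa(x_0)$; then $\gamma(s) := \varkappa^{-1}(\varkappa(x_0) + s w)$ is a curve from $x_0$ to $x_0 + Rv$, and I want to bound $\operatorname{dist}(\gamma(s), \tau)$. Since $\gamma(0)$ and $\gamma(1)$ lie on $\tau$ and $\gamma$ is a smooth reparametrization of a straight segment pushed through $\varkappa^{-1}$, the deviation is governed by the second derivative of $\varkappa^{-1}$: a Taylor/Lagrange estimate gives $|\gamma(s) - (\text{affine interpolant})| \le \tfrac12 C_2 n |w|^2 \le \tfrac12 C_2 n R^2$ (the factor $n$ from summing the Hessian bound $|\partial_i\partial_j\varkappa^{-1}|\le C_2$ over coordinates, and $|w|\le R$). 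Here the hypothesis $\alpha_1 \le \frac{\nu}{C_1 C_2 n}$ enters: it forces $\tfrac12 C_2 n R^2 \le \tfrac12 C_2 n R \cdot C_1\alpha_1 \le \tfrac{\nu R}{2C_1}$, so the curve $\gamma$ stays within distance $\tfrac{\nu R}{2 C_1}$ of the segment $\tau$. (I should double-check whether the cleanest estimate needs the affine interpolant between $\gamma(0),\gamma(1)$ or directly a comparison to $\tau$ — these differ by a reparametrization of $\tau$ itself, which is fine since both endpoints match.)

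With this in hand I would finish as in Lemma~\ref{lem:diffeomorphism_ball}. Let $y := \varkappa^{-1}(z) \in \varkappa^{-1}(\sigma)$; by the deviation bound there is a point $y' \in \tau$ with $|y - y'| \le \tfrac{\nu R}{2C_1}$. From $B_{\nu R/C_1}(z)\cap \varkappa(X) = \emptyset$ and the Lipschitz bound \eqref{eq:kappa_lipschitz} (lower bound), $\varkappa^{-1}(B_{\nu R/C_1}(z)) \supset B_{\nu R/C_1^2}(y)$, so $B_{\nu R/C_1^2}(y)\cap X = \emptyset$. Then by the triangle inequality $B_{\nu R/(2C_1^2)}(y') \subset B_{\nu R/C_1^2}(y)$ (since $\tfrac{\nu R}{2C_1} \le \tfrac{\nu R}{2C_1^2}\cdot C_1$... — here I need to be slightly careful with the constant bookkeeping, and may need to choose the radius in the line-porosity output a bit smaller, e.g. work with $\nu R/(2C_1)$ from the start, to guarantee $B_{\nu R/(2C_1^2)}(y')\cap X=\emptyset$ cleanly). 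This yields the desired point $y' \in \tau$, establishing that $X$ is $\tfrac{\nu}{2C_1^2}$-porous on lines from scales $C_1\alpha_0$ to $C_1\alpha_1$. The only genuinely new ingredient beyond Lemma~\ref{lem:diffeomorphism_ball} is the Hessian-based curve-straightening estimate, which is exactly why the hypothesis on $\partial_i\partial_j\varkappa^{-1}$ and the upper bound on $\alpha_1$ are imposed.
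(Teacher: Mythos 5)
There is a genuine gap, and it is quantitative rather than cosmetic: the chord-based route does not close with the constants in the statement. First, you twice use the upper bound in \eqref{eq:kappa_lipschitz} as if $\varkappa$ were $1$-Lipschitz: the chord $\sigma$ joining $\varkappa(x_0)$ and $\varkappa(x_0+Rv)$ has length $|w|\le C_1R$, not $\le R$. Hence its length can exceed $\alpha_1$ (so the porosity hypothesis cannot be applied to $\sigma$ as stated; this part is fixable by passing to a sub-segment of length $R/C_1$), and, more seriously, the Hessian/Lagrange estimate for the deviation of $\varkappa^{-1}(\sigma)$ from $\tau$ is of size $\sim C_2\,n\,|w|^2\le C_2\,n\,C_1^2R^2$, not $\tfrac12 C_2 n R^2$. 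Second, the deviation you actually need at the end is $|y-y'|\le \tfrac{\nu R}{2C_1^2}$ (so that $B_{\nu R/(2C_1^2)}(y')\subset B_{\nu R/C_1^2}(y)$), not $\tfrac{\nu R}{2C_1}$. Under the stated hypothesis $\alpha_1\le \tfrac{\nu}{C_1C_2 n}$ one only gets $C_2 n R<\nu$, so the (corrected) chord deviation is only $\lesssim C_1^2\,\nu R$, which misses the required $\tfrac{\nu R}{2C_1^2}$ by a factor of order $C_1^4$; the argument goes through only when $C_1$ is essentially $1$. So as proposed, you would need either a stronger smallness assumption on $\alpha_1$ or a weaker output porosity constant, i.e.\ you would not prove the lemma as stated.

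The paper's proof sidesteps exactly this loss by not using the chord between image points at all. It takes the image segment based at $\varkappa(x)$ in the direction $w=\tfrac{d\varkappa(x)v}{C_1|d\varkappa(x)v|}$, of length $R/C_1\in(\alpha_0,\alpha_1)$, applies line porosity there, and pulls back. Because the direction is the push-forward of $v$, the first-order Taylor term of $\varkappa^{-1}(\varkappa(x)+t_0w)$ is $x+t_0\tfrac{v}{C_1|d\varkappa(x)v|}$, which lies exactly on the original segment $\tau$; the only deviation is the second-order remainder $\tfrac{C_2 n t_0^2}{2C_1^2}\le\tfrac{\nu R}{2C_1^2}$, which is precisely what the hypothesis $\alpha_1\le\tfrac{\nu}{C_1C_2 n}$ absorbs. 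If you replace your chord $\sigma$ by this push-forward segment (keeping your pull-back and ball-inclusion steps, which are fine), your argument becomes the paper's and the constants close.
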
 

\begin{proof}
Let $x, v \in \R^n$ with $|v| =1$ and pick $C_1 \alpha_0 < R < C_1 \alpha_1$. It suffices to show that there exists a $0 \leq t_0 \leq R$ such that $B_{\frac{\nu R}{2 C_1^2}}(x+ t_0 v) \cap X =\emptyset$.

From \eqref{eq:kappa_lipschitz}, $C_1^{-1} \leq |d\varkappa (x)v|$. Thus for $w \coloneqq \tfrac{d\varkappa(x) v}{C_1 |d \varkappa(x) v|}$, $\{\varkappa(x) + tw : 0 \leq t \leq R\} \subset \{\varkappa(x) + td\varkappa(x) v: 0 \leq t \leq R\}$ and $\alpha_0 \leq R |w| \leq \alpha_1$.
By the line porosity of $\varkappa(X)$, 
there exists $0 \leq t_0 \leq R$ such that 
$$B_{\frac{\nu R}{C_1}} (\varkappa(x) + t_0w) \cap \varkappa(X)  = \emptyset.$$

Thus,
$$\varkappa^{-1} \left(B_{\frac{\nu R}{C_1}} (\varkappa(x) + t_0w)\right) \cap X  = \emptyset.$$
Clearly,
$$B_{\frac{\nu R}{C_1^2}}(\varkappa^{-1}(\varkappa(x) + t_0w)) \subset \varkappa^{-1} \left(B_{\frac{vR}{C_1}} (\varkappa(x) + t_0w)\right).$$

From the Taylor expansion in $t$,
$$\varkappa^{-1}(\varkappa(x) + t_0w) \in B_{\frac{ C_2 n t_0^2}{2C_1^2}}\left(x + t_0d\varkappa^{-1}(\varkappa(x)) w\right) =B_{\frac{ C_2 n t_0^2}{2C_1^2}}\left( x + t_0 \frac{v}{C_1 |d \varkappa(x) v|}\right) .$$

As $\alpha_1 \leq \frac{\nu}{C_1 C_2 n}$, we know $\frac{C_2 n t_0^2}{2 C_1^2 } \leq \frac{\nu R}{2C_1^2}$. 

Therefore, 
$$B_{\frac{\nu R}{2C_1^2}}\left( x + t_0 v\right) \subset \varkappa^{-1} \left(B_{\frac{vR}{C_1}} (\varkappa(x) + t_0w)\right),$$
which gives $B_{\frac{\nu R}{2C_1^2}}\left( x + t_0 v\right) \cap X = \emptyset$.
\end{proof}

%%%%%%%%%%%%%%%%%%%%%%%%%%%%%%%%%%%%%%%%%%%%%%%%%%%%%%%%%%%%%%%%%%%%%%%%%%%%%%%%
%%%%%%%%%%%%%%%%%%%%%%%%%%%%%%%%%%%%%%%%%%%%%%%%%%%%%%%%%%%%%%%%%%%%%%%%%%%%%%%%

\subsection{Generalizations of the fractal uncertainty principle}\label{subsection:gen_FIO}

%%%%%%%%%%%%%%%%%%%%%%%%%%%%%%%%%%%%%%%%%%%%%%%%%%%%%%%%%%%%%%%%%%%%%%%%%%%%%%%%
%%%%%%%%%%%%%%%%%%%%%%%%%%%%%%%%%%%%%%%%%%%%%%%%%%%%%%%%%%%%%%%%%%%%%%%%%%%%%%%%

We first quote the following lemma which adapts Theorem ~\ref{thm:fractal_uncertainty} to  unbounded sets. 
\begin{lemma}[~\cite{Ki24}*{Proposition 2.19}] \label{lem:FUP_unbounded}
Set $0 < \nu \leq \frac{1}{3}$. Let
\begin{itemize}
\item $X_- \subset \R^n$ be $\nu$-porous on balls from scales $h$ to 1;
\item $X_+  \subset \R^n$ be $\nu$-porous on lines from scales $h$ to 1.
\end{itemize}
Then there exists $\beta, C >0$, depending only on $\nu$ and $n$, such that
\begin{equation*}
\|\1_{X_-} \cF_h \1_{X_+}\|_{L^2 (\R^n) \rightarrow L^2(\R^n)} \leq Ch^\beta.
\end{equation*}
\end{lemma}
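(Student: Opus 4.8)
\emph{Proof proposal.} The plan is to deduce Lemma~\ref{lem:FUP_unbounded} from the bounded fractal uncertainty principle, Theorem~\ref{thm:fractal_uncertainty}, by localizing in space at unit scale and then running an almost‑orthogonality (Cotlar--Stein) argument. The only real subtlety is that $\cF_h$ is a \emph{global} operator, so one has to arrange that spatially separated pieces genuinely decouple. First I would replace the sharp cutoffs by smooth ones: fix a small constant $\varepsilon_0=\varepsilon_0(\nu)>0$ (say $\varepsilon_0=\nu/4$) and choose $\chi_\pm\in C^\infty(\R^n;[0,1])$ with $\1_{X_\pm}\le\chi_\pm\le\1_{X_\pm(\varepsilon_0 h)}$ and $|\partial^\alpha\chi_\pm|\le C_\alpha h^{-|\alpha|}$ (mollify $\1_{X_\pm(\varepsilon_0 h/2)}$ at scale $\varepsilon_0 h/4$). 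Since $\1_{X_\pm}\le\chi_\pm$ we have $\|\1_{X_-}\cF_h\1_{X_+}\|\le\|\chi_-\cF_h\chi_+\|$, so it suffices to bound the latter. Then take a smooth partition of unity $1=\sum_{k\in\Z^n}\eta_k$ with $\eta_k=\eta_0(\cdot-k)$, $\eta_0\in C_c^\infty((-1,1)^n)$, so that $\eta_k\eta_{k'}\equiv0$ unless $\|k-k'\|_\infty\le1$ and $|\partial^\alpha\eta_k|=\cO(1)$, and write
$$\chi_-\cF_h\chi_+=\sum_{k,j\in\Z^n}\Lambda_{kj},\qquad \Lambda_{kj}:=\eta_k\chi_-\cF_h\chi_+\eta_j.$$

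\textbf{Block estimates.} For the diagonal‑type bound, $\eta_k\chi_-$ is supported in $(k+(-1,1)^n)\cap X_-(\varepsilon_0 h)$ with $0\le\eta_k\chi_-\le1$, and similarly for $\chi_+\eta_j$; since $\|\1_E\cF_h\1_F\|$ depends on $E,F$ only up to independent translations, $\|\Lambda_{kj}\|\le\big\|\1_{(X_-(\varepsilon_0 h)-k)\cap[-1,1]^n}\cF_h\1_{(X_+(\varepsilon_0 h)-j)\cap[-1,1]^n}\big\|$. For $\varepsilon_0\le\nu/4$, Lemmas~\ref{lem:affine_ball} and~\ref{lem:gen_porous_balls} show the first set is $\tfrac\nu2$‑porous on balls from scales $h$ to $1$, and Lemmas~\ref{lem:affine_line} and~\ref{lem:gen_porous_lines} show the second is $\tfrac\nu2$‑porous on lines from scales $h$ to $1$ (intersecting with $[-1,1]^n$ only improves porosity); as $\tfrac\nu2\le\tfrac13$, Theorem~\ref{thm:fractal_uncertainty} gives $\|\Lambda_{kj}\|\le Ch^\beta$ with $C,\beta$ depending only on $\nu,n$. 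In particular $\|\Lambda_{kj}^*\Lambda_{k'j'}\|,\|\Lambda_{kj}\Lambda_{k'j'}^*\|\le C^2h^{2\beta}$ for all indices. For the off‑diagonal decay, one computes $\Lambda_{kj}^*\Lambda_{k'j'}=\eta_j\chi_+\,\cF_h^*(\chi_-^2\eta_k\eta_{k'})\,\cF_h\,\chi_+\eta_{j'}$, which vanishes unless $\|k-k'\|_\infty\le1$. The Schwartz kernel of $\cF_h^*(g)\cF_h$ equals $h^{-n}\widehat g\big((y-x)/h\big)$; since $g=\chi_-^2\eta_k\eta_{k'}$ is supported in a unit cube and, by the Leibniz rule together with $0\le\chi_-\le1$, satisfies $\|g\|_{C^N}\le C_N h^{-N}$, integrating by parts $N$ times gives $|h^{-n}\widehat g((y-x)/h)|\le C_N h^{-n}|x-y|^{-N}$ for $|x-y|\ge h$. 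Bounding $\Lambda_{kj}^*\Lambda_{k'j'}$ by its Hilbert--Schmidt norm over the cubes supporting $\eta_j,\eta_{j'}$ yields $\|\Lambda_{kj}^*\Lambda_{k'j'}\|\le C_N h^{-n}|j-j'|^{-N}$ whenever $\|j-j'\|_\infty\ge3$; symmetrically (using $\cF_h\cF_h^*=I$), $\Lambda_{kj}\Lambda_{k'j'}^*$ vanishes unless $\|j-j'\|_\infty\le1$ and is $\le C_N h^{-n}|k-k'|^{-N}$ for $\|k-k'\|_\infty\ge3$.

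\textbf{Conclusion.} Combining the two block estimates, for $\|k-k'\|_\infty\le1$ we get $\|\Lambda_{kj}^*\Lambda_{k'j'}\|\le C_N\min\!\big(h^{2\beta},\,h^{-n}|j-j'|^{-N}\big)$ (and $=0$ otherwise). Summing $\sum_{k',j'}\|\Lambda_{kj}^*\Lambda_{k'j'}\|^{1/2}$ by splitting at $|j-j'|\sim h^{-(n+2\beta)/N}$ — the $\cO(h^{-n(n+2\beta)/N})$ nearby terms each contribute $\lesssim h^{\beta}$, while the remaining terms sum to $\lesssim h^{-n/2}\sum_{m\gtrsim h^{-(n+2\beta)/N}}m^{n-1-N/2}$ — shows this quantity is $\le Ch^{\beta/2}$ once $N=N(n,\beta)$ is taken large enough ($N>2n$ and $n(n+2\beta)/N<\beta/2$), uniformly in $(k,j)$; likewise $\sum_{k',j'}\|\Lambda_{kj}\Lambda_{k'j'}^*\|^{1/2}\le Ch^{\beta/2}$. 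The Cotlar--Stein lemma then gives $\|\chi_-\cF_h\chi_+\|=\big\|\sum_{k,j}\Lambda_{kj}\big\|\le Ch^{\beta/2}$, and therefore $\|\1_{X_-}\cF_h\1_{X_+}\|\le Ch^{\beta/2}$, which is the assertion, with exponent $\beta/2$ and a new constant, both still depending only on $\nu$ and $n$.

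\textbf{Main obstacle.} The heart of the matter is the almost‑orthogonality. Because $\cF_h$ is a global operator, the blocks $\Lambda_{kj}$ and $\Lambda_{k'j'}$ only decouple after one smooths the indicators, and the smoothing scale must be chosen just right: as large as $\sim h$, so that the non‑stationary‑phase gain in the block interactions $\cF_h^*(\text{smooth symbol})\cF_h$ is as strong as possible, but no larger, so that Lemmas~\ref{lem:gen_porous_balls} and~\ref{lem:gen_porous_lines} still certify that the thickened sets $X_\pm(\varepsilon_0 h)$ remain porous all the way down to scale $h$ — precisely the hypothesis of Theorem~\ref{thm:fractal_uncertainty}. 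Everything else — the Hilbert--Schmidt bookkeeping for the off‑diagonal blocks, the near/far split, and the choice of $N$ — is routine, at the cost of a harmless loss in the exponent.
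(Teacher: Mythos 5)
The paper does not prove this lemma at all: it is imported verbatim from \cite{Ki24}*{Proposition 2.19}, so there is no in-paper proof to compare against. Your argument is, as far as I can check, correct and self-contained, and it follows the route one would expect for such an unbounded extension: smooth the indicators at scale $\sim h$ (small enough that Lemmas \ref{lem:gen_porous_balls} and \ref{lem:gen_porous_lines} keep the thickened sets porous down to scale $h$, large enough that the off-block interactions gain from non-stationary phase), localize to unit cubes, apply Theorem \ref{thm:fractal_uncertainty} to each block after using translation invariance of $\|\1_E\cF_h\1_F\|$ and the stability Lemmas \ref{lem:affine_ball}, \ref{lem:affine_line} (plus the fact that passing to subsets preserves porosity), and then run Cotlar--Stein with the near/far split in $|j-j'|$ (respectively $|k-k'|$). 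The bookkeeping checks out: the kernel of $\cF_h^*(\chi_-^2\eta_k\eta_{k'})\cF_h$ is $h^{-n}$ times a function of $(y-x)/h$ whose decay, given $|\partial^\alpha|\lesssim h^{-|\alpha|}$, is $C_N|y-x|^{-N}$ with no $h$-loss per integration by parts, and your choice $N>2n$, $n(n+2\beta)/N<\beta/2$ makes both row sums $O(h^{\beta/2})$, so the conclusion holds with exponent $\beta/2$ --- a harmless loss relative to the quoted statement. Two cosmetic points you should tighten in a written version: the first inequality $\|\1_{X_-}\cF_h\1_{X_+}\|\le\|\chi_-\cF_h\chi_+\|$ follows from $\1_{X_\pm}=\1_{X_\pm}\chi_\pm$ (i.e.\ $\chi_\pm\equiv1$ on $X_\pm$) together with $\|\1_{X_\pm}\|_{L^2\to L^2}\le1$, not from pointwise domination of multipliers alone (the same remark applies to the block estimate, where the factorization through $\1_{\mathrm{supp}}$ is what is really used); and one should note that $\sum_{k,j}\Lambda_{kj}$ converges to $\chi_-\cF_h\chi_+$ strongly so that the Cotlar--Stein bound indeed controls the full operator. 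Neither affects correctness.
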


We can extend Lemma ~\ref{lem:FUP_unbounded} by using a simple generalization of ~\cite{DJN22}*{Proposition 2.10}. Using the notation of ~\cite{DJN22}, we set $\gamma_0^\pm = \varrho$ and $\gamma_1^\pm =0$ to obtain the following.

\begin{lemma}
\label{lem:FUP_new_scales}
Set $0 < \nu \leq \frac{1}{3}$ and $\frac{1}{2} < \varrho \leq 1$. Let 
\begin{itemize}
\item $X_- \subset \R^n$ be $\nu$-porous on balls from scales $h^\varrho$ to 1;
\item $X_+ \subset \R^n$ be $\nu$-porous on lines from scales $h^\varrho$ to 1.
\end{itemize}
Then there exists $\beta, C >0$, where $C$ depends only on $\nu$ and $n$, while $\beta$ depends only on $\nu$, $n$, and $\varrho$ such that
\begin{equation*}
\|\1_{X_-} \cF_h \1_{X_+}\|_{L^2 (\R^n) \rightarrow L^2(\R^n)} \leq Ch^\beta.
\end{equation*}
\end{lemma}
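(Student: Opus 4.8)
The strategy is a rescaling argument that reduces Lemma~\ref{lem:FUP_new_scales} to Lemma~\ref{lem:FUP_unbounded}. Write $\tilde h = h^\varrho$; since $\tfrac12 < \varrho \le 1$ we have $\tilde h \le h$ wait — actually $\tilde h = h^\varrho \ge h$ when $\varrho \le 1$, so porosity from scales $h^\varrho$ to $1$ is a \emph{weaker} hypothesis than porosity from scales $h$ to $1$, and one cannot directly feed the hypotheses into Lemma~\ref{lem:FUP_unbounded}. Instead I would exploit the fact that the relevant semiclassical parameter for the pair $(X_-, X_+)$ at the coarser range of scales should be thought of as $\tilde h = h^\varrho$, and that the conjugation $f(x) \mapsto \lambda^{n/2} f(\lambda x)$ intertwines $\cF_h$ with $\cF_{h/\lambda^2}$ up to unitary factors: explicitly, if $D_\lambda f(x) = \lambda^{n/2} f(\lambda x)$, then $D_\lambda \cF_h D_\lambda^{-1} = \cF_{h/\lambda^2}$, and $D_\lambda \1_X D_\lambda^{-1} = \1_{\lambda^{-1} X}$.

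First I would set $\lambda = h^{(1-\varrho)/2}$, so that $h/\lambda^2 = h / h^{1-\varrho} = h^\varrho = \tilde h$. Then
$$D_\lambda \, \1_{X_-} \cF_h \1_{X_+} \, D_\lambda^{-1} = \1_{\lambda^{-1} X_-} \, \cF_{\tilde h} \, \1_{\lambda^{-1} X_+},$$
and since $D_\lambda$ is unitary on $L^2(\R^n)$, the operator norms agree. Next I would check the porosity hypotheses transfer correctly: by Lemma~\ref{lem:affine_ball} (with $y=0$ and dilation factor $\lambda^{-1}$), $\lambda^{-1} X_-$ is $\nu$-porous on balls from scales $\lambda^{-1} h^\varrho$ to $\lambda^{-1}$, i.e.\ from $h^{\varrho} \cdot h^{-(1-\varrho)/2} = h^{(3\varrho-1)/2}$ to $h^{-(1-\varrho)/2}$. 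Hmm — I need scales $\tilde h = h^\varrho$ to $1$ to apply Lemma~\ref{lem:FUP_unbounded} with parameter $\tilde h$. Note $h^{(3\varrho-1)/2} \le h^\varrho$ precisely when $(3\varrho - 1)/2 \ge \varrho$, i.e.\ $\varrho \ge 1$; in general for $\varrho < 1$ we have $(3\varrho-1)/2 < \varrho$ so $h^{(3\varrho-1)/2} > h^\varrho$, meaning the lower scale is too \emph{large}. So I would instead only claim $\lambda^{-1} X_-$ is $\nu$-porous on balls from scales $\tilde h$ to $1$, which holds because $[\tilde h, 1] \subset [h^{(3\varrho-1)/2}, h^{-(1-\varrho)/2}]$: indeed $h^{(3\varrho-1)/2} \le h^\varrho = \tilde h$ requires $(3\varrho-1)/2 \ge \varrho$... this is the wrong direction again.

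Let me reconsider: the cleaner route, and the one the lemma statement's reference to \cite{DJN22}*{Proposition 2.10} signals, is \emph{not} to rescale but to use the version of the FUP with general scale exponents directly. Specifically, \cite{DJN22}*{Proposition 2.10} is stated for sets porous from scales $h^{\gamma_0}$ to $h^{\gamma_1}$ for general $0 \le \gamma_1 < \gamma_0 \le 1$, and yields a bound $\cO(h^\beta)$ with $\beta$ depending on the exponents; setting $\gamma_0^\pm = \varrho$, $\gamma_1^\pm = 0$ recovers exactly the statement of Lemma~\ref{lem:FUP_new_scales}. So the plan is: (i) verify that the higher-dimensional input Lemma~\ref{lem:FUP_unbounded} plays the role that the one-dimensional FUP plays in \cite{DJN22}; (ii) rerun the proof of \cite{DJN22}*{Proposition 2.10} verbatim, which is itself a rescaling-plus-covering argument — one covers $[-1,1]^n$ (or $\R^n$) by boxes of side $\sim h^{1-\varrho}$, on each of which a parabolic rescaling by $\lambda = h^{(1-\varrho)/2}$ converts the scale window $[h^\varrho, 1]$ restricted to that box into $[h, h^{\varrho}]$... and more carefully $[\tilde h, \lambda^{-1}]$ with $\lambda^{-1} = h^{-(1-\varrho)/2} \ge 1$, so after rescaling one has porosity from $\tilde h$ up to \emph{at least} $1$, which is enough to invoke Lemma~\ref{lem:FUP_unbounded} at parameter $\tilde h$ on each box; (iii) reassemble using almost-orthogonality of the Fourier transform across the box decomposition, picking up a polynomial-in-$h^{-1}$ number of boxes which is absorbed into the $h^\beta$ gain at the cost of shrinking $\beta$.

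\textbf{Main obstacle.} The routine parts (affine/dilation invariance of porosity, unitarity of $D_\lambda$) are handled by Lemmas~\ref{lem:affine_ball}--\ref{lem:affine_line}. The real work is the reassembly step (iii): one must control the operator norm of $\1_{X_-}\cF_h \1_{X_+}$ by the norms of its localizations to a grid of $h^{1-\varrho}$-boxes without losing more than a power of $h$. The cleanest way is to decompose $\1_{X_-} = \sum_\alpha \1_{X_- \cap Q_\alpha}$ over a partition of $\R^n$ into such boxes $Q_\alpha$ and similarly for $X_+$; the off-diagonal terms $\1_{X_- \cap Q_\alpha}\cF_h \1_{X_+ \cap Q_\beta}$ do not obviously decay, so instead one should localize only on the $X_+$ side (or use a smooth partition and a non-stationary phase / almost-orthogonality estimate as in \cite{DJN22}), controlling the overlap via $TT^*$ and the rapid decay of $\cF_h$ of a compactly supported bump on scale $h^{1-\varrho} \gg h$. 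I expect this almost-orthogonality bookkeeping — tracking exactly how $\beta$ degrades and confirming $\beta$ ends up depending only on $\nu, n, \varrho$ as claimed — to be the step requiring the most care, though it is entirely parallel to \cite{DJN22}*{Proposition 2.10} and introduces no genuinely new difficulty.
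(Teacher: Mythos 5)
Your instinct to reduce to Lemma~\ref{lem:FUP_unbounded} by a dilation is exactly right, but you chose the wrong dilation and then, having run into the resulting scale mismatch, abandoned the correct route for a covering-plus-almost-orthogonality scheme that does not repair the defect. The fix is one line: take $\lambda = h^{1-\varrho}$ (not $h^{(1-\varrho)/2}$). Then $D_\lambda \,\1_{X_-}\cF_h \1_{X_+}\, D_\lambda^{-1} = \1_{\lambda^{-1}X_-}\,\cF_{\tilde h}\,\1_{\lambda^{-1}X_+}$ with $\tilde h \coloneqq h/\lambda^2 = h^{2\varrho-1}$, which is a small parameter precisely because $\varrho > \tfrac12$ --- this is where that hypothesis enters. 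By Lemmas~\ref{lem:affine_ball} and~\ref{lem:affine_line}, the sets $\lambda^{-1}X_\pm$ are $\nu$-porous (on balls, resp.\ lines) from scales $\lambda^{-1}h^{\varrho} = h^{2\varrho-1} = \tilde h$ to $\lambda^{-1} = h^{-(1-\varrho)} \ge 1$, hence in particular from $\tilde h$ to $1$. Since Lemma~\ref{lem:FUP_unbounded} applies to arbitrary subsets of $\R^n$, no localization is needed: it gives the bound $C\tilde h^{\beta_0} = C h^{(2\varrho-1)\beta_0}$ with $C,\beta_0$ depending only on $\nu$ and $n$, so $\beta = (2\varrho-1)\beta_0$ depends only on $\nu$, $n$, $\varrho$, exactly as claimed. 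Your error was to fix the target parameter $\tilde h = h^\varrho$ in advance and solve for $\lambda$, rather than letting the constraint ``rescaled lower porosity scale $\le \tilde h$'' determine both; the constraint reads $\lambda \le h^{1-\varrho}$, which your choice violates for every $\varrho<1$.

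The fallback plan in your steps (ii)--(iii) has a genuine gap and would not close. Localizing $X_\pm$ to boxes of side $h^{1-\varrho}$ and rescaling by $h^{(1-\varrho)/2}$ still leaves the rescaled sets porous only down to scale $h^{(3\varrho-1)/2}$, which is strictly coarser than the parameter $h^\varrho$ you intend to use; restricting a set to a box cannot create porosity at scales finer than $h^\varrho$, because the hypothesis simply does not provide it there, so the claim that the window becomes ``$[\tilde h,\lambda^{-1}]$'' is incorrect and the invocation of the FUP on each box is circular (it is the statement of Lemma~\ref{lem:FUP_new_scales} with a smaller exponent). Moreover, the reassembly you flag as the main obstacle is real if one insists on sharp cutoffs: with $\sim h^{-n(1-\varrho)}$ boxes a crude triangle inequality swamps the per-box gain, and $T_\alpha^*T_\beta$ has no off-diagonal decay for indicator localizations. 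All of this is avoided by the unbounded-set formulation of Lemma~\ref{lem:FUP_unbounded}, which is the point of quoting it; the paper itself offers no argument beyond citing the obvious generalization of \cite{DJN22}*{Proposition 2.10} with $\gamma_0^\pm=\varrho$, $\gamma_1^\pm=0$, and the content of that reduction in this special case is exactly the single global dilation above.
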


We now follow the argument of ~\cite{BD18}*{\S \S 4.1--2}, generalizing Lemma ~\ref{lem:FUP_unbounded} first to operators with variable amplitude, then to operators with a general phase. 

Let $A= A(h): L^2(\R^n) \rightarrow L^2(\R^n)$ be of the form
$$Af(x) = h^{-\frac{n}{2}} \int_{\R^n} e^{-2 \pi i \lrang{x, \xi}/h} a(x, \xi) f(\xi) d\xi,$$
where $a(x, \xi) \in C^\infty_c(\R^{2n})$ satisfies for each multi-index $\alpha$ and constants $C_\alpha$, $C_a$,
\begin{equation}\label{eq:a_bounds}
\sup |\partial_x^{\alpha} a | \leq C_\alpha, \quad \diam \supp a \leq C_a.
\end{equation}

We begin by showing that Lemma ~\ref{lem:FUP_new_scales} holds when $\cF_h$ is replaced by  $A(h)$. We adapt the argument of ~\cite{BD18}*{Proposition 4.2} to higher dimensions. 
\begin{lemma}\label{lem:FUP_A}
Set $0 < \nu \leq \frac{1}{3}$ and $\frac{1}{2} < \rho, \varrho \leq 1$. Let 
\begin{itemize}
\item $X_- \subset \R^n$ be $\nu$-porous on balls from scales $h^\varrho$ to $1$;
\item $X_+ \subset \R^n$ be $\nu$-porous on lines from scales $h^\varrho$ to $1$.
\end{itemize}
Assume ~\eqref{eq:a_bounds} holds. There exists $\beta>0$ depending only on $\nu$, $\rho$, $\varrho$, and $n$  and $C>0$ depending only on $\nu$, $\rho$, $n$, $\{C_\alpha\}$, and $C_a$ such that for $h$ sufficiently small,
$$
\|\1_{X_-(h^\rho)} A(h) \1_{X_+(h^\rho)}\|_{L^2(\R^n) \rightarrow L^2(\R^n)} \leq Ch^{\beta}.
$$
\end{lemma}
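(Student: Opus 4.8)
The plan is to reduce Lemma~\ref{lem:FUP_A} to the pure Fourier-transform statement of Lemma~\ref{lem:FUP_new_scales} by two successive approximations: first handling the variable amplitude $a(x,\xi)$, and then, at a later stage of the paper, the general phase. For the present lemma the phase is already $\langle x,\xi\rangle$, so only the amplitude must be dealt with. The key observation is that $\1_{X_-(h^\rho)}A(h)\1_{X_+(h^\rho)}$ acts nontrivially only when $x$ lies in $\supp_x a$ and $\xi$ in $\supp_\xi a$, so by \eqref{eq:a_bounds} we may assume $X_\pm$ have been intersected with fixed balls of radius $\lesssim C_a$, and after an affine rescaling (Lemmas~\ref{lem:affine_ball}~and~\ref{lem:affine_line}) we may assume $X_-,X_+\subset[-1,1]^n$, at the cost of changing $\nu$ by a fixed factor and the scales by a fixed factor.

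First I would record that, by Lemmas~\ref{lem:gen_porous_balls}~and~\ref{lem:gen_porous_lines}, the $h^\rho$-neighborhoods $X_-(h^\rho)$ and $X_+(h^\rho)$ are still $\tfrac{\nu}{2}$-porous on balls, resp.\ lines, from scales $\max(h^\varrho,\tfrac4\nu h^\rho)$ to $1$; since $\rho,\varrho>\tfrac12$, this exponent is still $>\tfrac12$, so these neighborhoods satisfy the hypotheses of Lemma~\ref{lem:FUP_new_scales} with a possibly smaller $\varrho$ and halved porosity constant (and after rescaling into $[-1,1]^n$). Hence it suffices to prove the estimate with $X_\pm(h^\rho)$ replaced by arbitrary porous sets $Y_\pm$ at scales $h^{\varrho'}$ to $1$ for some $\varrho'\in(\tfrac12,1]$, i.e.\ to prove
$$\|\1_{Y_-}A(h)\1_{Y_+}\|_{L^2\to L^2}\le Ch^\beta.$$

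The core step is to replace the amplitude $a$ by its partial inverse Fourier transform in the $x$-variable and exploit rapid decay. Write $a(x,\xi)=\int_{\R^n}\widehat{a}(\zeta,\xi)e^{2\pi i\langle x,\zeta\rangle}\,d\zeta$ where $\widehat a$ denotes the Fourier transform in the first slot; by \eqref{eq:a_bounds} (bounds on all $x$-derivatives, uniform in $\xi$, with compact $\xi$-support), $\widehat a(\zeta,\xi)$ is Schwartz in $\zeta$ uniformly in $\xi$, i.e.\ $|\widehat a(\zeta,\xi)|\le C_N\langle\zeta\rangle^{-N}$ for every $N$. Substituting into $Af$ and interchanging integrals gives
$$Af(x)=h^{-n/2}\int_{\R^n}\!\int_{\R^n}\widehat a(\zeta,\xi)e^{2\pi i\langle x,\zeta\rangle}e^{-2\pi i\langle x,\xi\rangle/h}f(\xi)\,d\xi\,d\zeta
=\int_{\R^n}\widehat a(\zeta,\cdot)\,\big[\cF_h(e^{2\pi i\langle \cdot,h\zeta\rangle}\cdot)\big]\ ,$$
which exhibits $A$ (after the obvious bookkeeping) as a superposition over $\zeta$, with weight $\widehat a(\zeta,\xi)$, of the operator $f\mapsto \cF_h\big(\1\cdot f\big)$ composed with the harmless unitary modulation $M_{h\zeta}f(x)=e^{2\pi i\langle x,h\zeta\rangle}f(x)$. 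Since modulation is an isometry of $L^2$ and commutes with multiplication by $\1_{Y_-}$, each summand has operator norm $\le \|\widehat a(\zeta,\cdot)\|_{L^\infty_\xi}\cdot\|\1_{Y_-}\cF_h\1_{Y_+}\|_{L^2\to L^2}$. Applying Lemma~\ref{lem:FUP_new_scales} to the inner norm and $\int_{\R^n}C_N\langle\zeta\rangle^{-N}\,d\zeta<\infty$ for $N>n$, the triangle inequality (Minkowski's integral inequality) yields $\|\1_{Y_-}A(h)\1_{Y_+}\|_{L^2\to L^2}\le C h^\beta$, with $C$ depending only on $\nu,\rho,n$ and finitely many of the constants $C_\alpha, C_a$, and $\beta$ depending only on $\nu,\rho,\varrho,n$.

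The main obstacle is the bookkeeping in the amplitude-splitting step: one must be careful that the modulation factor really is $e^{2\pi i\langle x,h\zeta\rangle}$ (so the frequency shift is $h\zeta$, scaling correctly with the semiclassical parameter so that $\1_{Y_+}$ is not disturbed) and that the $\xi$-dependence of $\widehat a$ is dominated uniformly — this is exactly where the hypothesis \eqref{eq:a_bounds} that $\sup|\partial_x^\alpha a|\le C_\alpha$ with $\alpha$ only in the $x$-variables, together with compact support in $\xi$, is used. Everything else — the reduction to $[-1,1]^n$, the neighborhood porosity lemmas, and the final integration in $\zeta$ — is routine. (This argument is the direct higher-dimensional analogue of \cite{BD18}*{Proposition 4.2}; the subsequent generalization to a general phase $\Phi(x,\xi)$ will be carried out separately by freezing the phase and absorbing the error into the amplitude, in the spirit of \cite{BD18}*{\S 4.2}.)
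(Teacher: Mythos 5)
Your argument is correct, but it proves the lemma by a genuinely different route than the paper. The paper inserts $\cF_h^*\cF_h$ and splits the middle identity as $\1_{\R^n\setminus X_+(2h^\rho)}+\1_{X_+(2h^\rho)}$: the "far" piece is $\cO(h)$ by non-stationary phase (using $|\xi-\eta|>h^\rho$ on the kernel support), the "near" piece reduces to the pure operator $\1_{X_-(h^\rho)}\cF_h^*\1_{X_+(2h^\rho)}$, handled exactly as you do via Lemmas~\ref{lem:gen_porous_balls}, \ref{lem:gen_porous_lines} and \ref{lem:FUP_new_scales}, and the whole thing is glued with an a priori Schur bound $\|A\|_{L^2\to L^2}\le C$. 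You instead expand the amplitude as $a(x,\xi)=\int\widehat a(\zeta,\xi)e^{2\pi i\langle x,\zeta\rangle}d\zeta$, so that $\1_{X_-(h^\rho)}A\1_{X_+(h^\rho)}$ becomes a superposition over $\zeta$ of operators of the form (unitary modulation)$\,\circ\,\1_{X_-(h^\rho)}\cF_h\1_{X_+(h^\rho)}\,\circ$(bounded multiplier), with weights $\cO_N(\langle\zeta\rangle^{-N})$ by \eqref{eq:a_bounds}; summing in $\zeta$ gives the result. This buys a shorter proof with no kernel/Schur estimates and no near/far splitting, at the cost of the (routine) Fubini/Minkowski bookkeeping; uniformity in $\zeta$ is immediate because the modulation acts on the output side and commutes with $\1_{X_-(h^\rho)}$ (equivalently, pulled through $\cF_h$ it only translates $X_+(h^\rho)$ by $h\zeta$, and porosity is translation-invariant). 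Two cosmetic points: the preliminary reduction to $[-1,1]^n$ is unnecessary, since Lemma~\ref{lem:FUP_new_scales} (via Lemma~\ref{lem:FUP_unbounded}) already applies to subsets of $\R^n$; and the modulation factor should be $e^{2\pi i\langle x,\zeta\rangle}=e^{2\pi i\langle x,h\zeta\rangle/h}$ rather than $e^{2\pi i\langle x,h\zeta\rangle}$ --- harmless, since either way it is unitary and drops out. The constant dependencies you obtain ($\beta$ on $\nu,\rho,\varrho,n$; $C$ on $\nu,\rho,n$, finitely many $C_\alpha$, and $C_a$) match the statement.
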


\begin{proof}
In this proof, the constant $C$ varies, but always depends on $\nu$, $n$, $\rho$, $\{C_\alpha\}$, and $C_a$.  

We first claim 
\begin{equation}\label{eq:A_L2_bound}
\|A\|_{L^2(\R^n) \rightarrow L^2(\R^n)} \leq C.
\end{equation}

To show ~\eqref{eq:A_L2_bound}, we first compute the integral kernel of $A^* A$:
$$
\cK_{A^*A}(\xi, \eta) = h^{-n} \int_{\R^n} e^{2 \pi i\lrang{x, \xi -\eta}/h} \overline{a(x, \xi)} a(x, \eta) dx.
$$
Using ~\eqref{eq:a_bounds} and repeated integration by parts in $x$, for each $N \in \N$, there exists $C_N>0$ such that
$$ \left|\cK_{A^* A}(\xi, \eta)\right| \leq C_N h^{-n} \lrang{\frac{\xi-\eta}{h}}^{-N}.
$$
Thus by Schur's inequality (see ~\cite{Zw12}*{Theorem 4.21}), we know $\|A^* A\|_{L^2 \rightarrow L^2} \leq C$, which proves ~\eqref{eq:A_L2_bound}. 

Now note that
$$
\1_{X_-(h^{\rho})}A \1_{X_+(h^{\rho})}= \1_{X_-(h^\rho)} \cF_h^* A_1 + A_2 \cF_h A \1_{X_+(h^\rho)},
$$
where
$$
A_1 \coloneqq \1_{\R^n \setminus X_+(2h^\rho)} \cF_h A \1_{X_+(h^\rho)}, \quad A_2 \coloneqq \1_{X_-(h^\rho)}\cF_h^* \1_{X_+(2h^{\rho})}.
$$

From ~\eqref{eq:A_L2_bound}, we know 
$$
\|\1_{X_-(h^{\rho})}A \1_{X_+(h^{\rho})}\|_{L^2 \rightarrow L^2} \leq \|A_1\|_{L^2 \rightarrow L^2} + C \|A_2\|_{L^2 \rightarrow L^2}.
$$
We first prove the decay of $A_1$. We compute the integral kernel of $A_1$: 
$$
\cK_{A_1}(\xi, \eta) = h^{-n} \1_{\R^n \setminus X_+(2h^\rho)}(\xi) \1_{X_+(h^{\rho})}(\eta) \int_{\R^n} e^{2 \pi i \lrang{x, \eta-\xi}/h} a(x, \eta) dx.
$$

On $\supp \cK_{A_1}$, $|\xi - \eta|> h^{\rho}$. From ~\eqref{eq:a_bounds} and repeated integration by parts in $x$, for each $M \in \N$, there exists $C_M >0$ such that
$$|\cK_{A_1}(\xi, \eta)| \leq C_M h^{-n} \lrang{\frac{\xi - \eta}{h}}^{-M}.$$
Choosing $M \geq \frac{1 +n}{1 - \rho}$, via Schur's inequality we see
\begin{equation}\label{eq:A_1_bound}
\|A_1\|_{L^2 \rightarrow L^2} \leq Ch.
\end{equation}

We now estimate $\|A_2\|_{L^2 \rightarrow L^2}$. From Lemma ~\ref{lem:gen_porous_balls}, $X_-(h^{\rho})$ is $\frac{\nu}{2}$-porous on balls from scales \linebreak $\max(h^\varrho, \frac{2}{\nu}h^{\rho})$ to $1$ and from Lemma ~\ref{lem:gen_porous_lines}, $X_+(2h^{\rho})$ is $\frac{\nu}{2}$-porous on lines from scales $\max(h^\varrho, \frac{4}{\nu}h^{\rho})$ to $1$. 
For $\rho' \in (\tfrac{1}{2}, \min(\varrho, \rho))$ and sufficiently small $h$, $X_-(h^{\rho})$ is $\frac{\nu}{2}$-porous on balls from scales $h^{\rho'}$ to $1$ and $X_+(2h^{\rho})$ is $\frac{\nu}{2}$-porous on lines from scales $h^{\rho'}$ to $1$.

Then from Lemma ~\ref{lem:FUP_new_scales}, 
\begin{equation}\label{eq:A_2_bound}
\|A_2\|_{L^2 \rightarrow L^2} \leq Ch^{\beta}.
\end{equation}
From ~\eqref{eq:A_1_bound} and ~\eqref{eq:A_2_bound}, we finish the proof.
\end{proof}

We now generalize the fractal uncertainty principle to operators with general phase.

Let $B= B(h):L^2(\R^n) \rightarrow L^2(\R^n)$ be of the form 
\begin{equation}\label{eq:B(h)_def}
Bf(x) = h^{-\frac{n}{2}} \int_{\R^n} e^{i \Phi(x,y)/h} b(x,y) f(y) dy,
\end{equation}
where for some open set $U \subset \R^{2n}$,
\begin{equation}\label{eq:B(h)_conditions}
\Phi(x, y) \in C^\infty(U; \R), \quad b \in C^\infty_c(U), \quad \det \partial_{xy}^2\Phi \neq 0 \text{ on } U.
\end{equation}

From the condition $\det \partial_{xy}^2\Phi \neq 0$, locally we can write the graph of the twisted gradient of $\Phi$ in terms of some symplectomorphism $\kappa$ of open subsets of $T^* \R^n$:
\begin{equation*}\label{eq:kappa_generating_function}
(x, \xi) = \kappa(y, \eta) \quad \Leftrightarrow \quad \xi = \partial_x \Phi(x,y), \quad \eta = -\partial_y \Phi(x,y).
\end{equation*}
We see that $B$ is a semiclassical Fourier integral operator associated to $\kappa$.

\begin{proposition} \label{prop:FUP_gen_phase}
Set $0 < \nu \leq \frac{1}{3}$ and $\frac{3}{4} < \varrho, \rho < 1$. Let 
\begin{itemize}
\item $X_- \subset \R^n$ be $\nu$-porous on balls from scales $h^\varrho$ to 1;
\item $X_+ \subset \R^n$ be $\nu$-porous on lines from scales $h^\varrho$ to 1.
\end{itemize}
Assume ~\eqref{eq:B(h)_conditions} holds. 
Then there exists constants $\beta, C >0$ depending only on $\nu$, $\rho$, $\varrho$, $n$, $\Phi$, and $b$  such that for $h$ sufficiently small,
\begin{equation*}\label{eq:FUP_B}
\|\1_{X_-(h^{\rho})} B(h) \1_{X_+(h^{\rho})}\|_{L^2(\R^n) \rightarrow L^2(\R^n)} \leq Ch^{\beta/2}.
\end{equation*}
\end{proposition}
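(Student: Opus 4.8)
The plan is to reduce Proposition~\ref{prop:FUP_gen_phase} to Lemma~\ref{lem:FUP_A} by decomposing $B(h)$ into pieces each of which, after an appropriate affine and/or diffeomorphic change of variables, looks like the operator $A(h)$ with a slowly-varying amplitude and a phase which is (a small perturbation of) the bilinear phase $-2\pi\langle x,\xi\rangle$. This mirrors the strategy of \cite{BD18}*{\S4.2}, implemented here using the porosity-preservation lemmas of \S\ref{subsection:porosity_properties} in place of the $\delta$-regularity bookkeeping of \cite{BD18}.

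First I would localize. Since $b\in C^\infty_c(U)$, cover $\supp b$ by finitely many product boxes $Q_j\times Q_k'\Subset U$ of small side length $\varepsilon_0$, chosen so small that on each box $\partial^2_{xy}\Phi$ is within $\tfrac12$ of a fixed invertible matrix $\Phi''_{jk}:=\partial^2_{xy}\Phi(x_j,y_k)$. Write $B=\sum_{j,k} B_{jk}$ with $B_{jk}$ the operator with amplitude $b$ cut off to $Q_j\times Q_k'$; it suffices to bound each $B_{jk}$, and since there are only $O(\varepsilon_0^{-2n})$ of them (a number independent of $h$) the sum of the bounds is still $O(h^{\beta/2})$. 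On the box $Q_j\times Q_k'$, perform the \emph{linear} changes of variables $x\mapsto L_j x$ on the target side and $y\mapsto M_k y$ on the source side, where $L_j,M_k$ are chosen (using invertibility of $\Phi''_{jk}$) so that the quadratic part of the phase becomes exactly $-2\pi\langle x,y\rangle$; conjugating $B_{jk}$ by these linear maps is a unitary operation up to a harmless Jacobian constant, and by Lemmas~\ref{lem:affine_ball} and~\ref{lem:affine_line} it transforms $X_\pm$ (and their $h^\rho$-neighborhoods, which by Notation~\ref{notation:neighborhood} scale correctly under affine maps) into sets that are still $\nu$-porous on balls/lines, from scales $c\,h^\varrho$ to $C$, with $c,C$ depending only on $\Phi$. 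After this reduction the phase has the form $\Phi(x,y)=-2\pi\langle x,y\rangle + r(x,y)$ where $r$ has vanishing $\partial^2_{xy}$ at the base point and is $O(\varepsilon_0)$ in $C^2$ on the (bounded) support; equivalently $\det\partial^2_{xy}\Phi\neq0$ throughout and the associated symplectomorphism $\kappa$ is $C^1$-close to the Fourier map $(y,\eta)\mapsto(\eta,-y)$.

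Next I would absorb the remaining non-bilinear part of the phase. One clean way: write $B_{jk}$ as $\mathcal F_h$ precomposed with a pseudodifferential-type operator. Concretely, since $\det\partial^2_{xy}\Phi\neq0$, on the small box one can solve $\eta = -\partial_y\Phi(x,y)$ for $x=X(y,\eta)$, and change variables in the $y$-integral from $y$ to $\eta$; this exhibits $B_{jk}$ as an operator of the form $\mathrm{Op}$ against the phase $\langle x,\eta\rangle/h$ (after a further affine normalization as above) with a new amplitude $\tilde b(x,\eta)$ that is $C^\infty_c$ and satisfies the hypotheses \eqref{eq:a_bounds} with constants controlled by $\Phi,b,\varepsilon_0$ — the point being that $\tilde b$ is \emph{$h$-independent}, so all $x$-derivatives are $O(1)$, and its support has bounded diameter. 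That is exactly the class of operators $A(h)$ to which Lemma~\ref{lem:FUP_A} applies. Conjugating the indicator cutoffs through the change of variables $y\mapsto\eta$ (a diffeomorphism with bi-Lipschitz and bounded-second-derivative constants depending only on $\Phi$ and $\varepsilon_0$) and invoking Lemmas~\ref{lem:diffeomorphism_ball} and~\ref{lem:gen_porous_balls}, the set on the right that multiplies $A(h)$ remains porous on balls from scales $\asymp h^\varrho$ to $\asymp1$; the set on the left is just $X_-(h^\rho)$ up to an affine map, hence porous on balls. Here is where the stronger lower bound $\varrho,\rho>3/4$ is used rather than $1/2$: after composing the $\rho$-neighborhood operations of Lemmas~\ref{lem:gen_porous_balls}–\ref{lem:gen_porous_lines} with the diffeomorphism distortion, we need the ``new'' porosity scale $\max(h^\varrho,\,Ch^\rho)$ to still be of the form $h^{\rho'}$ with $\rho'\in(1/2,1)$, and we need enough room to extract the power $h^{\beta/2}$ (the factor $1/2$ in the exponent is the usual loss from writing $\|B\|\le\|B^*B\|^{1/2}$ or from a dyadic decomposition of the $h^\rho$-scale neighborhoods). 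Apply Lemma~\ref{lem:FUP_A} to each $A$-piece with the porosity constant $\nu/C_1^2$ and scales $h^{\rho'}$, sum the $O(\varepsilon_0^{-2n})$ contributions, and conclude.

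\textbf{Main obstacle.} The routine parts are the affine normalization of the quadratic phase and the finite decomposition; the genuinely delicate step is the bookkeeping that ensures that after (i) passing to the $h^\rho$-neighborhoods, (ii) applying a bi-Lipschitz diffeomorphism with bounded second derivatives, and (iii) re-taking neighborhoods, the resulting sets are still porous \emph{from scales $h^{\rho'}$ to $1$ for some admissible $\rho'$} with a porosity constant depending only on the data — in particular, that Lemma~\ref{lem:diffeomorphism_line}'s hypothesis $\alpha_1\le \nu/(C_1C_2n)$ can be met, which forces one to first shrink to a sufficiently small box (small $\varepsilon_0$) so that the relevant "top scale" is below that threshold, and only then run the diffeomorphism lemmas. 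Getting the order of operations right — localize first to make $C_1,C_2$ tame and the top scale small, \emph{then} neighborhood-ize, \emph{then} diffeomorph, \emph{then} neighborhood-ize again, \emph{then} apply Lemma~\ref{lem:FUP_A} — is the crux, and it is exactly why the statement is phrased with the product of cutoffs $\1_{X_\pm(h^\rho)}$ and the slightly lossy exponent $\beta/2$.
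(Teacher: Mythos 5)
There is a genuine gap at the heart of your reduction: the step where you ``absorb the remaining non-bilinear part of the phase'' on boxes of fixed, $h$-independent size $\varepsilon_0$ does not work. After your linear normalization the phase is $-2\pi\langle x,y\rangle+r(x,y)$ with $r=O(\varepsilon_0)$ in $C^2$, but $r$ sits in the exponent as $e^{ir(x,y)/h}$, so if you push it into the amplitude, the new amplitude has derivatives of size $\varepsilon_0/h$, not $O(1)$; it does not satisfy \eqref{eq:a_bounds} with $h$-independent constants, so Lemma~\ref{lem:FUP_A} does not apply. Your proposed repair — solving $\eta=-\partial_y\Phi(x,y)$ and changing variables $y\mapsto\eta$ in the $y$-integral — is not legitimate either, because $\eta$ depends on the output variable $x$: after the substitution the integrand contains $f\bigl(y(x,\eta)\bigr)$, not $f(\eta)$, so $B_{jk}$ is not exhibited as an operator of the form $A(h)$ acting on $f$. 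This is exactly the obstruction that forces the paper (following \cite{BD18}*{\S4.2}) to localize the $y$-variable to balls $J$ of diameter $h^{1/2}$: on such a ball the Taylor remainder $\Psi(x,y)=O(|y-y_0|^2)=O(h)$, so $e^{i\Psi/h}$ has bounded derivatives in the rescaled variable $\xi=(y-y_0)/h^{1/2}$ and genuinely becomes part of an admissible amplitude, while the linear term $2\pi\langle\phi(x),y-y_0\rangle$ is straightened by the diffeomorphism $x\mapsto\phi(x)$ (Lemma~\ref{lem:diffeomorphism_ball} handles the ball porosity, Lemma~\ref{lem:affine_line} the line porosity of the rescaled set). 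The price is that the semiclassical parameter becomes $\tilde h=h^{1/2}$, which is the actual source of both the exponent $\beta/2$ (not a $\|B\|\le\|B^*B\|^{1/2}$ or dyadic loss, as you suggest) and of the threshold $\varrho,\rho>3/4$: one needs $h^{\varrho}=\tilde h^{2\varrho-1}$ and $h^{\rho}=\tilde h^{2\rho-1}$ with $2\varrho-1,\,2\rho-1>1/2$. In your scheme, with no rescaling, nothing explains why $1/2$ would not suffice, which is a sign the reduction is not capturing the real difficulty.

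A second missing ingredient follows from the first: once you localize at scale $h^{1/2}$ you have $\sim h^{-n/2}$ pieces, so the triangle inequality is too lossy and you must sum them by almost orthogonality. The paper covers $X_+(h^{\rho})\cap D_+$ by $h^{1/2}$-balls $J_k$, sets $B_k=\sqrt{\psi}\,B\,\1_{X_+(h^{\rho})\cap J_k}$ with a smooth cutoff $\psi$ supported in $X_-(h^{\rho/2})$ having $h^{-\rho/2}$-type derivative bounds, proves $B_kB_m^*=0$ and $\|B_k^*B_m\|\le Ch^{2n}$ for far indices by non-stationary phase (using the lower bound $C^{-1}|y-y'|\le|\partial_x\Phi(x,y)-\partial_x\Phi(x,y')|$, which is where $\rho<1$ enters through the gain $h^{(1-\rho)/2}$ per integration by parts), and then applies Cotlar--Stein together with the single-ball bound of Lemma~\ref{lem:FUP_gen_phase}. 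Your proposal has no analogue of this step, and it cannot be bypassed once the decomposition is at the correct ($h$-dependent) scale. The finite localization to $\varepsilon_0$-boxes and the bi-Lipschitz bookkeeping you describe are fine as far as they go, but they address the routine part of the argument, not the crux.
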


In the 1-dimensional version of Proposition~\ref{prop:FUP_gen_phase}, \cite{BD18}*{Proposition 4.3}, $\beta$ is independent of $\Phi$ and $b$. This is due to an argument that gives a  bound similar to~\eqref{eq:bilipschitz}, but with constants independent of $\Phi$ and $b$. This argument falls apart in higher dimensions due to the fact that $\det \partial^2_{xy} \Phi \neq \partial^2_{xy} \Phi$.

We adapt the proof of ~\cite{BD18}*{Proposition 4.3} to higher dimensions, starting with  the following lemma, generalized from ~\cite{BD18}*{Lemma 4.4}.

\begin{lemma}\label{lem:FUP_gen_phase}
Suppose the assumptions of Proposition~\ref{prop:FUP_gen_phase} hold. Then there exists $\beta, C>0$ depending only on $\nu$, $\rho$, $\varrho$, $n$, $\Phi$, and $b$ such for all balls $J$ of diameter $h^{1/2}$ and $h$ sufficiently small,
$$\|\1_{X_-(h^{\rho/2})} B(h) \1_{X_+(h^\rho) \cap J}\|_{L^2(\R^n) \rightarrow L^2(\R^n)} \leq Ch^{\beta/2}.$$
\end{lemma}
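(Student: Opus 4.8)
\textbf{Proof plan for Lemma \ref{lem:FUP_gen_phase}.} The idea is to localize the $x$-variable to balls of size $h^{1/2}$ as well, then on each such pair of small balls approximate the phase $\Phi$ by its second-order Taylor polynomial, which is (up to an affine error absorbed into conjugation by modulations/translations) a bilinear phase $\langle x, Q y\rangle$ for $Q = \partial^2_{xy}\Phi$. On such a product of $h^{1/2}$-balls the higher-order terms of $\Phi$ contribute a phase of size $O(h^{3/2}/h) = O(h^{1/2}) = o(1)$, so after factoring them out the operator looks like an FIO with an \emph{affine} canonical transformation, which is exactly the setting of Lemma \ref{lem:FUP_A} once we change variables. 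More precisely, I would cover $X_+(h^\rho)\cap J$ and the relevant part of $X_-(h^{\rho/2})$ by $O(1)$ (independent of $h$) balls $J' \ni x$ of diameter $h^{1/2}$ — here using that $\supp b$ is compact, so only finitely many such balls meet the support — and write
$$\|\1_{X_-(h^{\rho/2})} B \1_{X_+(h^\rho)\cap J}\|_{L^2\to L^2} \leq \sum_{J'} \|\1_{X_-(h^{\rho/2})\cap J'} B \1_{X_+(h^\rho)\cap J}\|_{L^2\to L^2},$$
so it suffices to bound each summand, where now both cutoffs are supported in $h^{1/2}$-balls $J', J$ with $(J'\times J)\cap \supp b \neq \emptyset$.

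On such a pair, Taylor expand: $\Phi(x,y) = \ell(x,y) + \langle (x - x_0), Q_0 (y - y_0)\rangle + E(x,y)$, where $x_0 \in J'$, $y_0\in J$, $\ell$ is the affine part, $Q_0 = \partial^2_{xy}\Phi(x_0,y_0)$ is invertible by \eqref{eq:B(h)_conditions}, and $|E(x,y)| \leq C h^{3/2}$ with $|\partial^\alpha_x E| \leq C h^{(3-|\alpha|)/2}$ on $J'\times J$ (this uses $|x-x_0|, |y-y_0| \leq h^{1/2}$ and boundedness of the third derivatives of $\Phi$ on $\supp b$). The affine phase $\ell$ conjugates $B$ by unitary operators (multiplication by unimodular functions), which do not change operator norms or the porosity cutoffs in any essential way, and the Gaussian-type remainder $e^{iE/h}b$ has amplitude satisfying \eqref{eq:a_bounds} with constants uniform in $h$ (since $E/h = O(h^{1/2})$ is bounded with bounded $x$-derivatives after rescaling). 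Thus, after the linear change of variables $x \mapsto x$, $y \mapsto Q_0 y$ (which distorts balls and lines by a bounded bi-Lipschitz factor depending only on $\Phi$ via $\|Q_0^{\pm1}\|$, hence preserves ball/line porosity with a possibly smaller constant and comparable scales by Lemmas \ref{lem:affine_ball}, \ref{lem:affine_line}, \ref{lem:diffeomorphism_ball}, and for lines we only need that the image of $X_-$ is still ball-porous, so Lemma \ref{lem:diffeomorphism_ball} suffices), the operator becomes of the form $A(h)$ from Lemma \ref{lem:FUP_A} with $\cF_h$ sandwiched between the (transformed) porous cutoffs. Applying Lemma \ref{lem:FUP_A} with the scale $\varrho$ (and the exponent $\rho/2$ in place of $\rho$; note $\rho/2$ and $\varrho$ still exceed $1/2$ since $\rho,\varrho > 3/4$) yields the bound $Ch^{\beta/2}$ for each summand, and summing the $O(1)$ summands completes the proof.

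\textbf{Main obstacle.} The delicate point is bookkeeping the constants: the bi-Lipschitz constant of the change of variables $y \mapsto Q_0 y$ depends on $\Phi$ (through $\min |\det \partial^2_{xy}\Phi|$ and $\max\|\partial^2_{xy}\Phi\|$ on $\supp b$), which forces $\beta$ to depend on $\Phi$ — precisely the phenomenon flagged in the remark after Proposition \ref{prop:FUP_gen_phase}, that in dimension $n>1$ one cannot decouple $\det \partial^2_{xy}\Phi$ from $\partial^2_{xy}\Phi$. One must also check carefully that the remainder term $E(x,y)/h$, when the $y$-variable is rescaled by $Q_0$, still has all $x$-derivatives bounded uniformly in $h$ — this is where using balls of the critical size $h^{1/2}$ (rather than any smaller scale) is essential, since it makes $E/h$ bounded but no smaller, and its $x$-derivatives $\partial^\alpha_x(E/h)$ of order $|\alpha|\geq 1$ are $O(h^{(1-|\alpha|)/2})$, which are \emph{not} bounded; however, these only ever appear multiplied against the oscillatory factor in integration-by-parts arguments inside the proof of Lemma \ref{lem:FUP_A}, and one checks the relevant symbol-type estimates \eqref{eq:a_bounds} are in fact for the \emph{amplitude} $e^{iE/h}b$ whose derivatives must be re-examined — the cleanest fix is to first rescale $(x,y)\mapsto (x_0 + h^{1/2}x, y_0 + h^{1/2}y)$ so that $J', J$ become unit balls, in which coordinates $\Phi/h$ becomes $h^{-1}\ell + \langle x, Q_0 y\rangle + h^{1/2}\tilde E$ with $\tilde E$ smooth and bounded with all derivatives uniformly in $h$, apply the bilinear-phase FIP (i.e.\ Lemma \ref{lem:FUP_A}, as the porosity scales transform correctly under this rescaling by Lemmas \ref{lem:affine_ball} and \ref{lem:affine_line}), and rescale back. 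I expect essentially all the work to be in setting up this rescaling so that Lemma \ref{lem:FUP_A} applies verbatim.
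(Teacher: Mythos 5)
There is a genuine gap, and it sits at the very first step. You propose to cover the relevant part of $X_-(h^{\rho/2})$ (which lives in the $x$-projection of $\supp b$, a set of unit size) by ``$O(1)$ (independent of $h$)'' balls $J'$ of diameter $h^{1/2}$; this is false — one needs $\sim h^{-n/2}$ such balls, and your triangle-inequality summation then costs a factor $h^{-n/2}$ that cannot be absorbed by the blockwise gain (the FUP exponent $\beta$ is small and not comparable to $n/2$). Worse, the blockwise gain itself is not there: once \emph{both} variables are localized at scale $h^{1/2}$, the cross term $\langle x-x_0, Q_0(y-y_0)\rangle/h$ is $O(1)$ on each block, i.e.\ the effective semiclassical parameter on a block is of order $1$, so the fractal uncertainty principle gives no decay there (porosity is only available at scales coarser than the uncertainty scale of the block). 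Your ``cleanest fix'' of rescaling $(x,y)\mapsto(x_0+h^{1/2}x,\,y_0+h^{1/2}y)$ makes this explicit: after that rescaling the phase is $\langle x,Q_0y\rangle$ with no small parameter, so Lemma \ref{lem:FUP_A} yields only an $O(1)$ bound per block, and no summation scheme (triangle inequality, orthogonal ranges, or Cotlar--Stein in $x$) recovers $h^{\beta/2}$ from non-decaying blocks. A smaller but real slip: you invoke Lemma \ref{lem:FUP_A} ``with the exponent $\rho/2$ in place of $\rho$'' and assert $\rho/2>1/2$; since $\rho<1$ this is false — the correct bookkeeping must be done at the new parameter $\tilde h=h^{1/2}$, where the relevant exponents are $2\rho-1$ and $2\varrho-1$, and this is precisely where $\rho,\varrho>3/4$ is used.

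The paper's proof avoids all of this by never localizing the $x$-variable at scale $h^{1/2}$ inside the lemma. It Taylor-expands the phase only in $y$ about the center $y_0$ of $J$, writing $\Phi(x,y)=\Phi(x,y_0)+2\pi\langle\phi(x),y-y_0\rangle+\Psi(x,y)$ with $\phi(x)=\tfrac{1}{2\pi}\partial_y\Phi(x,y_0)$ a genuinely nonlinear diffeomorphism of the (macroscopic) $x$-region, controlled by the bi-Lipschitz bound \eqref{eq:bilipschitz} coming from \eqref{eq:B(h)_conditions}. Conjugating by the isometry built from $\phi$ on the left and rescaling only $y$ by $h^{1/2}$ on the right turns the block into an operator of the form of Lemma \ref{lem:FUP_A} at the parameter $\tilde h=h^{1/2}$, with the quadratic remainder $\Psi/h=O(1)$ absorbed into the amplitude (satisfying \eqref{eq:a_bounds} uniformly); the sets become $\tilde X_-=\phi(X_-\cap D_-)$ (ball-porous via Lemma \ref{lem:diffeomorphism_ball}) and $\tilde X_+=h^{-1/2}(X_+-y_0)$ (line-porous via Lemma \ref{lem:affine_line}), at scales $\tilde h^{2\varrho-1}$ to $1$, and Lemma \ref{lem:FUP_A} gives $C\tilde h^{\beta}=Ch^{\beta/2}$. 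The decomposition into $h^{1/2}$-balls and a Cotlar--Stein summation do appear, but only in the proof of Proposition \ref{prop:FUP_gen_phase}, in the $y$-variable, where the off-diagonal smallness is produced by $B_kB_m^*=0$ and integration by parts using \eqref{eq:Phi_bound}; nothing analogous is available for your $x$-blocks. Your instincts about which set needs which porosity (diffeomorphism-invariance of ball porosity for $X_-$, affine maps for $X_+$) and about $\beta$ depending on $\Phi$ are correct, but the double-localization-plus-bilinear-phase reduction is the wrong skeleton and cannot be patched without changing the strategy to the one above.
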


\begin{proof}
Breaking the symbol $b$ into pieces using a partition of unity, we may assume that
$\supp b \subset D_- \times D_+ \subset U$,
where $D_-,  D_+$ are balls .
We assume that $J \subset D_+$, else for $h$ sufficiently small,
$\1_{X_-(h^{\rho/2})} B(h) \1_{X_+(h^\rho) \cap J} =0$.

Let $y_0$ be  the center of $J$. Choose $x_0 \in D_-$.
By~ \eqref{eq:B(h)_conditions}, there exists some neighborhood $U_{x_0}$ of $x_0$ with $U_{x_0} \subset D_-$
where $\partial_y \Phi (\cdot, y_0)$ is locally invertible. Therefore, for
any $x_1, x_2 \in U_{x_0}$,
$$\frac{|\partial_y \Phi(x_1, y_0) - \partial_y \Phi(x_2, y_0)|}{|x_1 - x_2|} \leq \sup_{x \in U_{x_0}} \| \partial^2_{xy} \Phi(x, y_0)\|,$$
and
$$\frac{|x_1 - x_2|}{|\partial_y \Phi(x_1, y_0) - \partial_y \Phi(x_2, y_0)|} \leq \sup_{x \in U_{x_0}} \| \partial_x (\partial_y \Phi(x, y_0))^{-1}\| =\sup_{x \in [\partial_y \Phi(U_{x_0}, y_0)]^{-1}} \|(\partial_{xy}^2 \Phi(x, y_0))^{-1}\|.$$

From~ \eqref{eq:B(h)_conditions}, we see $\sup_{x \in U_{x_0}} \| \partial^2_{xy} \Phi(x, y_0)\|$ and  $\sup_{x \in [\partial_y \Phi(U_{x_0}, y_0)]^{-1}} \|(\partial_{xy}^2 \Phi(x, y_0))^{-1}\|$ are nonzero and depend continuously on $x_0$ and $y_0$. Therefore, there exists $C=C_{x_0, y_0}>0$ and a neighborhood $U_{y_0}$ of $y_0$  with $U_{x_0} \times U_{y_0} \subset U$ such that for  $y \in U_{y_0}$,
\begin{equation}\label{eq:bilipschitz}
C^{-1} |x_1 - x_2| \leq |\partial_y \Phi(x_1, y) - \partial_y \Phi(x_2, y)| \leq C |x_1 - x_2|.
\end{equation}
Thus, using another partition of unity to further decrease the support of $b$ and shrinking $D_-$, we may assume that
$$\supp b \subset D_- \times D'_+ \subset D_- \times D_+ \subset U,$$
where $D'_+$ is a ball with $D_+' \Subset D_+$
and for $(x_1, y), (x_2, y) \in D_- \times D'_+$, ~\eqref{eq:bilipschitz} holds.

Now define the function 
$$\phi: D_- \rightarrow \R^n, \quad \phi(x) = \frac{1}{2 \pi} (\partial_{y_1} \Phi(x, y_0), \ldots, \partial_{y_n} \Phi(x, y_0)).$$
From~\eqref{eq:B(h)_conditions}, $\phi : D_- \rightarrow \phi(D_-)$ is a diffeomorphism.
Now define $\Psi = \sum_{|\alpha| =2} \Psi_\alpha(x, y) (y-y_0)^\alpha \in C^\infty(D_- \times D_+)$ to be the remainder in the following Taylor expansion of $\Phi$:
$$\Phi(x,y) = \Phi(x, y_0) + 2 \pi \lrang{\phi(x), y-y_0} + \Psi(x,y), \quad x \in D_-, y \in D_+.$$

Define the isometries $W_-: L^2(D_-) \rightarrow L^2(\phi(D_-))$ and $W_+: L^2(\R^n) \rightarrow L^2(\R^n)$ by 
$$W_-f(x) = e^{-i \Phi(\phi^{-1}(x), y_0)/h} |\det \partial_x (\phi^{-1})(x)|^{1/2} f(\phi^{-1}(x)), \quad W_+f(y)  = h^{-\frac{n}{4}} f\left(\frac{y-y_0}{h^{1/2}}\right).$$
Let $\chi \in C^\infty_c(B_1(0); [0,1])$ be a cutoff function such that $\chi =1$ near $B_{\frac{1}{2}}(0)$. Then set
$$\chi_J(y) \coloneqq \chi\left( \frac{y-y_0}{h^{1/2}}\right).$$
Clearly $\chi_J = 1$ on $J$. 

Set $A= A(h) \coloneqq W_- B(h) \chi_J W_+$. Then 
$$Af(x) = \tilde{h}^{-\frac{n}{2}} \int_{\R^n} e^{2 \pi i \lrang{x, \xi}/\tilde{h}} a(x, \xi; \tilde{h}) f(\xi) d\xi,$$
where $\tilde{h} \coloneqq h^{\frac{1}{2}}$ and 
$$a(x, \xi; \tilde{h}) = |\det \partial_x (\phi^{-1})(x)|^{1/2} e^{i \sum \Psi_\alpha (\phi^{-1}(x), y_0 + \tilde{h}\xi) \xi^\alpha} b(\phi^{-1}(x), y_0 + \tilde{h}\xi) \chi(\xi).$$

Note that $a$ satisfies ~\eqref{eq:a_bounds}, where $C_\alpha$, $C_a$ depend only on $\Phi$ and $b$. For $h$ sufficiently small,
\begin{align*}
\|\1_{X_-(h^{\rho/2})} B \1_{X_+(h^\rho) \cap J}\|_{L^2 \rightarrow L^2}&\leq \|W_- \1_{X_-(h^{\rho/2}) \cap D_-} B \chi_J \1_{X_+(h^\rho)} W_+\|_{L^2 \rightarrow L^2}\\
&\leq \left\|\1_{\tilde{X}_-(C \tilde{h}^\rho)}A \1_{\tilde{X}_+(\tilde{h}^{2 \rho -1})} \right\|_{L^2 \rightarrow L^2} \\
&\leq \left\|\1_{\tilde{X}_-(\tilde{h}^{2\rho-1})}A \1_{\tilde{X}_+(\tilde{h}^{2 \rho -1})} \right\|_{L^2 \rightarrow L^2},
\end{align*}
where $\tilde{X}_- \coloneqq \phi(X_- \cap D_-)$, $\tilde{X}_+ \coloneqq h^{-1/2}(X_+-y_0)$.

Using~\eqref{eq:bilipschitz}, from Lemma ~\ref{lem:diffeomorphism_ball}, we see $\tilde{X}_-$ is $\frac{\nu}{2C^2}$-porous on balls from scales $C h^\varrho$ to $1$. Thus, $\tilde{X}_-$ is $\frac{\nu}{2C^2}$-porous on balls from scales $\tilde{h}^{2 \varrho -1}$ to $1$.
From Lemma ~\ref{lem:affine_line}, $\tilde{X}_+$ is $\nu$-porous on lines from scales $\tilde{h}^{2 \varrho -1}$ to $1$. 

Then by Lemma ~\ref{lem:FUP_A}, 
$$\left\|\1_{\tilde{X}_-(\tilde{h}^{2\rho-1})}A \1_{\tilde{X}_+(\tilde{h}^{2 \rho -1})} \right\|_{L^2 \rightarrow L^2} \leq C h^{\beta/2},$$
which completes the proof.
\end{proof}

\begin{proof}[Proof of Proposition ~\ref{prop:FUP_gen_phase}]

In the following, $C$ is a constant that can vary but only depends on $\nu, \rho, \varrho, n, \Phi$, and $b$. Using a similar argument to the one in Lemma~\ref{lem:FUP_gen_phase}, since $\Phi \in C^\infty(U; \R)$ and $\det \partial^2_{x,y} \Phi \neq 0$ on $U$, after using a partition of unity for $b$ and shrinking $U$, we may assume that 
\begin{equation}\label{eq:Phi_bound}
C^{-1}|y-y'| \leq |\partial_x \Phi(x,y) - \partial_x \Phi(x, y')| \quad \text{for all } (x, y), (x, y') \in U.  
\end{equation}
From ~\cite{DZ16}*{Lemma 3.3}, there exists $\psi = \psi(x;h) \in C^\infty(\R^n; [0,1])$ such that for some global constants $C_{\alpha,\psi}$,
\begin{equation*}\label{eq:psi_support}
\psi =1 \text{ on } X_-(h^\rho), \quad \supp \psi \subset X_-(h^{\rho/2});
\end{equation*}
\begin{equation}\label{eq:psi_derivative_bounds}
\sup |\partial^\alpha_x \psi| \leq C_{\alpha, \psi}h^{-\rho |\alpha|/2}.
\end{equation}

Take the smallest ball $D_+$ such that $\supp b \subset \R^n \times D_+$. Take a maximal set of $\frac{1}{2}h^{1/2}$-separated points
$$y_1, \ldots, y_N \in X_+(h^\rho) \cap D_+, \quad N \leq Ch^{-\frac{n}{2}}$$
and let $J_k$ be the ball of diameter $h^{1/2}$ centered at $y_k$. Define the operators
$$B_k \coloneqq \sqrt{\psi} B \1_{X_+(h^\rho) \cap J_k}, \quad k=1, \ldots, N.$$

Noting that $X_+(h^\rho) \cap D_+ \subset \bigcup_k (X_+(h^\rho) \cap J_k)$, we see 
\begin{equation}\label{eq:cotlar_stein_setup}
\left\|\1_{X_-(h^\rho)}B \1_{X_+(h^\rho)}\right\|_{L^2 \rightarrow L^2} \leq \left\|\sqrt{\psi} B \1_{X_+(h^\rho) \cap D_+}\right\|_{L^2 \rightarrow L^2} \leq \left\|\sum_{k=1}^N B_k \right\|_{L^2 \rightarrow L^2}.
\end{equation}

We estimate the right-hand side of ~\eqref{eq:cotlar_stein_setup} using the Cotlar--Stein Theorem ~\cite{Zw12}*{Theorem C.5}. We say that two points $y_k, y_m$ are \emph{close} if $|y_k -y_m| \leq 10h^{1/2}$. Else, $y_k, y_m$ are \emph{far}. Each point is close to at most $100^n$ points.

Suppose $y_k, y_m$ are far. Thus, $J_k \cap J_m = \emptyset$, which implies
\begin{equation}\label{eq:far_estimate1}
B_k B_m^* =0.
\end{equation}
We claim that 
\begin{equation}\label{eq:far_estimate2}
\|B_k^* B_m\|_{L^2 \rightarrow L^2} \leq Ch^{2n}.
\end{equation}
To show ~\eqref{eq:far_estimate2}, we first compute the integral kernel of $B_k^*B_m$:
$$\cK_{B_k^*B_m}(y,y') = h^{-n} \1_{X_+(h^\rho) \cap J_k}(y) \1_{X_+(h^\rho) \cap J_m}(y') \int_{\R^n} e^{i(\Phi(x,y')-\Phi(x,y))/h} b(x,y') \overline{b(x,y)}\psi(x) dx.$$
Note that if $(y,y') \in \supp \cK_{B_k^*B_m}$, then $|y-y'| \geq h^{1/2}$. We integrate by parts in $x$, using 
$$L=\frac{h}{i|\partial_x(\Phi(x,y') - \Phi(x, y))|^2} \lrang{\partial_x(\Phi(x,y') - \Phi(x, y)), \partial_x}.$$
From ~\eqref{eq:Phi_bound} and ~\eqref{eq:psi_derivative_bounds}, we gain $h^{(1-\rho)/2}$ after each integration of integration by parts. As $\rho<1$, after finitely many steps, we conclude ~\eqref{eq:far_estimate2} by Schur's inequality (see ~\cite{Zw12}*{Theorem 4.21}).

To handle $y_k , y_m$ close, from Lemma ~\ref{lem:FUP_gen_phase}, uniformly in $k$,
\begin{equation}\label{eq:Bn_bound}
\|B_k\|_{L^2 \rightarrow L^2} \leq \|\1_{X_-(h^{\rho/2})}B \1_{X_+(h^\rho) \cap J_k}\|_{L^2 \rightarrow L^2} \leq Ch^{\beta/2}.  
\end{equation}

Using ~\eqref{eq:far_estimate1}, ~\eqref{eq:far_estimate2}, and ~\eqref{eq:Bn_bound} to apply the Cotlar-Stein theorem, we know 
$$\left\|\sum_{k=1}^N B_k\right\|_{L^2 \rightarrow L^2} \leq Ch^{\beta/2}.$$
From ~\eqref{eq:cotlar_stein_setup}, we conclude Proposition ~\ref{prop:FUP_gen_phase}.
\end{proof}

Finally, we adapt Proposition ~\ref{prop:FUP_gen_phase} to manifolds. 
Suppose $M, \tilde{M}$ are $n$-dimensional compact manifolds. 
Let $B= B(h):L^2(M) \rightarrow L^2(\tilde{M})$ be of the form 
\begin{equation}\label{eq:B(h)_def_manifold}
Bf(x) = h^{-\frac{n}{2}} \int_{M} e^{i \Phi(x,y)/h} b(x,y) f(y) dy,
\end{equation}
where for some open set $U \subset M \times \tilde{M}$,
\begin{equation}\label{eq:B(h)_conditions_manifold}
\Phi \in C^\infty(U; \R), \quad b \in C^\infty_c(U), \quad \det \partial_{xy}^2\Phi \neq 0 \text{ on } U.
\end{equation}
Note that $\det \partial_{xy}^2 \Phi \neq 0$ is a coordinate-invariant property. 

We pick coordinate charts $\psi_j: M_j \rightarrow X_j$, $\tilde{\psi}_j: \tilde{M}_j \rightarrow \tilde{X}_j$, where $M= \bigcup M_j$, $\tilde{M} = \bigcup \tilde{M}_j$, $X_j, \tilde{X}_j \subset \R^n$ for open sets $M_j$, $\tilde{M}_j$, $X_j$, $\tilde{X}_j$ and $\psi_j$, $\tilde{\psi}_j$ are diffeomorphisms  such that for some $C_0>0$, 
\begin{equation}\label{eq:det_bounds}
|\det \partial \psi_j|, |\det \partial \tilde{\psi}_j| \leq C_0.
\end{equation}
As $M, \tilde{M}$ are compact, we can assume that there are finitely many $\psi_j$, $\tilde{\psi}_j$.

\begin{proposition} \label{prop:FUP_manifolds}
Set $0 < \nu \leq \frac{1}{3}$ and $\frac{3}{4} < \varrho, \rho <1$. Let $X_- \subset \tilde{M}$ and $X_+ \subset M$ such that 
\begin{itemize}
\item for all $k$, $\tilde{\psi}_k(X_- \cap \tilde{M}_k)$ is $\nu$-porous on balls from scales $h^\varrho$ to 1;
\item for all $j$, $\psi_j(X_+ \cap M_j)$ is $\nu$-porous on lines from scales $h^\varrho$ to 1.
\end{itemize}
Assume ~\eqref{eq:B(h)_conditions_manifold} holds. 
Then there exists constants $C, \beta >0$ depending on  $\nu$, $\rho$, $n$, $\Phi$, $M$, $\tilde{M}$, $\psi_j$, $\tilde{\psi}_j$, and $b$ such that for $h$ sufficiently small,
\begin{equation*}\label{eq:FUP_B_manifold}
\|\1_{X_-(h^{\rho})} B(h) \1_{X_+(h^{\rho})}\|_{L^2(M) \rightarrow L^2(\tilde{M})} \leq Ch^{\beta/2}.
\end{equation*}
\end{proposition}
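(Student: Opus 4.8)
The plan is to reduce Proposition~\ref{prop:FUP_manifolds} to its Euclidean counterpart, Proposition~\ref{prop:FUP_gen_phase}, by a partition of unity adapted to the coordinate charts. First I would localize: since $M$ and $\tilde M$ are compact, choose finite smooth partitions of unity $\{\chi_j\}$ on $M$ and $\{\tilde\chi_k\}$ on $\tilde M$ with $\supp\chi_j\Subset M_j$ and $\supp\tilde\chi_k\Subset\tilde M_k$, and write $B=\sum_{j,k}B_{jk}$ with $B_{jk}\coloneqq\tilde\chi_k B\chi_j$. Each $B_{jk}$ is again of the form \eqref{eq:B(h)_def_manifold} with amplitude $\tilde\chi_k(x)b(x,y)\chi_j(y)$, now supported in a compact subset of $U\cap(\tilde M_k\times M_j)$. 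Since $\1_{X_-(h^\rho)}$ and $\1_{X_+(h^\rho)}$ have operator norm $\le 1$ and there are only finitely many pairs $(j,k)$, the triangle inequality reduces the problem to bounding each $\|\1_{X_-(h^\rho)}B_{jk}\1_{X_+(h^\rho)}\|_{L^2(M)\to L^2(\tilde M)}$ separately.

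Next I would transport $B_{jk}$ to $\R^n$ using the charts: conjugating by the natural isometries $L^2(M_j)\to L^2(X_j)$ and $L^2(\tilde M_k)\to L^2(\tilde X_k)$ induced by $\psi_j$, $\tilde\psi_k$ (well-defined since $\psi_j$, $\tilde\psi_k$ are diffeomorphisms and, by \eqref{eq:det_bounds}, have Jacobians controlled on the relevant compact sets) produces an operator $\widehat B_{jk}$ on $\R^n$ of the form \eqref{eq:B(h)_def} with phase $\Phi\circ(\tilde\psi_k^{-1}\times\psi_j^{-1})$ and a smooth, compactly supported amplitude absorbing all Jacobian factors. Conjugation by these isometries carries multiplication operators to multiplication operators, so $\1_{X_\pm(h^\rho)}$ becomes the indicator of the image of $X_\pm(h^\rho)$ under the relevant chart map; and because $\det\partial^2_{xy}\Phi\neq 0$ is a coordinate-invariant condition, the nondegeneracy hypothesis \eqref{eq:B(h)_conditions} holds for $\widehat B_{jk}$.

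Finally I would transfer the porosity hypotheses and apply Proposition~\ref{prop:FUP_gen_phase}. Restricted to a neighborhood of $\supp\tilde\chi_k$ (resp. $\supp\chi_j$), each chart map is bi-Lipschitz with a uniform constant $C_1$, so for $h$ small the image of $X_-(h^\rho)$ under $\tilde\psi_k$ is contained in the $(C_1h^\rho)$-neighborhood of $\widehat X_-\coloneqq\tilde\psi_k(X_-\cap\tilde M_k)$, and likewise for $X_+$ and $\widehat X_+\coloneqq\psi_j(X_+\cap M_j)$. Fixing $\rho'\in(\tfrac34,\rho)$, we have $C_1h^\rho\le h^{\rho'}$ for $h$ small, so the conjugated indicators are supported in $\widehat X_-(h^{\rho'})$, $\widehat X_+(h^{\rho'})$. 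By hypothesis $\widehat X_-$ is $\nu$-porous on balls and $\widehat X_+$ is $\nu$-porous on lines, each from scales $h^\varrho$ to $1$, with $\varrho,\rho'\in(\tfrac34,1)$; hence Proposition~\ref{prop:FUP_gen_phase} applied to $\widehat B_{jk}$ with exponents $\varrho$ and $\rho'$ gives $\|\1_{\widehat X_-(h^{\rho'})}\widehat B_{jk}\1_{\widehat X_+(h^{\rho'})}\|_{L^2\to L^2}\le Ch^{\beta/2}$. Since the conjugating isometries have norm $1$ and the conjugated indicator sets are subsets of $\widehat X_-(h^{\rho'})$, $\widehat X_+(h^{\rho'})$, this bounds $\|\1_{X_-(h^\rho)}B_{jk}\1_{X_+(h^\rho)}\|$; summing over the finitely many $(j,k)$ completes the argument.

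The main obstacle is the bookkeeping at the chart boundaries: one must refine the partitions of unity so that every amplitude is supported where the chart maps are genuinely bi-Lipschitz and so that every $h^\rho$-neighborhood that appears stays inside the chart domains, and one must verify that the bi-Lipschitz inflation $h^\rho\mapsto C_1h^\rho$ of neighborhoods can be absorbed into a slightly smaller exponent $\rho'>\tfrac34$. This is precisely where the hypothesis $\rho,\varrho>\tfrac34$ (rather than merely $>\tfrac12$) is used, the stronger constraint itself being inherited from Proposition~\ref{prop:FUP_gen_phase}.
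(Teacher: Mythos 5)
Your proposal is correct and follows essentially the same route as the paper: localize with partitions of unity subordinate to the charts, pull each piece $\tilde\chi_k B\chi_j$ back to $\R^n$ where it has the form \eqref{eq:B(h)_def} with phase $\Phi(\tilde\psi_k^{-1}(x),\psi_j^{-1}(y))$ and Jacobian-modified amplitude, transfer the indicator functions and the porosity of $\tilde\psi_k(X_-\cap\tilde M_k)$, $\psi_j(X_+\cap M_j)$, apply Proposition~\ref{prop:FUP_gen_phase}, and sum over the finitely many $(j,k)$. Your only deviations are cosmetic: you conjugate by $L^2$-isometries and absorb the bi-Lipschitz inflation of the $h^\rho$-neighborhoods into a slightly smaller exponent $\rho'\in(3/4,\rho)$, whereas the paper keeps plain pullbacks $\psi_j^*$, $\tilde\psi_k^*$ and absorbs the same chart constants directly into the constant $C$ while keeping the exponent $\rho$.
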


\begin{proof}

Let $\sum \chi_j =1$ be a partition of unity on $M$ subordinate to $M_j$ and $\sum \tilde{\chi_j}=1$ be a partition of unity on $\tilde{M}$ subordinate to $\tilde{M}_j$.  We have
$$B= \sum_{j,k} \tilde{\chi}_k B \chi_j = \sum_{j,k} \tilde{\psi}_k^* B_{jk} \psi_j^{-*} \chi_j,$$
where $B_{jk}: L^2(X_j) \rightarrow L^2(\tilde{X}_k)$ is given by $B_{jk} = \tilde{\psi}_k^{-*} \tilde{\chi}_k B \chi_j \psi_j^*$. More specifically,
\begin{align*}
&B_{jk} f(x)\\
&= h^{-\frac{n}{2}} \int_{M} e^{i \Phi(\tilde{\psi}^{-1}_k (x), y)} \tilde{\chi}_k (\tilde{\psi}_k^{-1}(x)) \chi_j(y) b(\tilde{\psi}_k^{-1}(x), y) f(\psi_j(y)) dy  \\
&=h^{-\frac{n}{2}} \int_{\R^n} e^{i \Phi(\tilde{\psi}^{-1}_k (x), \psi_j^{-1}(y))} \tilde{\chi}_k (\tilde{\psi}_k^{-1}(x)) \chi_j(\psi_j^{-1}(y)) b(\tilde{\psi}_k^{-1}(x), \psi_j^{-1} (y)) f(y) |\det \partial \psi_j^{-1}(y)| dy,   
\end{align*}
for some $g_j >0$.
We see that $B_{jk}$ is of the form ~\eqref{eq:B(h)_def}.
From \eqref{eq:det_bounds}, for $h$ sufficiently small,
\begin{align*}
\|\1_{X_- (h^\rho)} B \1_{X_+(h^\rho)}\|_{L^2(M) \rightarrow L^2(\tilde{M})} & \leq \sum_{j, k} \|\1_{X_-(h^\rho)}  \tilde{\psi}_k^* \tilde{\chi}_k B_{jk} \psi_j^{-*} \chi_j \1_{X_+(h^\rho)}\|_{L^2(M ) \rightarrow L^2(\tilde{M})}\\
& \leq  \sum_{j, k} \|\1_{\tilde{\psi_k}(X_-(h^\rho) \cap \tilde{M}_k)} B_{jk} \1_{\psi_j(X_+(h^\rho) \cap M_j)} \|_{L^2(\R^n) \rightarrow L^2(\R^n)}\\
& \leq C \sum_{j, k} \|\1_{\tilde{\psi_k}(X_- \cap \tilde{M}_k)(h^\rho)} B_{jk} \1_{\psi_j(X_+ \cap M_j)(h^\rho)} \|_{L^2(\R^n) \rightarrow L^2(\R^n)}.
\end{align*}
Thus, by our porosity assumptions on $\tilde{\psi}_k(X_- \cap \tilde{M}_k)$ and $\psi_j(X_+ \cap M_j)$, by Proposition ~\ref{prop:FUP_gen_phase}, we conclude that there exists $C, \beta>0$ such that  
$\|\1_{X_-} B \1_{X_+}\|_{L^2(M) \rightarrow L^2(\tilde{M})} \leq C h^{\beta/2}$.
\end{proof}

%%%%%%%%%%%%%%%%%%%%%%%%%%%%%%%%%%%%%%%%%%%%%%%%%%%%%%%%%%%%%%%%%%%%%%%%%%%%%%%%
%%%%%%%%%%%%%%%%%%%%%%%%%%%%%%%%%%%%%%%%%%%%%%%%%%%%%%%%%%%%%%%%%%%%%%%%%%%%%%%%

\section{Proof of Theorem ~\ref{thm:support} up to Lemma~\ref{lem:A_w_decay}}

%%%%%%%%%%%%%%%%%%%%%%%%%%%%%%%%%%%%%%%%%%%%%%%%%%%%%%%%%%%%%%%%%%%%%%%%%%%%%%%%
%%%%%%%%%%%%%%%%%%%%%%%%%%%%%%%%%%%%%%%%%%%%%%%%%%%%%%%%%%%%%%%%%%%%%%%%%%%%%%%%

\label{section:reduction}
Let $(M,g)$ be a compact hyperbolic $(n+1)$-dimensional manifold.  Let $u_j$ be a sequence of $L^2$-normalized eigenfunctions of $-\Delta$ with eigenvalues $h_j^{-2}$ that converges semiclassically to $\mu$.
Set 
$$U_{\mu} \coloneqq S^*M \setminus \supp \mu$$ and suppose that $U_\mu$ is $U_1^-$-dense.
Recalling the invariance of semiclassical measures under the geodesic flow, to prove Theorem~\ref{thm:support}, it suffices to find a contradiction. 

\subsection{Partition of unity}\label{subsection:partition_of_unity}
Using the partition of unity constructed in Lemma~\ref{lem:a1_a2_safe}, we build a microlocal partition of unity.  
We follow ~\cite{DJ18}*{\S 3.1} and ~\cite{ADM24}*{Lemma 4.4}.

\begin{lemma}\label{lem:partition_of_unity}
There exists a pseudodifferential partition of unity
\begin{equation}\label{eq:partition_of_unity}
I=A_0+A_1+A_2,\quad
A_0\in\Psi^0_h(M),\quad
A_1,A_2\in\Psi^{\comp}_h(M)
\end{equation}
endowed with the following properties. 
\begin{itemize}
\item The wavefront set of $A_0$ is bounded away from the cosphere bundle $S^*M$. In particular,
\begin{equation}\label{eq:WF(A0)}
\WF_h(A_0)\cap \{\tfrac{1}{2}\leq |\xi|_g\leq 2\}=\emptyset,\quad
\WF_h(I-A_0)\subset \{\tfrac{1}{4}<|\xi|_g<4\}.
\end{equation}
\item For $j=1,2$ and $a_j \coloneqq \sigma_h(A_j)$, there exists $U_1^-$-dense $U_j \subset S^*M$ such that 
\begin{equation}\label{eq:supp_a1_a2}
U_j \Subset S^*M \setminus \supp a_j.
\end{equation}
\item For $j= 1,2$,
\begin{equation}\label{eq:abchi}
a_j = b \chi_j, 
\end{equation}
where $\chi_j$ is a homogeneous function of order 0 and $b$ depends only on $|\xi|_g$ with $\{\tfrac{1}{2} \leq |\xi|_g \leq 2\} \subset \supp b \subset \{\tfrac{1}{4} \leq |\xi|_g \leq 4\}$.

\item $A_1$ is controlled by $\mu$,
that is
\begin{equation}
  \label{eq:A-1-a}
\WF_h(A_1)\cap \supp \mu = \emptyset.
\end{equation}
\end{itemize}
 
\end{lemma}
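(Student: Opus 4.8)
The plan is to build the three operators $A_0, A_1, A_2$ from the $U_1^-$-dense partition of unity $\chi_1 + \chi_2 = 1$ furnished by Lemma~\ref{lem:a1_a2_safe}, applied to the open set $U_\mu = S^*M\setminus\supp\mu$, which we have assumed is $U_1^-$-dense. First I would fix the radial cutoff $b = b(|\xi|_g)\in C_c^\infty((0,\infty))$ with $\{\tfrac12\le|\xi|_g\le 2\}\subset\supp b\subset\{\tfrac14\le|\xi|_g\le 4\}$ and $b\equiv 1$ near $\{\tfrac12\le|\xi|_g\le 2\}$. Pull $\chi_1,\chi_2$ back from $S^*M$ to $T^*M\setminus 0$ as functions homogeneous of degree $0$, and set $a_j := b\,\chi_j$ for $j=1,2$ so that $a_1+a_2 = b$ on all of $T^*M\setminus 0$; this gives \eqref{eq:abchi} by construction. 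Then define $A_j := \op_h(a_j)\in\Psi^{\comp}_h(M)$ for $j=1,2$ (using any fixed quantization on $M$), and set $A_0 := I - A_1 - A_2$. Since $\sigma_h(A_1+A_2) = b$, which equals $1$ near $S^*M$, we have $\sigma_h(A_0) = 1-b$ vanishing near $S^*M$; more precisely $\WF_h(A_0)\cap\{\tfrac12\le|\xi|_g\le 2\}=\emptyset$ because $1-b\equiv 0$ there, and $A_0\in\Psi^0_h(M)$ rather than $\Psi^{\comp}_h$ since $1-b$ does not decay at infinity. Also $\WF_h(I-A_0)=\WF_h(A_1+A_2)\subset\supp b\subset\{\tfrac14<|\xi|_g<4\}$, giving \eqref{eq:WF(A0)}. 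This establishes \eqref{eq:partition_of_unity} and \eqref{eq:WF(A0)} directly.

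Next I would address \eqref{eq:supp_a1_a2} and \eqref{eq:A-1-a}, which is where the content of Lemma~\ref{lem:a1_a2_safe} enters. That lemma produces $\chi_1,\chi_2$ with $\supp\chi_1\subset U_\mu$ and with both complements $S^*M\setminus\supp\chi_1$ and $S^*M\setminus\supp\chi_2$ being $U_1^-$-dense. Since $\supp a_j = \supp b\cap\pi^{-1}(\supp\chi_j)$ (radially), we get $\supp a_j\cap S^*M = \supp\chi_j$ (identifying $S^*M$ with $\{|\xi|_g=1\}$), so $S^*M\setminus\supp a_j = S^*M\setminus\supp\chi_j$ is open and $U_1^-$-dense. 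To get the \emph{compactly contained} $U_j\Subset S^*M\setminus\supp a_j$ with $U_j$ still $U_1^-$-dense, I would apply Lemma~\ref{lem:safe_compact} to the open $U_1^-$-dense set $S^*M\setminus\supp\chi_j$ to extract a compact $U_1^-$-dense $K_j$ inside it, then take $U_j$ to be a small open neighborhood of $K_j$ whose closure still lies in $S^*M\setminus\supp\chi_j$; this $U_j$ is $U_1^-$-dense since it contains $K_j$. Finally, for \eqref{eq:A-1-a}: $\WF_h(A_1) = \supp a_1 = \supp b\cap\pi^{-1}(\supp\chi_1)$, and since $\supp\chi_1\subset U_\mu = S^*M\setminus\supp\mu$ while $\supp\mu\subset S^*M$, the cone over $\supp\chi_1$ meets $\supp\mu$ only possibly on $S^*M$, where it is disjoint from $\supp\mu$ by construction; hence $\WF_h(A_1)\cap\supp\mu=\emptyset$.

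One small technical point to handle carefully is the smoothness of $\chi_1,\chi_2$ near the zero section and their homogeneous extension: since we only ever multiply by $b$, which is supported in $\{\tfrac14\le|\xi|_g\le 4\}$, the product $a_j = b\chi_j$ is genuinely smooth and compactly supported in the fibers, so $A_j\in\Psi^{\comp}_h(M)$ is unproblematic. I do not expect a serious obstacle here — the lemma is essentially an assembly of Lemma~\ref{lem:a1_a2_safe} with standard quantization bookkeeping. The only place requiring a moment's care is matching the precise form \eqref{eq:abchi} (factoring $a_j$ as radial times degree-zero) with the requirement that $A_1,A_2$ be compactly microlocalized and that $A_0$ be exactly $I-A_1-A_2$ with the stated wavefront properties; this is why one chooses $b\equiv 1$ near $\{\tfrac12\le|\xi|_g\le 2\}$ rather than merely supported there. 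Once these are lined up, all four bullet points follow, completing the proof.
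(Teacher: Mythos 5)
Your construction proves the lemma as stated, but it is not the paper's construction, and the difference matters downstream. The paper defines $A_0$ \emph{first}, via the functional calculus, as $A_0:=\psi_0(-h^2\Delta)$ with $\supp\psi_0\cap[1/4,4]=\emptyset$ and $\supp(1-\psi_0)\subset(1/16,16)$, then writes $I-A_0=\op_h(b)+R$ with $R=\cO(h^\infty)_{\Psi_h^{\comp}}$ and sets $A_1:=\op_h(b\chi_1)+R$, $A_2:=\op_h(b\chi_2)$. The point of this ordering is that $A_1+A_2=I-A_0=(1-\psi_0)(-h^2\Delta)$ is \emph{exactly} a function of the Laplacian; this is what later yields the commutation of $A_1+A_2$ with $P$ and $U(t)$, the identity $A_{\cW(T)}=(A_1+A_2)^T$ in \eqref{eq:A_1A_2}, the bound $\|A_1+A_2\|_{L^2\to L^2}\le 1$ in Lemma~\ref{lem:A_Z_decay}, and the applicability of \eqref{eq:A_1A_2_upperbound}. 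In your version $A_1+A_2=\op_h(b)$ is only a quantized radial symbol: it commutes with $U(t)$ only up to $\cO(h)$-type errors, and there is no free norm bound by $1$, so while every bullet of the lemma still holds, the rest of \S\ref{section:reduction} would no longer go through verbatim. What your route buys is a slightly more elementary statement-level proof (no functional calculus, exact partition by fiat); what the paper's route buys is the extra, unstated property of $A_1+A_2$ that the proof of Theorem~\ref{thm:support} actually uses. The remaining ingredients are the same in both: Lemma~\ref{lem:a1_a2_safe} applied to $U_\mu$, homogeneous extension of $\chi_1,\chi_2$, and Lemma~\ref{lem:safe_compact} for \eqref{eq:supp_a1_a2} (the paper simply takes $U_j$ to be the compact set itself rather than an open neighborhood).

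Two small points to tighten in your write-up. First, the claim $\WF_h(A_0)\cap\{\tfrac12\le|\xi|_g\le2\}=\emptyset$ cannot be read off the principal symbol alone; it holds because $1-b$ vanishes \emph{identically on a neighborhood} of that shell, so every term in the symbolic expansion of $\op_h(c)(I-\op_h(b))$ vanishes for $c$ supported there (this is the correct justification, and it is why you must insist, as you do, that $b\equiv1$ near $\{\tfrac12\le|\xi|_g\le2\}$ rather than merely on it). Second, to get $\WF_h(I-A_0)\subset\{\tfrac14<|\xi|_g<4\}$ with the \emph{open} shell, you should take $\supp b$ compactly contained in $\{\tfrac14<|\xi|_g<4\}$ (as the paper effectively does with $\supp\tilde\psi\subset[\tfrac1{16},16]$ chosen inside the open interval $(\tfrac1{16},16)$), not merely inside the closed shell.
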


\begin{proof}
Set $A_0:=\psi_0(-h^2\Delta)$, where
$\psi_0\in C^\infty(\mathbb R;[0,1])$ satisfies
$$
\supp\psi_0\cap [1/4,4]=\emptyset,\quad
\supp(1-\psi_0)\subset (1/16, 16).
$$
Then ~\eqref{eq:WF(A0)} follows from the fact that $\sigma_h(\psi_0(-h^2 \Delta)) = \psi_0(|\xi|_g^2)$.

We now construct $A_1, A_2$. From Lemma ~\ref{lem:a1_a2_safe}, there exists $\chi_1, \chi_2 \in C^\infty(S^*M ;[0,1])$ such that $\chi_1 + \chi_2 =1$, $\supp \chi_1 \subset U_\mu$, and $S^*M \setminus \supp \chi_1$, $S^*M \setminus \supp \chi_2$ are $U_1^-$-dense. By Lemma~\ref{lem:safe_compact}, there exists compact $U_1^-$-dense sets $U_1, U_2 \subset S^*M$ such that  $U_1 \Subset S^*M \setminus \supp \chi_1$ and $U_2 \Subset S^*M \setminus \supp \chi_2$. We extend $\chi_1$ and $\chi_2$ to be homogeneous functions of order $0$ on $T^*M \setminus 0$.

We write $I -A_0 = \op_h(b) + R$, where $R= \cO(h^\infty)_{\Psi_h^{\comp}}$ and $b(x, \xi) = \tilde{\psi}(|\xi|^2_g)$ for $\tilde{\psi} \in C^\infty_c (\R, [0,1])$ with $\supp \tilde{\psi} \subset [\tfrac{1}{16}, 16]$.

Then set
$$a_1 \coloneqq \chi_1 b, \quad a_2 \coloneqq \chi_2 b, \quad A_1 \coloneqq \op_h(a_1) + R, \quad A_2 \coloneqq \op_h(a_2).$$ 
The statements ~\eqref{eq:partition_of_unity}, \eqref{eq:supp_a1_a2}, ~\eqref{eq:abchi}, and ~\eqref{eq:A-1-a} follow immediately.
\end{proof}

\subsection{Dynamical refinement of partition of unity}\label{subsection:refine}
For $T \in \N$, define the set of words 
$$\cW(T) \coloneqq \{1, 2\}^T = \{\w =w_0 \ldots w_{T-1} : w_0, \ldots, w_{T-1} \in \{1,2\} \} .$$
Let $A: L^2(M) \rightarrow L^2(M)$ be a bounded operator and recall the notation $A(t) \coloneqq U(-t)AU(t)$ from ~\eqref{eq:T_notation}. Then for $\w =w_0 \ldots w_{T-1} \in \cW(T)$, set 
$$A_{\w} \coloneqq A_{w_{T-1}}(T-1) \ldots A_{w_1}(1)A_{w_0}(0).$$

We now carefully pick the length of our words, i.e., the values of $T$ we will use. Fix
\begin{equation}\label{eq:rho_def}
\rho \in (3/4,1).   
\end{equation}
Define
\begin{equation}\label{eq:T_def}
T_0 \coloneqq \lrceil{\frac{\rho}{4} \log h^{-1}}, \quad T_1 \coloneqq 4T_0 \approx \rho \log h^{-1}.
\end{equation}

We claim that $T_0$ and $T_1$ are chosen so that for $\w \in \cW(T_0)$ or $\w \in \cW(T_1)$, up to an error term, $A_\w$ can be  respectively written as the quantization of a symbol in $S^{\comp}_{L_s, \rho/4}$ or $S^{\comp}_{L_s, \rho}$. Specifically, for 
$$a_\w \coloneq \prod_{j=0}^{T-1} (a_{w_j} \circ \phi_j), \quad \w \in \cW(T),$$
we have the following lemma. 

\begin{lemma}[~\cite{DJ18}*{Lemma 3.2}]\label{lem:a_w_symbol_class}
For each $\w \in \cW(T_0)$, we have (with bounds independent of $\w$)
$$a_\w \in S^{\comp}_{L_s, \rho/4}(T^*M \setminus 0), \quad A_\w = \op_h^{L_s}(a_\w) + \cO(h^{3/4})_{L^2 \rightarrow L^2}$$
and for $\w \in \cW(T_1)$, we have
$$a_\w \in S_{L_s, \rho}^{\comp}(T^*M \setminus 0), \quad A_\w = \op_h^{L_s}(a_\w) + \cO(h^{1- \rho -})_{L^2 \rightarrow L^2},$$
where the constants in $\cO(\cdot)$ are uniform in $\w$. 
\end{lemma}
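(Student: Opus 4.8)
\textbf{Proof plan for Lemma~\ref{lem:a_w_symbol_class}.}
The plan is to reduce everything to the two building blocks already available in the excerpt: first, the statement~\eqref{eq:log_propagate} that $a_j\circ\phi_t\in S^{\comp}_{L_s,\rho,\rho'}(T^*M\setminus 0)$ uniformly for $t\in[0,\rho\log h^{-1}]$, for any admissible pair $(\rho,\rho')$; second, the composition formula~\ref{eq:composition_formula} for the quantization $\op_h^{L_s}$, which says $\op_h^{L_s}(a)\op_h^{L_s}(b)=\op_h^{L_s}(a\#_{L_s}b)+\cO(h^\infty)_{L^2\to L^2}$ with $a\#_{L_s}b=ab+\cO(h^{1-\rho-\rho'})_{S^{\comp}_{L_s,\rho,\rho'}}$ and $\supp(a\#_{L_s}b)\subset\supp a\cap\supp b$. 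The two cases of the lemma differ only in the choice of parameters: for $\w\in\cW(T_0)$ we work with the foliation symbol class at scale $\rho/4$ (so roughly $\rho$-parameter $\rho/4$, and $T=T_0\approx(\rho/4)\log h^{-1}$ is within the allowed propagation window), while for $\w\in\cW(T_1)$ we work at scale $\rho$ and $T=T_1\approx\rho\log h^{-1}$ sits at the edge of the window. I would set this up once with a general parameter and then specialize.

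First I would establish the symbol bound $a_\w\in S^{\comp}_{L_s,\cdot}$. Each factor $a_{w_j}\circ\phi_j$ lies in the appropriate $L_s$-foliation symbol class uniformly in $j\le T$ by~\eqref{eq:log_propagate} (using that $j\le T_1\approx\rho\log h^{-1}$ and, in the $T_0$ case, that the even smaller range is certainly contained in the window for the class at parameter $\rho/4$). The class $S^{\comp}_{L_s,\rho,\rho'}$ is an algebra under pointwise multiplication — this is essentially the Leibniz rule combined with the derivative bounds~\eqref{eq:derivative_bounds}, where $L_s$-tangent derivatives cost $h^{-\rho'}$ and transverse derivatives cost $h^{-\rho}$, and a product of $T$ factors each satisfying such bounds still satisfies them because $T$ is only logarithmic in $h$ (so the combinatorial constants from the Leibniz rule are polynomial in $\log h^{-1}$, hence absorbed into the $\cO$ with a harmless loss, which in the $T_1$ case is exactly why one lands in $S^{\comp}_{L_s,\rho}=\bigcap_\varepsilon S^{\comp}_{L_s,\rho+\varepsilon,\varepsilon}$ rather than a fixed class). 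The support of $a_\w$ is the intersection of the supports of the $a_{w_j}\circ\phi_j$, all contained in $\{1/4<|\xi|_g<4\}$, so $a_\w$ is compactly supported in $T^*M\setminus 0$, giving the $\comp$ qualifier. This step is routine bookkeeping.

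Next I would prove the operator identity $A_\w=\op_h^{L_s}(a_\w)+(\text{error})$. The idea is to telescope: write $A_\w=A_{w_{T-1}}(T-1)\cdots A_{w_0}(0)$ and replace each propagated factor $A_{w_j}(j)=U(-j)A_{w_j}U(j)$ by its foliation quantization. Using~\eqref{eq:egorov_log} (whose quoted part holds in all dimensions, as remarked after it), we have $A_{w_j}(j)=\op_h^{L_s}(a_{w_j}\circ\phi_j)+\cO(h^{1-\rho}\log h^{-1})_{L^2\to L^2}$, uniformly in $j\le T_1$. Then I compose these $T$ factors using~\ref{eq:composition_formula}: the principal terms multiply to $a_\w$ (modulo $\cO(h^{1-\rho-\rho'})$ corrections in the symbol norm at each of the $T$ composition steps, which accumulate to $\cO(T\cdot h^{1-\rho-\rho'})$, i.e.\ still $\cO(h^{1-\rho-\rho'-})$ after absorbing the log), and the operator-norm errors from the $\op_h^{L_s}$ replacements, of size $\cO(h^{1-\rho}\log h^{-1})$ each, accumulate — using that each $\op_h^{L_s}$ of a symbol in the class is bounded uniformly in $h$ by~\ref{eq:bounded_property} — to a total of $\cO(T\cdot h^{1-\rho}\log h^{-1})=\cO(h^{1-\rho-})$. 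In the $T_0$ case, the parameters are $\rho/4$ in place of $\rho$, $T\approx(\rho/4)\log h^{-1}$, and the propagation/Egorov error is $\cO(h^{1-\rho/4}\log h^{-1})$ per step, so the accumulated error is $\cO(h^{1-\rho/4-})$, comfortably $\cO(h^{3/4})$ once one checks $1-\rho/4>3/4$, i.e.\ $\rho<1$, which holds by~\eqref{eq:rho_def}; in the $T_1$ case the accumulated error is $\cO(h^{1-\rho-})$ as stated.

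The main obstacle — and the only place requiring genuine care rather than bookkeeping — is controlling the accumulation of errors over $T\approx\rho\log h^{-1}$ compositions, both at the level of symbol seminorms (the Leibniz-rule constants and the $h^{1-\rho-\rho'}$ composition remainders multiplying up) and at the level of $L^2$ operator norms (the Egorov remainders adding up). The point that makes this work, and which I would emphasize, is that $T$ is logarithmic in $h$, so any polynomial-in-$T$ growth of constants is absorbed into an arbitrarily small power of $h$; this is precisely the reason the symbol class $S^{\comp}_{L_s,\rho}=\bigcap_{\varepsilon>0}S^{\comp}_{L_s,\rho+\varepsilon,\varepsilon}$ is formulated with the $\varepsilon$-slack and the notation $\cO(h^{\alpha-})$ is introduced. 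I would also note that this is exactly the content of \cite{DJ18}*{Lemma 3.2}, whose proof (being a purely local/microlocal statement about the $L_s$-calculus and Egorov propagation up to the Ehrenfest-type time $\rho\log h^{-1}$) goes through verbatim in higher dimensions once one has the all-dimensions version of~\eqref{eq:egorov_log} and the higher-dimensional $\op_h^{L_s}$-calculus recorded in \S\ref{subsubsection:quantization}; so in the write-up I would likely cite \cite{DJ18} and only indicate the parameter choices and the points where dimension enters.
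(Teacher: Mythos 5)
Your plan follows the paper's route essentially verbatim: the paper likewise reduces Lemma~\ref{lem:a_w_symbol_class} to \eqref{eq:log_propagate}, the all-dimensional part of Egorov's theorem \eqref{eq:egorov_log}, and the product lemma for logarithmically many symbols and operators (Lemma~\ref{lem:many_product}, quoted from \cite{DJ18}*{Lemmas A.1, A.6}), deferring the detailed bookkeeping to \cite{DJ18} exactly as you propose. The one point to tighten in your sketch is the error-accumulation step: mere uniform boundedness of $\op_h^{L_s}$ from \ref{eq:bounded_property} would allow a product of $T\sim\log h^{-1}$ operator norms $C>1$ to grow like a fixed negative power of $h$, so one needs the hypothesis $\sup|a_j|\leq 1$ (giving norms $1+\cO(h^{\delta})$) as in Lemma~\ref{lem:many_product} — a detail the cited \cite{DJ18} argument, which you ultimately invoke, does supply.
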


The $h^{3/4}$-remainder for $T_0$  is necessary for Lemma \ref{lem:ac_bound} to hold. Although the choice of $T_1$ gives a larger  error term, a propagation time close to $\rho \log h^{-1}$ is needed for a later application of the fractal uncertainty principle in Lemma \ref{lem:a_hyperbolic_ball_porosity} and Lemma \ref{lem:a_hyperbolic_line_porosity}.

We outline the proof of Lemma~\ref{lem:a_w_symbol_class} to provide intuition, but defer the full proof to ~\cite{DJ18}. 
Recall that uniformly in $t \in [0, \rho \log h^{-1}]$, from  \eqref{eq:log_propagate}, we know $a \circ \phi_t \in S^{\comp}_{L_s, \rho, 0} (T^*M \setminus 0)$  and from \eqref{eq:egorov_log}, we have an Egorov's theorem.
Thus, we can use the following result, which combines the statements of  \cite{DJ18}*{Lemma A.1} and \cite{DJ18}*{Lemma A.6}.

\begin{lemma}[\cite{DJ18}*{Lemmas A.1, A.6}]
  \label{lem:many_product}
Let $C$ be an arbitrary fixed constant and assume that $a_1,\dots,a_N\in S^{\comp}_{L,\rho,\rho'}(U)$, $1\leq N\leq C\log h^{-1}$
are such that $\sup |a_j|\leq 1$, and
each $S^{\comp}_{L,\rho,\rho'}(U)$ seminorm of $a_j$ is bounded uniformly in $j$.
Then for all small $\varepsilon>0$,
the product $a_1\ldots a_N$ lies in $S^{\comp}_{L,\rho+\varepsilon,\rho'+\varepsilon}(U)$.
For $A_1,\dots,A_N : L^2(M) \rightarrow L^2(M)$ such that
$A_j=\op_h^L(a_j)+\mathcal O(h^{1-\rho-\rho'-})_{L^2\to L^2}$ where the constants in $\mathcal O(\cdot)$
are independent of $j$, 
$$
A_1\ldots A_N=\op_h^L(a_1\ldots a_N)+\mathcal O(h^{1-\rho-\rho'-})_{L^2\to L^2}.
$$
\end{lemma}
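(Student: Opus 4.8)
\textbf{Proof plan for Lemma \ref{lem:many_product}.}

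The plan is to deduce this statement from the three quantization properties \ref{eq:bounded_property}--\ref{eq:composition_formula} together with a careful tracking of how the $S^{\comp}_{L,\rho,\rho'}$-seminorms deteriorate under iterated products. First I would handle the symbol-level claim: that $a_1\cdots a_N \in S^{\comp}_{L,\rho+\varepsilon,\rho'+\varepsilon}(U)$. The natural approach is induction on $N$, but a naive induction loses a factor of $h^{-\rho}$ (or $h^{-\rho'}$) of seminorm growth at each step, which over $N \asymp C\log h^{-1}$ steps produces a factor $h^{-C\rho\log h^{-1}}$ that is \emph{not} absorbable into any $h^{-\varepsilon}$. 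The fix, which is exactly the content of \cite{DJ18}*{Lemma A.1}, is to observe that the relevant seminorm of a product is controlled not by a product of seminorms but by a \emph{sum}: applying a vector field (tangent to $L$ or not) to $a_1\cdots a_N$ via the Leibniz rule distributes the single derivative onto one factor at a time, so each differentiated monomial gains only one factor of $h^{-\rho}$ or $h^{-\rho'}$, and the $\sup|a_j|\le 1$ hypothesis keeps the undifferentiated factors bounded. Iterating the Leibniz rule for a fixed word $Y_1\cdots Y_m Z_1\cdots Z_k$ of $m+k$ vector fields, one gets a bound by $C^{m+k}(m+k)^{m+k}\,h^{-\rho k-\rho' m}$ up to the combinatorial count of ways to distribute $m+k$ derivatives among $N$ factors, which is $\le (m+k+1)^N$ (or more crudely $N^{m+k}$); either way, since $N\le C\log h^{-1}$, this prefactor is bounded by $h^{-\varepsilon}$ for $h$ small. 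This is where one uses $1\le N\le C\log h^{-1}$ crucially, and this counting argument is the main obstacle — getting the bookkeeping tight enough that the polynomial-in-$N$ (equivalently, polynomial-in-$\log h^{-1}$) losses are genuinely $\cO(h^{-\varepsilon})$.

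Granting the symbol estimate, the second claim — that $A_1\cdots A_N = \op_h^L(a_1\cdots a_N) + \cO(h^{1-\rho-\rho'-})_{L^2\to L^2}$ — follows by a telescoping argument, which is \cite{DJ18}*{Lemma A.6}. Writing $A_j = \op_h^L(a_j) + E_j$ with $\|E_j\|_{L^2\to L^2}\le C_0 h^{1-\rho-\rho'-}$ uniformly in $j$, I would expand the product $A_1\cdots A_N = \prod_j(\op_h^L(a_j)+E_j)$ and collect terms by the number and position of the $E_j$-factors. The term with no $E$-factors is $\op_h^L(a_1)\cdots\op_h^L(a_N)$, which by \ref{eq:composition_formula} and the symbol estimate equals $\op_h^L(a_1\cdots a_N) + \cO(h^{1-\rho-\rho'-})$ — here one needs to iterate the composition formula $a\#_L b = ab + \cO(h^{1-\rho-\rho'})_{S^{\comp}}$ exactly $N-1$ times, and again the error at each step is controlled in an $S^{\comp}_{L,\rho+\varepsilon,\rho'+\varepsilon}$-seminorm by the first part, so the accumulated error over $N\asymp\log h^{-1}$ steps is still $\cO(h^{1-\rho-\rho'-})$ after absorbing the polynomial-in-$\log h^{-1}$ loss into the ``$-$''. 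For the terms containing at least one $E_j$: each factor $\op_h^L(a_j)$ is bounded on $L^2$ uniformly in $h$ by \ref{eq:bounded_property} (and uniformly in $j$ since the seminorms of the $a_j$ are uniformly bounded, so $\|\op_h^L(a_j)\|\le C_1$), hence a term with exactly $r\ge 1$ occurrences of $E$-factors is bounded by $\binom{N}{r} C_1^{N-r}(C_0 h^{1-\rho-\rho'-})^r$. Summing over $r\ge 1$ gives $\le (1+C_1)^N\cdot C_0 h^{1-\rho-\rho'-}\cdot(\text{geometric factor})$, and since $(1+C_1)^N = (1+C_1)^{C\log h^{-1}} = h^{-C\log(1+C_1)}$ — wait, this is \emph{not} a negative power of $h$ times $h^{-\varepsilon}$; it is a fixed negative power $h^{-\sigma}$ with $\sigma = C\log(1+C_1)$ potentially large.

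This last point is the real subtlety, so let me address it honestly in the write-up: the resolution is that one does not bound $\|\op_h^L(a_j)\|$ by an absolute constant $C_1>1$, but rather uses the refined composition bound. Since $\sup|a_j|\le 1$, the principal part of $\op_h^L(a_j)$ has norm $\le 1 + \cO(h^{1-\rho-\rho'-})$ — more precisely, $\|\op_h^L(a_j)\|_{L^2\to L^2}\le 1 + C_2 h^{1-\rho-\rho'-}$, which follows from the sharp Gårding-type bound implicit in the construction of $\op_h^L$ (this is part of the cited \cite{DJ18}*{\S A.4} package). Then $\prod_j(1+C_2 h^{1-\rho-\rho'-}) \le \exp(N C_2 h^{1-\rho-\rho'-}) = \exp(C C_2 h^{1-\rho-\rho'-}\log h^{-1}) \to 1$ as $h\to 0$, since $h^{1-\rho-\rho'-}\log h^{-1}\to 0$ because $\rho+\rho'<1$. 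Thus the operator norms of the partial products are uniformly bounded (in fact $\to 1$), and the sum over terms with $r\ge 1$ $E$-factors is $\le \big(\prod(1+C_2 h^{1-\rho-\rho'-})\big)\cdot\big((1+C_0 h^{1-\rho-\rho'-}/(1+C_2 h^{1-\rho-\rho'-}))^N - 1\big) \le C\cdot N C_0 h^{1-\rho-\rho'-} = \cO(h^{1-\rho-\rho'-})$, again absorbing the $N\asymp\log h^{-1}$ into the ``$-$''. Assembling the no-$E$ term and the $\ge 1$-$E$ terms gives the claim. The only genuinely new input beyond \cite{DJ18} is that all the vector-field and seminorm manipulations above are purely local and formal — they never see the dimension of $L$ or of $M$ — so the proof quoted from \cite{DJ18}*{Lemmas A.1, A.6} goes through verbatim in our setting, and I would phrase the write-up as ``the proof is identical to \cite{DJ18}, the only observation being that each step is local and dimension-independent,'' rather than reproducing the bookkeeping in full.
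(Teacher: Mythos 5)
Your proposal is correct and takes essentially the same route as the paper, which simply quotes \cite{DJ18}*{Lemmas A.1, A.6} on the grounds that their proofs (the Leibniz-rule counting argument for the symbol statement, and the telescoping/induction argument for the operator statement using the refined norm bound $\|\op_h^L(a)\|_{L^2\to L^2}\leq \sup|a|+\cO(h^{(1-\rho-\rho')/2})$, which is an explicit lemma in the \cite{DJ18}*{\S A} package rather than merely ``implicit'') are local and dimension-independent. The only slip is the parenthetical suggestion that the cruder count $(m+k+1)^N$ would also do --- that factor is $h^{-C\log(m+k+1)}$, a fixed negative power of $h$, not $\cO(h^{-\varepsilon})$ --- but since you also give the correct count $N^{m+k}\leq (C\log h^{-1})^{m+k}=\cO(h^{-\varepsilon})$, the argument stands as written.
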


Lemma~\ref{lem:a_w_symbol_class} then follows.

We now take weighted sums of the operators $A_\w$. For a function $c: \cW(T) \rightarrow \C$, define the operator 
\begin{equation*}
A_c \coloneqq \sum_{\w \in \cW(T)} c(\w) A_\w,
\end{equation*}
with symbol 
\begin{equation*}
a_c \coloneqq \sum_{\w \in \cW(T)} c(\w) a_\w.
\end{equation*}  If $c=\1_E$ for $E \subset \cW(T)$, we use the notation $A_E$ to denote $A_{\1_E}$.

The following lemma shows that, modulo a small remainder, $A_c$ is pseudodifferential.
\begin{lemma}[~\cite{DJ18}*{Lemma 4.4}]\label{lem:ac_bound}
Assume $\sup |c| \leq 1$. Then for $T= T_0$,
$$a_c \in S^{\comp}_{L_s, \frac{1}{2}, \frac{1}{4}}(T^*M \setminus 0), \quad A_c = \op_h^{L_s}(a_c) + \cO(h^{1/2}),$$
where the $S^{\comp}_{L_s, \frac{1}{2}, \frac{1}{4}}$-seminorms of $a_c$ and the constant in $\cO(\cdot)$ are independent of $c$.
\end{lemma}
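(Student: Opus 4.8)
The plan is to prove Lemma~\ref{lem:ac_bound} by splitting the word length $T$ (here $T=2T_1$, the length used in the outline) into pieces of length $T_0$, applying Lemma~\ref{lem:a_w_symbol_class} on each short block, and then tracking the gains carefully through the composition. First I would recall that $T_1 = 4T_0$, so a word $\w\in\cW(2T_1)$ of length $2T_1 = 8T_0$ decomposes into $8$ consecutive blocks $\w^{(0)},\dots,\w^{(7)}$, each of length $T_0$, and correspondingly
$$A_\w = A_{\w^{(7)}}(7T_0)\cdots A_{\w^{(1)}}(T_0)\,A_{\w^{(0)}}(0),$$
where each factor $A_{\w^{(i)}}(iT_0)$ is, up to conjugation by $U(iT_0)$, a product of $T_0$ of the operators $A_{w_j}(j)$. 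By Lemma~\ref{lem:a_w_symbol_class}, each block operator $A_{\w^{(i)}}(iT_0)$, after conjugating back to time $0$, is $\op_h^{L_s}(a_{\w^{(i)}}\circ\phi_{iT_0}) + \cO(h^{3/4})_{L^2\to L^2}$ with symbol in $S^{\comp}_{L_s,\rho/4}$; the key point is that $\rho/4 < 1/4$ (since $\rho<1$), so the remainder $h^{1-\rho/4-} = \cO(h^{3/4})$ matches the stated error and each block symbol has $S^{\comp}_{L_s,1/2,1/4}$-seminorms bounded uniformly in $\w$.

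Next I would assemble the $8$ blocks. The symbol $a_\w = \prod_{j=0}^{2T_1-1}(a_{w_j}\circ\phi_j)$ is a product of $2T_1 \approx 2\rho\log h^{-1}$ factors $a_{w_j}\circ\phi_j$, each bounded by $1$ in absolute value and each lying in $S^{\comp}_{L_s,\rho,0}$ by~\eqref{eq:log_propagate} (note $\phi_j$ for $0\le j\le 2T_1-1 \lesssim 2\rho\log h^{-1}$ is still within the allowed propagation window, after rescaling $\rho$ by a constant factor $<1$ absorbed into the ``$\rho+\varepsilon$''; this is exactly why one needs $\rho<1$ with room to spare). Then Lemma~\ref{lem:many_product}, applied with $L=L_s$ and the role of ``$\rho$'' played by a value strictly below $1/2$—here one uses that the relevant exponent governing $a_{w_j}\circ\phi_j$ derivative growth along $L_s$-transverse directions is $\rho/4$ per block of length $T_0$, accumulated over $8$ blocks to give total exponent $< 1/2$, while tangential derivatives along $L_s$ contribute the $1/4$—gives $a_c = \sum_\w c(\w) a_\w \in S^{\comp}_{L_s,1/2,1/4}(T^*M\setminus 0)$ and $A_c = \op_h^{L_s}(a_c) + \cO(h^{1/2})_{L^2\to L^2}$, with all seminorms and the $\cO(\cdot)$ constant independent of $c$ because $\sup|c|\le 1$ and the number of words is only exponential in $\log h^{-1}$, i.e. polynomial in $h^{-1}$, but the operator bounds are uniform per-word so the triangle-inequality sum is controlled by the common bound (one must be slightly careful: summing $2^{2T_1}$ terms each $\cO(h^{1/2})$ would be lossy, so the argument must keep the sum inside the symbol and apply Lemma~\ref{lem:many_product} to $a_c$ directly, using the $L^2$-boundedness property~\eqref{eq:bounded_property} of $\op_h^{L_s}$ of a single symbol rather than summing operator norms).

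The main obstacle, and the point requiring the most care, is precisely this last subtlety: one cannot afford to estimate $\|A_c\|$ by $\sum_{\w}|c(\w)|\,\|A_\w\|$ since there are exponentially many words. Instead, following~\cite{DJ18}*{Lemma 4.4}, one must show that the \emph{symbol} $a_c$ itself satisfies the $S^{\comp}_{L_s,1/2,1/4}$ bounds with $c$-independent seminorms—this uses that the supports of the $a_\w$ overlap in a controlled way and that each derivative bound~\eqref{eq:derivative_bounds} for $a_\w$ holds uniformly in $\w$ with $\sup|a_\w|\le 1$, so that $|a_c|\le \sum_\w |a_\w|$ and, crucially, the sum $\sum_\w a_\w = a_1' \# \cdots$ telescopes via $A_{\cW(T)} = \prod (A_1(j)+A_2(j))$ back to a product of the bounded operators $A_1(j)+A_2(j)$, keeping everything uniform. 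Then the $\cO(h^{1/2})$ remainder comes from Lemma~\ref{lem:many_product} applied to the product structure of $\sum_\w A_\w$, and the general weighted sum $A_c$ with $\sup|c|\le 1$ is handled by the same bound since the block structure and seminorm estimates do not see the coefficients beyond their sup norm. I would close by noting that the $h^{3/4}$ error in the $T_0$-block version of Lemma~\ref{lem:a_w_symbol_class}, rather than a worse one, is exactly what makes the final $\cO(h^{1/2})$ survive through the composition of blocks—this is the content of the remark immediately following Lemma~\ref{lem:a_w_symbol_class}.
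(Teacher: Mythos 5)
There is a genuine gap, and it starts with the word length. Lemma~\ref{lem:ac_bound} is the quoted \cite{DJ18}*{Lemma 4.4} and, as its uses in the paper show (it is applied to $A_\cZ$ and $A_F$ with $\cZ\subset\cW(T_0)$ and $F:\cW(T_0)\to[0,1]$, and Lemma~\ref{lem:ac_ad_bound} is stated for $\cW(T_0)$), it concerns weights $c$ on $\cW(T_0)$, i.e.\ words of length $T_0\approx\tfrac\rho4\log h^{-1}$. For $T=2T_1$, which is the setting you adopt, the statement is false: a single $L_s$-transverse derivative of $a_\w=\prod_{j=0}^{2T_1-1}(a_{w_j}\circ\phi_j)$ grows like $e^{2T_1}\approx h^{-2\rho}$, so $a_\w$ (hence $a_c$) lies in no class $S^{\comp}_{L_s,\rho'',\rho'''}$ with $\rho''<1$, let alone $S^{\comp}_{L_s,1/2,1/4}$; this is precisely why the paper splits $\w=\w_+\w_-$ and invokes the fractal uncertainty principle rather than pseudodifferential calculus (``the choice of $T_1$ is too large for $A_\w$ to be pseudodifferential''). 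Your bookkeeping ``exponent $\rho/4$ per block of length $T_0$, accumulated over $8$ blocks to give total exponent $<1/2$'' is not how exponents combine: the worst factor is the one propagated for time $\approx 2T_1$, and in any case $8\cdot\rho/4=2\rho>3/2$. Moreover, for $i\ge1$ the block symbols $a_{\w^{(i)}}\circ\phi_{iT_0}$ involve propagation up to time $(i+1)T_0>T_0$, so they are not covered by Lemma~\ref{lem:a_w_symbol_class} with uniform $S^{\comp}_{L_s,1/2,1/4}$ bounds as you assert.

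Even if reread with $T=T_0$, the mechanism you propose for general $c$ is the wrong one. The telescoping $\sum_\w A_\w=\prod_j(A_1(j)+A_2(j))$ is only available for $c\equiv1$; for arbitrary $c$ with $\sup|c|\le1$ there is no product structure, and Lemma~\ref{lem:many_product} concerns products of symbols, so ``applying it to $a_c$ directly'' is not meaningful. The actual argument of \cite{DJ18}*{Lemma 4.4} has two steps. For the symbol bounds one estimates $|c(\w)|\le1$, expands derivatives of $a_c$ by Leibniz, and uses the factorization $\sum_{\w\in\cW(T_0)}\prod_j|\cdot|=\prod_j\bigl(|\cdot_1|+|\cdot_2|\bigr)$ together with $a_1+a_2\le1$, so the undifferentiated factors sum to at most $1$; each transverse derivative costs at most $CT_0e^{T_0}=\cO(h^{-\rho/4-})\le\cO(h^{-1/2})$ and each $L_s$-tangent derivative $\cO(h^{0-})\le\cO(h^{-1/4})$, uniformly in $c$. (A crude triangle inequality over words fails already at the zeroth-order bound, since there are about $h^{-\rho\ln2/4}$ words.) For the operator statement, by contrast, the plain triangle inequality over words is exactly what one does, and it is not lossy: by Lemma~\ref{lem:a_w_symbol_class} each remainder is $\cO(h^{3/4})$ uniformly in $\w$, while $\#\cW(T_0)=2^{T_0}\le Ch^{-\rho\ln2/4}\le Ch^{-1/4}$, giving a summed remainder $\cO(h^{1/2})$; this is the content of the paper's remark that the $h^{3/4}$ remainder at length $T_0$ is what makes Lemma~\ref{lem:ac_bound} hold. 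Your proposal inverts these two points, and with the word length $2T_1$ neither step can be repaired.
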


We also quote the following lemma which shows an “almost monotonicity” property for norms of the operators $A_c$.
\begin{lemma}[~\cite{DJ18}*{Lemma 4.5}]\label{lem:ac_ad_bound}
Assume $c, d: \cW(T_0) \rightarrow \R$ and $|c(w)| \leq d(w) \leq 1$ for all $\w \in \cW(T_0)$. Then for all $u \in L^2(M)$, we have
$$\|A_c u\|_{L^2} \leq \|A_d u\|_{L^2} + Ch^{1/8} \|u\|_{L^2},$$
where the constant $C$ is independent of $c, d$.
\end{lemma}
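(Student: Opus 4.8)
\textbf{Proof proposal for Lemma~\ref{lem:ac_ad_bound}.}
The plan is to follow the strategy of \cite{DJ18}*{Lemma 4.5}: reduce everything to an $L^2$ operator norm bound on a nonnegative combination of the $A_\w$ via a positivity (sharp G\aa{}rding / square root) trick. Write $e(\w) \coloneqq d(\w) - |c(\w)| \geq 0$ for $\w \in \cW(T_0)$, so that $0 \leq e \leq 1$. The key observation is that $\|A_c u\|_{L^2} \leq \|A_{|c|} u\|_{L^2} + (\text{error})$ is \emph{not} true termwise, so instead one works at the level of the symbols: by Lemma~\ref{lem:ac_bound}, $A_c = \op_h^{L_s}(a_c) + \cO(h^{1/2})$, $A_d = \op_h^{L_s}(a_d) + \cO(h^{1/2})$, and $A_e = \op_h^{L_s}(a_e) + \cO(h^{1/2})$, all with symbol seminorms bounded uniformly in $c,d,e$ in the class $S^{\comp}_{L_s,\frac12,\frac14}$. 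Moreover $a_d = a_{|c|} + a_e$ with $a_{|c|} = \sum_\w |c(\w)| a_\w$ and all of $a_c, a_{|c|}, a_d, a_e$ supported in the same compact subset of $\{\frac14 < |\xi|_g < 4\}$, with $|a_c| \leq a_{|c|} \leq a_d \leq 1$ pointwise (using that the $a_\w$ have disjoint-up-to-boundary supports and $\sum_{\w} a_\w \leq 1$, which is how the partition of unity was built).

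The heart of the argument is then the pointwise inequality on symbols
\[
|a_c|^2 \leq a_d^2,
\]
which I would like to upgrade to an operator inequality $A_c^* A_c \leq A_d^* A_d + \cO(h^{\text{something}})$. To do this, form $g \coloneqq \sqrt{a_d^2 - |a_c|^2} = \sqrt{(a_d - |a_c|)(a_d + |a_c|)}$. The subtlety, exactly as in \cite{DJ18}, is that $g$ need not be smooth where $a_d = |a_c|$, nor have symbol seminorms controlled by a fixed power of $h$; this is why one only gets $h^{1/8}$ rather than $h^{1/2}$ in the conclusion. The fix is to instead set $g \coloneqq \sqrt{a_d^2 - |a_c|^2 + h^{1/4}}$ (regularized square root), which lies in $S^{\comp}_{L_s,\frac12,\frac14}(T^*M\setminus 0)$ with seminorms that blow up only like a fixed negative power of $h^{1/4}$ — and crucially the composition calculus \ref{eq:composition_formula} still applies with a remainder that is a positive power of $h$ once $\rho = \frac12$, $\rho' = \frac14$ (so $1 - \rho - \rho' = \frac14 > 0$). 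Then $g \#_{L_s} g = a_d^2 - |a_c|^2 + h^{1/4} + \cO(h^{1/4})_{S^{\comp}}$, and one deduces from the composition formula and the fact that $a_{|c|}$, $a_c$ differ only by phases (so $|a_c|^2 = \overline{a_c}\, a_c$ and $a_c \#_{L_s} \overline{a_c} = |a_c|^2 + \cO(h^{1/4})$, similarly for $a_d$) the operator identity
\[
\op_h^{L_s}(a_d)^* \op_h^{L_s}(a_d) - \op_h^{L_s}(a_c)^* \op_h^{L_s}(a_c) = \op_h^{L_s}(g)^* \op_h^{L_s}(g) + \cO(h^{1/4})_{L^2\to L^2}.
\]
Since $\op_h^{L_s}(g)^* \op_h^{L_s}(g) \geq 0$, this gives $A_c^* A_c \leq A_d^* A_d + C h^{1/4}$ as operators on $L^2$ (after absorbing the $\cO(h^{1/2})$ errors from Lemma~\ref{lem:ac_bound}), hence $\|A_c u\|_{L^2}^2 \leq \|A_d u\|_{L^2}^2 + C h^{1/4}\|u\|_{L^2}^2$, and taking square roots and using $\sqrt{s+t} \leq \sqrt s + \sqrt t$ yields $\|A_c u\|_{L^2} \leq \|A_d u\|_{L^2} + C h^{1/8}\|u\|_{L^2}$.

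The main obstacle I anticipate is the non-smoothness of the naive square root $\sqrt{a_d^2 - |a_c|^2}$ and the need to track the exponent through the regularized construction: one must check that $g = \sqrt{a_d^2 - |a_c|^2 + h^{1/4}}$ genuinely lies in $S^{\comp}_{L_s,\frac12,\frac14}$ with seminorms $\cO(h^{-\varepsilon})$ (it will not — derivatives hitting the argument each cost $h^{-\delta}$ for the relevant $\delta$, and division by $\sqrt{\,\cdot\,} \gtrsim h^{1/8}$ costs another fixed power), and that the resulting remainders in the composition formula still beat the $h^{1/8}$ we are claiming. This is a bookkeeping argument of exactly the type carried out in \cite{DJ18}*{\S4.3}, and since we are in the fixed symbol class $S^{\comp}_{L_s,\frac12,\frac14}$ with $\rho + \rho' = \frac34 < 1$ the same estimates go through verbatim in our higher-dimensional setting (the only place dimension enters is the number of derivatives in the seminorms, which is harmless); thus I would simply cite \cite{DJ18}*{Lemma 4.5}, noting that the proof there uses only the quantization properties \ref{eq:bounded_property}, \ref{eq:FIO_conjugation}, \ref{eq:composition_formula} and Lemma~\ref{lem:ac_bound}, all of which we have established in arbitrary dimension.
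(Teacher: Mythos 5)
Your proposal is correct and matches the paper's treatment: the paper itself gives no proof of this lemma but simply quotes \cite{DJ18}*{Lemma 4.5}, whose argument (the symbol bound $|a_c|\leq a_d\leq 1$, the regularized square root $\sqrt{a_d^2-a_c^2+h^{1/4}}$, the $\#_{L_s}$ composition calculus with $\mathcal O(h^{1/4})$ remainder, positivity, and a final square root giving $h^{1/8}$) depends only on the quantization properties \ref{eq:bounded_property}--\ref{eq:composition_formula} and Lemma~\ref{lem:ac_bound}, all dimension-independent, exactly as you conclude. One small correction: the inequality $|a_c|\leq a_d$ does not come from any disjointness of the supports of the $a_\w$ (they overlap heavily); it follows simply from $a_\w\geq 0$ together with $|c(\w)|\leq d(\w)$ and $\sum_\w a_\w\leq 1$.
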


Define the following function $F:\cW(T_0) \rightarrow [0,1]$, which gives the proportion of the digit 1 in a word $\w = w_0 \ldots w_{T_0-1}$:
\begin{equation}\label{eq:F_def}
F(\w) \coloneqq \frac{|\{k \in \{0,\ldots, T_0 -1\}: w_k=1\}|}{T_0}.
\end{equation}
For $\alpha \in (0, \frac{1}{2})$, which we later select to be sufficiently small in ~\eqref{eq:alpha_def}, set 
\begin{equation*}
\cZ = \{ \w \in \cW(T_0): F(\w) >\alpha\}.
\end{equation*}
We call $\cZ$ the set of \emph{controlled words}. This is due to the fact that
for $\w \in \cZ$, if $(x, \xi) \in \supp a_\w$, then at least $\alpha T_0$ of the points $\phi_0(x, \xi), \phi_1(x, \xi), \ldots, \phi_{T_0-1}(x, \xi)$ lie in $\supp a_1$, which from ~\eqref{eq:A-1-a} is controlled by $\mu$.

We use $\cZ$ to split $\cW(2T_1)$ into the following two disjoint sets, where a word in $\cW(2T_1)$  is now written as a concatenation of $8$ elements of $\cW(T_0)$. 
Specifically, set
\begin{equation}\label{eq:def_X}
\begin{split}
\cY&\coloneqq \{\w^{(1)} \ldots\w^{(8)} : \w^{(k)} \in \cZ \text{ for some } 1 \leq k \leq 8 \},\\
\cX&\coloneqq \cW(2T_1) \setminus \cY=\{\w^{(1)} \ldots \w^{(8)} :  \w^{(k)} \in \cW(2T_1) \setminus \cZ \text{ for all } 1 \leq k \leq 8\}.
\end{split}
\end{equation}
We call elements of $\cX$ \emph{uncontrolled long logarithmic words} and elements of $\cY$ \emph{controlled long logarithmic words}.

Since $P$ and $A_1 + A_2 =I - A_0$ are both functions of $\Delta$, $A_1 + A_2$ and $P$ commute. Therefore, $A_1 + A_2$ and $U(t)$ commute. We see
\begin{equation} \label{eq:A_1A_2}
A_{\cW(T)} = (A_1 + A_2)^T.
\end{equation}

From ~\cite{DJ18}*{Lemma 3.1}, we know for all $T \geq 0$ and $u \in H^2(M)$,
\begin{equation}\label{eq:A_1A_2_upperbound}
\left\|u-(A_1 + A_2)^T u\right\|_{L^2} \leq C \left\|(-h^2 \Delta -I)u\right\|_{L^2}.
\end{equation}
The proof in~\cite{DJ18} exploits the fact that $A_{\cW(T)}$ is equal to $I$ microlocally near $S^*M$. 

Recall that $u_j$ is our sequence of $L^2$-normalized eigenfunctions of $-\Delta$ with eigenvalues $h_j^{-2}$ that converges semiclassically to $\mu$.
Then,
$$\|u_j\|_{L^2} \leq  \|u_j - (A_1 + A_2)^{2T_1} u_j\|_{L^2}+ \|A_\cX u_j\|_{L^2} + \|A_\cY u_j\|_{L^2}  =   \|A_\cX u_j\|_{L^2} + \|A_\cY u_j\|_{L^2}.$$

Therefore Theorem~\ref{thm:support} follows from the next two propositions.  

\begin{proposition}\label{prop:AY_decay}
As $j \rightarrow \infty$, we have
$$\|A_\cY u_j\|_{L^2} \rightarrow 0.$$
\end{proposition}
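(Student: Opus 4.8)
The plan is to show that for words $\w\in\cY$, the operator $A_\w$ is, up to a small remainder, the quantization of a symbol $a_\w$ whose support meets $\supp\mu$ in a quantitatively controlled way: along the support of $a_\w$, at least a fixed positive fraction of the iterates $\phi_0,\dots,\phi_{2T_1-1}$ land in $\supp a_1$, which is disjoint from $\supp\mu$ by \eqref{eq:A-1-a}. Combined with the fact that $u_j$ converges semiclassically to $\mu$, a single factor $A_1(k)$ applied near the end of a product already forces decay of $\|A_1(k)\cdots u_j\|_{L^2}$; the main point is to organize the $8$ blocks so that every $\w\in\cY$ contains at least one block $\w^{(m)}\in\cZ$, and to extract quantitative smallness from that block.

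First I would set $A_\cZ=\sum_{\w\in\cZ}A_\w$, the sum over controlled words of length $T_0$, and prove $\|A_\cZ u_j\|_{L^2}\to 0$. By Lemma~\ref{lem:ac_bound}, $A_\cZ=\op_h^{L_s}(a_\cZ)+\cO(h^{1/2})$ with $a_\cZ=\sum_{\w\in\cZ}a_\w$ and $\sup|a_\cZ|\le 1$ (the $a_\w$ have disjoint-ish supports coming from the partition of unity, so $\sum_{\w\in\cZ}a_\w\le \sum_{\w\in\cW(T_0)}a_\w\le 1$). The symbol $a_\cZ$ is supported where the fraction of indices $k$ with $\phi_k(x,\xi)\in\supp a_1$ exceeds $\alpha$; in particular $a_\cZ$ is supported in $\bigcup_{k=0}^{T_0-1}\phi_{-k}(\supp a_1)$, and more usefully, telescoping, $\|A_\cZ u_j\|_{L^2}^2=\langle A_\cZ^* A_\cZ u_j,u_j\rangle$ can be estimated by writing $A_\cZ^*A_\cZ$ as a pseudodifferential operator (again by Lemma~\ref{lem:ac_bound} and the composition formula \eqref{eq:composition_formula}) whose symbol is bounded by the indicator of a set meeting $\supp\mu$ in measure zero — more precisely, use that for each $\w\in\cZ$ one of the factors $A_1(k)$ appears, commute it to act directly, and apply that $\langle \op_h(a_1\circ\phi_k)u_j,u_j\rangle\to \int a_1\circ\phi_k\,d\mu=0$ by $\phi_t$-invariance of $\mu$ and \eqref{eq:A-1-a}. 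Concretely, I would follow the argument of \cite{DJ18}*{Lemma 3.3} verbatim adapted to our dimension: there one shows $\|A_\cZ u_j\|_{L^2}\to 0$ by an Egorov/positivity argument exploiting that $a_\cZ$ vanishes near $\supp\mu$ after time-averaging.

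Next I would pass from $A_\cZ$ to $A_\cY$ by the block structure. Write $\cW(2T_1)=\cW(T_0)^8$ and note $A_{\cW(2T_1)}=(A_1+A_2)^{2T_1}=\bigl((A_1+A_2)^{T_0}\bigr)^8=A_{\cW(T_0)}^8$ by \eqref{eq:A_1A_2}. Splitting $A_{\cW(T_0)}=A_\cZ+A_{\cW(T_0)\setminus\cZ}$ in each of the $8$ slots and expanding, $A_\cY=A_{\cW(2T_1)}-A_\cX$ equals the sum of the $2^8-1$ terms in which at least one slot carries $A_\cZ$ (here one uses that propagation by $U(j)$ respects the block decomposition, so $A_{\w^{(1)}\cdots\w^{(8)}}$ factors as a product of conjugated blocks $U(-(m-1)T_0)\bigl(\cdots\bigr)U((m-1)T_0)$). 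For each such term, isolate the leftmost slot $m$ carrying $A_\cZ$; the operator to its left is a product of $A_1(k)+A_2(k)$'s which is bounded on $L^2$ uniformly (by \eqref{eq:A_1A_2_upperbound} and Lemma~\ref{lem:ac_bound}, or just by $\|A_1+A_2\|\le 1+\cO(h)$ and almost-orthogonality), and the operator to its right applied to $u_j$ has norm $\le\|u_j\|+\cO(h^{1/8})$ by Lemma~\ref{lem:ac_ad_bound}. Hence $\|A_\cY u_j\|_{L^2}\le C\sup_{0\le r\le 7}\|A_\cZ U(rT_0)u_j\|_{L^2}+\cO(h^{1/8})$, and since $U(rT_0)u_j$ is again an $L^2$-normalized eigenfunction converging semiclassically to the (geodesic-flow-invariant!) measure $\mu$, the first step gives $\|A_\cZ U(rT_0)u_j\|_{L^2}\to 0$ for each fixed $r$, so $\|A_\cY u_j\|_{L^2}\to 0$.

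The main obstacle I anticipate is the first step — proving $\|A_\cZ u_j\|_{L^2}\to 0$ cleanly — because one must convert the combinatorial statement ``a definite fraction of iterates hit $\supp a_1$'' into an analytic decay estimate without losing the uniformity in $\w$ and while staying inside the symbol class $S^{\comp}_{L_s,1/2,1/4}$ where Lemma~\ref{lem:ac_bound} applies. The delicate point is that a single symbol $a_1\circ\phi_k$ with $k$ as large as $T_0\approx\tfrac{\rho}{4}\log h^{-1}$ has derivatives growing like $h^{-\rho/4}$, so one cannot naively apply sharp Gårding; the resolution, exactly as in \cite{DJ18}, is to use that $a_\cZ$ is a \emph{sum} over many words and that its time-average $\tfrac{1}{T_0}\sum_k (a_1\circ\phi_k)$ is bounded below by $\alpha$ on $\supp a_\cZ$, so $\|A_\cZ u_j\|^2\lesssim \tfrac{1}{\alpha T_0}\sum_{k=0}^{T_0-1}\langle \op_h(a_1\circ\phi_k)\,A_\cZ^\flat u_j, A_\cZ^\flat u_j\rangle+o(1)$ for suitable bounded $A_\cZ^\flat$, and each inner product tends to $\int a_1\,d\mu=0$ after using Egorov \eqref{eq:egorov_log} and $\phi_t$-invariance. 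Getting all constants uniform in $\w$ and $k$, and handling the $\cO(h^{1/2})$, $\cO(h^{1-\rho-})$ remainders so they do not accumulate over $T_0$ or $8$ blocks, will require the care already present in Lemmas~\ref{lem:a_w_symbol_class}, \ref{lem:ac_bound}, and \ref{lem:ac_ad_bound}, which is why those lemmas were set up with the particular choices of $T_0$, $T_1$ and the $h^{3/4}$, $h^{1/8}$ exponents.
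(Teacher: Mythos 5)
Your overall strategy is the paper's: first show $\|A_\cZ u_j\|\to 0$ by comparing $\1_\cZ$ with the frequency function $F$ (Lemma~\ref{lem:ac_ad_bound}), using $\WF_h(A_1)\cap\supp\mu=\emptyset$ and the fact that $u_j$ is an exact eigenfunction, and then reduce $A_\cY$ to $A_\cZ$ through the $8$-block structure. (Minor points: the relevant DJ18 lemma for the $A_\cZ$ step is their Lemma 4.6, not 3.3, and for propagated symbols with $k\sim\log h^{-1}$ the paper avoids any statement like $\langle\op_h(a_1\circ\phi_k)u_j,u_j\rangle\to\int a_1\circ\phi_k\,d\mu$ by instead using the exact identity $U(t)u_j=e^{-it/h}u_j$, i.e.\ Lemma~\ref{lem:A_bound}, which reduces everything to the fixed symbol $a_1$.)

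There is, however, a genuine gap in your block-decomposition step. You group each word of $\cY$ by the \emph{first-in-time} controlled block, which in operator order puts the uncontrolled blocks $A_{\cW(T_0)\setminus\cZ}(rT_0)$ \emph{between} $A_\cZ((m-1)T_0)$ and $u_j$, and the full sums $(A_1+A_2)$ on the outer left. From the three facts you invoke — the left factor is bounded, $\|(\text{uncontrolled blocks})\,u_j\|\le\|u_j\|+\cO(h^{1/8})$ by Lemma~\ref{lem:ac_ad_bound}, and $A_\cZ(\cdot)$ is bounded — the asserted inequality $\|A_\cY u_j\|\le C\sup_r\|A_\cZ U(rT_0)u_j\|+\cO(h^{1/8})$ does not follow: $A_\cZ$ is acting on $(\text{uncontrolled blocks})\,u_j$, not on a phase multiple of $u_j$, and norm control of that vector alone only yields $\|A_\cY u_j\|=\cO(1)$, no decay. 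The paper's fix is to group by the \emph{last-in-time} controlled block: then the factor adjacent to $u_j$ is the unrestricted sum $(A_1+A_2)^{(k-1)T_0}$, which equals the identity on the exact eigenfunction $u_j$ by \eqref{eq:A_1A_2_upperbound}, so $A_\cZ((k-1)T_0)$ really does hit (a phase times) $u_j$, while the uncontrolled blocks land on the outer left where only the uniform operator bound from Lemma~\ref{lem:ac_bound} and \eqref{eq:bounded_property} is needed. Alternatively your grouping can be salvaged, but only with an extra argument you did not state: once $\|A_\cZ u_j\|\to0$ is known, $A_{\cW(T_0)\setminus\cZ}(rT_0)u_j=u_j-A_\cZ(rT_0)u_j=u_j+o(1)$ up to a phase, and iterating over the at most seven uncontrolled blocks shows the vector $A_\cZ$ acts on is $u_j+o(1)$; the $\|u_j\|+\cO(h^{1/8})$ bound by itself is not enough.
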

We prove this proposition in \S \ref{subsection:decay_of_Y}, relying on the fact that $a_1$ is controlled by $\mu$.

\begin{proposition}\label{prop:AX_decay}
There exists some $C, \beta >0$, depending only on $M$, $a_1$, $a_2$, $\rho$ such that 
$$\|A_{\cX} \|_{L^2 \rightarrow L^2} \leq Ch^{\beta/10}.$$
\end{proposition}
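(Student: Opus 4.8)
The plan is to prove Proposition~\ref{prop:AX_decay} by the triangle inequality, bounding $\|A_{\cX}\|_{L^2\to L^2}$ by a sum over words $\w\in\cX$ of $\|A_\w\|_{L^2\to L^2}$, then counting $\#\cX$ and invoking the decay of each $\|A_\w\|$. First I would state the counting estimate: since $\cX$ consists of concatenations $\w^{(1)}\cdots\w^{(8)}$ where each $\w^{(k)}\in\cW(2T_1)\setminus\cZ$ has fewer than $\alpha T_0$ ones, a standard large-deviations/entropy bound on binomial coefficients gives $\#\bigl(\cW(T_0)\setminus\cZ\bigr)\le 2^{g(\alpha)T_0}$ for an explicit $g(\alpha)\to 0$ as $\alpha\to 0^+$, so that
\begin{equation*}
\#\cX \le 2^{8g(\alpha)T_0} = h^{-2g(\alpha)\rho}
\end{equation*}
using $8T_0 = 2T_1\approx 2\rho\log h^{-1}$. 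This is where the choice of $\alpha$ in~\eqref{eq:alpha_def} enters: $\alpha$ is taken small enough that $2g(\alpha)\rho < \beta/10$, where $\beta$ is the exponent coming from the per-word decay. This part is essentially combinatorial and I expect it to be routine once the per-word decay exponent is fixed.

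The main content is the per-word bound $\|A_\w\|_{L^2\to L^2}\le Ch^{\beta}$ for $\w\in\cW(2T_1)$, which is exactly the statement of Lemma~\ref{lem:A_w_decay}, to which this proposition is reduced in \S\ref{section:apply_FUP}. Granting that lemma, the proof of the proposition is then just: write $\w=\w^{(1)}\cdots\w^{(8)}$, observe that $A_\w$ as a sum over all of $\cX$ can be estimated by $\sum_{\w\in\cX}\|A_\w\|\le \#\cX\cdot\max_\w\|A_\w\| \le h^{-2g(\alpha)\rho}\cdot Ch^\beta$, and choose $\alpha$ so that $\beta - 2g(\alpha)\rho \ge \beta/10$ (say). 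One must be slightly careful that the $S^{\comp}_{L_s,\rho}$-seminorms and hence the constants in $\|A_\w\|\le Ch^\beta$ are uniform in $\w$; this uniformity is furnished by Lemma~\ref{lem:a_w_symbol_class} and Lemma~\ref{lem:many_product}, where all bounds are stated independently of the word.

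The hard part — and the reason this proposition is only reduced to, not proved from, the material in the excerpt — is Lemma~\ref{lem:A_w_decay} itself: showing each $\|A_\w\|_{L^2\to L^2}\to 0$ for a single word of length $2T_1\approx 2\rho\log h^{-1}$. The strategy there, as outlined in the introduction, is to split $\w=\w_+\w_-$ into two halves of length $T_1$ each, write $A_{\w_+}$ and $A_{\w_-}$ (after conjugating one half to the unstable foliation via Egorov) as pseudodifferential operators $\op_h^{L_u}(a_+)$ and $\op_h^{L_s}(a_-)$ with controlled symbols by Lemma~\ref{lem:a_w_symbol_class}, show $\supp a_\pm$ project to sets that are hyperbolic line porous (using property~\eqref{eq:outline_U1-_dense}, i.e.\ $U_1^-$-density, together with Lemma~\ref{lem:jets}-type transversality) and hyperbolic ball porous (using mixing of the geodesic flow), transfer these to genuine porosity in $\R^n$ coordinates, and apply the Fourier integral operator fractal uncertainty principle Proposition~\ref{prop:FUP_manifolds} to the operator intertwining the two foliations. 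I expect verifying the porosity of the propagated symbol supports — in particular deducing hyperbolic ball porosity from mixing with the right quantitative rate over the full Ehrenfest-type time $\rho\log h^{-1}$ — to be the main obstacle, but since Proposition~\ref{prop:AX_decay} as stated only asks for the reduction to Lemma~\ref{lem:A_w_decay}, here I would simply cite that lemma and assemble the counting argument above to conclude
\begin{equation*}
\|A_\cX\|_{L^2\to L^2} \le \sum_{\w\in\cX}\|A_\w\|_{L^2\to L^2} \le \#\cX\cdot Ch^{\beta} \le Ch^{\beta/10},
\end{equation*}
which completes the proof.
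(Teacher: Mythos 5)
Your proposal matches the paper's proof: the paper likewise bounds $\|A_\cX\|_{L^2\to L^2}$ by $\#\cX\cdot\sup_{\w}\|A_\w\|_{L^2\to L^2}$, citing Lemma~\ref{lem:size_X_bound} (the same entropy-type count you derive, stated there as $\#\cX\le Ch^{-4\sqrt{\alpha}}$) and Lemma~\ref{lem:A_w_decay} for the per-word decay $Ch^{\beta/2}$, and then fixes $\alpha=\beta^2/100$ so that the net exponent is $\beta/10$, with the dependence of constants only on $M,a_1,a_2,\rho$ exactly as you note. Your only deviations are cosmetic (you normalize the per-word exponent as $\beta$ rather than $\beta/2$ and write the counting bound as $h^{-2g(\alpha)\rho}$ instead of $h^{-4\sqrt{\alpha}}$), so the argument is essentially identical.
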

We prove this proposition in \S\ref{subsection:decay_of_X} and \S\ref{subsection:proof_Aw_decay}, using that  $U_\mu$ is $U_1^-$-dense.

\subsection{Proof of Proposition ~\ref{prop:AY_decay}}\label{subsection:decay_of_Y}

We adapt ~\cite{DJ18}*{\S 4}, which in turn used many of the tools from ~\cite{An08}*{\S 2}. The following argument could be easily modified to hold for $o(h/\log h^{-1})$ quasimodes.

We first control the behavior of propagated operators, following the proof of~\cite{DJ18}*{Lemma 4.2}. 
\begin{lemma}\label{lem:A_bound}
Let $A: L^2(M) \rightarrow L^2(M)$ be uniformly bounded in $h$. Then,
$$\|A(t) u_j\|_{L^2} \leq \|A u_j\|_{L^2}.$$
\end{lemma}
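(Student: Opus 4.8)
\textbf{Proof plan for Lemma \ref{lem:A_bound}.}

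The statement is that propagating a uniformly bounded operator by the unitary geodesic flow does not increase the $L^2$-norm of its action on an eigenfunction $u_j$. The plan is to exploit the fact that $u_j$ is (up to the semiclassical error) an eigenfunction of the generator $P$ of the flow, so that $U(t)$ acts on $u_j$ by a scalar of modulus one. First recall that $U(t) = \exp(-itP/h)$ with $P = \psi_P(-h^2\Delta)$, and that $u_j$ satisfies $(-h_j^2\Delta - I)u_j = 0$, so $-h_j^2\Delta\, u_j = u_j$, i.e.\ $u_j$ is an eigenfunction of $-h_j^2\Delta$ with eigenvalue $1$. Since $\psi_P(\lambda) = \sqrt\lambda$ for $\frac{1}{16}\le\lambda\le 16$ and in particular at $\lambda = 1$, functional calculus gives $P u_j = \psi_P(1) u_j = u_j$. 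Consequently $U(t) u_j = e^{-it/h} u_j$, a unit-modulus scalar multiple of $u_j$.

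With this in hand the computation is immediate: by definition $A(t) = U(-t) A U(t)$, so
$$
\|A(t) u_j\|_{L^2} = \|U(-t) A U(t) u_j\|_{L^2} = \|U(-t) A (e^{-it/h} u_j)\|_{L^2} = \|U(-t) A u_j\|_{L^2} = \|A u_j\|_{L^2},
$$
where the last equality uses that $U(-t)$ is unitary. (Here I have suppressed the subscript $j$ on $h$; the operators $A(t)$, $U(t)$ are implicitly taken at $h = h_j$.) This is the entire argument.

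\textbf{Main obstacle.} There is essentially no obstacle of substance; the only point requiring a line of care is the assertion $P u_j = u_j$, which relies on the precise normalization $\psi_P(\lambda) = \sqrt\lambda$ near $\lambda = 1$ (recorded in \eqref{eq:P_def}) together with the fact that the eigenvalue of $-h_j^2\Delta$ on $u_j$ is exactly $1$ (from Definition \ref{def:semiclassical_measure}). One should also note that $A$ need not commute with anything — the cancellation happens purely because $U(t)$ hits the eigenfunction from the right and produces a scalar, and $U(-t)$ on the left is an isometry; no hypothesis on $A$ beyond uniform boundedness (which guarantees $A u_j \in L^2$) is used.
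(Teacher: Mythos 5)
Your proof is correct, and it is a bit more direct than the one in the paper. You observe that $(-h_j^2\Delta - I)u_j = 0$ makes $u_j$ an exact eigenvector of $-h_j^2\Delta$ with eigenvalue $1$, so functional calculus gives $Pu_j = \psi_P(1)u_j = u_j$ and hence $U(t)u_j = e^{-it/h_j}u_j$ exactly; unitarity of $U(-t)$ then yields the (stronger) equality $\|A(t)u_j\|_{L^2} = \|Au_j\|_{L^2}$. The paper instead runs a Duhamel-type argument: it integrates $\partial_t\bigl(e^{it/h}U(t)\bigr) = -\tfrac{i}{h}e^{it/h}U(t)(P-I)$ to get $\|U(t)u_j - e^{-it/h}u_j\|_{L^2} \le \tfrac{|t|}{h}\|(P-I)u_j\|_{L^2}$, uses the uniform bound on $A$, and then kills the error by factoring $P - I = \psi_E(-h_j^2\Delta)(-h_j^2\Delta - I)$ with $\psi_E(\lambda) = (\psi_P(\lambda)-1)/(\lambda-1)$ bounded, so that $\|(P-I)u_j\|_{L^2} = 0$. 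Both arguments ultimately rest on the same fact, namely $(P-I)u_j = 0$, which comes from the exact eigenvalue together with the normalization $\psi_P(\lambda) = \sqrt{\lambda}$ near $\lambda = 1$ (a point you correctly flag as the only step needing care). What the paper's longer route buys is robustness: the quantitative estimate $\tfrac{C|t|}{h}\|(P-I)u_j\|_{L^2}$ would still give a useful bound for quasimodes, where $(-h^2\Delta - I)u_j$ is small but nonzero, whereas your argument uses exactness of the eigenfunction; for the statement as given, your shorter spectral-theorem argument is perfectly adequate.
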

    
\begin{proof}
We know
$$\partial_t \left(e^{i t/h} U(t)\right) = -\frac{i}{h} e^{it/h} U(t)(P-I).$$
Integrating the above equality from $0$ to $t$, we know 
$$\left\|U(t)u_j - e^{-it/h}u_j\right\|_{L^2} = \left\|e^{it/h} U(t)u_j -u_j\right\|_{L^2} \leq \frac{|t|}{h} \|(P-I)u_j\|_{L^2}.$$

Then since $A : L^2(M) \rightarrow L^2(M)$ is uniformly bounded in $h$, 
$$\|A(t) u_j\|_{L^2} = \|A U(t)u_j\|_{L^2} \leq \|A u_j\|_{L^2} + \frac{C|t|}{h} \|(P-I) u_j\|_{L^2}.$$

For $\psi_E (\lambda) \coloneqq (\psi_P(\lambda) -1)/(\lambda -1)$, by \eqref{eq:P_def},
$$P-I = \psi_E(-h_j^2 \Delta) (-h_j^2\Delta -I).$$
Thus, 
$$\|(P-I)u_j\|_{L^2} \leq C\|(-h_j^2 \Delta -I)u_j\|_{L^2} =0,$$
which finishes the proof.
\end{proof}

Now note that $$\|A_1 u_j\|^2_{L^2} = \|\op_h(a_1)u_j\|_{L^2}^2 \rightarrow \int_{T^*M} |a_1|^2 d \mu =0.$$
Therefore,
\begin{equation}\label{eq:A1_decay}
\|A_1(t) u_j\|_{L^2} \leq \|A_1 u_j\|_{L^2} \rightarrow 0.
\end{equation}

We next follow the proof argument of \cite{DJ18}*{Lemma 4.6} to prove the decay of $A_\cZ$.
\begin{lemma}\label{lem:A_Z_decay}
We have
$$\|A_\cZ u_j\|_{L^2} \rightarrow 0.$$
\end{lemma}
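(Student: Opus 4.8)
\textbf{Proof proposal for Lemma~\ref{lem:A_Z_decay}.}

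The plan is to express $A_\cZ$ as a weighted sum of the propagated partition operators and to transfer the decay of $\|A_1(t)u_j\|_{L^2}$ from \eqref{eq:A1_decay} to the whole sum. By definition,
$$A_\cZ = \sum_{\w\in\cZ} A_\w = \sum_{\w\in\cZ} A_{w_{T_0-1}}(T_0-1)\cdots A_{w_0}(0),$$
where every $\w\in\cZ$ has at least $\alpha T_0$ indices $k$ with $w_k=1$. First I would fix, for each $\w\in\cZ$, the smallest such index $k=k(\w)$ and split the product of operators as $A_\w = B_\w A_1(k(\w)) C_\w$, where $C_\w = A_{w_{k-1}}(k-1)\cdots A_{w_0}(0)$ and $B_\w$ collects the remaining factors. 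Since each $A_i(t)$ is uniformly bounded in $h$ (by Egorov/Lemma~\ref{lem:a_w_symbol_class} the relevant operators are $\op_h^{L_s}$ of symbols in a fixed class plus small remainders, hence $L^2$-bounded uniformly), the factors $B_\w$ are bounded by $C^{T_0} = h^{-O(1)\cdot}$ — this crude bound is \emph{not} good enough, so instead I would group by the value of $k(\w)$ and use the almost-monotonicity Lemma~\ref{lem:ac_ad_bound} together with the structure $A_\cW(T_0)=(A_1+A_2)^{T_0}=I$ microlocally near $S^*M$ to avoid summing $2^{T_0}$ uncontrolled terms.

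More precisely, here is the cleaner route I would actually carry out. For $0\le k\le T_0-1$ let $\cZ_k\subset\cW(T_0)$ be the set of words with $w_k=1$; then $\cZ\subset\bigcup_{k}\cZ_k$, and by inclusion-exclusion / a union bound it suffices to bound $\|A_{\cZ_k}u_j\|_{L^2}$ uniformly in $k$ and multiply by $T_0$. Now $A_{\cZ_k}$ factors cleanly: summing over the free digits before and after position $k$,
$$A_{\cZ_k} = \Bigl(\sum_{\w'\in\cW(T_0-1-k)} A_{\w'}\bigl(\text{shifted}\bigr)\Bigr)\,A_1(k)\,\Bigl(\sum_{\w''\in\cW(k)} A_{\w''}\Bigr) = (A_1+A_2)^{T_0-1-k}(T_0-1-k\ \text{shift})\cdot A_1(k)\cdot (A_1+A_2)^{k},$$
using \eqref{eq:A_1A_2} and the fact that $A_1+A_2=I-A_0$ commutes with $U(t)$. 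Since $A_1+A_2$ is bounded uniformly in $h$ (indeed $\|(A_1+A_2)\|_{L^2\to L^2}$ is bounded, being $I-A_0$ with $A_0$ bounded), the powers $(A_1+A_2)^{T_0-1-k}$ are bounded by $C^{T_0}$ — again too lossy. The fix is to instead apply \eqref{eq:A_1A_2_upperbound}-style reasoning: since $u_j$ is an exact eigenfunction, $(A_1+A_2)^{m}u_j = u_j + \cO(h^\infty)$ for every $m$ (as $A_{\cW(m)}=I$ microlocally near $S^*M$ and $u_j$ is microlocalized there up to $\cO(h^\infty)$), so $(A_1+A_2)^{k}u_j = u_j+\cO(h^\infty)_{L^2}$. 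Hence $A_{\cZ_k}u_j = (A_1+A_2)^{T_0-1-k}(\cdot)\,A_1(k)u_j + \cO(h^\infty)$, and now $\|A_1(k)u_j\|_{L^2}\le\|A_1u_j\|_{L^2}\to 0$ by Lemma~\ref{lem:A_bound}, while $\|(A_1+A_2)^{T_0-1-k}(\cdot)\|_{L^2\to L^2}$ must be controlled.

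The main obstacle is precisely controlling $\|(A_1+A_2)^{m}\|_{L^2\to L^2}$ for $m$ of size $\sim\log h^{-1}$ without a loss of the form $C^m = h^{-O(1)}$. This is where Lemma~\ref{lem:ac_ad_bound} and Lemma~\ref{lem:ac_bound} enter: $(A_1+A_2)^{T_0} = A_{\cW(T_0)}$, and more generally any $A_{\cZ_k}$ is of the form $A_c$ for $c=\1_{\cZ_k}$ with $\sup|c|\le 1$, so by Lemma~\ref{lem:ac_bound} it is $\op_h^{L_s}(a_c)+\cO(h^{1/2})$ with seminorms independent of $c$, hence uniformly $L^2$-bounded. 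Thus $\|A_{\cZ_k}\|_{L^2\to L^2}\le C$ uniformly in $k$, but that alone gives $\|A_\cZ u_j\|\le T_0\cdot C$, which is useless. The correct argument, following \cite{DJ18}*{Lemma 4.6}, is: take $d=\1_{\cZ}$ and compare with the indicator of a \emph{single} well-chosen sub-family; concretely, apply Lemma~\ref{lem:ac_ad_bound} with $c=\1_\cZ$ and a dominating $d$ supported so that $A_d$ factors through $A_1(k)$ for one fixed $k$, then iterate. I would follow \cite{DJ18}*{\S 4} verbatim here: write $\cZ$ as a disjoint union over the position of the \emph{first} digit equal to $1$, bound the corresponding $A_c$ for the tail by the uniform constant from Lemma~\ref{lem:ac_bound}, pull out the single $A_1(k)$ factor, apply Lemma~\ref{lem:A_bound} and \eqref{eq:A1_decay}, and absorb the remaining prefactor of size $\le C$ (not $C^{T_0}$, because after extracting $A_1(k)$ what remains is again a single $A_c$-type operator on the digits $0,\dots,k-1$, bounded uniformly by Lemma~\ref{lem:ac_bound}). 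Summing the resulting geometric-type estimate over $k$ gives $\|A_\cZ u_j\|_{L^2}\le C\|A_1u_j\|_{L^2}+o(1)\to 0$, completing the proof.
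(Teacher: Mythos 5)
Your middle computation is on the right track — the identity $A_{\cZ_k}=(A_1+A_2)^{T_0-1-k}(\,\cdot\,)A_1(k)(A_1+A_2)^{k}$ together with $(A_1+A_2)^{k}u_j=u_j$ (exact, by \eqref{eq:A_1A_2_upperbound}, since $u_j$ is an eigenfunction) is exactly the mechanism the paper uses — but the argument you actually propose to carry out has two genuine gaps. First, in your final scheme you decompose $\cZ$ disjointly by the position $k$ of the \emph{first} digit equal to $1$; then the operator acting on $u_j$ is $A_1(k)A_2(k-1)\cdots A_2(0)$, whose prefix is a string of $A_2$'s and not $(A_1+A_2)^k$, so \eqref{eq:A_1A_2_upperbound} cannot be used to make it act as the identity on $u_j$. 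Hence $A_1(k)$ never hits $u_j$, Lemma~\ref{lem:A_bound} and \eqref{eq:A1_decay} cannot be invoked, and the claimed ``geometric-type estimate'' in $k$ has no source — all $T_0$ terms are of comparable size. Second, even the repaired route through the non-disjoint families $\cZ_k=\{w_k=1\}$ fails if, as you suggest, you ``multiply by $T_0$'': the decay $\|A_1u_j\|_{L^2}\to0$ comes from weak convergence to $\mu$ and carries \emph{no rate}, while $T_0\approx\tfrac{\rho}{4}\log h_j^{-1}\to\infty$, so a bound of the form $T_0\|A_1u_j\|_{L^2}+o(1)$ does not tend to $0$. (Also note that a ``union bound'' $\|A_\cZ u_j\|\le\sum_k\|A_{\cZ_k}u_j\|$ is not available: $\1_\cZ\le\sum_k\1_{\cZ_k}$ is a pointwise inequality of weights, and operators $A_c$ are not monotone in $c$; Lemma~\ref{lem:ac_ad_bound} requires the dominating weight to be $\le1$.)

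The two ingredients you flag as ``the main obstacle'' but never supply are precisely what the paper's proof uses. (a) Apply the almost-monotonicity Lemma~\ref{lem:ac_ad_bound} \emph{once}, with $c=\alpha\1_\cZ$ dominated by the frequency function $d=F\le1$ from \eqref{eq:F_def}; this gives $\alpha\|A_\cZ u_j\|_{L^2}\le\|A_Fu_j\|_{L^2}+\cO(h^{1/8})$, and since $A_F=\tfrac1{T_0}\sum_{k}(A_1+A_2)^{T_0-1-k}A_1(k)(A_1+A_2)^k$ by \eqref{eq:F_def} and \eqref{eq:A_1A_2}, the $1/T_0$ normalization is built in, so the sum over $k$ is controlled by a maximum rather than by $T_0$ times a maximum, at the harmless cost of the constant $\alpha^{-1}$. (b) Observe that $A_1+A_2=I-\psi_0(-h_j^2\Delta)$ is a spectral multiplier with values in $[0,1]$, hence $\|A_1+A_2\|_{L^2\to L^2}\le1$ — a contraction, not merely $\cO(1)$ — so the outer powers cost nothing and $\|A_Fu_j\|_{L^2}\le\max_k\|A_1(k)(A_1+A_2)^ku_j\|_{L^2}=\max_k\|A_1(k)u_j\|_{L^2}\le\|A_1u_j\|_{L^2}\to0$ by Lemma~\ref{lem:A_bound} and \eqref{eq:A1_decay}. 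Without (a) and (b) your route produces either a $C^{T_0}$ or a $T_0$ loss, and neither is admissible here.
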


\begin{proof}
Since $0 \leq \alpha \1_\cZ(\w) \leq F(\w) \leq 1$ for all $\w \in \cW(T_0)$, by Lemma ~\ref{lem:ac_ad_bound},
\begin{equation}\label{eq:A_Z_upper_bound}
\alpha \|A_\cZ u_j\|_{L^2} \leq \|A_F u_j\|_{L^2} + \cO(h^{1/8}).
\end{equation}
By ~\eqref{eq:F_def} and~\eqref{eq:A_1A_2},
$$A_F = \frac{1}{T_0} \sum_{k=0}^{T_0 -1} \sum_{\substack{\w \in \cW(T_0)\\ w_k=1}} A_\w = \frac{1}{T_0} \sum_{k=0}^{T_0 -1} (A_1 + A_2)^{T_0 - 1 - k} A_1(k) (A_1 + A_2)^k.$$
As $A_1 +A_2 = I - \psi_0(-h_j^2 \Delta)$, we see that $\|A_1 + A_2\|_{L^2 \rightarrow L^2} \leq 1$.
Thus,
$$\|A_F u_j \|_{L^2}  \leq \max_{0 \leq k < T_0} \|A_1(k) (A_1 +A_2)^k u_j\|_{L^2}.$$
From Lemma \ref{lem:A_bound}, $\|A_1(k)\|_{L^2 \rightarrow L^2} = \|A_1\|_{L^2 \rightarrow L^2} \leq C$. Thus using ~\eqref{eq:A_1A_2_upperbound},
$$\|A_1(k) u_j - A_1(k) (A_1 + A_2)^k u_j\|_{L^2} \leq C \|u_j - (A_1 + A_2)^k u_j\|_{L^2} =0.$$
Therefore by ~\eqref{eq:A1_decay},
\begin{equation}\label{eq:A_F_decay}
\|A_F u_j\|_{L^2} \leq \max_{0 \leq k < T_0} \|A_1(k) u_j \|_{L^2} \leq \|A_1 u_j\|_{L^2} \rightarrow 0.
\end{equation}
The proof then finishes by combining ~\eqref{eq:A_Z_upper_bound} and ~\eqref{eq:A_F_decay}.
\end{proof}

\begin{proof}[Proof of Proposition ~\ref{prop:AY_decay}]

From ~\eqref{eq:def_X} and ~\eqref{eq:A_1A_2}, we see 
$$A_\cY = \sum_{k=1}^8 U(7T_0) (A_{\cW(T_0) \setminus \cZ} U(T_0))^{8-k} A_\cZ((k-1)T_0) (A_1+ A_2)^{(k-1)T_0}.$$
By Lemma ~\ref{lem:ac_bound} and ~\eqref{eq:bounded_property}, we know $\|A_{\cW(T_0) \setminus \cZ}\|_{L^2 \rightarrow L^2}$, $\|A_\cZ\|_{L^2 \rightarrow L^2} \leq C$.
Therefore,
$$\|A_\cY u_j\|_{L^2} \leq C \sum_{k=1}^8 \|A_\cZ((k-1)T_0) (A_1 + A_2)^{(k-1)T_0} u_j\|_{L^2}$$
and 
by ~\eqref{eq:A_1A_2_upperbound}, 
$$\|A_\cZ((k-1)T_0) u_j - A_\cZ((k-1)T_0) (A_1 + A_2)^{(k-1)T_0} u_j\|_{L^2} \leq C\|u_j - (A_1 + A_2)^{(k-1)T_0} u_j\|_{L^2} =0.$$
Therefore, 
\begin{align*}
\|A_\cY u_j\|_{L^2} &\leq C \sum_{k=1}^8 \|A_\cZ ((k-1)T_0) (A_1 + A_2)^{(k-1)T_0}u_j\|_{L^2}\\
&\leq C\|A_\cZ ((k-1)T_0) u_j\|_{L^2} \leq C\|A_\cZ u_j\|_{L^2}.   
\end{align*}
Thus, by Lemma ~\ref{lem:A_Z_decay}, we conclude $\|A_\cY u_j\|_{L^2} \rightarrow 0$. 
\end{proof}

\subsection{Reduction of 
Proposition ~\ref{prop:AX_decay} to Lemma~\ref{lem:A_w_decay}}
\label{subsection:decay_of_X}

We begin by quoting the following lemma, which estimates the size of $\cX$.

\begin{lemma}[~\cite{DJ18}*{Lemma 3.3}]\label{lem:size_X_bound}
The number of elements in $\cX$ is bounded by 
\begin{equation*}\label{eq:X_size}
\# \cX \leq Ch^{-4 \sqrt{\alpha}},
\end{equation*}
where $C$ may depend on $\alpha$.
\end{lemma}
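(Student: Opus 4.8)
The plan is to count words in $\cX$ by a straightforward combinatorial estimate, essentially a Chernoff-type bound on the number of words with few $1$'s, then amplify over the eight blocks. Recall that a word $\w\in\cW(2T_1)$ is a concatenation $\w^{(1)}\cdots\w^{(8)}$ of eight words in $\cW(T_0)$, and $\w\in\cX$ precisely when every block $\w^{(k)}$ satisfies $F(\w^{(k)})\le\alpha$, i.e.\ has at most $\alpha T_0$ digits equal to $1$. Since the blocks are chosen independently, $\#\cX = (\#(\cW(T_0)\setminus\cZ))^8$, so it suffices to prove $\#(\cW(T_0)\setminus\cZ)\le C h^{-\sqrt{\alpha}/2}$ and raise to the eighth power.

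The key step is the binomial estimate. The number of words in $\cW(T_0)$ with exactly $m$ digits equal to $1$ is $\binom{T_0}{m}$, so
\begin{equation*}
\#(\cW(T_0)\setminus\cZ) = \sum_{0\le m\le \alpha T_0}\binom{T_0}{m}.
\end{equation*}
Since $\alpha<1/2$, the summands are increasing in $m$ on the relevant range, so this is at most $(\alpha T_0+1)\binom{T_0}{\lfloor\alpha T_0\rfloor}$. Using the standard entropy bound $\binom{T_0}{m}\le 2^{T_0 H(m/T_0)}$ where $H(x)=-x\log_2 x-(1-x)\log_2(1-x)$ is the binary entropy, together with monotonicity of $H$ on $[0,1/2]$, we get $\binom{T_0}{\lfloor\alpha T_0\rfloor}\le 2^{T_0 H(\alpha)}$. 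The elementary inequality $H(\alpha)\le C_0\sqrt{\alpha}$ for $\alpha\in(0,1/2)$ (with an absolute $C_0$; one can take, say, $H(\alpha)\le 2\sqrt{\alpha}$ by a crude bound, or simply $H(\alpha)/\sqrt\alpha\to 0$ suffices to absorb constants) then gives $\binom{T_0}{\lfloor\alpha T_0\rfloor}\le 2^{C_0\sqrt{\alpha}\,T_0}$. Recalling from \eqref{eq:T_def} that $T_0\le \tfrac{\rho}{4}\log h^{-1}+1$, we obtain
\begin{equation*}
2^{C_0\sqrt\alpha\, T_0}\le C\, h^{-C_0\sqrt\alpha\,\rho(\log 2)/4}\le C\, h^{-\sqrt\alpha/2},
\end{equation*}
where in the last inequality we use $\rho<1$ and choose the constant $C_0$-dependent normalization appropriately (the point is only that the exponent is a fixed multiple of $\sqrt\alpha$; replacing $4\sqrt\alpha$ in the statement by any fixed multiple is harmless since $\alpha$ will be chosen small later). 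The polynomial factor $\alpha T_0+1 = O(\log h^{-1})$ is absorbed into $h^{-\varepsilon}$ for any $\varepsilon>0$, hence into the exponent by enlarging the constant slightly.

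Finally, raising to the eighth power, $\#\cX \le C h^{-8\sqrt\alpha/2}=Ch^{-4\sqrt\alpha}$, with $C$ depending on $\alpha$ (through the polynomial factors and the choice of how much slack we leave in the exponent). This is exactly the claimed bound. I do not anticipate a serious obstacle here: the only mild care needed is (i) the monotonicity of the binomial coefficients on $\{m\le\alpha T_0\}$ which requires $\alpha T_0<T_0/2$, guaranteed by $\alpha<1/2$, and (ii) tracking that the logarithmic pre-factors from the number of summands and from $\lceil\cdot\rceil$ in the definition of $T_0$ only cost an $h^{-o(1)}$ factor. Everything else is the textbook entropy estimate for binomial tails together with $H(\alpha)=O(\sqrt\alpha)$ near $0$.
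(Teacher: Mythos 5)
Your proof is correct, and it is essentially the same argument as the one behind the cited result \cite{DJ18}*{Lemma 3.3}: since $\#\cX=(\#(\cW(T_0)\setminus\cZ))^8$, one bounds $\sum_{m\le\alpha T_0}\binom{T_0}{m}$ by a Stirling/entropy (Chernoff-type) estimate, uses $H(\alpha)=O(\sqrt\alpha)$ together with $T_0\approx\frac{\rho}{4}\log h^{-1}$, $\rho<1$, and absorbs the logarithmic prefactors into the slack in the exponent, with $C$ depending on $\alpha$. Nothing further is needed.
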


We claim that we can bound $\|A_\w\|_{L^2 \rightarrow L^2}$ in the following way.

\begin{lemma}\label{lem:A_w_decay}
There exist $C, \beta>0$, depending only on $M$, $a_1$, $a_2$, and $\rho$ such that
$$\sup_{\w \in \cW(2T_1)} \|A_\w\|_{L^2 \rightarrow L^2} \leq Ch^{\beta/2}.$$
\end{lemma}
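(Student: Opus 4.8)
The plan is to follow the strategy outlined in the introduction: split each long word $\w \in \cW(2T_1)$ into a ``future'' half and a ``past'' half, realize each half as a quantization of a symbol in the appropriate Lagrangian symbol class, show that the supports of these two symbols satisfy hyperbolic line and ball porosity conditions, and then invoke the Fourier-integral-operator version of the fractal uncertainty principle, Proposition~\ref{prop:FUP_manifolds}. Concretely, write $\w = \w_+\w_-$ where $\w_\pm \in \cW(T_1)$, and set
\[
A_{\w_-}(0) = A_{w_{T_1-1}}(T_1-1)\cdots A_{w_0}(0),\qquad
A_{\w_+} = A_{w_{2T_1-1}}(2T_1-1)\cdots A_{w_{T_1}}(T_1),
\]
so that $A_\w = A_{\w_+}A_{\w_-}$. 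By Lemma~\ref{lem:a_w_symbol_class}, $A_{\w_-}(0) = \op_h^{L_s}(a_{\w_-}) + \cO(h^{1-\rho-})$ with $a_{\w_-}\in S^{\comp}_{L_s,\rho}$ and $\supp a_{\w_-} \subset \bigcap_{j=0}^{T_1-1}\phi_{-j}(\supp a_{w_j})$. Conjugating by $U(T_1)$ and applying the analogue of \eqref{eq:log_propagate} for $\phi_{-t}$, the operator $A_{\w_+}$ is, up to $\cO(h^{1-\rho-})$, $U(-T_1)$ times the quantization $\op_h^{L_u}(a_{\w_+})$ of a symbol $a_{\w_+}\in S^{\comp}_{L_u,\rho}$ supported in $\bigcap_{j=0}^{T_1-1}\phi_{j}(\supp a_{w_{T_1+j}})$; since $U(-T_1)$ is unitary it may be discarded for norm estimates. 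Thus it suffices to bound $\|\op_h^{L_u}(a_+)\,\op_h^{L_s}(a_-)\|_{L^2\to L^2}$ where $a_\pm$ are the propagated symbols just described.

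The next step is the geometric heart of the argument: establishing that $\supp a_-$ is hyperbolic ball porous and $\supp a_+$ is hyperbolic line porous, at scales roughly $h^{\rho}$ to $1$, in the coordinate systems furnished by the charts of the Lagrangian foliations $L_s$ and $L_u$. The point here is that $\supp a_-$ misses, at every time step $j \le T_1$, the $U_1^-$-dense set $U_2$ (respectively $\supp a_+$ misses $U_1$ at every step), since $w_j \in \{1,2\}$ and $U_j \Subset S^*M\setminus \supp a_j$; by Lemmas~\ref{lem:finite_safe} and \ref{lem:a1_a2_safe} together with the expansion/contraction rates of $\phi_t$ on $E_u$, $E_s$, one translates ``avoiding a $U_1^-$-dense set along a stable/unstable leaf'' into porosity of the projected support along lines. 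The ball porosity of $\supp a_-$ in the remaining directions comes instead from the mixing (exponential decay of correlations) of the geodesic flow, exactly as described in the proof outline: over a time window of length $\sim \rho\log h^{-1}$ the propagated supports become equidistributed enough transversally to force gaps at every dyadic scale. One then passes these porosity statements through the coordinate charts $\kappa_l$ (which are bi-Lipschitz diffeomorphisms, so Lemmas~\ref{lem:affine_ball}, \ref{lem:affine_line}, \ref{lem:diffeomorphism_ball} apply) to obtain Euclidean porosity in the sense required by Proposition~\ref{prop:FUP_manifolds}.

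With the porosity established, apply Proposition~\ref{prop:FUP_manifolds} with $\varrho = \rho$ (both in $(3/4,1)$ by \eqref{eq:rho_def}), taking $X_- = \supp a_-$, $X_+ = \supp a_+$, and $B = B(h)$ the Fourier integral operator that implements the conjugation between the $L_s$- and $L_u$-quantizations (its phase $\Phi$ satisfies $\det\partial^2_{xy}\Phi \ne 0$ because the stable and unstable foliations are transverse, so $L_s$ and $L_u$ are mapped to transverse Lagrangians). The composition formulas \eqref{eq:composition_formula} and \eqref{eq:FIO_conjugation}, together with the fact that $a_\pm$ are supported where the relevant indicator cutoffs equal $1$, let one insert the characteristic functions $\1_{X_\mp(h^\rho)}$ for free up to $\cO(h^\infty)$ losses; the FUP then yields $\|\op_h^{L_u}(a_+)\op_h^{L_s}(a_-)\|_{L^2\to L^2} \le Ch^{\beta/2}$. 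Combining with the $\cO(h^{1-\rho-})$ remainders from Lemma~\ref{lem:a_w_symbol_class} (note $1-\rho > 0$) and taking the supremum over the finitely many choices of chart and the $2^{2T_1}$ words — but noting the bound is uniform in $\w$ — gives the claimed estimate. The main obstacle is the second step: carefully verifying hyperbolic ball porosity from mixing, since one must quantify the rate of equidistribution of the propagated supports and check it is strong enough at every scale down to $h^\rho$, and correctly track how the bi-Lipschitz constants of the foliation charts enter (they degrade $\nu$ but, crucially, not the scale exponents in a way that would break the hypotheses of Proposition~\ref{prop:FUP_manifolds}).
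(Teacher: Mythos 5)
Your overall architecture (split $\w=\w_+\w_-$, realize the two halves as $\op_h^{L_u}$/$\op_h^{L_s}$ quantizations, verify hyperbolic porosity, apply the FIO fractal uncertainty principle) is the paper's, but the key verification step is assigned the wrong way around, and as stated it would fail. With your own definitions, $\supp a_{\w_-}\subset\bigcap_{j}\phi_{-j}(\supp a_{w_j})$ and $\supp a_{\w_+}\subset\bigcap_{j}\phi_{j}(\supp a_{w_{T_1+j}})$, so the small-scale holes of $\supp a_-$ lie along the \emph{unstable} ($U^-$) directions: pushing forward by $\phi_j$ expands the $U_1^-$ parameter by $e^{j}$, which brings a $U_1^-$-segment of length $\alpha\approx e^{-j}$ up to the fixed scale $T$ where Lemma~\ref{lem:finite_safe} applies; this is exactly how line porosity is obtained for $\supp a_-$ (Lemma~\ref{lem:a_hyperbolic_line_porosity}). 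The holes of $\supp a_+$, by contrast, lie along the \emph{stable} ($U^+$) directions, which are expanded only by the backward flow, and the standing hypothesis gives $U_1^-$-density of $U_1,U_2$, not $U_1^+$-density; the $U_1^-$ direction is \emph{contracted} in the regime relevant for $\supp a_+$, so your claim that ``$\supp a_+$ misses the $U_1^-$-dense set at every step'' yields hyperbolic line porosity of $\supp a_+$ cannot be established from the hypotheses. The only porosity available for $\supp a_+$ is ball porosity along $U^+$, coming from mixing (Lemmas~\ref{lem:ergodicity} and~\ref{lem:a_hyperbolic_ball_porosity}). So the roles must be swapped relative to your proposal: line porosity for $a_-$ (from $U_1^-$-density), ball porosity for $a_+$ (from mixing). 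Since the FUP needs one line-porous and one ball-porous set (which side each sits on is handled by taking adjoints), this is not a relabeling issue: the geometric heart of your argument, as written, verifies the wrong porosity for the set where it is unavailable. (Also, $a_\pm$ are not tied to $U_1$ versus $U_2$; each avoids the propagated $U_{w_k}$ for whatever letters $w_k$ occur.)

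Two secondary points. First, your time bookkeeping is internally inconsistent: conjugating the second half by $U(\pm T_1)$ as you describe produces again a forward word at times $0,\dots,T_1-1$, i.e.\ an $L_s$-quantization; to obtain one $L_s$ and one $L_u$ factor one must shift the time origin to the middle of the word, as in $A_\w=U(-T_1)A_{\w_-}A_{\w_+}(-T_1)U(T_1)$. Second, the passage from hyperbolic porosity to the Euclidean porosity required by Proposition~\ref{prop:FUP_manifolds} is not done in arbitrary foliation charts: the paper projects $\supp a_\pm$ to the boundary sphere via the explicit symplectomorphisms $\kappa^\pm$, so the intertwining operator becomes the explicit kernel $|y-y'|^{2iw/h}$ whose mixed Hessian determinant is computed directly (transversality of $L_s$ and $L_u$ alone is not invoked), the porosity is transferred to the projections $\Omega_\pm\subset\bS^n$ (Lemmas~\ref{lem:a_tilde_porous_ball}, \ref{lem:a_tilde_porous_line}), and gnomonic charts are used precisely because general bi-Lipschitz diffeomorphisms do not preserve line porosity without the extra hypotheses of Lemma~\ref{lem:diffeomorphism_line}. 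Finally, no quantitative equidistribution rate is needed for the ball porosity: qualitative mixing plus compactness give a uniform time $T$, and the exponential expansion of the flow renormalizes that fixed-scale statement to all scales down to $K_1h^{\rho}$.
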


We defer the proof of this lemma until \S\ref{subsection:proof_Aw_decay}. 
Assuming that this lemma holds, we can prove Proposition ~\ref{prop:AX_decay}.

\begin{proof}[Proof of Proposition ~\ref{prop:AX_decay}]
From Lemma ~\ref{lem:size_X_bound} and Lemma ~\ref{lem:A_w_decay}, we have 
$$\|A_\cX\|_{L^2 \rightarrow L^2} \leq \# \cX \left(\sup_{\w \in \cW(2T_1)} \|A_{\w}\|_{L^2 \rightarrow L^2} \right) \leq C h^{\frac{\beta}{2}-4 \sqrt{\alpha}},$$
where $\beta>0$ depends only on $M, a_1, a_2, \rho$ and $C>0$ depends on $M, a_1, a_2, \rho, \alpha$. 
Setting
\begin{equation}\label{eq:alpha_def}
\alpha = \frac{\beta^2}{100},
\end{equation}
we conclude $\|A_\cX\|_{L^2 \rightarrow L^2} \leq Ch^{\frac{\beta}{10}}$.
\end{proof}

%%%%%%%%%%%%%%%%%%%%%%%%%%%%%%%%%%%%%%%%%%%%%%%%%%%%%%%%%%%%%%%%%%%%%%%%%%%%%%%%
%%%%%%%%%%%%%%%%%%%%%%%%%%%%%%%%%%%%%%%%%%%%%%%%%%%%%%%%%%%%%%%%%%%%%%%%%%%%%%%%

\section{Proof of Lemma~\ref{lem:A_w_decay}}\label{section:apply_FUP}

%%%%%%%%%%%%%%%%%%%%%%%%%%%%%%%%%%%%%%%%%%%%%%%%%%%%%%%%%%%%%%%%%%%%%%%%%%%%%%%%
%%%%%%%%%%%%%%%%%%%%%%%%%%%%%%%%%%%%%%%%%%%%%%%%%%%%%%%%%%%%%%%%%%%%%%%%%%%%%%%%

We will apply the higher-dimensional fractal uncertainty principle to prove Lemma~\ref{lem:A_w_decay}. Here we briefly outline our argument.

In \S \ref{subsection:hyperbolic_porosity}, we define a notion of ball and line porosity contingent on the hyperbolic structure of the manifold. We show that, in some sense, the support of $a_\w$ satisfies this porosity. The  porosity of $a_\w$ comes from the propagation of $a_1$ and $a_2$ by $\phi_t$, an Anosov geodesic flow. In \S\ref{subsection:symplectomorphisms}, we construct two symplectomorphisms to ``straighten out" the stable and unstable Lagrangian foliations. In \S\ref{subsection:hyperbolic_FUP}, we employ these symplectomorphisms to show the our hyperbolic notion of porosity gives a fractal uncertainty principle. Finally, in \S\ref{subsection:proof_Aw_decay}, we apply this fractal uncertainty principle to conclude Lemma~\ref{lem:A_w_decay}.

\subsection{Hyperbolic porosity}\label{subsection:hyperbolic_porosity}

For $u =(u_1, \ldots, u_n) \in \R^n$, define the vector field on $F^*M$
$$\cU^\pm u \coloneqq u_1 U_1^\pm + \cdots + u_n U_n^\pm.$$

Similarly, for $v = (v_1, \ldots, v_{n+1}) \in \R^{n+1}$, define
$$\cV^\pm v  \coloneqq v_1 U_1^\pm + \cdots + v_{n} U_{n}^\pm + v_{n+1} X.$$

Using these vector fields, we define analogues on hyperbolic manifolds of ball and line porosity given in Definitions \ref{def:porous_on_balls} and \ref{def:porous_on_lines}. See also Figure~\ref{fig:porous}.

\begin{definition}\label{def:hyperbolic_ball_porosity}
We say that $\Omega \subset S^*M$ is \emph{hyperbolic $(\nu^\pm,  \varepsilon^\mp)$-porous on balls from scales $\alpha_0$ to $\alpha_1$} if for all  $q \in F^*M$ and $\alpha \in [\alpha_0, \alpha_1]$, there exists $u_0 \in \R^n$ with $|u_0| \leq \alpha$ such that 
$$\pi_S\{e^{\cV^{\mp} v} e^{\cU^\pm (u+u_0)} q: u \in \R^n, |u| \leq \nu \alpha, v \in \R^{n+1}, |v| \leq \varepsilon \}\cap \Omega = \emptyset.$$
\end{definition}

\begin{definition}\label{def:hyperbolic_line_porosity}
We say that $\Omega \subset S^*M$ is \emph{hyperbolic $(\nu^\pm, \varepsilon^\mp)$-porous on lines from scales $\alpha_0$ to $\alpha_1$} if for all  $q \in F^*M$ and $\alpha \in [\alpha_0, \alpha_1]$, there exists $t_0 \in [-\alpha, \alpha]$ such that 
$$\pi_S\{(e^{\cV^{\mp} v} e^{\cU^{\pm} u + U_1^\pm t_0} q): u \in \R^n, |u| \leq \nu \alpha, v \in \R^{n+1}, |v| \leq \varepsilon \}\cap \Omega = \emptyset.$$
\end{definition}

\begin{figure}
\centering
\resizebox{6.5in}{2.2in}{
\begin{tikzpicture} 
\begin{scope}[yshift=-2in, xshift=-2in]
\filldraw[color=blue!3!white]  (0, 0) ellipse (4 and 2);
\filldraw[color=blue!10!white] (-2.25, 3.2) rectangle (.25, -1.2);
\filldraw[color=blue!25!white]  (-1, -1.2) ellipse (1.25 and .625);
\filldraw[color=blue!25!white]  (-1, 1) ellipse (1.25 and .625);
\filldraw[color=blue!25!white] (-2.25, 1) rectangle (.25, -1.2);
\filldraw[color=blue!20!white]  (-1, 3.2) ellipse (1.25 and .625);
\draw (0, 0) ellipse (4 and 2);
\filldraw (0,0) circle (0.05cm) node[anchor=east, scale=1.2]{$q$};
\filldraw (-1, 1) circle (0.05cm) node[anchor=east, scale=1.2]{$q_0$};
\draw (0,0) --  (2.89, -1.39);
\draw node[scale=1.2] at (1.5, -.5) {$\alpha$};
\draw (-1, 1) ellipse (1.25 and .625);
\draw (-2.25, 1) -- (-2.25, 3.2);
\draw[<->] (-2.5, 1) -- (-2.5, 3.2);
\draw node[scale=1.5] at (-2.7, 2) {$\varepsilon$};
\draw (.25, 1) -- (.25, 3.2);
\draw[dashed] (-2.25, 1) -- (-2.25, -1.2);
\draw[dashed] (.25, 1) -- (.25, -1.2);
\draw (-1, 3.2) ellipse (1.25 and .625);
\draw[dashed] (-1, -1.2) ellipse (1.25 and .625);
\draw (-1,1) -- (-.1, .56);
\draw node[scale=1.2] at (-.4, 1) {$\nu \alpha$};
\draw[->] (-3, 4.7) -- (6, 2);
\draw node[scale=1.2] at (6.4, 2) {$\cU^+$};
\draw[->] (-3, 4.7) -- (-8, 0);
\draw node[scale=1.2] at (-8.3, -.1) {$\cU^+$};
\draw[->] (-3, 4.7) -- (-3, 7);
\draw node[scale=1.2] at (-3, 7.2) {$\mathcal{V}^-$};

\begin{scope}[xshift=6.3in]

\filldraw[color=blue!10!white] (-2.25, 3.2) rectangle (.25, -1.2);
\filldraw[color=blue!10!white]  (-1, -1.2) ellipse (1.25 and .625);
\filldraw[color=blue!20!white]  (-1, 3.2) ellipse (1.25 and .625);
\draw[color=blue!70!white, ultra thick] (0,0) arc (30:70:2.8);
\draw[color=blue!70!white, ultra thick] (0,0) arc (30:0:5);
\draw node[scale=1.2] at (2.5,-1.5) {$\{e^{U_1^+ s} q: |s| \leq \alpha \}$};
\filldraw (0,0) circle (0.05cm) node[anchor=east, scale=1.2]{$q$};
\filldraw (-1, 1) circle (0.05cm);
\draw node[scale=1.2] at (-1.3, .9) {$q_1$};
\draw (-1, 1) ellipse (1.25 and .625);
\draw (-2.25, 1) -- (-2.25, 3.2);
\draw[<->] (-2.5, 1) -- (-2.5, 3.2);
\draw node[scale=1.5] at (-2.7, 2) {$\varepsilon$};
\draw (.25, 1) -- (.25, 3.2);
\draw (-2.25, 1) -- (-2.25, -1.2);
\draw (.25, 1) -- (.25, -1.2);
\draw (-1, 3.2) ellipse (1.25 and .625);
\draw[dashed] (-1, -1.2) ellipse (1.25 and .625);
\draw (-2.25, -1.2) arc (-180:0:1.25 and .625);
\draw (-1,1) -- (.2, .8);
\draw node[scale=1.2] at (-.4, 1.1) {$\nu \alpha$};
\draw[->] (-3, 4.7) -- (6, 2);
\draw node[scale=1.2] at (6.4, 2) {$\cU^+$};
\draw[->] (-3, 4.7) -- (-8, 0);
\draw node[scale=1.2] at (-8.3, -.1) {$\cU^+$};
\draw[->] (-3, 4.7) -- (-3, 7);
\draw node[scale=1.2] at (-3, 7.2) {$\mathcal{V}^-$};
\end{scope}
\end{scope}
\end{tikzpicture}}
    \caption{The left image represents the set from Definition~\ref{def:hyperbolic_ball_porosity}: $\{e^{\cV^{-} v} e^{\cU^+ u} q_0: u \in \R^n, |u| \leq \nu \alpha, v \in \R^{n+1}, |v| \leq \varepsilon \}$, where $q_0 = e^{\cU^+ u_0} q$. The right image represents the set from Definition~\ref{def:hyperbolic_line_porosity}: $\{e^{\cV^{-} v} e^{\cU^+ u} q_1: u \in \R^n, |u| \leq \nu \alpha, v \in \R^{n+1}, |v| \leq \varepsilon \}$, where $q_1 = e^{U^+_1 t_0} q$. The picture is to give intuition only; as $U^+_i$, $U_i^-$, $X$ do not commute, they do not give a coordinate system.} 
    \label{fig:porous}
\end{figure}

Let $\w \in \cW(2T_1)$.
We decompose the word $\w$ into two words of length $T_1$ in order to construct two functions. The support of these functions will be either hyperbolic porous on lines or hyperbolic porous on balls. Write 
$$\w =\w_+ \w_-, \quad \w_\pm \in \cW(T_1).$$ 
Relabel $\w_+$ and $\w_-$ as
$$\w_+ = w^+_{T_1} \ldots w_1^+ \quad \text{and} \quad \w_- = w_0^- \ldots w_{T_1-1}^-.$$
Now set 
\begin{equation}\label{eq:a_pm_def}
a_+ \coloneqq \prod_{k=1}^{T_1} a_{w_k^+} \circ \phi_{-k} \quad \text{and} \quad a_- \coloneqq \prod_{k=0}^{T_1-1} a_{w_k^-} \circ \phi_k.
\end{equation}

From \eqref{eq:abchi}, recall that $a_j = b\chi_j$, where $\chi_j$ are homogeneous functions of degree 0 and $b$ depends only on $|\xi|_g$ with 
$\{\tfrac{1}{2} \leq |\xi|_g \leq 2\} \subset \supp b \subset \{\tfrac{1}{4} \leq |\xi|_g \leq 4\}$.
By  the homogeneity of $\phi_t$, we have 
$$a_+ = b^{T_1-1}\prod_{k=1}^{T_1} \left(\chi_{w_k^+} \circ \phi_{-k}\right) \quad \text{and} \quad a_- = b^{T_1-1} \prod_{k=0}^{T_1-1} \left(\chi_{w_k^-} \circ \phi_k\right),$$

Therefore 
\begin{equation}\label{eq:mathcal_A_def}
\begin{split}
\supp a_+ &\subset \cA_+ \coloneqq \{\tfrac{1}{4} \leq |\xi|_g \leq 4\} \cap \bigcap_{k=1}^{T_1} \phi_k (\supp \chi_{w_k^+})\\
\supp a_- &\subset \cA_- \coloneqq \{\tfrac{1}{4} \leq |\xi|_g \leq 4\} \cap \bigcap_{k=0}^{T_1-1} \phi_{-k} (\supp \chi_{w_k^-}).
\end{split}
\end{equation}
Clearly, $\cA_\pm$ are invariant under scalings of $\xi$ within the compact set $\{\tfrac{1}{4} \leq |\xi|_g \leq 4\}$. We will use this fact and use $\cA_\pm$ in lieu of $\supp a_\pm$  in our argument.

Recalling the definitions of $\rho$ and $T_1$ from ~\eqref{eq:rho_def} and ~\eqref{eq:T_def}, we also know the following.

\begin{lemma}\label{lem:a_pm_symbol_classes}
We have $a_+ \in S_{L_u, \rho}^{\comp}(T^*M \setminus 0)$ and   $a_- \in S_{L_s, \rho}^{\comp}(T^*M \setminus 0)$ with bounds on the semi-norms that do not depend on $\w$. 
Moreover,
\begin{align*}\label{eq:b+-}
\begin{split}
A_{\w_+}(-T_1) &= \op_h^{L_u}(a_+) + \cO\left(h^{1- \rho -}\right)_{L^2 \rightarrow L^2},\\
A_{\w_-} &= \op_h^{L_s}(a_-) + \cO\left(h^{1- \rho - }\right)_{L^2 \rightarrow L^2},
\end{split}
\end{align*}
where the constants in $\cO(\cdot)$ are uniform in $\w$.
\end{lemma}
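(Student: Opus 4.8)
\textbf{Proof proposal for Lemma~\ref{lem:a_pm_symbol_classes}.}

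The plan is to reduce both assertions to the machinery already assembled in \S\ref{subsection:symbol_class}, in particular the propagation statement \eqref{eq:log_propagate}, the Egorov-type estimate \eqref{eq:egorov_log}, and the product lemma (Lemma~\ref{lem:many_product}). The key point is that $a_+$ and $a_-$ are products of $T_1 \approx \rho\log h^{-1}$ many factors of the form $a_{w}\circ\phi_{\mp k}$ with $0\le k\le T_1$, so one wants to apply Lemma~\ref{lem:many_product} with $N=T_1$ and with the symbol class parameters corresponding to $S^{\comp}_{L_u,\rho}$ (resp.\ $S^{\comp}_{L_s,\rho}$).

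First I would handle $a_-$, which is the exact analogue of the situation in Lemma~\ref{lem:a_w_symbol_class} (the case $\w\in\cW(T_1)$): each factor $a_{w_k^-}\circ\phi_k$ lies in $S^{\comp}_{L_s,\rho,0}(T^*M\setminus 0)$ uniformly in $k\in[0,T_1]$ by \eqref{eq:log_propagate}, and each has $\sup|a_{w_k^-}\circ\phi_k|\le 1$ with seminorms bounded uniformly in $k$ and in $\w$; moreover by \eqref{eq:egorov_log} we have $A_{w_k^-}(k)=U(-k)\op_h(a_{w_k^-})U(k)=\op_h^{L_s}(a_{w_k^-}\circ\phi_k)+\cO(h^{1-\rho}\log h^{-1})_{L^2\to L^2}$ uniformly in $k$, which in particular is $\cO(h^{1-\rho-})$. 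Then Lemma~\ref{lem:many_product} (applied with $\rho'=0$, absorbing the loss into $S^{\comp}_{L_s,\rho+\varepsilon,\varepsilon}$ for every $\varepsilon>0$, i.e.\ into $S^{\comp}_{L_s,\rho}$) gives simultaneously $a_-\in S^{\comp}_{L_s,\rho}(T^*M\setminus 0)$ and $A_{\w_-}=A_{w_{T_1-1}^-}(T_1-1)\cdots A_{w_0^-}(0)=\op_h^{L_s}(a_-)+\cO(h^{1-\rho-})_{L^2\to L^2}$, with all bounds uniform in $\w$. This is essentially verbatim the argument deferred to \cite{DJ18} for Lemma~\ref{lem:a_w_symbol_class}.

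For $a_+$ I would argue symmetrically but with the roles of stable and unstable reversed and a shift by $-T_1$. Observe that $A_{\w_+}(-T_1)=U(T_1)A_{w_1^+}(1)^{-1}\cdots$; more directly, $A_{\w_+}(-T_1)=U(T_1)\,A_{w_{T_1}^+}(T_1)\cdots A_{w_1^+}(1)\,U(-T_1)$ so that each conjugated block becomes $U(T_1-k)A_{w_k^+}U(-(T_1-k))$ — wait, it is cleaner to note $A_{w_k^+}(k)$ conjugated by $U(\mp T_1)$ produces $U(T_1-k)\op_h(a_{w_k^+})U(-(T_1-k))$ with $T_1-k\in[0,T_1-1]$, and \eqref{eq:egorov_log} (second line) gives $U(t)\op_h(a)U(-t)=\op_h^{L_u}(a\circ\phi_{-t})+\cO(h^{1-\rho}\log h^{-1})$. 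Substituting $t=T_1-k$ and relabeling, the product telescopes to $\op_h^{L_u}$ of $\prod_{k=1}^{T_1}(a_{w_k^+}\circ\phi_{-k})=a_+$. The factors $a_{w_k^+}\circ\phi_{-k}$ lie in $S^{\comp}_{L_u,\rho,0}$ uniformly in $k\in[0,T_1]$ by the second half of \eqref{eq:log_propagate}, so a second application of Lemma~\ref{lem:many_product} — now for the Lagrangian foliation $L_u$ — yields $a_+\in S^{\comp}_{L_u,\rho}(T^*M\setminus 0)$ and $A_{\w_+}(-T_1)=\op_h^{L_u}(a_+)+\cO(h^{1-\rho-})_{L^2\to L^2}$, uniformly in $\w$. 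I expect the only genuinely fiddly step to be bookkeeping the conjugation-by-$U(\mp T_1)$ and the index reversal so that the propagated symbols come out as $a_{w_k^+}\circ\phi_{-k}$ exactly matching \eqref{eq:a_pm_def}; the symbol-class and operator estimates themselves are immediate consequences of the cited lemmas.
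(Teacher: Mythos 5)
Your proposal is correct and follows essentially the same route as the paper: the $a_-$/$A_{\w_-}$ case is exactly Lemma~\ref{lem:a_w_symbol_class} (i.e.\ \eqref{eq:log_propagate}, \eqref{eq:egorov_log}, and Lemma~\ref{lem:many_product}), and the $a_+$/$A_{\w_+}(-T_1)$ case is the time-reversed version of the same argument, which is precisely what the paper means by ``reverse the flow, exchanging the stable and unstable foliations.'' The only roughness is the indexing of the conjugation times (the factors of $A_{\w_+}(-T_1)$ are $A_{w_k^+}(-k)$ with $k=1,\dots,T_1$, so the second line of \eqref{eq:egorov_log} is applied with $t=k$ rather than $t=T_1-k$), but after your relabeling the product of propagated symbols matches $a_+$ from \eqref{eq:a_pm_def}, so this is bookkeeping rather than a gap.
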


\begin{proof}
From Lemma ~\ref{lem:a_w_symbol_class}, this lemma holds for $a_-$ and $A_{\w_-}$. For $a_+$ and $A_{\w_+}$, we reverse the flow $\phi_t$, which exchanges the stable and unstable foliations.
\end{proof}

\subsubsection{Ball porosity} \label{subsection:ball_porous}
In this subsection, we show that $\supp a_+ \cap S^*M$ is hyperbolic porous on balls. This result exploits the ergodicity of the geodesic flow.

Utilizing the Mautner phenomenon \cite{Mautner}, Moore \cite{Moore} showed that the geodesic flow on $S^*M$ is \emph{strongly mixing}, i.e. for $f, g \in C^\infty(S^*M)$, 
\begin{equation}\label{eq:mixing}
\lim_{t \rightarrow \infty} \int_{S^*M} (f \circ \phi_{t}) g  d \mu_L = \left(\int_{S^* M} f d \mu_L\right) \left( \int_{S^*M} g d\mu_L \right),
\end{equation}
where $\mu_L$ is the Liouville measure.
This was also proved by Anosov in~\cite{An67} in a more general setting. 

We now prove density of translates of horocyclic segments. 

\begin{lemma}\label{lem:ergodicity}
For each open nonempty set $U \subset S^*M$, there exists some $T>0$ such that for all $q \in F^*M$ and $t \geq T$, we have
$$\phi_{-t} \pi_S \{e^{\cU^+ u}q :u \in \R^n, |u| \leq 1 \} \cap U \neq \emptyset.$$
\end{lemma}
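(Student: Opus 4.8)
\textbf{Proof proposal for Lemma \ref{lem:ergodicity}.}

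The plan is to deduce this from the strong mixing of the geodesic flow \eqref{eq:mixing} together with the fact that the full horocyclic orbit $\{e^{U^+u}q : u \in \R^n\}$ equidistributes (or at least becomes dense). First I would reduce to a convenient quantitative statement: it suffices to show that for every nonempty open $U \subset S^*M$ there is $T>0$ so that for all $q$ and all $t \ge T$, the set $\phi_{-t}\pi_S\{e^{U^+u}q : |u|\le 1\}$ enters $U$. Since $\phi_{-t}\circ e^{U^+u} = e^{e^{-t}U^+ u}\circ\phi_{-t}$ by the commutation relations \eqref{eq:commutator} (namely $[X,U_i^+]=U_i^+$), the image $\phi_{-t}\pi_S\{e^{U^+u}q:|u|\le 1\}$ is exactly $\pi_S\{e^{U^+u'}(\phi_{-t}q):|u'|\le e^{-t}\}$, i.e. a small horocyclic ``ball'' of radius $e^{-t}$ around the point $\phi_{-t}(\pi_S q)$. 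So the statement is equivalent to: for every nonempty open $U$ there is $T$ so that for every $q$ and every $t\ge T$, the $e^{-t}$-horocyclic ball around $\phi_{-t}(\pi_S q)$ meets $U$.

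The key step is an equidistribution/mixing argument. Fix a nonnegative $f\in C_c^\infty(S^*M)$ supported in $U$ with $\int f\,d\mu_L > 0$, and fix a nonnegative $g\in C_c^\infty(S^*M)$ with $\int g\,d\mu_L>0$ supported in a small ball; I would like to test $f$ against the push-forward under $\phi_{-t}$ of (an approximation to) the unit horocyclic segment. One clean route: use mixing in the form that $\phi_{-t}$-averages of the \emph{unstable} horocyclic pieces equidistribute. Concretely, parametrize a small piece of phase space near a fixed base point by unstable horocycle coordinates $u$ times transverse coordinates, write the indicator of a flow-box as a product, and observe that $\int_{|u|\le 1}f(\phi_{-t}e^{U^+u}q_0)\,du$ can be compared, via the $h=0$ (non-semiclassical) analog of the Egorov/propagation estimates and a change of variables absorbing the Jacobian $e^{nt}$, to $\int_{S^*M} f\,d\mu_L$ up to an error tending to $0$ as $t\to\infty$, \emph{uniformly in the base point} $q_0$. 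The uniformity is what lets us conclude for all $q$ simultaneously. Since $\int f\,d\mu_L>0$ and $f\ge 0$ supported in $U$, for $t$ large the integral is positive, hence $\phi_{-t}e^{U^+u}q_0\in U$ for some $|u|\le 1$.

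The main obstacle I anticipate is getting the conclusion \emph{uniformly in $q$}: mixing as stated in \eqref{eq:mixing} is a statement about integrals against fixed test functions, not about orbits through an arbitrary point, and a single application gives only that ``most'' horocyclic segments meet $U$, not all of them. There are two standard ways around this. The first is a compactness argument: if the claim failed, there would be $t_j\to\infty$ and $q_j\in F^*M$ with $\phi_{-t_j}\pi_S\{e^{U^+u}q_j:|u|\le1\}\cap U=\emptyset$; passing to a subsequence $\phi_{-t_j}(\pi_S q_j)\to\rho_\infty\in S^*M$ and using the compactness of $S^*M$ together with the fact that the $e^{-t_j}$-horocyclic balls shrink, one would need to derive a contradiction with minimality/density of some horocyclic orbit closure, which is not immediate since the single horocyclic flow $e^{sU_1^-}$ is not uniquely ergodic here (this is precisely the phenomenon highlighted after Theorem \ref{thm:orbit_closure}). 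The cleaner and more robust approach, which I would pursue, is to avoid minimality entirely and instead run the mixing argument with an \emph{effective} (uniform) error term: cover $S^*M$ by finitely many flow boxes adapted to the unstable foliation, write $\1_{\text{unit }U^+\text{-ball through }q}$ (after the $\phi_{-t}$ rescaling) as a sum over these boxes of smooth bumps whose $C^k$ norms are controlled independently of $q$, and apply a quantitative mixing estimate (exponential mixing of the geodesic flow on hyperbolic manifolds, or the weaker polynomial mixing that suffices here) box by box. The error then decays in $t$ uniformly in $q$, giving the result for all $t\ge T$ with $T$ depending only on $U$. I would cite the exponential mixing for hyperbolic manifolds already invoked in the paper's framework and present the flow-box decomposition as the one genuinely new bookkeeping step.
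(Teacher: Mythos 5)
Your opening reduction contains a sign error that invalidates the stated ``equivalent'' formulation. With the conventions of the paper (right action of $G$ on $\Gamma\backslash G$, left-invariant vector fields, $[X,U_i^+]=U_i^+$), one has $\phi_{-t}\bigl(e^{uU_i^+}q\bigr)=e^{e^{t}uU_i^+}\phi_{-t}(q)$: conjugating $e^{uU^+}$ past $e^{-tX}$ multiplies $u$ by $e^{t}$, not $e^{-t}$, because $U^+$ spans the directions that are \emph{stable} for the forward flow \eqref{eq:preimage_Es_Eu} and hence expand under $\phi_{-t}$. So $\phi_{-t}\pi_S\{e^{U^+u}q:|u|\le 1\}$ is a horospherical piece of radius $e^{t}$ around $\phi_{-t}(\pi_S q)$, not a ball of radius $e^{-t}$; your reformulated statement (``the $e^{-t}$-horocyclic ball around $\phi_{-t}(\pi_S q)$ meets $U$'') is false as stated, since a shrinking ball around an arbitrary point need not meet a fixed open set. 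The expansion is the entire mechanism of the lemma, and your later mention of the Jacobian $e^{nt}$ shows you have the right picture in mind, but the key identity you write down is backwards and the argument as written does not go through.

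Beyond that, your mixing strategy is in the right spirit but heavier than what is needed, and it differs from the paper's route. The paper does not require effective equidistribution of expanding translates (which is essentially what your flow-box plus exponential-mixing scheme would re-derive, as in the Kleinbock--Margulis result cited in the remark following the lemma). Instead, for each frame $q_0\in F^*M$ it thickens the unit $U^+$-ball by a fixed $\varepsilon$ in the $\cV^-$-directions (spanned by $U^-$ and $X$), which do \emph{not} expand under $\phi_{-t}$; it takes a nonnegative test function $f_{q_0}$ supported in this thickened set and a cutoff $\chi$ supported in $U$ chosen so that the $\cV^-$-thickening of $\supp\chi$ stays in $U$, and applies plain qualitative mixing \eqref{eq:mixing} to obtain a time $T_{q_0}$ with $\phi_{-t}(\supp f_{q_0})\cap\supp\chi\neq\emptyset$ for $t\ge T_{q_0}$. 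The commutation relation (with the correct sign) then converts the intersection point into $\pi_S\bigl(e^{U^+e^{-t}u_0}q_0\bigr)\in U$ with $|e^{-t}u_0|\le 1$, and uniformity in $q$ comes simply from compactness of $F^*M$ (the time $T_q$ can be taken locally constant by supporting $f_{q_0}$ in a slightly smaller thickened ball shared by all nearby frames), not from any minimality or effective rate. Your concern about uniformity is legitimate, but it is resolved by this elementary compactness step rather than by quantitative mixing; if you repair the sign and either adopt the paper's compactness argument or carry out your effective scheme carefully (thickening only in non-expanding directions, uniformly in the frame), the proof can be completed.
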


\begin{proof}
Let $U \subset S^*M$ be an open set and fix $q_0 \in F^*M$.

Let $\chi \in C^\infty(S^*M)$ be a cutoff function such that $\supp \chi \subset U$ and $\int_{S^*M} \chi d\mu_L >0$. Pick $\varepsilon>0$ sufficiently small so that if $q \in \pi_S^{-1}(\supp \chi)$, then 
$\pi_S\{e^{\cV^- v} q : v \in \R^{n+1}, |v| \leq \varepsilon\} \subset U$. 

Let $f=f_{q_0} \in C^\infty(S^*M)$ such that $\int_{S^*M} f d\mu_L >0$ and
$$\supp f \subset \pi_S \left\{e^{\cV^- v} e^{\cU^+ u} q_0:  u \in \R^n, |u| \leq 1,  v \in \R^{n+1}, |v| \leq \varepsilon \right\}.$$

By ~\eqref{eq:mixing}, 
$\lim_{t \rightarrow \infty} \int_{S^*M} (f \circ \phi_{t}) \chi d \mu_L = ( \int_{S^*M} f d\mu_L ) ( \int_{S^*M} \chi d\mu_L )>0.$
Therefore, there exists $T_{q_0}>0$, depending on $q_0$ and $U$, such that for all $t \geq T_{q_0}$, $\phi_{-t}(\supp f) \cap \supp \chi \neq \emptyset$.  Fix $t \geq T_{q_0}$.

We now examine $\phi_{-t}(\supp f)$. From ~\eqref{eq:commutator}, we know 
\begin{equation}\label{eq:commute_constant}
\phi_{-t}( e^{U_i^\pm}q) = e^{e^{ \pm t} U_i^\pm}\phi_{-t} (q).
\end{equation}

Thus,
\begin{align*}
\phi_{-t}(\supp f) & \subset \phi_{-t}\pi_S \left\{e^{\cV^- v} e^{\cU^+ u} q_0: u \in \R^n,  |u| \leq 1,  v \in \R^{n+1}, |v| \leq \varepsilon \right\}\\
& =\pi_S \left\{ \phi_{-t} (e^{\cV^- v} e^{\cU^+ u} q_0): u \in \R^n, |u| \leq 1,  v \in \R^{n+1}, |v| \leq \varepsilon  \right\}\\
& \subset \pi_S \left\{ e^{\cV^- v} e^{\cU^+ u} \phi_{-t} (q_0):  u \in \R^n, |u| \leq e^t,  v \in \R^{n+1}, |v| \leq \varepsilon \right\}.
\end{align*}

Therefore, for some $|u_0|  \leq e^t$ and $|v_0| \leq \varepsilon$, 
$e^{\cV^- v_0} e^{\cU^+u_0} \phi_{-t}( q_0 )\in \pi_S^{-1} \supp \chi$. By the choice of $\varepsilon$, $\pi_S (e^{\cU^+u_0} \phi_{-t}( q_0)) \in U$.

From \eqref{eq:commute_constant}, $\pi_S(e^{\cU^+u_0} \phi_{-t}( q_0)) = \phi_{-t} (\pi_S( e^{\cU^+ e^{-t} u_0} q_0))$. Clearly, $|e^{-t} u_0| \leq 1$.

Finally, note that by the compactness of $F^*M$, there exists some $T>0$ which depends on $U$ such that $T \geq T_{q}$ for all $q\in F^*M$.  
\end{proof}

\begin{remark}
One can make stronger statements about the subset
$$\phi_{-t} \pi_S \{e^{\cU^+ u}q :u \in \R^n, |u| \leq 1 \},$$
of $S^*M$ appearing in Lemma \ref{lem:ergodicity}.
Indeed as $T\to\infty$, this subset equidistributes to $\mu_L$ by work of Shah \cite{ShahExpanding}.
This is moreover known with an effective rate, as shown by Kleinbock and Margulis \cite{KleinbockMargulis}.
\end{remark}

We now prove the hyperbolic ball porosity of $\cA_+ \cap S^*M$. The following proof takes inspiration from ~\cite{DJ18}*{Lemma 5.10}.

\begin{lemma}\label{lem:a_hyperbolic_ball_porosity}
There exists $\nu_1, \varepsilon_0, K_1>0$, where  $\nu_1 = e^{-T-1}\varepsilon_0$ for some $T>1$, depending only on $M, a_1, a_2$, such that $\cA_+ \cap S^*M$ is 
hyperbolic $(\nu_1^+, \varepsilon_0^-)$-porous on balls from scales $K_1 h^\rho$ to $1$
in the sense of Definition ~\ref{def:hyperbolic_ball_porosity}.
\end{lemma}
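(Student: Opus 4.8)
\textbf{Proof plan for Lemma \ref{lem:a_hyperbolic_ball_porosity}.}

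The plan is to fix the data and then use mixing (in the form of Lemma \ref{lem:ergodicity}) to locate, inside any sufficiently long horocyclic box, a ``hole'' on which $a_+$ vanishes identically. First, recall from \eqref{eq:mathcal_A_def} that $\supp a_+ \cap S^*M = \cA_+ \cap S^*M = \{|\xi|_g=1\} \cap \bigcap_{k=1}^{T_1} \phi_k(\supp \chi_{w_k^+})$, and that $U_1 \Subset S^*M \setminus \supp \chi_1$, $U_2 \Subset S^*M \setminus \supp \chi_2$ are $U_1^-$-dense; set $U \coloneqq U_{w_1^+}$, an open set disjoint from $\supp \chi_{w_1^+}$. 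Apply Lemma \ref{lem:ergodicity} to this $U$ (more precisely, to a slightly shrunken version, so that a small $(\cV^-,U^+)$-box around each point of the relevant translate still lands in the complement of $\supp\chi_{w_1^+}$): this furnishes $T = T(M,a_1,a_2) > 1$ and then we set $\varepsilon_0$ to be the resulting box width, $\nu_1 \coloneqq e^{-T-1}\varepsilon_0$, and $K_1$ to be chosen below. The point is that, because there are only finitely many choices of the single letter $w_1^+\in\{1,2\}$, the constants $T,\varepsilon_0$ can be taken uniform in $\w$.

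Next, I would verify the porosity condition. Fix $(x,\xi)\in S^*M$, $q\in\pi_S^{-1}(x,\xi)$, and a scale $\alpha\in[K_1 h^\rho,1]$. Choose an integer $k_0\in\{1,\dots,T_1\}$ with $e^{-k_0}\asymp \alpha$, i.e. $k_0 = \lceil \log(1/\alpha)\rceil + O(1)$; this is possible precisely because $\alpha\ge K_1 h^\rho$ forces $k_0\le T_1 \approx \rho\log h^{-1}$ once $K_1$ is large enough, and $\alpha\le 1$ forces $k_0\ge T$ after enlarging $T$ if necessary (the scales $\alpha$ between, say, $e^{-T}$ and $1$ are handled by always using $k_0=T$ and shrinking the hole, absorbing the loss into $\nu_1,\varepsilon_0$). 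Now apply $\phi_{-k_0}$ to $q$ and invoke Lemma \ref{lem:ergodicity}: the translate $\phi_{-k_0}\pi_S\{e^{U^+ u}(\phi_{k_0}q') : |u|\le 1\}$ meets $U$ for a suitable base frame $q'$; unwinding via the conjugation identity \eqref{eq:commute_constant}, $\phi_{-t}(e^{U_i^+ s}q)=e^{e^{-t}U_i^+ s}\phi_{-t}(q)$, translates ``$|u|\le 1$ at time $-k_0$'' into ``$|u|\le e^{-k_0}\asymp\alpha$ at time $0$'', which produces exactly the shift $u_0$ with $|u_0|\le\alpha$ required in Definition \ref{def:hyperbolic_ball_porosity}. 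By construction, the point $\pi_S(e^{U^+ u_0}q)$ (suitably translated back by $\phi_{k_0}$) lands near $\supp\chi_{w_1^+}$'s complement, and a $(\cV^-,U^+)$-box of radii $(\varepsilon_0,\nu_1\alpha)$ around it — after pushing forward by $\phi_{k_0}$, which contracts $U^+$ by $e^{-k_0}\asymp\alpha$ and which can be controlled on $\cV^-$ directions over the bounded range — stays outside $\phi_{k_0}(\supp\chi_{w_1^+})$, hence outside $\cA_+$. The scaling of $\nu_1$ by $e^{-T-1}$ relative to $\varepsilon_0$ is exactly the budget needed to absorb the discrepancy between the $|u|\le 1$ scale of Lemma \ref{lem:ergodicity} and the contracted scale, together with the $O(1)$ slack in $k_0$.

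The main obstacle I anticipate is bookkeeping the interaction between the flow $\phi_{k_0}$ and the weak-unstable box directions $\cV^-$: while $\phi_t$ contracts $U^+$ cleanly by $e^{-t}$ via \eqref{eq:commute_constant}, the full box in Definition \ref{def:hyperbolic_ball_porosity} also includes the $\cV^- v$ directions with $|v|\le\varepsilon$, and one must check that conjugating this box by $\phi_{k_0}$ and by the frame-bundle $K_0$-action does not expand it uncontrollably — here one uses that $k_0$ is applied to a set that Lemma \ref{lem:ergodicity} already placed at ``time $-k_0$,'' so the net flow amount seen by the box is bounded, and the commutation relations \eqref{eq:commutator} keep the $\cV^-$ and $R_{i+1,j+1}$ directions under control. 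A secondary technical point is ensuring uniformity in $\w$: since $a_+$ depends on $\w_+=w^+_{T_1}\cdots w^+_1$ only through which translates of $\supp\chi_1,\supp\chi_2$ are intersected, and only the \emph{last-applied} letter $w_1^+$ (equivalently, the innermost $\phi_{k_0}$-translate for the chosen $k_0$) is used to produce the hole, all constants come from the finite data $\{M,\chi_1,\chi_2,U_1,U_2\}$ and are independent of $\w$, as claimed.
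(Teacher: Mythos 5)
Your overall mechanism is the same as the paper's (this is essentially the argument of the paper's proof): use the mixing statement, Lemma \ref{lem:ergodicity}, applied after flowing back by a time $t$ with $e^{-t}\asymp\alpha$, then use the commutation relation \eqref{eq:commute_constant} to convert the unit-scale horocyclic hole into a hole of $U^+$-radius comparable to $\varepsilon_0\alpha$ located within $U^+$-distance $\alpha$ of $q$, with $\nu_1=e^{-T-1}\varepsilon_0$ absorbing the slack. However, there is a genuine gap in how you connect the hole to $\supp a_+$. You fix the open set once and for all as $U=U_{w_1^+}$, i.e.\ you use only the first letter of $\w_+$, and you then claim the box ``stays outside $\phi_{k_0}(\supp\chi_{w_1^+})$, hence outside $\cA_+$.'' This inclusion is unjustified: from \eqref{eq:mathcal_A_def} one only has $\cA_+\subset\phi_{k}(\supp\chi_{w^+_{k}})$ for the letter \emph{at position} $k$, and there is no reason that $\cA_+\subset\phi_{k_0}(\supp\chi_{w_1^+})$ when $w^+_{k_0}\neq w^+_1$. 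The letter must be chosen \emph{scale-dependently}: with $e^{-t}<\alpha\le e^{-t+1}$ one must take $w=w^+_{t+T}$ and the set $U_{w^+_{t+T}}$, which is exactly why the paper needs $K_1=e^{2+T}$ (to guarantee $1<t+T\le T_1-1$, so that the required position actually exists inside the word). Uniformity in $\w$ is still automatic — there are only the two sets $U_1,U_2$ and a single $T$ from Lemma \ref{lem:ergodicity} works for both — so your worry that uniformity forces a fixed letter is unfounded.

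A second, related slip is the ``time level'' at which you place the hole. In your construction the point produced by Lemma \ref{lem:ergodicity} lies in $U$ at time $0$ (since $\phi_{-k_0}e^{U^+u_w}\phi_{k_0}q'=e^{U^+e^{-k_0}u_w}q'$), and you surround \emph{that} point by an $(\varepsilon_0,\nu_1\alpha)$-box; such a box only avoids the depth-$0$ constraint, which does not occur in the product $a_+=\prod_{k=1}^{T_1}a_{w^+_k}\circ\phi_{-k}$. What is needed is that the $\phi_{-(t+T)}$-image of the box avoids $\supp\chi_{w^+_{t+T}}$; since $\phi_{-(t+T)}$ expands the $U^+$-directions by $e^{t+T}$, the box must be obtained as the forward image under $\phi_{t+T}$ of an $\varepsilon_0$-box centered at a point of $U_{w^+_{t+T}}$ sitting at the flowed-back location $\phi_{-T-t}(e^{U^+e^{-t}u_0}q_0)$ (this is where \eqref{eq:ball_propagate} enters, and it is also what keeps the $\cV^-$-radius at the full size $\varepsilon_0$, since the $U^-$-components only contract under the backward flow). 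With the letter indexed by $t+T$ and the hole produced at that depth, your scale bookkeeping ($e^{-(t+T)}\varepsilon_0\ge e^{-T-1}\varepsilon_0\,\alpha=\nu_1\alpha$, $|e^{-t}u_0|\le\alpha$) goes through and the argument closes; as written, it does not.
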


\begin{proof}
From \eqref{eq:supp_a1_a2}, there exist $U_1, U_2 \subset S^*M$ such that for $w=1,2$, $U_w \Subset S^*M \setminus \supp a_w$. Therefore, $U_w \Subset S^*M \setminus  \supp \chi_w$.

From Lemma ~\ref{lem:ergodicity}, there exists $T>1$ such that for each $q  \in F^*M$ and  some $u_w = u_w(q)$ with $|u_w| \leq 1$, 
\begin{equation}\label{eq:propagate_forward}
\pi_S(\phi_{-T}( e^{\cU^+ u_w} (q))) \in U_w.
\end{equation}

Set $K_1 \coloneqq e^{2+ T}$. Fix $q_0 \in F^*M$ and $\alpha \in [K_1 h^\rho, 1]$. Let $t$ be the unique integer such that $e^{-t} < \alpha \leq e^{-t +1}$. From ~\eqref{eq:T_def}, $1 < T+t \leq T_1 -1$. Set $w= w_{t+T}^+ \in \{1,2\}$ and note that $\cA_+ \cap \phi_{t+T} (S^*M \setminus \supp \chi_w) = \emptyset$.

Fix $u_0 \coloneqq u_w(\phi_{-t}(q_0))$. From ~\eqref{eq:commutator} and ~\eqref{eq:propagate_forward},
\begin{equation}\label{eq:propagate_into_U}
\pi_S (\phi_{-T-t}(e^{ \cU^+ e^{-t} u_0}q_0) )=  \pi_S (\phi_{-T}(e^{\cU^+ u_0}\phi_{-t}(q_0))) \in U_w.
\end{equation}

Choose $\varepsilon_0 >0$ sufficiently small so that for $w=1,2$, if $u \in  \R^n$, $v \in \R^{n+1}$ satisfy $|u|, |v| \leq \varepsilon_0$ and $q \in \pi^{-1}_S(U_w)$, then 
$$\pi_S (e^{\cV^- v} e^{\cU^+u} q )\subset S^*M \setminus \supp \chi_w.$$

Then from ~\eqref{eq:propagate_into_U},  we know 
\begin{equation}\label{eq:ball_epsilon}
\{e^{\cV^- v} e^{\cU^+ u}( \phi_{-T -t}( e^{\cU^+ e^{-t} u_0} q_0)) : |u|, |v| \leq \varepsilon_0\} \subset \pi_S^{-1}(S^*M \setminus \supp \chi_w).
\end{equation}

From  ~\eqref{eq:commutator}, for any $q \in F^*M$,
\begin{equation}\label{eq:ball_propagate}
e^{\cV^- v} e^{\cU^+  u} \phi_t(  q)=\phi_t (e^{\cV^- v'} e^{\cU^+ e^t u} q) ,
\end{equation}
where $v=(v_1, \ldots, v_{n+1})$ and $v'= (e^{-t} v_1, \ldots, e^{-t} v_n, v_{n+1})$. 

Thus from \eqref{eq:ball_epsilon} and \eqref{eq:ball_propagate},
$$\pi_S \left\{\phi_{-t -T}(e^{\cV^-v} e^{\cU^+( e^{-T-t} u +e^{-t} u_0)} q_0) : |u|, |v| \leq \varepsilon_0 \right\} \subset S^*M \setminus \supp \chi_w.$$
Setting $\nu_1 \coloneqq e^{-T-1}\varepsilon_0$, we see that
$$\pi_S \left\{e^{\cV^- v} e^{\cU^+ (u + e^{-t} u_0)} q_0: |u| \leq \nu_1 \alpha, |v| \leq \varepsilon_0 \right\}  \subset \phi_{t+T} \left(S^*M \setminus \supp a_{w} \right) \subset S^*M \setminus \cA_+,$$
with $|e^{-t} u_0| \leq \alpha$.
\end{proof}

\subsubsection{Line porosity} \label{subsection:line porous}
We show that $\cA_- \cap S^*M$ is hyperbolic porous on lines. This result uses our assumption that $U_\mu$ is $U_1^-$ dense. Similarly to Lemma~\ref{lem:a_hyperbolic_ball_porosity}, the proof is adapted from ~\cite{DJ18}*{Lemma 5.10}.

\begin{lemma}\label{lem:a_hyperbolic_line_porosity}
There exists $\nu_1, \varepsilon_0, K_1 >0$, with $\nu_1 = \varepsilon_0/T$ for some $T>1$, depending only on $M, a_1, a_2$ such that $\cA_- \cap S^*M$ is hyperbolic $(\nu_1^-, \varepsilon_0^+)$-porous on lines from scales $K_1h^\rho$ to $1$ in the sense of Definition ~\ref{def:hyperbolic_line_porosity}.
\end{lemma}

\begin{proof}
For $w =1, 2$, from \eqref{eq:supp_a1_a2}, there exists a $U_1^-$-dense set $U_w \subset S^*M$ such that 
$U_w \Subset S^*M \setminus \supp a_w$. Therefore, $U_w \Subset S^*M \setminus  \supp \chi_w$.

Using Lemma ~\ref{lem:finite_safe}, there exists $T>1$ depending only on $M, a_1$, and  $a_2$ such that for each $q \in F^*M$ and some $t_w = t_w(q) \in [-T, T]$,  
\begin{equation}\label{eq:safe_consequence}
\pi_S(e^{t_w U_1^-}q) \in U_w.
\end{equation}

Set $K_1 \coloneqq 3T$. Fix $q_0 \in F^*M$ and $\alpha \in [K_1 h^\rho, 1]$.  Let $j$ be the unique integer such that $e^{j-1} \alpha < T \leq e^j \alpha$. From ~\eqref{eq:T_def},  $1 \leq j \leq T_1-1$.

Set $t_0 \coloneqq e^{-j} t_w(\phi_{j}(q_0))$. As $e^j \alpha \geq T$, we know $t_0 \in [-\alpha, \alpha]$.

By ~\eqref{eq:commutator} and ~\eqref{eq:safe_consequence}, 
\begin{equation}\label{eq:propogate_forward_ball}
\pi_S (\phi_j( e^{t_0 U_1^-}q_0)) =  \pi_S (e^{e^j t_0 U_1^-}\phi_j( q_0))\in U_w.
\end{equation}

Choose $\varepsilon_0 >0$ sufficiently small so that for $w=1,2$, if $|u|, |v| \leq \varepsilon_0$ and $q \in \pi_S^{-1}(U_w)$, then
$$\pi_S (e^{\cV^+ v} e^{\cU^- u} q) \subset S^*M \setminus \supp \chi_w.$$
Then from ~\eqref{eq:propogate_forward_ball},we know
$$\pi_S \left\{e^{\cV^+ v} e^{\cU^- u} \phi_j( e^{t_0 U_1^-} q_0 ): |u|, |v| \leq \varepsilon_0\right\} \subset S^*M \setminus \supp \chi_w.$$
From ~\eqref{eq:commutator}, for all $q \in F^*M$
$$e^{\cV^+ v}e^{\cU^- u} \phi_j( q)= \phi_j( e^{\cV^+ v'} e^{\cU^- e^{-j}u}q),$$
where $v = (v_1, \ldots, v_{n+1})$ and $v' = (e^{j}v_1, \ldots, e^{j} v_n, v_{n+1})$.

Setting $\nu_1 \coloneqq \varepsilon_0/T$, we see that
$$\pi_S\{e^{\cV^+ v} e^{\cU^- u} e^{t_0 U_1^-} q_0 : |u| \leq \nu_1 \alpha, |v| < \varepsilon_0\} \subset \phi_{-j} ( S^*M \setminus \supp a_w) \subset S^*M \setminus \cA_-,$$
which completes the proof.
\end{proof}

\subsection{Symplectomorphisms on hyperbolic space}\label{subsection:symplectomorphisms}

We follow the exposition of ~\cite{DZ16}*{\S 4.4}.
Define the maps
\begin{equation} \label{eq:B_def}
B_{\pm}: T^* \bH^{n+1} \backslash 0 \rightarrow \bS^{n}
\end{equation}
as follows: for $(x, \xi) \in T^* \bH^{n+1}, B_\pm(x, \xi)$ is the limit of the projection to the ball model of $\bH^{n+1}$ of the geodesic $e^{tX}(x, \xi)$ as $t \rightarrow \pm \infty$. 
For a more in-depth exposition, see ~\cite{DFG15}*{\S 3.4}.

Then the lifts of the weak stable/unstable Lagrangian foliations $L_s/L_u$ (defined in  \eqref{eq:LsLu_def}) to $T^*\bH^{n+1} \setminus 0$ are given by
\begin{equation*}\label{eq:L_lifts}
\begin{split}
\pi^*_\Gamma L_s (x, \xi) &= \ker dB_+(x, \xi) \cap \ker dp(x, \xi),\\
\pi^*_\Gamma L_u (x, \xi) &= \ker dB_-(x, \xi) \cap \ker dp(x, \xi),
\end{split}
\end{equation*}
where we recall $p$ from \eqref{eq:p_def}.

We construct the symplectomorphisms 
\begin{equation*}\label{eq:kappa_range_domain}
\kappa^\pm : T^*\bH^{n+1}\setminus 0 \rightarrow T^*(\R^+_w \times \bS^{n}_y)
\end{equation*}
which map $L_s$ and $L_u$, respectively, to the vertical foliation on $T^*(\R^{+} \times \bS^{n})$:
\begin{equation*}\label{eq:kappa_def}
(\kappa^+)_* L_u = (\kappa^-)_*L_s =L_V \coloneqq \ker (dw) \cap \ker(dy).
\end{equation*}

The use of two symplectomorphisms is necessary; we cannot use a single diffeomorphism to simultaneously straighten out both $L_s$ and $L_u$.

Set 
\begin{equation*}\label{eq:G_def}
\cG(y, y') = \frac{y' - (y\cdot y') y}{1- y \cdot y'} \in \R^{n+1}, \quad y, y' \in \bS^{n} \subset \R^{n+1}, \quad y \neq y',
\end{equation*}
which is half the stereographic projection of $y'$ with the base point $y$. We have $\cG(y, y') \perp y$, thus we think of $\cG(y, y')$ as a vector in $T_y \bS^{n}$. Using the round metric on the sphere, we can also think of  $\cG(y, y')$ as a vector in $T_y^* \bS^{n}$.
For $(x, \xi) \in T^* \bH^{n+1} \setminus 0$, set
$$G_{ \pm}(x, \xi)= p(x, \xi) \cG(B_{\pm}(x, \xi), B_{\mp}(x, \xi)) \in T_{B_{\pm}(x, \xi)}^* \bS^{n}.$$
Denote by $\cP(x, y)$ the following function defined on the ball model of $\bH^{n+1}$ by 
\begin{equation*}\label{eq:Poisson_kernel_def}
\cP(x, y)=\frac{1-|x|^2}{|x-y|^2}, \quad x \in \bH^{n+1}, \quad y \in \bS^{n}.
\end{equation*}

We then construct the symplectomorphisms $\kappa^\pm$ in the following way:

\begin{lemma}[~\cite{DZ16}*{Lemma 4.7}]\label{lem:kappa_pm_def}
Let $\kappa^{\pm}: T^*\bH^{n+1} \setminus 0 \rightarrow T^*(\R^+ \times \bS^{n})$ be given by
\begin{equation}\label{eq:kappa_explicit_def}
\kappa^{ \pm}: (x, \xi) \mapsto \left(p(x, \xi), B_{\mp}(x, \xi), \pm \log \cP(x, B_{\mp}(x, \xi)), \pm G_{\mp}(x, \xi)\right). 
\end{equation}
Then $\kappa^{\pm}$ are exact symplectomorphisms. 
\end{lemma}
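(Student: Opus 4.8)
The plan is to verify directly that each of the two maps $\kappa^\pm$ defined by \eqref{eq:kappa_explicit_def} is a diffeomorphism onto its image and that it pulls back the canonical $1$-form $\eta\,dw + \zeta\,dy$ on $T^*(\R^+\times\bS^n)$ to the canonical $1$-form $\xi\,dx$ on $T^*\bH^{n+1}\setminus 0$ up to an exact differential; in fact I expect to show the pullback equals $\xi\,dx$ exactly, which simultaneously gives that $\kappa^\pm$ is symplectic (the differential of the $1$-form identity is the symplectic-form identity) and that it is \emph{exact}, since then the antiderivative $F$ of $(\xi\,dx - \eta\,dw - \zeta\,dy)|_{\mathrm{Gr}(\kappa^\pm)}$ can be taken to be $0$.

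First I would recall the geometric meaning of the ingredients, following \cite{DFG15}*{\S 3.4} and \cite{DZ16}*{\S 4.4}: $B_\pm$ are the forward/backward endpoint maps, homogeneous of degree $0$ in $\xi$; $p(x,\xi)=|\xi|_g$ is homogeneous of degree $1$; $\log\cP(x,B_\mp(x,\xi))$ is the Busemann-type function whose level sets are the horospheres through $B_\mp$, and $G_\mp$ is the transverse "position on the horosphere" coordinate. The key structural facts I would use are: (i) along a geodesic $e^{tX}(x,\xi)$ the quantities $B_\pm$ and $p$ are constant while $\log\cP(x,B_\mp)$ changes at unit rate (so $X$ is $\partial_w$ in the new coordinates, up to sign); (ii) the stable/unstable leaves are exactly the fibers of $(B_+,p)$ resp. $(B_-,p)$, matching the statement preceding the lemma; (iii) $d\pi_\Gamma$ intertwines everything, so it suffices to work on $\bH^{n+1}$. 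Points (i)--(iii) reduce the claim to checking the $1$-form identity, which is the computational heart.

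For the $1$-form computation I would exploit homogeneity to split the check. Write $\kappa^\pm(x,\xi)=(w,y,\eta,\zeta)$ with $w=p$, $y=B_\mp$, $\eta=\pm\log\cP(x,y)$, $\zeta=\pm G_\mp$. Then $\eta\,dw+\zeta\,dy = \pm\log\cP(x,B_\mp)\,dp \pm p\,\cG(B_\mp,B_\pm)\cdot dB_\mp$. The pullback $\kappa^{\pm*}(\eta\,dw+\zeta\,dy)$ and $\xi\,dx$ are both homogeneous of degree $1$ in $\xi$, so by Euler's identity it is enough to verify (a) their contraction with the radial vector field $\xi\cdot\partial_\xi$ agree, and (b) their restrictions to the unit cosphere bundle $S^*\bH^{n+1}=\{p=1\}$ agree. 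For (a): contracting $\xi\,dx$ with $\xi\cdot\partial_\xi$ gives $0$, and contracting the pullback gives $\log\cP\cdot(\xi\cdot\partial_\xi)p \pm p\,\cG\cdot(\xi\cdot\partial_\xi)B_\mp$, which vanishes because $B_\mp$ is homogeneous of degree $0$ and $\xi\,dx$ annihilates $\partial_\xi$-directions after... — more precisely one checks $(\xi\cdot\partial_\xi)(\xi\,dx)=\xi\,dx$ paired against vertical vectors is $0$; I would be careful here and instead contract with the vertical Euler field consistently. For (b), on $S^*\bH^{n+1}$ one uses the identification with $G$ from \S\ref{subsection:hyperbolic_manifolds}: parametrize by $g\in G$, express $B_\mp(g\cdot(\vec e_0,\vec e_1))$, $\log\cP$, $G_\mp$ and the contact form $\xi\,dx|_{S^*\bH^{n+1}}$ in terms of the left-invariant frame \eqref{eq:frame}, and match coefficients using the commutator relations \eqref{eq:commutator}; this is exactly the computation sketched in \cite{DZ16}*{\S 4.4} and I would cite/adapt it rather than redo it in full.

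The main obstacle is the explicit verification in step (b): one must compute how $B_\mp$, the Poisson kernel $\cP$, and the stereographic quantity $\cG$ transform under $G$ and then confirm the $1$-form identity coordinate-by-coordinate in the $U_i^\pm, X, R_{i+1,j+1}$ frame. The mixed terms coming from $dB_\mp$ paired against the $U^\mp$ directions (the "expanding" directions) are where cancellations must be checked with care, and getting the signs consistent between $\kappa^+$ (which straightens $L_u$) and $\kappa^-$ (which straightens $L_s$) is the delicate bookkeeping. Everything else — smoothness, the diffeomorphism property onto the image, homogeneity, and equivariance under $\Gamma$ — is routine given the formulas for $B_\pm$ and $\cP$ and the fact that $\cP(x,y)>0$ and $\cG(y,y')$ is smooth for $y\neq y'$, which holds on the relevant domain since $B_+(x,\xi)\neq B_-(x,\xi)$ always.
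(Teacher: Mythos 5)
First, a remark on the comparison itself: the paper does not prove this lemma at all — it is quoted verbatim from \cite{DZ16}*{Lemma 4.7} — so your proposal has to be measured against the cited computation rather than an in-paper argument. Your general strategy (reduce to a statement about tautological $1$-forms, exploit homogeneity in $\xi$ to split the check into the radial direction plus the unit cosphere bundle, and do the remaining verification in the left-invariant frame on $G$) is the right kind of argument and is close in spirit to the computation in \cite{DZ16}*{\S 4.4}.

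However, the specific identity you propose to verify is false, and this is exactly the point where your sketch wobbles. If $\alpha'=\theta\,dw+\eta\,dy$ denotes the canonical $1$-form on $T^*(\R^+\times\bS^n)$, then $\kappa^{\pm*}\alpha'=\pm\log\cP(x,B_\mp)\,dp\pm p\,\langle\cG(B_\mp,B_\pm),dB_\mp\rangle$, and this cannot equal $\xi\,dx$: contracting with the radial field $\xi\cdot\partial_\xi$ gives $0$ for $\xi\,dx$ but $\pm p\log\cP(x,B_\mp)$ for the pullback (since $dp(\xi\cdot\partial_\xi)=p$ by Euler and $B_\mp$ is $0$-homogeneous), and contracting with the generator of the geodesic flow gives $p$ for $\xi\,dx$ but $0$ for the pullback (since $p$ and $B_\mp$ are flow-invariant — indeed, if $\kappa^\pm$ is symplectic with $w\circ\kappa^\pm=p$, the geodesic field is carried to the Hamiltonian field of $w$, on which $\alpha'$ vanishes). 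So your step (a), where you assert that the term $\log\cP\cdot(\xi\cdot\partial_\xi)p$ vanishes, is wrong, and in particular the antiderivative $F$ cannot be taken to be $0$. The correct statement is that the difference $\xi\,dx-\kappa^{\pm*}\alpha'$ is a nonzero exact form; the contractions above are consistent with $F=\mp\,p\log\cP(x,B_\mp)$, equivalently with the identity $\kappa^{\pm*}\bigl(-w\,d\theta+\eta\,dy\bigr)=\xi\,dx$, which is the identity one should actually aim to verify (the remaining stable/unstable directions still need the frame computation you describe in step (b)). Alternatively, one can first establish the symplectomorphism property at the level of $2$-forms and then note that for $n\ge 2$ exactness is automatic, since $T^*\bH^{n+1}\setminus 0$ retracts onto $\bS^n$ and so every closed $1$-form is exact; but as written, your plan would be attempting to prove an equality of $1$-forms that does not hold.
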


To explain, for $\kappa^\pm(x, \xi)=(w, y, \theta, \eta)$:
\begin{itemize}
\item $y, \eta$ determine the geodesic $\gamma(t)=e^{t X}(x, \xi)$ up to shifting $t$ and rescaling $\xi$. In particular, $y$ gives the limit of the geodesic $\gamma(t)$ as $t \rightarrow \mp \infty$;
\item $w$ is the length of $\xi$, corresponding to the energy of the geodesic $\gamma(t)$;
\item $\theta$ satisfies $\theta(\gamma(t))=\theta(\gamma(0))-t$ and thus determines the position of $(x, \xi)$ on the geodesic $\gamma(t)$.
\end{itemize}

We will later use the symplectomorphism 
\begin{equation*} \label{eq:kappa_hat}
\hat{\kappa} \coloneqq \kappa^+ \circ (\kappa^-)^{-1} : T^*(\R^+ \times \bS^{n}) \rightarrow T^*(\R^+ \times \bS^{n}).
\end{equation*}

We cite the following lemma, which characterizes the Fourier integral operators associated to $\hat{\kappa}^{-1}$.

\begin{lemma}~\cite{DZ16}*{Lemma 4.9}\label{lem:FIO_to_PsiDO}
Assume that $B \in I^{\comp}_h(\hat{\kappa}^{-1})$. Then we have
$$B = A \widetilde{B}_\chi + \cO(h^\infty)_{L^2 \rightarrow L^2}$$
for some $A \in \Psi_h^{\comp}(\R^{+} \times \bS^{n})$, $\chi \in C_c^{\infty}(\bS_{\Delta}^{n})$, and
$$\widetilde{B}_\chi v(w, y)=(2 \pi h)^{-\frac{n}{2}} \int_{\bS^{n}}\left|\frac{y-y'}{2}\right|^{2 i \omega / h} \chi(y, y') v(w, y') d y',$$
where $\bS_\Delta^{n}=\{(y, y') \in \bS^{n} \times \bS^{n}: y \neq y'\}$, $|y-y'|$ denotes the Euclidean distance, and $dy'$ is the standard volume form on the sphere.
\end{lemma}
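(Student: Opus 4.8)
Since Lemma~\ref{lem:FIO_to_PsiDO} is quoted verbatim from \cite{DZ16}*{Lemma 4.9}, the cleanest route is to cite it; for the reader's convenience we indicate the plan of proof. The idea is to recognize $\widetilde{B}_\chi$, for an elliptic choice of $\chi$, as an elliptic compactly microlocalized Fourier integral operator associated to $\hat{\kappa}^{-1}$, and then to invoke the factorization of such operators recalled in \S\ref{subsection:FIO}.

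First I would compute $\hat{\kappa} = \kappa^+ \circ (\kappa^-)^{-1}$ explicitly from \eqref{eq:kappa_explicit_def}: both $\kappa^\pm$ preserve $w = p(x,\xi)$, so $\hat\kappa$ fixes $w$, and the remaining coordinates are determined by the boundary data $B_\pm$, the Poisson kernel $\cP$, and the map $\cG$. Since the two endpoints $B_+(x,\xi)$, $B_-(x,\xi)$ of a geodesic in $\bH^{n+1}$ are always distinct, $\Gr(\hat\kappa^{-1})$ projects into $\bS^n_\Delta$, so nothing is lost in taking $\chi \in C^\infty_c(\bS^n_\Delta)$. Next I would verify $\widetilde{B}_\chi \in I^{\comp}_h(\hat{\kappa}^{-1})$ by writing its Schwartz kernel as an oscillatory integral with phase
$$\Psi(w, y, w', y', \sigma) = (w - w')\sigma + 2w\log\Big|\tfrac{y - y'}{2}\Big|,$$
where the auxiliary variable $\sigma$ supplies the factor $\delta(w - w')$ and reconciles the normalization $(2\pi h)^{-n/2}$ with the FIO convention. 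Imposing $\partial_\sigma\Psi = 0$ forces $w = w'$, and the identities $\theta - \theta' = 2\log|\tfrac{y-y'}{2}|$, $\eta = 2w\,\nabla_y\log|y - y'|$, $\eta' = -2w\,\nabla_{y'}\log|y - y'|$ then cut out the canonical relation of $\widetilde{B}_\chi$. Checking that this coincides with $\Gr(\hat\kappa^{-1})$ is the heart of the matter, and I expect it to be the main obstacle: it is an elementary but not short computation in the ball model, using $\theta = \pm\log\cP$, $\eta = \pm p\,\cG(B_\mp, B_\pm)$, and standard identities relating the Poisson kernel, stereographic projection, and Euclidean distance on $\bS^n$. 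Granting it, the principal symbol of $\widetilde{B}_\chi$ equals $\chi$ up to a nonvanishing factor, so $\widetilde{B}_\chi$ is elliptic on $\{\chi \neq 0\}$.

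Finally I would conclude by the usual parametrix argument. Given $B \in I^{\comp}_h(\hat{\kappa}^{-1})$, choose $\chi \in C^\infty_c(\bS^n_\Delta)$ elliptic near the image of $\WF_h(B)$ under projection to the $(y,y')$-variables, and pick a microlocal parametrix $\widetilde{B}' \in I^{\comp}_h(\hat{\kappa})$ for $\widetilde{B}_\chi$, so that $\widetilde{B}'\widetilde{B}_\chi = \chi_1 + \cO(h^\infty)$ with $\chi_1 \in \Psi_h^{\comp}(\R^+ \times \bS^n)$ elliptic near $\WF_h(B)$. Then $B = B\chi_1 + \cO(h^\infty) = (B\widetilde{B}')\widetilde{B}_\chi + \cO(h^\infty)$, and since $\hat\kappa^{-1}\circ\hat\kappa = \mathrm{id}$ we have $A := B\widetilde{B}' \in \Psi_h^{\comp}(\R^+ \times \bS^n)$. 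Absorbing the nonvanishing symbol factor into $A$ gives $B = A\widetilde{B}_\chi + \cO(h^\infty)_{L^2 \to L^2}$, as required.
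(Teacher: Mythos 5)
The paper offers no argument for this lemma---it is quoted verbatim from \cite{DZ16}*{Lemma 4.9}---and your proposal takes the same route (cite), with a sketch that matches the cited proof: exhibit $\widetilde{B}_\chi$ as an elliptic element of $I^{\comp}_h(\hat{\kappa}^{-1})$ by writing its kernel with the phase $(w-w')\sigma + 2w\log\left|\tfrac{y-y'}{2}\right|$ and checking the resulting canonical relation is $\Gr(\hat{\kappa}^{-1})$, then factor any $B\in I^{\comp}_h(\hat{\kappa}^{-1})$ through it by a parametrix/ellipticity argument, the composition $B\widetilde{B}'$ being pseudodifferential since $\hat{\kappa}^{-1}\circ\hat{\kappa}=\mathrm{id}$. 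The only small imprecision is in the final step: to write $B=B\chi_1+\cO(h^\infty)$ you need $\chi_1$ to be microlocally equal to $1$ (not merely elliptic) on the relevant part of $\WF_h'(B)$, or equivalently you should insert a microlocal inverse of $\chi_1$ into $A$---which is exactly what your closing remark about absorbing the nonvanishing symbol factor into $A$ accomplishes, so the argument is complete once that is said explicitly.
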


\subsection{Hyperbolic fractal uncertainty principle}\label{subsection:hyperbolic_FUP}
The goal of this subsection is to prove Proposition \ref{prop:hyperbolic_FUP}, stated following this paragraph. Intuitively, this proposition tell us that the hyperbolic  $(\nu_1^+, \varepsilon_0^-)$-porosity on balls of $\cA_+$ from Lemma~\ref{lem:a_hyperbolic_ball_porosity} and the hyperbolic  $(\nu_1^-, \varepsilon_0^+)$-porosity on lines of $\cA_-$ from Lemma~\ref{lem:a_hyperbolic_line_porosity} give a fractal uncertainty principle.
The statement and the proof of Proposition \ref{prop:hyperbolic_FUP} are adapted to higher dimensions and to manifolds from ~\cite{DJ18}*{Proposition 5.7}.  Note that  Lemma~\ref{lem:a_hyperbolic_ball_porosity} and  Lemma~\ref{lem:a_hyperbolic_line_porosity} give two sets of values for $\nu_1$, $\varepsilon_0$, and $K_1$. From now on, we assume that  $\nu_1$ and $\varepsilon_0$ are each equal to the minimum of their two values and $K_1$ is equal to the maximum of its two values. We can write $\varepsilon_0 = C \nu_1$, where $C$ depends only on $M, a_1, a_2$.

\begin{proposition}\label{prop:hyperbolic_FUP}
Recall $a_\pm$ from \eqref{eq:a_pm_def}. There exists $\beta>0$  depending only on $M$, $a_1$, $a_2$, and $\rho$ such that 
for all $Q \in \Psi_h^0(M)$
\begin{equation}\label{eq:Q_bound}
\left\|\op_h^{L_s}(a_-) Q \op_h^{L_u}(a_+)\right\|_{L^2(M) \rightarrow L^2(M)} \leq Ch^{\beta/2},
\end{equation}
where $C$ depends only on $M, a_1, a_2, Q$, and $\rho$.
\end{proposition}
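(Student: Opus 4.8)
The plan is to reduce the operator norm estimate \eqref{eq:Q_bound} to the manifold fractal uncertainty principle of Proposition~\ref{prop:FUP_manifolds}, by straightening the stable and unstable foliations with the symplectomorphisms $\kappa^\pm$ of Lemma~\ref{lem:kappa_pm_def}. First I would localize: since $\op_h^{L_s}(a_-)$ and $\op_h^{L_u}(a_+)$ are built from a locally finite collection of charts for the foliations (Definition~\ref{def:Op_h^L}), and $\WF_h$ of each factor is contained in the compact set $\cA_\pm$ inside $\{1/4 \le |\xi|_g \le 4\}$, it suffices to bound a single piece $B_-' \op_h(a_-^l) B_- \, Q \, B_+' \op_h(a_+^m) B_+$ where $(U_l,\kappa_l,B_l,B_l')$ runs over charts for $L_s$ and $(U_m,\kappa_m,B_m,B_m')$ over charts for $L_u$. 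Using the composition property \ref{eq:FIO_conjugation} together with the fact that $\kappa^-$ maps $L_s$ to the vertical foliation $L_V$ and $\kappa^+$ maps $L_u$ to $L_V$, one can arrange the charts so that, modulo $\cO(h^\infty)_{L^2\to L^2}$, conjugating by the Fourier integral operators associated to $\kappa^-$ and $\kappa^+$ turns the two pseudodifferential factors into operators $\op_h(\tilde a_-)$ and $\op_h(\tilde a_+)$ quantizing (in the standard $\R^n$-sense, in the variables dual to $y\in\bS^n$) symbols supported in sets $\tilde{\cA}_\pm=\kappa^{\mp}(\cA_\pm)$, with $Q$ absorbed into a bounded Fourier integral operator associated to $\hat\kappa = \kappa^+\circ(\kappa^-)^{-1}$.

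\textbf{Porosity transfer.} The heart of the argument is to show that the sets $\tilde{\cA}_+$ and $\tilde{\cA}_-$, read in the $y$-coordinate on $\bS^n$, are respectively ball porous and line porous from scales $\sim h^\rho$ to $1$. This is exactly where Lemma~\ref{lem:a_hyperbolic_ball_porosity} and Lemma~\ref{lem:a_hyperbolic_line_porosity} enter. In the $\kappa^+$-picture, the $y$-coordinate records the backward endpoint $B_-(x,\xi)$ on the boundary sphere; moving $q\in\pi_S^{-1}(x,\xi)$ along the horocyclic directions $U^+u$ moves $B_-$ (and not $B_+$), and conjugating by $e^{\cV^- v}$ corresponds to the $\theta$- and $w$-directions which are killed by $d\pi_S$ or are the energy/position directions fixed by the chart. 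So the hyperbolic ball porosity of $\supp a_+\cap S^*M$ translates, via the explicit formula \eqref{eq:kappa_explicit_def} and the bi-Lipschitz/diffeomorphism behavior of $\cG$ and the stereographic-type maps on compact pieces of $\bS^n$ away from the diagonal, into ordinary $\nu'$-porosity on balls of $\tilde{\cA}_+$ (with $\nu'$ a fixed fraction of $\nu_1$, after using Lemma~\ref{lem:diffeomorphism_ball} and the neighborhood lemmas Lemma~\ref{lem:gen_porous_balls}); symmetrically, the hyperbolic line porosity of $\supp a_-\cap S^*M$ yields ordinary porosity on lines of $\tilde{\cA}_-$ via Lemma~\ref{lem:diffeomorphism_line} (here the hypothesis $\rho>3/4$ ensures the scale condition $h^\varrho<\tfrac{\nu}{C_1C_2 n}$ of that lemma can be met after rescaling, and that we stay in the regime $\tfrac34<\varrho,\rho<1$ required by Proposition~\ref{prop:FUP_gen_phase}/\ref{prop:FUP_manifolds}). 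One must also note that $\cA_\pm$ is homogeneous in $\xi$ on $\{1/4\le|\xi|_g\le4\}$, so restricting to $S^*M$ loses nothing: the porosity on the sphere bundle propagates to a neighborhood in the $(w,y)$-variables.

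\textbf{Applying the FUP and conclusion.} Once $\tilde{\cA}_-$ is line porous and $\tilde{\cA}_+$ is ball porous from scales $\sim h^\rho$ to $1$, I would write the middle operator $B_-\,Q\,B_+'$ (the part between $\op_h(\tilde a_-)$ and $\op_h(\tilde a_+)$) as an $h$-Fourier integral operator associated to $\hat\kappa^{-1}$, which by Lemma~\ref{lem:FIO_to_PsiDO} has the form $A\widetilde B_\chi$ with $\widetilde B_\chi$ an oscillatory integral whose phase $\omega\log|\tfrac{y-y'}{2}|$ has nondegenerate mixed Hessian off the diagonal — precisely the setting of \eqref{eq:B(h)_conditions_manifold}. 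Then Proposition~\ref{prop:FUP_manifolds}, applied on $M=\tilde M=\bS^n$ (in local coordinate charts on the sphere avoiding the diagonal, where $|\det\partial\psi_j|$ is bounded), with $X_-$, $X_+$ the images of $\cA_-$, $\cA_+$, gives $\|\1_{\tilde{\cA}_-(h^\rho)}\,\widetilde B_\chi\,\1_{\tilde{\cA}_+(h^\rho)}\|_{L^2\to L^2}\le Ch^{\beta/2}$. Composing back with the bounded operators $A$, $B_\pm$, $B_\pm'$ and the pseudodifferential factors, and summing over the finitely many relevant chart pairs, yields \eqref{eq:Q_bound}. The main obstacle I anticipate is the bookkeeping in the porosity transfer step: one has to verify carefully that the coordinate change $\kappa^\pm$ conjugates the \emph{hyperbolic} notions of Definitions~\ref{def:hyperbolic_ball_porosity}--\ref{def:hyperbolic_line_porosity} into the \emph{Euclidean} notions of Definitions~\ref{def:porous_on_balls}--\ref{def:porous_on_lines} with only a controlled loss in the porosity constant and scale range, keeping track of the $\varepsilon_0$-thickenings in the $\cV^\mp$-directions (which become the neighborhood thickenings handled by Lemmas~\ref{lem:gen_porous_balls}--\ref{lem:gen_porous_lines}) and of the homogeneity in $|\xi|_g$; getting the scales to land in the window $(h^\varrho,1)$ with $\tfrac34<\varrho<1$ compatible with the fixed $\rho\in(3/4,1)$ is the delicate quantitative point, and is exactly why $T_1\approx\rho\log h^{-1}$ was chosen.
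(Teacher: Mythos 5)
Your proposal is correct and follows essentially the same route as the paper: localize microlocally near a point of $S^*M$, conjugate the two factors by Fourier integral operators quantizing $\kappa^\pm$, transfer the hyperbolic ball/line porosity of $\supp a_\pm$ (Lemmas \ref{lem:a_hyperbolic_ball_porosity}--\ref{lem:a_hyperbolic_line_porosity}) to Euclidean ball/line porosity of the $y$-projections on $\bS^n$, and then apply Proposition \ref{prop:FUP_manifolds} to the kernel with phase $2w\log|y-y'|$ furnished by Lemma \ref{lem:FIO_to_PsiDO}. Two minor remarks: your pairing $\tilde{\cA}_\pm=\kappa^{\mp}(\cA_\pm)$ should read $\kappa^{\pm}(V\cap\cA_\pm)$ (your subsequent description of the $\kappa^+$-picture with $y=B_-$ is the correct one), and the paper sidesteps Lemma \ref{lem:diffeomorphism_line} by taking gnomonic projections for the sphere charts, which preserve geodesics, while explicitly noting that your alternative via that lemma would also work.
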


For some $(x_0, \xi_0) \in S^*M$ and $C_1$ selected to be sufficiently small in  Lemma ~\ref{lem:a_tilde_porous_ball} and Lemma ~\ref{lem:a_tilde_porous_line}, define
\begin{equation}\label{eq:V_diam}
V \coloneqq \left\{(x, \xi) \in T^*M \setminus 0: \tfrac{1}{4} \leq |\xi|_g \leq 4,  (x, \xi) \in B_{\tfrac{\nu_1}{C_1^2}}(x_0, |\xi|_g\xi_0)\right\}.
\end{equation}
By a microlocal partition of unity and since $a_\pm$ is supported in $\{\tfrac{1}{4} \leq |\xi|_g \leq 4\}$, we assume $\WF_h(Q) \subset V$.

Now define the following neighborhoods of $V$
$$V' \coloneqq \left\{(x, \xi) \in T^*M \setminus 0: \tfrac{1}{4}- \tfrac{\nu_1}{C_1^2} \leq |\xi|_g \leq 4 +\tfrac{\nu_1}{C_1^2}, (x, \xi) \in B_{\tfrac{2\nu_1}{C_1^2}}(x_0, |\xi|_g\xi_0) \right\},$$
$$V'' \coloneqq \left\{(x, \xi) \in T^*M \setminus 0: \tfrac{1}{4}- \tfrac{2\nu_1}{C_1^2} \leq |\xi|_g \leq 4 +\tfrac{2\nu_1}{C_1^2}, (x, \xi) \in B_{\tfrac{3\nu_1}{C_1^2}}(x_0, |\xi|_g\xi_0) \right\}.$$

We compose the maps $\kappa^\pm$ (given in ~\eqref{eq:kappa_explicit_def}) with a local inverse of the covering map $\pi_\Gamma : T^* \bH^{n+1} \rightarrow T^*M$ defined in ~\eqref{eq:pi_gamma_def} to obtain exact symplectomorphisms
$$\kappa_0^\pm : V \rightarrow T^*(\R^+_w \times \bS^n_y).$$

We can assume that $\kappa_0^\pm(V)$ is contained in a compact subset of $T^*(\R^+_w \times \bS^{n}_y)$ that depends only on $M$.

We choose operators
$$\cB_\pm \in I^{\comp}_h(\kappa^\pm_0), \quad \cB'_\pm \in I^{\comp}_h((\kappa_0^\pm)^{-1})$$
which quantize $\kappa_0^\pm$ near $\kappa^\pm_0(\WF_h(Q)) \times \WF_h(Q))$ in the sense of ~\eqref{eq:quantize_kappa}. We  use these operators to conjugate $\op_h^{L_s}(a_-)$ and $\op_h^{L_u}(a_+)$ to operators on $\R^+ \times \bS^n$. Define
$$A_- \coloneqq \cB_- \op_h^{L_s}(a_-) \cB'_-, \quad A_+ \coloneqq \cB_+ Q \op_h^{L_u}(a_+) \cB'_+, \quad B \coloneqq \cB_-\cB_+'.$$
Note that $B \in I_h^{\comp}(\kappa^- \circ (\kappa^+)^{-1})$.
We have
$$\op_h^{L_s}(a_-) Q \op_h^{L_u}(a_+) = \cB'_- A_-B A_+\cB_+ + \cO(h^\infty)_{L^2 \rightarrow L^2}.$$

By ~\eqref{eq:FIO_conjugation}, there exists $\tilde{a}_\pm \in S^{\comp}_{L_V, \rho}(T^*(\R^+_w \times \bS^n_y))$ such that
\begin{equation*}\label{eq:supp_a_tilde}
A_\pm  = \op_h^{L_V}(\tilde{a}_\pm) + \cO(h^\infty)_{L^2 \rightarrow L^2}, \quad \supp \tilde{a}_\pm \subset \kappa_0^\pm(V \cap \supp a_\pm).
\end{equation*}

Then ~\eqref{eq:Q_bound} follows from showing
\begin{equation}\label{eq:B_bound}
\|\op_h^{L_V}(\tilde{a}_-) B \op_h^{L_V}(\tilde{a}_+)\|_{L^2(\R^+ \times \bS^n) \rightarrow L^2(\R^+ \times \bS^n)} \leq Ch^{\beta/2}. 
\end{equation}

Recall the definition of $\cA_\pm$ from \eqref{eq:mathcal_A_def}. Then for 
\begin{equation}\label{eq:A_tilde}
\tilde{\cA}_\pm \coloneqq \kappa_0^\pm(V \cap \cA_\pm),
\end{equation}
we know  $\supp \tilde{a}_\pm \subset \tilde{\cA}_\pm$.

Recall the definition of  $B_-(x, \xi)$ from ~\eqref{eq:B_def}. Then since $U_i^\pm$ generate one-parameter unipotent flows,
\begin{equation}\label{eq:B-_change}
d(B_\pm \circ \pi_S)\cdot U_i^\pm =0, \quad d(B_\pm \circ \pi_S) \cdot X =0.
\end{equation}
We also define the family of functions for $\lambda>0$: 
\begin{equation}\label{eq:f_lambda}
f_\lambda: T^*M\setminus 0 \rightarrow T^*M \setminus 0, \quad f_\lambda(x, \xi) = (x, \lambda \xi).
\end{equation}
For the remainder of the paper, balls and distances on $\bS^n$ are given by geodesics under the round metric, unless otherwise noted.

We adapt following lemma  from ~\cite{DJ18}*{Lemma 5.8}.

\begin{lemma}\label{lem:a_tilde_porous_ball}
There exists a constant $C_1>0$ depending only on $M$, $a_1$, $a_2$ such that the following holds. Define the projection of $\tilde{\cA}_+$ onto the $y$-variables
$$\Omega_+ \coloneqq \left\{y \in \bS^n : \exists w, \theta, \eta \text{ such that } (w, y, \theta, \eta) \in \tilde{\cA}_+\right\} \subset \bS^n,$$
and set 
$\nu \coloneqq \nu_1^2/ C_1^3$. 
Then for all balls $B \subset \bS^n$ of diameter $R \in [C_1 K_1 h^\rho/ \nu_1, 1]$, there exists $y_1 \in B$ such that $B_{\nu R}(y_1) \cap \Omega_+ =\emptyset$.
\end{lemma}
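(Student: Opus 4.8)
The goal is to transfer the hyperbolic ball porosity of $\supp a_+ \cap S^*M$ established in Lemma~\ref{lem:a_hyperbolic_ball_porosity} through the symplectomorphism $\kappa_0^+$ into ordinary ball porosity of the projection $\Omega_+$ onto the sphere factor. The key geometric fact is the description of $\kappa^+$ in \eqref{eq:kappa_explicit_def}: for $(x,\xi)$ with $\kappa^+(x,\xi) = (w, y, \theta, \eta)$, the $y$-coordinate equals $B_-(x,\xi)$, the backward limit point of the geodesic through $(x,\xi)$. By \eqref{eq:B-_change}, $B_-\circ\pi_S$ is constant along the $U^-$-directions and along $X$; equivalently $y = B_-(x,\xi)$ changes \emph{only} when we move in the $U^+$-directions. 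So the ``ball of radius $\nu_1\alpha$ in the $U^+$-directions'' appearing in Definition~\ref{def:hyperbolic_ball_porosity} is exactly what controls a genuine ball in the $y$-variable, while the $e^{\cV^- v}$-thickening (with $|v|\le\varepsilon_0$) is absorbed because it does not move $y$ at all.

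\textbf{Key steps.} First, I would fix a ball $B\subset\bS^n$ of diameter $R\in[C_1 K_1 h^\rho/\nu_1, 1]$ with center $y_B$, and pick a point $(x_0,\xi_0)\in V\cap\cA_+$ (if no such point exists the claim is vacuous) whose backward endpoint $B_-(x_0,\xi_0)$ is close to $y_B$; lift to $q\in\pi_S^{-1}(x_0,\xi_0)\in F^*M$. Second, I would use the bi-Lipschitz behavior of $B_-$ restricted to the $U^+$-horosphere through $q$: there is a constant $C_1$, depending only on $M$ (through the compactness of $S^*M$ and smoothness of $B_-$ and of the maps in \eqref{eq:B_def}), such that $u\mapsto B_-(\pi_S(e^{U^+u}q))$ is a diffeomorphism onto its image with Lipschitz constants between $C_1^{-1}$ and $C_1$ on the relevant compact range $|u|\le 1$. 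Applying Lemma~\ref{lem:a_hyperbolic_ball_porosity} with $\alpha \coloneqq C_1 R/\nu_1 \in [K_1 h^\rho, 1]$ (after shrinking $C_1$ so this lands in $[K_1 h^\rho,1]$, using $R\ge C_1 K_1 h^\rho/\nu_1$), I obtain $u_0$ with $|u_0|\le\alpha$ such that $\pi_S\{e^{\cV^- v}e^{U^+(u+u_0)}q : |u|\le\nu_1\alpha, |v|\le\varepsilon_0\}$ misses $\supp a_+$. Third, I would set $y_1 \coloneqq B_-(\pi_S(e^{U^+ u_0}q))$: since $|u_0|\le\alpha = C_1 R/\nu_1$ and $B_-$ is $C_1$-Lipschitz in $u$, $\mathrm{dist}(y_1, y_B)\le C_1\alpha = C_1^2 R/\nu_1$ — here I need to be slightly more careful, choosing $\alpha$ proportional to $R$ with the right small constant so that $y_1\in B$; this is where the precise value $\nu = \nu_1^2/C_1^3$ and the scale lower bound $C_1 K_1 h^\rho/\nu_1$ are calibrated. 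Fourth, for any $y$ with $\mathrm{dist}(y, y_1)\le \nu R$, I use the lower Lipschitz bound $C_1^{-1}$ to find $u$ with $|u|\le C_1\nu R \le \nu_1\alpha$ (again by the choice $\nu = \nu_1^2/C_1^3$ and $\alpha = C_1R/\nu_1$) such that $y = B_-(\pi_S(e^{U^+(u+u_0)}q))$. Then the point $\pi_S(e^{U^+(u+u_0)}q)$ — which lies in the $v=0$ slice of the missed set — is not in $\supp a_+$; but I need to conclude $y\notin\Omega_+$, i.e. that \emph{no} $(w,y,\theta,\eta)\in\tilde\cA_+$, which requires knowing that the entire fiber over $y$ (all choices of $\theta$, $\eta$, and $|\xi|_g = w\in[1/4,4]$) is excluded.

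\textbf{Main obstacle.} The delicate point is precisely this last fiber issue: $\Omega_+$ is the projection of $\tilde\cA_+ = \kappa_0^+(V\cap\cA_+)$, so $y\in\Omega_+$ means \emph{some} $(x,\xi)\in V\cap\cA_+$ has $B_-(x,\xi) = y$, not the specific one I constructed. This is resolved by the structure of $\cA_+$: it is invariant under scaling $\xi$ in $\{1/4\le|\xi|_g\le 4\}$ (so the $w$-direction is harmless by homogeneity of $\phi_t$ and of $\supp a_+$), invariant under the geodesic flow direction $X$ up to the time-window — more precisely $\cA_+ = \{1/4\le|\xi|_g\le 4\}\cap\bigcap_{k=1}^{T_1}\phi_k(\supp\chi_{w_k^+})$ — and the remaining fiber directions over a fixed endpoint $y = B_-$ are exactly swept out by $e^{\cV^- v}$ together with $X$ and the isotropy $K_0$-directions. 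The $\varepsilon_0$-thickening in the $\cV^-$-direction and the fact that $V$ is a small ball of radius $\nu_1/C_1^2$ (so $V\cap\cA_+$ over a given $y$ is a small neighborhood in these complementary directions) is what lets the argument close: one chooses $C_1$ large enough (depending only on $M, a_1, a_2$) that the full preimage in $V$ of any $y$ within $\nu R$ of $y_1$ is contained in the missed set $\pi_S\{e^{\cV^- v}e^{U^+(u+u_0)}q : |u|\le\nu_1\alpha, |v|\le\varepsilon_0\}$. Making this containment precise — tracking how a small ball $V$ in $T^*M$ decomposes into the $U^+$-directions (controlling $y$) versus the $\cV^-\oplus\mathfrak{h}$-directions (the fiber over $y$), and verifying the constants match — is the real content, and is the higher-dimensional analogue of the corresponding step in \cite{DJ18}*{Lemma 5.8}. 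Once that is done, $y_1\in B$ with $B_{\nu R}(y_1)\cap\Omega_+ = \emptyset$, completing the proof.
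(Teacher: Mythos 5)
Your plan follows the paper's own route: transfer the hyperbolic ball porosity of $\supp a_+\cap S^*M$ (Lemma~\ref{lem:a_hyperbolic_ball_porosity}) through $\kappa_0^+$, using that the $y$-component equals $B_-$ and hence, by \eqref{eq:B-_change} and homogeneity, depends only on the $U^+$-parameter $u$, while the fiber over a fixed $y$ is swept by $e^{\cV^- v}$ and the $|\xi|_g$-scaling and is absorbed by the $\varepsilon_0$-thickening together with the smallness of $V$. Two points, however, keep the proposal from closing. First, the scale is mis-calibrated: you take $\alpha = C_1R/\nu_1$, which for $R$ near $1$ exceeds the admissible range $[K_1h^\rho,1]$ of Lemma~\ref{lem:a_hyperbolic_ball_porosity}, and, worse, with the $C_1$-Lipschitz map $u\mapsto y$ it only gives $\mathrm{dist}(y_1,y_B)\le C_1\alpha = C_1^2R/\nu_1$, so $y_1$ need not lie in $B$; ``shrinking $C_1$'' is not available since $C_1$ must dominate the Lipschitz constants of $\kappa_0^+$ and the size of $V$. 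The correct choice is a small multiple, $\alpha=\nu_1R/C_1$, which is exactly why the hypothesis requires $R\ge C_1K_1h^\rho/\nu_1$ (so that $\alpha\ge K_1h^\rho$), gives $C_1\alpha=\nu_1R\le R/2$ (so $y_1\in B$), and makes the step-four inequality $C_1\nu R\le\nu_1\alpha$ the source of the value $\nu=\nu_1^2/C_1^3$.

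Second, the step you yourself call ``the real content'' --- that the full preimage in $V\cap\cA_+$ of every $y$ within $\nu R$ of $y_1$ is contained in the excluded set --- is only described, not proved. In the paper this is done by extending $\kappa_0^+$ to the enlarged neighborhoods $V'\subset V''$, introducing the diffeomorphism $\Theta(u,v,\lambda)=\kappa_0^+\bigl(f_\lambda(\pi_S(e^{\cV^- v}e^{U^+u}(x_1,\xi_1,\Xi_1)))\bigr)$ onto $W''$ whose $y$-component $\Theta_1$ depends only on $u$, and arranging $\diam W''_{\bS}\le\nu_1/C_1$ so that the $(u,v)$-domain has diameter at most $\varepsilon_0$; this, plus the homogeneity of $\cA_+$ in $\{1/4\le|\xi|_g\le4\}$, is what allows one to drop the constraints $|v|\le\varepsilon_0$ and $\lambda\in[1/4,4]$ and conclude $B_{\nu_1\alpha}(0)\cap\Theta_1^{-1}(\Omega_+)=\emptyset$. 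Relatedly, your opening reduction (``pick $(x_0,\xi_0)\in V\cap\cA_+$ with endpoint near $y_B$; otherwise vacuous'') should be made precise as the paper's dichotomy: either the center of $B$ lies outside the $y$-projection of $W'$, in which case $\mathrm{dist}(y_0,\Omega_+)\ge\nu_1/C_1^3\ge\nu R$ and one takes $y_1=y_0$, or it lies inside, in which case one pulls it back by $(\kappa_0^+)^{-1}$ to a point of $S^*M$ and runs the argument above. With these repairs your argument coincides with the paper's proof.
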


\begin{proof}
Set $C_1>0$ to be sufficiently large, as specified later in the proof. $C_1$ will depend only on $M$, $a_1$, $a_2$.
Define $W \coloneqq \kappa_0^+(V)$. We lift $V''$ to $T^*\bH^{n+1} \setminus 0$ and use $\kappa^+$ to extend $\kappa_0^+$ to a symplectomorphism
$$\kappa_0^+: V' \rightarrow W', \quad V'' \rightarrow W''$$
for open sets $W', W'' \subset T^*(\R^+_w \times \bS_y^n)$. Define 
$$W''_\bS \coloneqq \kappa_0^+(V'' \cap S^*M).$$

For $C_1$ to be sufficiently large, $\diam W''_\bS \leq \frac{C_1}{10}\diam(V'' \cap S^*M)$. Then by ~\eqref{eq:V_diam},
\begin{equation}\label{eq:diam_W''_+}
\diam(W''_\bS) \leq \frac{\nu_1}{C_1}.
\end{equation}
Again, select $C_1$ to be sufficiently large so that the $\nu_1/C_1^3$-neighborhoods of $W, W'$ are contained respectively in $W', W''$. 

Let $B \subset \bS^n$ be a ball of diameter $R \in [C_1 K_1 h^\rho/ \nu_1, 1]$, centered at some $y_0 \in \bS^n$. 

Assume first that the $y$-projection of $W'$ does not contain $y_0$. By ~\eqref{eq:A_tilde}, $\tilde{\cA}_+ \subset W$. Then the distance between $y_0$ and $\Omega_+$ is at least $\nu_1/C_1^3$. Therefore, the ball of radius $\nu R$ centered at $y_0$ does not intersect $\Omega_+$.

Now assume the $y$-projection of $W'$ does contain $y_0$. Therefore, we can choose $w_0, \theta_0, \eta_0$ such that $(w_0, y_0, \theta_0, \eta_0) \in W'$ and 
$$(x_0, \xi_0) \coloneqq (\kappa_0^+)^{-1}(w_0, y_0, \theta_0, \eta_0) \in S^*M.$$
Choose $\Xi_0 \coloneqq (\xi_2, \ldots, \xi_{n+1})$ such that $(x_0, \xi_0, \Xi_0) \in F^*M$. Define
$$\alpha \coloneqq \frac{\nu_1 R}{C_1},$$
and note $K_1 h^\rho \leq \alpha \leq  1$.

By Lemma \ref{lem:a_hyperbolic_ball_porosity}, we know $\cA_+ \cap S^*M$ is hyperbolic $(\nu_1^+, \varepsilon_0^-)$-porous on balls from scales $K_1 h^\rho$ to $1$. Thus, there exists $u_0 \in \R^n$, $|u_0| \leq \alpha$ such that
\begin{equation*}
\{\pi_S(e^{\cV^-v} e^{\cU^+(u +u_0)}(x_0, \xi_0, \Xi_0)) : |u| \leq \nu_1 \alpha, |v| \leq \varepsilon_0\} \cap \cA_+ = \emptyset.
\end{equation*}
As  $\cA_+$ is homogeneous inside $\{\tfrac{1}{4} \leq |\xi|_g \leq 4\}$,
\begin{equation}\label{eq:a+_control}
\{f_\lambda(\pi_S(e^{\cV^-v} e^{\cU^+(u +u_0)}(x_0, \xi_0, \Xi_0)) ): |u| \leq \nu_1 \alpha, |v| \leq \varepsilon_0, \tfrac{1}{4} \leq \lambda \leq 4\} \cap \cA_+ = \emptyset,
\end{equation}
where we recall $f_\lambda$ from \eqref{eq:f_lambda}.
Set
$$(x_1, \xi_1, \Xi_1) \coloneqq e^{\cU^+ u_0} (x_0, \xi_0, \Xi_0), \quad  (x_1, \xi_1) \in V''.$$

By \eqref{eq:tangent_space_decomp} and \eqref{eq:preimage_Es_Eu}, for $C_1$ sufficiently large, we have a diffeomorphism 
$$\Theta: \tilde{U} \times \left[-\tfrac{1}{4} -\tfrac{2\nu_1}{C_1^2}, 4 + \tfrac{2\nu_1}{C_1^2}\right] \rightarrow W'', \quad (u, v, \lambda) \mapsto \kappa_0^+(f_{\lambda}(\pi_S( e^{\cV^-v} e^{ \cU^+ u} (x_0, \xi_0, \Xi_0)))),$$
where $\tilde{U}$ is a neighborhood of $(0, 0) \in \R^n \times  \R^{n+1}$. 
From ~\eqref{eq:B-_change},
the value of $y$ does not change if we change $v$. As $B_-$ is invariant under rescaling $\xi$, the value of $y$ does not change with $\lambda$. Thus, the $y$ component of $\Theta$ is equal to a diffeomorphism $\Theta_1(u)$, defined on a subset of $\R^n$.

We apply $\kappa_0^+$ to ~\eqref{eq:a+_control} and use ~\eqref{eq:A_tilde} to know that 
\begin{equation}\label{eq:line_Theta_intersection_+} 
\left\{\Theta(u,v, \lambda) : (u,v) \in \tilde{U}, |u| \leq \nu_1 \alpha, |v| \leq \varepsilon_0, \tfrac{1}{4} \leq \lambda \leq 4\right\} \cap \tilde{\cA}_+ = \emptyset.
\end{equation}
By \eqref{eq:diam_W''_+}, since $\varepsilon_0$ is a constant multiple of $\nu_1$,  $\diam (\tilde{U}) \leq \sqrt{C_1} \diam(W''_\bS) \leq \varepsilon_0$.
We also know that $\tilde{\cA}_+ \subset \kappa_0^+ (\{\tfrac{1}{4} \leq |\xi|_g \leq 4\})$. Thus, we can remove the conditions $ |v| \leq \varepsilon_0$ and $\tfrac{1}{4}\leq \lambda \leq 4$ from \eqref{eq:line_Theta_intersection_+}.

Therefore,
\begin{equation}\label{eq:Theta_control_+}
B_{\nu_1 \alpha} (0) \cap \Theta_1^{-1}(\Omega_+) = \emptyset.
\end{equation}

We label 
$$(w_1, y_1, \theta_1, \eta_1) \coloneqq \kappa_0^+ (\pi_S (x_1, \xi_1, \Xi_1)) = \kappa_0^+(x_1, \xi_1) \in W''.$$

Consider the ball $B_{\nu R} (y_1) \subset \bS^n$. We know that $|y_0 - y_1 | \leq C_1 |u_0| \leq R/2$. Therefore, $y_1 \in B$. We also know $\Theta_1(0) = y_1$ and $\diam(\Theta^{-1}_1(B_{\nu R} (y_1) ) \leq 2C_1 \nu R \leq 2\nu_1 \alpha$, which by ~\eqref{eq:Theta_control_+} gives $B_{\nu R} (y_1) \cap \Omega_+ = \emptyset$.
\end{proof}

We now show that the hyperbolic $(\nu_1^-, \varepsilon_0^+)$-porosity on lines of $\cA_-$ from Lemma \ref{lem:a_hyperbolic_line_porosity} implies that $\tilde{\cA}_-$ has a property similar to line porosity in Definition~\ref{def:porous_on_lines}.
Similarly to Lemma~\ref{lem:a_tilde_porous_ball}, the following lemma is adapted from ~\cite{DJ18}*{Lemma 5.8}.

\begin{lemma}\label{lem:a_tilde_porous_line}
There exists a constant $C_1>0$ depending only on $M$, $a_1$, $a_2$, $\rho$ such that the following holds. Define the projection of $\tilde{\cA}_-$ onto the $y$-variables
$$\Omega_- \coloneqq \left\{y \in \bS^n : \exists w, \theta, \eta \text{ such that } (w, y, \theta, \eta) \in \tilde{\cA}_-\right\} \subset \bS^n,$$
and set $\nu \coloneqq \nu_1^2/2C_1^4$.
Then for all geodesics $I \subset \bS^n$ of length $|I| \in [C_1^3 K_1 h^\rho/\nu_1, 1]$, there exists $y_1 \in I$ such that $B_{\nu|I|} (y_1) \cap \Omega_- = \emptyset$.
\end{lemma}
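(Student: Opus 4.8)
The plan is to rerun the proof of Lemma~\ref{lem:a_tilde_porous_ball} essentially verbatim, with $\kappa_0^+$ replaced by $\kappa_0^-$ and the hyperbolic ball porosity of Lemma~\ref{lem:a_hyperbolic_ball_porosity} replaced by the hyperbolic line porosity of Lemma~\ref{lem:a_hyperbolic_line_porosity}; the one genuinely new point is that the fixed geodesic $I$ is not rotation invariant, so the $U_1^-$-direction must be aligned with $I$ by a suitable choice of frame. First I would fix $C_1$ large (depending only on $M,a_1,a_2,\rho$), set $W\coloneqq\kappa_0^-(V)$, extend $\kappa_0^-$ to exact symplectomorphisms $V'\to W'$ and $V''\to W''$ by lifting $V',V''$ to $T^*\bH^{n+1}\setminus 0$ and composing with $\kappa^-$, and put $W''_\bS\coloneqq\kappa_0^-(V''\cap S^*M)$; exactly as in Lemma~\ref{lem:a_tilde_porous_ball}, for $C_1$ large one has $\diam W''_\bS\le\nu_1/C_1$ and the $\nu_1/C_1^3$-neighborhoods of $W$ and $W'$ lie in $W'$ and $W''$. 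Given a geodesic $I\subset\bS^n$ with midpoint $y_0$ and $|I|\in[C_1^3K_1h^\rho/\nu_1,1]$: if $y_0$ lies outside the $y$-projection of $W'$, then since $\tilde{\cA}_-\subset W$ the distance from $y_0$ to $\Omega_-$ is $\ge\nu_1/C_1^3\ge\nu|I|$ and $y_1\coloneqq y_0$ works; otherwise pick $(x_0,\xi_0)\coloneqq(\kappa_0^-)^{-1}(w_0,y_0,\theta_0,\eta_0)\in S^*M\cap V''$ for suitable $w_0,\theta_0,\eta_0$.

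Next I would choose the frame $\Xi_0=(\xi_2,\dots,\xi_{n+1})$, $q_0\coloneqq(x_0,\xi_0,\Xi_0)\in F^*M$, so that $d(B_+\circ\pi_S)_{q_0}\cdot U_1^-$ is a positive multiple of the unit tangent to $I$ at $y_0$. This is possible because $d\pi_S$ carries $\spn(U_1^-,\dots,U_n^-)$ isomorphically onto $E_u(x_0,\xi_0)$ and $dB_+$ carries $E_u(x_0,\xi_0)$ isomorphically onto $T_{y_0}\bS^n$, while by \eqref{eq:commutator} conjugation by the frame rotations $e^{sR_{2,j+1}}$ rotates $U_1^-$ through all of $\spn(U_1^-,\dots,U_n^-)$; the length of the resulting vector is bounded above and below by constants depending only on $M$, by compactness of $S^*M$. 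Put $\alpha\coloneqq\nu_1|I|/C_1^3$, so $K_1h^\rho\le\alpha\le1$, and apply Lemma~\ref{lem:a_hyperbolic_line_porosity} at $q_0$ at scale $\alpha$ (legitimate since that lemma holds at every frame over a given base point): there is $t_0\in[-\alpha,\alpha]$ with $\pi_S\{e^{\cV^+v}e^{U^-u+U_1^-t_0}q_0:|u|\le\nu_1\alpha,\ |v|\le\varepsilon_0\}\cap\supp a_-=\emptyset$, and by the homogeneity of $\cA_-$ inside $\{\tfrac{1}{4}\le|\xi|_g\le4\}$ the same holds with $\supp a_-$ replaced by $\cA_-$ after inserting scalings $f_\lambda$, $\tfrac{1}{4}\le\lambda\le4$.

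As in Lemma~\ref{lem:a_tilde_porous_ball}, I would then form $\Theta\colon(u,v,\lambda)\mapsto\kappa_0^-(f_\lambda(\pi_S(e^{\cV^+v}e^{U^-u}q_0)))$, a diffeomorphism onto an open subset of $W''$; by \eqref{eq:B-_change} (which gives $d(B_+\circ\pi_S)\cdot U_i^+=d(B_+\circ\pi_S)\cdot X=0$) and the invariance of $B_+$ under rescaling $\xi$, the $y$-component of $\Theta$ is a $C_1$-bi-Lipschitz diffeomorphism $\Theta_1(u)$ of an open subset of $\R^n$. Applying $\kappa_0^-$ to the disjointness, discarding the now-redundant ranges $|v|\le\varepsilon_0$ and $\tfrac{1}{4}\le\lambda\le4$ (since $\diam\tilde U\le\varepsilon_0$ and $\tilde{\cA}_-\subset\kappa_0^-(\{\tfrac{1}{4}\le|\xi|_g\le4\})$) and writing $U^-u+U_1^-t_0=U^-(u+t_0e_1)$ with $e_1$ the first coordinate vector of $\R^n$, I get $B_{\nu_1\alpha}(t_0e_1)\cap\Theta_1^{-1}(\Omega_-)=\emptyset$, hence $B_{\nu_1\alpha/C_1}(y_1)\cap\Omega_-=\emptyset$ with $y_1\coloneqq\Theta_1(t_0e_1)$. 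Finally, by the choice of $\Xi_0$ the curve $t\mapsto\Theta_1(te_1)$ is tangent to $I$ at $y_0$, so for $|t|\le\alpha$ it stays within $C\alpha^2$ of $I$ (with $C$ depending only on $M,a_1,a_2$); letting $y_1'\in I$ be the closest point to $y_1$ gives $|y_1-y_1'|\le C\alpha^2$ and $|y_0-y_1'|\le C_1\alpha+C\alpha^2\le|I|/2$, so $y_1'\in I$, while $B_{\nu_1\alpha/C_1-C\alpha^2}(y_1')\subset B_{\nu_1\alpha/C_1}(y_1)$ misses $\Omega_-$. Since $\alpha=\nu_1|I|/C_1^3$ and $|I|\le1$, for $C_1$ large $\nu_1\alpha/C_1-C\alpha^2\ge\tfrac{1}{2}\nu_1\alpha/C_1=\nu_1^2|I|/(2C_1^4)=\nu|I|$, as required.

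The main obstacle — and the only real departure from Lemma~\ref{lem:a_tilde_porous_ball} — is this alignment-plus-correction step: whereas in the ball case the hole produced by porosity is automatically a ball, here the line-porosity hole is attached to a segment whose $\Theta_1$-image is a curve that is only \emph{tangent} to $I$ rather than contained in it, so one must both (i) rotate the frame so that the tangency points along $I$ and (ii) check that the second-order deviation $O(\alpha^2)$ of that curve from $I$ is dominated by the porosity radius $\sim\nu_1\alpha$. This is exactly why the admissible scales are $[C_1^3K_1h^\rho/\nu_1,1]$ (rather than $[C_1K_1h^\rho/\nu_1,1]$ as in the ball case) and why one takes $\nu=\nu_1^2/(2C_1^4)$, the extra powers of $C_1$ furnishing the room needed for (ii).
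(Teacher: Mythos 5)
Your proposal is correct and follows essentially the same route as the paper: the trivial case, the $\Theta$/$\Theta_1$ construction, the application of Lemma~\ref{lem:a_hyperbolic_line_porosity} at scale $\alpha\sim\nu_1|I|/C_1^3$, and crucially the frame-alignment-plus-$O(\alpha^2)$-correction idea are all the same. The only cosmetic difference is where the tangency comparison is carried out: the paper lifts (a central piece of) $I$ to a curve $(x(t),\xi(t))\in S^*M$ with velocity in $E_u$, chooses $\xi_2$ to match it via \eqref{eq:2nd_derivative}, and estimates $|\pi_S(e^{t_0U_1^-}q_0)-(x(t_0),\xi(t_0))|\le C_1|t_0|^2$ upstairs before pushing through $\kappa_0^-$, whereas you align $d(B_+\circ\pi_S)\cdot U_1^-$ with the tangent of $I$ and absorb the second-order deviation downstairs on $\bS^n$ — equivalent bookkeeping.
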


\begin{proof}
Set $C_1>0$ to be sufficiently large, as specified later in the proof. $C_1$ will depend only on $M$, $a_1$, $a_2$, and $\rho$.
Define $W \coloneqq \kappa_0^-(V)$. We lift $V''$ to $T^*\bH^{n+1} \setminus 0$ and use $\kappa^-$ to extend $\kappa_0^-$ to a symplectomorphism
$$\kappa_0^-: V' \rightarrow W', \quad V'' \rightarrow W''$$
for open sets $W', W'' \subset T^*(\R^+_w \times \bS_y^n)$. Define
$$W''_\bS \coloneqq \kappa_0^- (V'' \cap S^*M).$$

For $C_1$  sufficiently large, $\diam W''_\bS \leq \frac{C_1}{10}\diam(V'' \cap S^*M)$. Then by ~\eqref{eq:V_diam},
\begin{equation}\label{eq:diam_W''}
\diam(W''_\bS) \leq \frac{\nu_1}{C_1}.
\end{equation}
Again, select $C_1$ to be sufficiently large so that the $\nu_1/C_1^3$-neighborhoods of $W, W'$ are contained respectively in $W', W''$.

Let $I =\{y(t) : |t| \leq |I|/2\} \subset \bS^n$ be a geodesic  with $C_1^3 K_1 h^\rho/\nu_1 \leq |I| \leq 1$ centered at some $y(0) \in \bS^n$. 

Assume first that the $y$-projection of $W'$ does not contain $y(0)$. By ~\eqref{eq:A_tilde}, $\tilde{\cA}_- \subset W$. Then the distance between $y(0)$ and $\Omega_-$ is at least $\nu_1/C_1^3$. Therefore, the ball of radius $\nu |I|$ centered at $y(0)$ does not intersect $\Omega_-$.

Now assume the $y$-projection of $W'$ does contain $y(0)$. 
Therefore, there exists a geodesic $I' = \{y(t): |t| \leq \nu_1/C_1^3\} \subset I$ of length $2 \nu_1/ C_1^3$ centered at $y(0)$ such that  $I'$ is contained in the $y$-projection of $W''$. 

By ~\eqref{eq:tangent_space_decomp}, we have $T_{(x, \xi)} (S^*M) = \R X \oplus E_s(x, \xi) \oplus E_u(x, \xi)$. Therefore, by ~\eqref{eq:preimage_Es_Eu} and ~\eqref{eq:B-_change},  we can find  $w(t), \theta(t), \eta(t)$ defined smoothly in $|t| \leq \nu_1/C_1^3$ such that 
$(w(t), y(t), \theta(t), \eta(t)) \in W''$,  $(w(0), y(0), \theta(0), \eta(0)) \in W'$,  
$$(x(t), \xi(t)) \coloneqq (\kappa_0^-)^{-1}(w(t), y(t), \theta(t), \eta(t)) \in S^*M,$$ 
and $(\dot{x}(0), \dot{\xi}(0)) \in E_u(x(0), \xi(0))$. 
Clearly, $(x(0), \xi(0)) \in V'$. 

From the definition of $E_u$ in ~\eqref{eq:Es_Eu_def}, we can write $(\dot{x}(0), \dot{\xi}(0))= (-\xi_2, -\xi_2 )$, where $\xi_2$ is orthogonal to both $x(0)$ and $\xi(0)$.

Now pick $\xi_3, \ldots, \xi_{n+1}$ such that for $\Xi_0 \coloneqq (\xi_2, \ldots, \xi_{n+1})$, $(x(0), \xi(0), \Xi_0) \in F^*M$. An explicit calculation shows 
\begin{equation}\label{eq:2nd_derivative}
\partial_t \pi_S(e^{t U_1^-}(x(0), \xi(0), \Xi_0))|_{t=0} = (-\xi_2, -\xi_2) = (\dot{x}(0), \dot{\xi}(0)).
\end{equation}

Define
$$\alpha \coloneqq \frac{|I'|}{2} = \frac{\nu_1 |I|}{2C_1^3},$$
and note  $K_1 h^\rho \leq \alpha \leq 1$.

By Lemma \ref{lem:a_hyperbolic_line_porosity},  $\cA_- \cap S^*M$ is hyperbolic $(\nu_1^-,  \varepsilon_0^+)$-porous on lines from scales $K_1 h^\rho$ to $1$. Thus, there exists $|t_0| \leq  \alpha$ such that
\begin{equation*}
\left\{\pi_S(e^{\cV^+ v} e^{\cU^-u + t_0 U_1^-}(x(0), \xi(0), \Xi_0)) :  |u| \leq \nu_1 \alpha,  |v| \leq \varepsilon_0 \right\} \cap \cA_- = \emptyset.
\end{equation*}
As $\cA_-$ is homogeneous inside $\{\tfrac{1}{4} \leq |\xi|_g \leq 4\}$, 
\begin{equation}\label{eq:a-_control}
\left\{f_\lambda(\pi_S(e^{\cV^+ v} e^{\cU^-u + t_0 U_1^-}(x(0), \xi(0), \Xi_0))) :  |u| \leq \nu_1 \alpha,  |v| \leq \varepsilon_0, \tfrac{1}{4} \leq \lambda \leq 4\right\} \cap \cA_- = \emptyset,
\end{equation}
where we recall $f_\lambda$ from \eqref{eq:f_lambda}.

From  \eqref{eq:2nd_derivative}, for $C_1$ sufficiently large,
\begin{equation*}
|\pi_S(e^{t_0 U_1^- } (x(0), \xi(0), \Xi_0)) - (x(t_0), \xi(t_0))| \leq C_1 |t_0|^2.
\end{equation*}
Therefore,
\begin{equation}\label{eq:s0_approx}
|f_{|\xi(t_0)|_g}(\pi_S(e^{t_0 U_1^- } (x(0), \xi(0), \Xi_0))) - (x(t_0), \xi(t_0))| \leq C_1 |t_0|^2.
\end{equation}

Set
$$(\tilde{x}, \tilde{\xi}, \tilde{\Xi}) \coloneqq e^{t_0U_1^-} (x(0), \xi(0), \Xi_0), \quad (\tilde{x}, \tilde{\xi}) \in V''.$$

By \eqref{eq:tangent_space_decomp} and \eqref{eq:preimage_Es_Eu}, for $C_1$ sufficiently large, we have a diffeomorphism 
$$\Theta: \tilde{U} \times \left[\tfrac{1}{4} - \tfrac{2 \nu_1}{C_1^2}, 4 +\tfrac{2 \nu_1}{C_1^2}\right] \rightarrow W'', \quad (u, v, \lambda) \mapsto \kappa_0^-(f_{\lambda}(\pi_S( e^{\cV^+v} e^{ \cU^- u} (\tilde{x}, \tilde{\xi}, \tilde{\Xi})))),$$
where $\tilde{U}$ is a neighborhood of $(0, 0) \in \R^n \times  \R^{n+1}$. 
From ~\eqref{eq:B-_change},
the value of $y$ does not change if we change $v$. As $B_+$ is invariant under rescaling $\xi$, the value of $y$ also does not change with $\lambda$. Thus, the $y$ component of $\Theta$ is equal to a diffeomorphism $\Theta_1(u)$, defined on a subset of $\R^n$.

We apply $\kappa_0^-$ to ~\eqref{eq:a-_control} and use ~\eqref{eq:A_tilde} to know that
\begin{equation}\label{eq:line_Theta_intersection}
\left\{\Theta(u, v, \lambda) : (u,v) \in \tilde{U}, |u| \leq \nu_1 \alpha, |v| \leq \varepsilon_0, \tfrac{1}{4} \leq \lambda \leq 4 \right\} \cap \tilde{\cA}_- = \emptyset.
\end{equation}
By \eqref{eq:diam_W''}, since $\varepsilon_0$ is a constant multiple of $\nu_1$, $\diam (\tilde{U}) \leq \sqrt{C_1} \diam(W'') \leq \varepsilon_0$.
We also know that $\tilde{\cA}_- \subset \kappa_0^- (\{\tfrac{1}{4} \leq |\xi|_g \leq 4\})$. Thus, we can remove the conditions $ |v| \leq \varepsilon_0$ and $\tfrac{1}{4} \leq \lambda \leq 4$ from \eqref{eq:line_Theta_intersection}.

Therefore,
\begin{equation}\label{eq:Theta_control}
B_{\nu_1 \alpha} (0) \cap \Theta_1^{-1}(\Omega_-) = \emptyset.
\end{equation}

We label 
$$(\tilde{w}, \tilde{y}, \tilde{\theta}, \tilde{\eta}) \coloneqq \kappa_0^- (\pi_S (\tilde{x}, \tilde{\xi}, \tilde{\Xi})) = \kappa_0^-(\tilde{x}, \tilde{\xi}) \in W''.$$

Consider the ball $B_{2\nu |I|} (\tilde{y}) \subset \bS^n$. We  know $\Theta_1(0) = \tilde{y}$ and $\diam(\Theta^{-1}_1(B_{2 \nu |I|} (\tilde{y}) )) \leq 2C_1 \nu |I| \leq 2 \nu_1 \alpha$, which by ~\eqref{eq:Theta_control} gives $B_{2\nu |I|} (\tilde{y}) \cap \Omega_- = \emptyset$.

By ~\eqref{eq:s0_approx}, we have that $|\tilde{y} - y(t_0)| \leq C_1^2 |t_0|^2 \leq \nu |I|$. Therefore, $B_{\nu |I|}(y(t_0)) \subset B_{2\nu |I|} (\tilde{y})$. This implies $B_{\nu |I|}(y(t_0)) 
 \cap \Omega_- = \emptyset$. Clearly, $y(t_0) \in I$.
\end{proof}

Recall that we have reduced Proposition ~\ref{prop:hyperbolic_FUP} to showing ~\eqref{eq:B_bound}. 

As $B = \cB_- \cB_+' \in I_h^{\comp}(\kappa^- \circ (\kappa^+)^{-1})$, by Lemma ~\ref{lem:FIO_to_PsiDO}, there exists $A \in \Psi_h^{\comp}(\R^+ \times \bS^n)$ such that
$$B = A \tilde{B}_\chi + \cO(h^\infty)_{L^2 \rightarrow L^2},$$
where $\chi \in C_c^\infty(\bS^n_\Delta)$ and $\tilde{B}_\chi : L^2(\R^+ \times \bS^n) \rightarrow L^2(\R^+ \times \bS^n)$ is given by $\tilde{B}_\chi v(w, y) = B_{\chi, w}(v(w, \cdot))(y)$, with $w>0$ and
$$B_{\chi,w } v(y) = (2 \pi h)^{-\frac{n}{2}} \int_{\bS^n} \left|\frac{y-y'}{2}\right|^{2 i w /h} \chi(y,y') v(y') dy'.$$
In the above equation, $|y-y'|$ denotes the Euclidean distance between $y, y' \in \bS^n \subset \R^{n+1}$.

Set $a'_- \coloneqq \tilde{a}_- \# \sigma_h(A)$ and $a'_+ \coloneqq \tilde{a}_+$. Then,
\begin{equation}\label{eq:B_to_B_chi}
\op_h^{L_V} (\tilde{a}_-) B \op_h^{L_V}(\tilde{a}_+) = \op_h^{L_V} (a'_-) \tilde{B}_\chi \op_h^{L_V}(a'_+) + \cO(h^{\infty})_{L^2 \rightarrow L^2}.
\end{equation}

Recall that $\supp \tilde{a}_\pm \subset \tilde{\cA}_\pm$. Then, $a'_\pm \in S^{\comp}_{L_V, \rho} (T^* (\R^+ \times \bS^n))$ with 
\begin{equation}\label{eq:a'_support}
\supp a'_\pm \subset \{1/4 \leq w \leq4, y \in \Omega_\pm\}.    
\end{equation}

By ~\cite{DZ16}*{Lemma 3.3}, there exists $\chi_\pm(y; h) \in C_c^\infty(\bS^n; [0,1])$ such that
$$|\partial^\alpha_y \chi_\pm| \leq C_\alpha h^{-\rho |\alpha|}, \quad \supp(1- \chi_\pm) \cap \Omega_\pm = \emptyset, \quad \supp \chi_\pm \subset \Omega_\pm(h^{\rho}).$$
Choose $\chi_w(w) \in C_c^\infty((1/8, 8))$ such that $\chi_w =1$ near $[1/4, 4]$. 
Then from ~\eqref{eq:B_to_B_chi} and ~\eqref{eq:a'_support}, we have
$$\op_h^{L_V}(\tilde{a}_-) B \op_h^{L_V}(\tilde{a}_+) = \op_h^{L_V}(a'_-) \chi_w \chi_- \tilde{B}_\chi \chi_+ \op_h^{L_V}(a'_+) + \cO(h^\infty)_{L^2 \rightarrow L^2}.$$

Thus to show ~\eqref{eq:B_bound}, it suffices to show
$$\|\chi_w \chi_- \tilde{B}_\chi \chi_+\|_{L^2(\R^+ \times \bS^n) \rightarrow L^2(\R^+ \times \bS^n)} \leq C h^{\beta/2},$$
which follows from showing that
\begin{equation}\label{eq:sup_bound}
\sup_{w \in [1/8, 8]} \|\1_{\Omega_-(h^\rho)} B_{\chi, w} \1_{\Omega_+ (h^\rho)}\|_{L^2(\bS^n) \rightarrow L^2(\bS^n)} \leq C h^{\beta/2}.
\end{equation}

Let $\{M_k\}$ be a finite covering of $\bS^n$ by balls of fixed radius $1/2$, each centered at $y_k \in \bS^n$. We view $\bS^n$ as a subset of $\R^{n+1}$ and identify each hyperplane tangent to $y_k$ with $\R^n$. 
Then let $\psi_k:M_k \rightarrow \R^n$ be gnomonic projection onto the hyperplane tangent to  $y_k$. More specifically, for $y \in M_k$, $\psi_k(y)$ is the intersection of the the hyperplane tangent to  $y_k$ and the line going through the center of $\bS^n$ and $y$.
We  know 
$$\psi_k(M_k) = B,$$ where $B \subset \R^n$ is a ball. There exists some $C_2>0$ such that for $x, y \in B$ and all $k$,
\begin{equation}\label{eq:psi_k_bounds}
C_2^{-1}|x-y|_{\R^n} \leq |\psi_k^{-1}(x) - \psi_k^{-1}(y)|_{\bS^n} \leq  |x-y|_{\R^n},
\end{equation}
where the respective metrics are the intrinsic metrics.
We further assume that $C_2\geq 2$.

Since $\psi_k$ are gnomonic projections, they preserve geodesics. This fact simplifies, but is not strictly necessary for the proof of line porosity of $\psi_k(\Omega_- \cap M_k)$. If we chose  different $\psi_k$, then a method similar to Lemma ~\ref{lem:diffeomorphism_line} could be used instead. 

For the following lemma, we take $\nu$ to be the minimum of its two values from Lemma \ref{lem:a_tilde_porous_ball} and Lemma \ref{lem:a_tilde_porous_line}.
\begin{lemma}\label{lem:mapped_porosity}
Fix $\varrho \in (3/4, \rho)$. For all $k$, $\psi_k(\Omega_+ \cap M_k)$ is $\nu/2C_2$-porous on balls from scales $h^\varrho$ to $1$ and $\psi_k(\Omega_- \cap M_k)$ is $\nu/2C_2$-porous on lines from scales $h^\varrho$ to $1$.
\end{lemma}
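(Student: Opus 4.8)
The plan is to deduce Lemma~\ref{lem:mapped_porosity} from the two ``hyperbolic'' porosity statements for $\tilde{\cA}_\pm$ already established, namely Lemma~\ref{lem:a_tilde_porous_ball} (for $\Omega_+$, on balls) and Lemma~\ref{lem:a_tilde_porous_line} (for $\Omega_-$, on lines), combined with the bi-Lipschitz bounds \eqref{eq:psi_k_bounds} for the gnomonic charts $\psi_k$ and the fact that $\psi_k$ preserves geodesics. The two charts statements are essentially the same argument with ``ball'' replaced by ``line'', so I would write the ball case in full and then indicate the (cosmetic) changes for the line case.

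First, for the ball case: fix $k$ and let $B\subset\R^n$ be a ball of diameter $R$ with $h^\varrho\le R\le 1$. Since $C_2\ge 2$ and $\psi_k^{-1}$ is $1$-Lipschitz by \eqref{eq:psi_k_bounds}, the image $\psi_k^{-1}(B)$ is contained in a geodesic ball $\tilde B\subset\bS^n$ of diameter at most $R$; and since $\psi_k^{-1}$ is $C_2^{-1}$-expanding, $\psi_k^{-1}(B)$ contains a geodesic ball of diameter $R/(2C_2)$ centered at $\psi_k^{-1}(\text{center of }B)$. I would apply Lemma~\ref{lem:a_tilde_porous_ball} to a geodesic ball $\tilde B'\subset\bS^n$ of diameter $R/(2C_2)$ sitting inside $\psi_k^{-1}(B)$; to invoke it I need $R/(2C_2)$ to lie in $[C_1K_1h^\rho/\nu_1,\,1]$, which holds for $h$ small because $\varrho<\rho$ (so $h^\varrho\ge C\,h^\rho$ eventually) and $R\le 1$ forces $R/(2C_2)\le 1$. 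Lemma~\ref{lem:a_tilde_porous_ball} then produces $y_1\in\tilde B'\subset\psi_k^{-1}(B)$ with $B_{\nu\,R/(2C_2)}(y_1)\cap\Omega_+=\emptyset$ in the $\bS^n$-metric, where $\nu=\nu_1^2/C_1^3$. Pushing forward by $\psi_k$ and using that $\psi_k$ is $1$-Lipschitz (so it can only shrink distances to within the chart, hence $\psi_k(B_{\nu R/(2C_2)}(y_1))\supset B_{\nu R/(2C_2)}(\psi_k(y_1))$ in $\R^n$), we get $\psi_k(y_1)\in B$ and $B_{\nu R/(2C_2)}(\psi_k(y_1))\cap\psi_k(\Omega_+\cap M_k)=\emptyset$; rescaling the constant, $\psi_k(\Omega_+\cap M_k)$ is $\nu/(2C_2)$-porous on balls from scales $h^\varrho$ to $1$, which is exactly what is claimed (after replacing $\nu$ by the minimum of the two values as stipulated before the lemma). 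One subtlety I want to be careful about: the radii of exclusion balls need to be stated consistently with Definition~\ref{def:porous_on_balls}, which asks for a point $x\in B$ with $B_{\nu R}(x)\cap X=\emptyset$ where $R$ is the \emph{diameter} of $B$; so I must track whether $R$ denotes diameter or radius at each step and match the conventions in Lemma~\ref{lem:a_tilde_porous_ball} (there $R$ is also a diameter), absorbing any factor of $2$ into the porosity constant.

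For the line case, the argument is identical in structure: given a line segment $\tau\subset\R^n$ of length $R\in[h^\varrho,1]$, the set $\psi_k^{-1}(\tau)$ is a \emph{geodesic} arc in $\bS^n$ (this is where preservation of geodesics by the gnomonic projection is used, avoiding any appeal to Lemma~\ref{lem:diffeomorphism_line}), of length between $R/C_2$ and $R$ by \eqref{eq:psi_k_bounds}. Apply Lemma~\ref{lem:a_tilde_porous_line} to a sub-geodesic of length $R/C_2$, which again lies in $[C_1^3K_1h^\rho/\nu_1,1]$ for small $h$ since $\varrho<\rho$; this yields $y_1$ on that arc with $B_{\nu R/C_2}(y_1)\cap\Omega_-=\emptyset$ where $\nu=\nu_1^2/2C_1^4$. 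Pulling back through $\psi_k$ exactly as above gives the desired $\nu/(2C_2)$-porosity on lines from scales $h^\varrho$ to $1$ for $\psi_k(\Omega_-\cap M_k)$.

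I expect the only real obstacle to be bookkeeping rather than anything conceptual: keeping the porosity constants, the diameter-versus-radius conventions, and the scale thresholds consistent across the three different definitions in play (Definitions~\ref{def:porous_on_balls}, \ref{def:porous_on_lines}, \ref{def:hyperbolic_ball_porosity}, \ref{def:hyperbolic_line_porosity}) and making sure the lower scale threshold $h^\varrho$ genuinely dominates the $h^\rho$-type thresholds coming from Lemmas~\ref{lem:a_tilde_porous_ball}--\ref{lem:a_tilde_porous_line} once $h$ is small, which is precisely why the hypothesis $\varrho\in(3/4,\rho)$ is imposed. Everything else is a direct transport of porosity through a bi-Lipschitz, geodesic-preserving diffeomorphism, for which the machinery is already in place (cf.\ Lemma~\ref{lem:diffeomorphism_ball}).
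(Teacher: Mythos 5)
Your proposal follows essentially the same route as the paper: both reduce the statement to Lemmas~\ref{lem:a_tilde_porous_ball} and~\ref{lem:a_tilde_porous_line} by transporting balls/segments through the charts using the bi-Lipschitz bounds \eqref{eq:psi_k_bounds}, with the geodesic-preservation of the gnomonic projections doing exactly the work you assign it (avoiding Lemma~\ref{lem:diffeomorphism_line}), and the constants you track ($\nu R/(2C_2)$ exclusion radius, scales $h^\varrho$ versus the $C\,h^\rho/\nu_1$ thresholds absorbed for small $h$ since $\varrho<\rho$) match the paper's.

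The one step that would fail as written is the treatment of balls (and segments) that are not contained in $X=\psi_k(M_k)$: Definition~\ref{def:porous_on_balls} quantifies over \emph{all} balls $B\subset\R^n$ of diameter $R\in[h^\varrho,1]$, and your construction begins with ``$\psi_k^{-1}(\text{center of }B)$'', which is undefined when the center lies outside $X$, and more generally the claim that $\psi_k^{-1}(B)$ contains a geodesic ball of diameter $R/(2C_2)$ can fail when $B$ straddles $\partial X$. The paper disposes of this with a one-line dichotomy: since $\nu/2C_2\le 1/4$, either $B$ contains a sub-ball of radius $R/4$ inside $X$ (run your transport argument on that sub-ball, whose preimage does contain the needed geodesic ball since its center is at depth $\ge R/4$ in $X$), or $B$ contains a sub-ball of radius $R/4$ disjoint from $X$, in which case porosity is trivial because $\psi_k(\Omega_\pm\cap M_k)\subset X$; the same reduction is applied to line segments. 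This is an easy repair and does not change your argument, but it is a genuine omission rather than pure bookkeeping, since a point of $B$ just outside $X$ need not be $\nu R$-far from the set. One further cosmetic slip: by \eqref{eq:psi_k_bounds} it is $\psi_k^{-1}$, not $\psi_k$, that is $1$-Lipschitz ($\psi_k$ is expanding, $C_2$-Lipschitz); the inclusion you actually use, $B_{\nu R/(2C_2)}(\psi_k(y_1))\cap X\subset\psi_k\bigl(B_{\nu R/(2C_2)}(y_1)\cap M_k\bigr)$, is the correct consequence of the correct fact, so nothing downstream is affected.
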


\begin{proof}
We begin by showing the ball porosity of $\psi_k(\Omega_+ \cap M_k)$.
Let $R \in [h^\varrho, 1]$. We examine $B_{R/2}(r_0)$ for some $r_0 \in \R^n$. 
Either $B_{R/2}(r_0)$ contains a ball of radius  $R/4$ contained inside $B$ or $B_{R/2}(r_0)$ contains a ball of radius $R/4$ that does not intersect $B$. Since $\nu/2C_2 \leq 1/4$, it suffices to examine   $B_{R/4}(r_0)$ contained in $B$.

 From ~\eqref{eq:psi_k_bounds}, we know that 
$$B_{R/4 C_2 }(\psi_k^{-1}(r_0)) \subset \psi^{-1}_k\left(B_{R/4}(r_0) \right) \subset \bS^n.$$
Recall $C_1, \nu_1, K_1$ from Lemma~\ref{lem:a_tilde_porous_ball}. For $h$ sufficiently small, $C_1 K_1 h^\rho /2\nu_1 < h^\varrho/2 C_2 \leq R/2C_2 \leq 1$.
Therefore by Lemma ~\ref{lem:a_tilde_porous_ball}, there exist  $y_1 \in B_{R/4 C_2}(\psi_k^{-1}(r_0))$ such that  $B_{\nu R/2 C_2}(y_1) \cap \Omega_+ = \emptyset$. Clearly, $\psi_k(y_1) \in B_{R/4}(r_0)$. From ~\eqref{eq:psi_k_bounds}, we have 
$$B_{\nu R/2C_2} (\psi_k(y_1)) \cap B  \subset \psi_k \left(B_{\nu R/2 C_2}(y_1) \cap M_k \right).$$ 
Therefore, $B_{\nu R/2C_2} (\psi_k(y_1)) \cap \psi_k(\Omega_+ \cap M_k)=\emptyset$. We conclude that $\psi_k(\Omega_+ \cap M_k)$ is $\nu/2C_2$-porous on balls from scales $h^\varrho$ to $1$

We now show the line porosity of $\psi_k(\Omega_- \cap M_k)$. 
Let $I$ be a line segment of length $R \in [h^\varrho, 1]$. Either $I \cap B$ contains a line segment of length $R/2$  or $I \setminus B$ contains a line segment of length $R/4$. 
Since $\nu/2C_2 \leq 1/4$, it suffices to assume $I$ is a line segment of length $R/2$, contained in $B$. 

By ~\eqref{eq:psi_k_bounds}, $\psi_k^{-1}(I)$ is a segment of a geodesic on $\bS^n$ of length at least $R/2C_2$. Recall $C_1, \nu_1, K_1$ from Lemma \ref{lem:a_tilde_porous_line}. For $h$ sufficiently small, $C_1 K_1 h^\rho /\nu_1 < h^\varrho/2 C_2 \leq R/2C_2 \leq 1$. Therefore by Lemma ~\ref{lem:a_tilde_porous_line}, there exists some $y_1 \in \psi_k^{-1}(I)$ such that $B_{\nu R/2C_2}(y_1) \cap \Omega_- = \emptyset$. Clearly, $\psi_k(y_1) \in I$. 
From ~\eqref{eq:psi_k_bounds}, we know $B_{\nu R/2C_2}(\psi_k(y_1)) \cap B \subset \psi_k(B_{\nu R/2C_2}(y_1) \cap M_k)$.
Therefore, 
$B_{\nu R/2C_2}(\psi_k(y_1)) \cap \psi_k(\Omega_- \cap M_k) = \emptyset$. We conclude that $\psi_k(\Omega_- \cap M_k)$ is $\nu/2C_2$-porous on lines from scales $h^\varrho$ to $1$.
\end{proof}

We return to showing ~\eqref{eq:sup_bound}. We see that $B_{\chi, w}$ is of the form ~\eqref{eq:B(h)_def_manifold} with
$M= \tilde{M} = \bS^n$, $U= \bS^n_\Delta$, and
$\Phi(y, y') = 2 w \log|y - y'| - w \log 4$. Again, $|y-y'|$ denotes the Euclidean distance between $y, y' \in \R^{n+1}$. We have $\Phi(y,y') = 2 w \log (\sum_{i=1}^{n+1} (y_i -y_i')^2)^{\frac{1}{2}} - w \log 4$. 
To apply Proposition \ref{prop:FUP_manifolds}, it remains to show that $\Phi$ satisfies \eqref{eq:B(h)_conditions_manifold}.

\begin{lemma}
For $y \neq y'$, $\det \partial_{y y'}^2 \Phi(y,y') \neq 0$.
\end{lemma}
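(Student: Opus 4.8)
The statement is a pointwise non-degeneracy of the mixed Hessian of the phase $\Phi(y,y') = 2w\log|y-y'| - w\log 4$ on $\bS^n_\Delta$, where $|y-y'|$ is the ambient Euclidean distance in $\R^{n+1}$. Since $w$ is a fixed positive parameter (ranging in $[1/8,8]$), it suffices to show $\det\partial^2_{yy'}\big(\log|y-y'|\big)\neq 0$ for $y\neq y'$ on the sphere. The plan is to compute this Hessian in suitable local coordinates on $\bS^n\times\bS^n$ and exhibit its determinant explicitly.

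\textbf{Key steps.} First I would set $r^2 = |y-y'|^2 = \sum_{i=1}^{n+1}(y_i-y_i')^2$ and work in local coordinates: parametrize a neighborhood of $y$ in $\bS^n$ by $u\in\R^n$ and a neighborhood of $y'$ by $u'\in\R^n$, with $y=y(u)$, $y'=y'(u')$. Then $\partial_{u_a}\Phi = \tfrac{2w}{r^2}\langle \partial_{u_a} y,\, y-y'\rangle$, and differentiating again in $u'_b$,
\[
\partial_{u'_b}\partial_{u_a}\Phi = 2w\Big(\tfrac{-1}{r^2}\langle \partial_{u_a} y,\, \partial_{u'_b} y'\rangle + \tfrac{2}{r^4}\langle \partial_{u_a} y,\, y-y'\rangle\langle y-y',\, \partial_{u'_b} y'\rangle\Big).
\]
The tangent vectors $\partial_{u_a}y$ span $T_y\bS^n$ and $\partial_{u'_b}y'$ span $T_{y'}\bS^n$. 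Writing $e = (y-y')/r$, the matrix $\partial^2_{uu'}\Phi$ is, up to the nonzero scalar $2w/r^2$ and the (invertible) Gram matrices of the two frames, the matrix of the bilinear form $B(v,v') = -\langle v, v'\rangle + 2\langle v, e\rangle\langle e, v'\rangle$ between $T_y\bS^n$ and $T_{y'}\bS^n$. So it suffices to show $B$ is a non-degenerate pairing. The map $v'\mapsto v' - 2\langle v',e\rangle e$ is the Euclidean reflection $\sigma_e$ of $\R^{n+1}$ across $e^\perp$, so $B(v,v') = -\langle v, \sigma_e(v')\rangle$; thus non-degeneracy of $B$ is equivalent to $\sigma_e(T_{y'}\bS^n)$ having trivial intersection with $(T_y\bS^n)^\perp = \R y$. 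Since $\sigma_e$ preserves the hyperplane $e^\perp$ and both $y,y'\in e^\perp$ are nonzero (they are unit vectors, and $\langle y, e\rangle = \langle y, y-y'\rangle/r = (1-\langle y,y'\rangle)/r$, $\langle y', e\rangle = (\langle y,y'\rangle - 1)/r$, which are negatives of each other but I instead use that $\sigma_e$ fixes $y+y'$... more carefully: $\sigma_e(y') = y' - 2\langle y',e\rangle e$; one computes $\langle \sigma_e(y'), y\rangle = \langle y',y\rangle - 2\langle y',e\rangle\langle e,y\rangle$, which I would evaluate directly and check it is not equal to $\pm 1$ unless $y=\pm y'$). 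I would verify that $\sigma_e(y')$ is a unit vector not proportional to $y$ whenever $y\neq y'$: indeed $\sigma_e$ is an isometry so $|\sigma_e(y')|=1$, and $\sigma_e(y') = y$ would force $y' = \sigma_e(y)$; computing $\sigma_e(y) = y - 2\langle y,e\rangle e = y - \tfrac{2(1-\langle y,y'\rangle)}{r^2}(y-y')$, and setting this equal to $y'$ and simplifying $r^2 = 2(1-\langle y,y'\rangle)$ gives $\sigma_e(y) = y - (y-y') = y'$, so in fact $\sigma_e$ swaps $y$ and $y'$. Therefore $\sigma_e(T_{y'}\bS^n) = \sigma_e(y'^\perp) = \sigma_e(y')^\perp = y^\perp = T_y\bS^n$, so $B(v,v') = -\langle v, \sigma_e(v')\rangle$ is exactly (minus) the standard inner product transported by an isomorphism $T_{y'}\bS^n \xrightarrow{\sigma_e} T_y\bS^n$, hence non-degenerate. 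Consequently $\det\partial^2_{yy'}\Phi \neq 0$.

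\textbf{Main obstacle.} The only delicate point is bookkeeping: one must be careful that the change from intrinsic coordinates to the ambient description only introduces invertible factors (the Gram matrices of the coordinate frames and the scalar $2w/r^2$), and that the key reflection identity $\sigma_e$ swaps $y$ and $y'$ is used correctly. Once that identity is in hand the non-degeneracy is immediate, so the bulk of the writeup is the coordinate-free reformulation rather than any hard computation. I would present the argument intrinsically as above to avoid grinding through an $n\times n$ determinant.

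Alternatively — and perhaps cleaner for the writeup — one can invoke the general fact that $\Phi$ is (a constant multiple of) a generating function for the symplectomorphism $\kappa^-\circ(\kappa^+)^{-1}$, equivalently for $\hat\kappa$ appearing in Lemma~\ref{lem:FIO_to_PsiDO} and \S\ref{subsection:symplectomorphisms}; the condition $\det\partial^2_{yy'}\Phi\neq 0$ is exactly the statement that this generating function is non-degenerate, which holds because $B_\chi$ in Lemma~\ref{lem:FIO_to_PsiDO} is a Fourier integral operator associated to a genuine symplectomorphism. But since a self-contained verification is short, I would favor the direct computation above.
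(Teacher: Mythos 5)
Your proof is correct, and it takes a genuinely different route from the paper's. The paper differentiates $\Phi$ in the ambient coordinates of $\R^{n+1}$, writes the resulting $(n+1)\times(n+1)$ mixed Hessian as a rank-one perturbation $v^Tv-\tfrac12|y-y'|^2\,\mathrm{Id}$ of a scalar matrix, and evaluates its determinant by the matrix determinant lemma. You instead argue intrinsically: in local coordinates on $\bS^n\times\bS^n$ the mixed Hessian is $\tfrac{2w}{r^2}$ times the pairing $B(v,v')=-\langle v,\sigma_e v'\rangle$ on $T_y\bS^n\times T_{y'}\bS^n$, where $\sigma_e$ is the Euclidean reflection across $e^\perp$ with $e=(y-y')/r$; the identity $\langle y,e\rangle=r/2$ (using $r^2=2(1-\langle y,y'\rangle)$) shows $\sigma_e$ swaps $y$ and $y'$, hence carries $T_{y'}\bS^n$ isometrically onto $T_y\bS^n$, so the pairing is nondegenerate and the coordinate matrix is invertible. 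Each approach buys something: the paper's is a two-line explicit determinant, while yours is coordinate-free and verifies exactly the intrinsic nondegeneracy that Proposition~\ref{prop:FUP_manifolds} uses in charts --- for mixed derivatives the intrinsic Hessian is precisely the restriction of the ambient one to $T_y\bS^n\times T_{y'}\bS^n$, and invertibility of the full ambient matrix does not formally imply nondegeneracy of that restriction, so your argument addresses the needed statement more directly. Two cosmetic points: the phrase ``up to \ldots the Gram matrices of the two frames'' is better stated as ``with respect to the coordinate frames, which are bases,'' since the matrix of a nondegenerate pairing in any pair of bases is invertible; and the exploratory parenthetical about $\langle\sigma_e(y'),y\rangle$ can be deleted once you have the clean identity $\sigma_e(y)=y'$.
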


\begin{proof}
For $i \neq j$, we calculate
$\partial_{y_i' y_j}^2 = 4w |y-y'|^{-4} (y_j -y_j')(y_i -y_i')$
and 
$\partial_{y_i' y_i}^2 = 4w |y-y'|^{-4} (y_i -y_i')^2  -2w |y-y'|^{-2}$.
Thus, it suffices to show $\det A \neq 0$, where
$$A \coloneqq \begin{bmatrix}
(y_1-y_1')^2 -\frac{1}{2}|y-y'|^2 & (y_1-y_1') (y_2-y_2') & \cdots & (y_1-y_1') (y_{n+1}-y_{n+1}') \\
(y_1-y_1') (y_2-y_2') & (y_2-y_2')^2-\frac{1}{2}|y-y'|^2 & \cdots & (y_2-y_2') (y_{n+1}-y_{n+1}')\\
\vdots & \vdots & \ddots & \vdots\\
(y_1-y_1') (y_{n+1}-y_{n+1}') & (y_2-y_2') (y_{n+1}-y_{n+1}') & \cdots & (y_{n+1}-y_{n+1}')^2 -\frac{1}{2}|y-y'|^2
\end{bmatrix}.$$

For $v = \begin{bmatrix}
(y_1-y_1'), &\cdots, & (y_{n+1}-y_{n+1}') 
\end{bmatrix}$ and $B=  \operatorname{diag}\left(-\frac{1}{2}|y-y'|^2, \ldots, -\frac{1}{2}|y-y'|^2\right)$, $A =v^T v  +B$.   Since $y \neq y'$, $B$ is invertible. 
Thus by the matrix determinant lemma, $\det A =  (1+v B^{-1} v^T) \det B$. 

Since $(1+v B^{-1} v^T) =1$, 
$$\det A = \left(\frac{-1}{2}\right)^n |y-y'|^{2n} \neq 0,$$
which concludes the proof.
\end{proof}

Therefore, using Lemma ~\ref{lem:mapped_porosity}, we can apply Proposition ~\ref{prop:FUP_manifolds} to conclude ~\eqref{eq:sup_bound}. This finishes the proof of Proposition \ref{prop:hyperbolic_FUP}.

\subsection{Proof of Lemma ~\ref{lem:A_w_decay}}\label{subsection:proof_Aw_decay}
Recall that we reduced the proof of Theorem \ref{thm:support} to showing  Lemma ~\ref{lem:A_w_decay}.
Note that
\begin{equation*}\label{eq:A_w_decomp}
A_\w = U(-T_1) A_{\w_-} A_{\w_+}(-T_1) U(T_1).
\end{equation*}

Thus, from Lemma ~\ref{lem:a_pm_symbol_classes}, Lemma ~\ref{lem:A_w_decay} follows from proving
\begin{equation}\label{eq:a_pm_FUP}
\|\op_h^{L_s}(a_-) \op_h^{L_u}(a_+)\|_{L^2 \rightarrow L^2} \leq C h^{\beta/2},
\end{equation}
for $C, \beta>0$ independent of $\w$.

Setting $Q=I$ in Proposition ~\ref{prop:hyperbolic_FUP}, we conclude ~\eqref{eq:a_pm_FUP}.

\bibliography{refs,rhyp}{}
\bibliographystyle{alpha}

\end{document}